\newcommand{\norm}[1]{\left\lVert#1\right\rVert}
\newcommand{\be}{\mathbf{E}}
\newcommand{\bp}{\mathbf{P}}
\newcommand{\iot}{\int_{0}^{t}}
\newcommand{\var}{\textbf{Var}}
\newcommand{\lp}{\left(}
\newcommand{\rp}{\right)}
\newcommand{\lc}{\left[}
\newcommand{\rc}{\right]}
\newcommand{\lla}{\left\langle}
\newcommand{\rra}{\right\rangle}
\newcommand{\ep}{\varepsilon}
\newcommand{\si}{\sigma}
\newcommand{\vp}{\varphi}
\def \eref#1{\hbox{(\ref{#1})}}
\newcommand{\beq}{\begin{equation}}
\newcommand{\eeq}{\end{equation}}
\newcommand{\bea}{\begin{eqnarray}}
\newcommand{\eea}{\end{eqnarray}}
\newcommand{\beas}{\begin{eqnarray*}}
\newcommand{\eeas}{\end{eqnarray*}}
\newcommand{\cf}{{\mathcal F}}
\def\cH{\mathcal{H}}
\def\cF{\mathcal{F}}
\def\bR{\mathbb{R}}
\newcommand{\R}{{\mathbb R}}
\newcommand{\ind}{\boldsymbol{1}}
\newenvironment{proof*}{\noindent\textbf{Proof} \vspace*{0mm} \\}{\hfill $\blacksquare$ \vspace*{5mm}}
\renewcommand \theequation {%
\ifnum \c@chapter>\z@ \@arabic\c@chapter.%
\fi \ifnum\c@subsection>\z@\@arabic\c@subsection.%
\fi\@arabic\c@equation} 
\newtheoremstyle{break}
  {\topsep}{\topsep}%
  {\itshape}{}%
  {\bfseries}{}%
  {\newline}{}%
\theoremstyle{break}
\newtheorem{theorem}{Theorem}[section]
\newtheorem{assumption}[theorem]{Assumption}
\newtheorem{definition}[theorem]{Definition}
\newtheorem{lemma}[theorem]{Lemma}
\newtheorem{proposition}[theorem]{Proposition}
\theoremstyle{remark}
\newtheorem{remark}[theorem]{Remark}
\newtheorem{example}[theorem]{Example}
\newcommand*{\rom}[1]{\expandafter\@slowromancap\romannumeral #1@}
\def\no{\nonumber}
    \gdef\upqed{\vskip-\belowdisplayskip\vskip-\baselineskip\kern0pt\let\@qed\mathend@qed}
\begin{document}
\pagestyle{plain}
\pagenumbering{roman}
\dissertationtrue
\thispagestyle{empty}%

	\vskip0.5in%

	\begin{center}

		\LARGE Intermittency for the stochastic heat and wave equations with generalized fractional noise

	\end{center}

	\vfill

	\begin{center}

		\rm by \\
		
                         \vfill
                 
		Ruxiao Qian \\

	\end{center}

	\vfill

	\begin{center}

		\rm A Dissertation \\

		Presented to the Graduate Committee \\

		of Lehigh University \\

		in Candidacy for the Degree of \\

		Doctor of Philosophy \\

		in \\

		Mathematics

	\end{center}

	\vfill

	\begin{center}

		Lehigh University \\

		August 2022 \\

	\end{center}\vskip.5in
	
	\newpage

\vspace*{\fill}
\begin{center}
Copyright\\
Ruxiao Qian
\end{center}
\newpage

\thispagestyle{plain}

Approved and recommended for acceptance as a dissertation in partial fulfillment of the requirements for the degree of Doctor of Philosophy.
\\
\\
Ruxiao Qian \\
Intermittency for the stochastic eeat and wave Equations with generalized fractional noise

\vspace{.1in}

\begin{tabular}{l}
\\
\hline
\textbf{Date \ \ \ \ \ \ \ \ \ \ \ \ \ \ \ \ \ \ \ }
\end{tabular}

\begin{flushright}
\begin{tabular}{l}
\\
\hline
\textbf{Daniel Conus, PhD}, Dissertation Director, Chair \\ 
\end{tabular}

\vspace{.05in}

\end{flushright}

\begin{tabular}{l}
\\
\hline
\textbf{Accepted Date \ \ \ \ \ \ \ }
\end{tabular}

\begin{flushright}
\begin{tabular}{l}
Committee Members \ \ \ \ \ \ \ \ \ \ \ \ \ \ \ \ \ \ \ \ \ \ \ \ \ \
\\ 
\\
\\
\hline
\textbf{Si Tang, PhD}\\
\\ 
\\
\\
\hline
\textbf{Wei-Min Huang, PhD}\\
\\ 
\\
\\
\hline
\textbf{Xiaoming Song, PhD}\\
\\
\\
\\
\end{tabular}
\end{flushright}
\newpage
\chapter*{Acknowledgement}
First and foremost, I would like to show my deepest gratitude to my advisor, Prof. Daniel Conus. His enthusiasm, extremely organized lecture and intuitive explanation inspired me to join math department and to enjoy the beauty of math. This work could never have been done without his significant inspiration, constant guidance and support. I will always hold this contagious passion that originally drew me in.

Secondly, I would like to express thanks to Prof. Don Davis. His strict and meticulous manner, and his Ultramarathoning career motivated me to work harder and to be a better mathematician.

I am thankful to Prof. Si Tang, Prof. Xiaoming Song, and Prof. Wei-Min Huang for providing me with detailed and constructive comments to improve this thesis.

Further, I am thankful to Prof. Lei Wu for his generous helping in many aspects of math and most importantly, a profound view of Analysis and PDEs.

Last but not least, I owe my loving thanks to my parents. Their encouragement and caring are always the strongest support in my life.
\tableofcontents
\nopagebreak
\addcontentsline{toc}{chapter}{List of Figures}
\listoffigures
\newpage
\pagestyle{plain}
\pagenumbering{arabic}
\addcontentsline{toc}{chapter}{Abstract}
\chapter*{Abstract}
\title{Abstract}
We are looking at the stochastic heat and wave equations with different types of fractional noise. We are interested in the intermittency property and Lyapunov exponent for the solution. First we look at the equation driven by the Dobri\'c-Ojeda noise and we show that the Lyapunov exponent matches that of the equation driven by standard fractional noise as obtained by Hu, Huang, Nualart, Tindel (2015) and Balan, Conus (2016).

In the second part, we introduce a generalized fractional noise that includes both standard fractional noise and Dobri\'c-Ojeda noise. It shows that, in this specific situation, the correlation structure of the noise does not change the Lyapunov exponent. We conjecture that this result would hold more generally.
\pagestyle{plain}

\chapter{Introduction}
\section{Heat and Wave equations}
In mathematics and physics, the heat equation is a certain second-order partial differential equation. The theory of the heat equation was first developed by Joseph Fourier in 1822 for the purpose of modeling how a quantity, such as heat, diffuses through a given region. The classic heat equation (without random noise) is given by
\begin{align}
\left\{
\begin{array}{l}
\displaystyle
\label{eqn:classic heat}
    \frac{\partial}{\partial t} u(t,x)=\frac{1}{2}\Delta u(t,x),\\\rule{0ex}{1.5em}
    u(0,x)=u_0(x),
    \end{array}
\right.
\end{align}
where $t>0, x\in \mathbb{R}^d$, $\Delta$ stands for the Laplacian operator on $\mathbb{R}^d$, $u_0$ is a bounded function. 

The solution to the heat equation is given by
\begin{equation} \label{heatsolution}
u(t,x)=(p_h*u_0)(t,x)=\int_{\bR^d} u_0(y) \cdot p_{h}(t,x-y)dy
\end{equation}
where $*$ denotes the convolution, and $p_h$ is the heat kernel defined below.
\begin{definition}[Heat kernel]
The fundamental solution of the heat equation (also known as the heat kernel) in dimension $d$ is defined by
\begin{align}
\label{heatkernel}
p_h(t,x) = \frac{1}{(2\pi t)^{d/2}} \exp\left(-\frac{|x|^2}{2t}\right),
\end{align}
for all $t > 0$, $x \in \mathbb{R}^d$.
\end{definition}
The Fourier transform of the heat kernel is given by:
\begin{align}
    \hat{p}_h(t,\xi)=\int_{\bR^d}e^{-i\xi x}p_h(t,x) dx=\exp\left(-\frac{t|\xi|^2}{2}\right). \label{eqn:heatFourier}
\end{align}
We notice that for all $d$, $$\displaystyle{\int_{\mathbb{R}^d}p_h(t,x)\,dx =1}. $$

Finally, we point out a connection between the heat equation and the theory of stochastic processes. The operator $\frac{1}{2}\Delta$ is the infinitesimal generator of Brownian motion.

Indeed, let $B(t)$ be an $\mathbb{R}^d$-valued Brownian motion. Let $t \geq 0$ be fixed and consider a bounded function $g$. The infinitesimal generator $\mathcal{A}g$ is defined to be the limit
\begin{align}
    (\mathcal{A}g)(x)=\lim_{\epsilon \downarrow 0}\frac{E^x[g(B_\epsilon)]-g(x)}{\epsilon},
\end{align}
where $E^x$ denotes the expectation under the condition $B_0 = x$. It is a known fact that for Brownian motion, 
\begin{align}
    (\mathcal{A}g) = \frac{1}{2}\Delta g.
\end{align}

The wave equation is a second-order linear partial differential equation for the description of waves or standing wave fields — as they occur in classical physics — such as mechanical waves (e.g. water waves, sound waves and seismic waves) or electromagnetic waves (including light waves). It arises in fields like acoustics, electromagnetism, and fluid dynamics. A single wave propagating in a pre-defined direction can also be described with the one-way wave equation.

The wave equation is given by
\begin{align}
\label{eqn:classic wave}
\left\{
\begin{array}{l}
\displaystyle
    \frac{\partial^2}{\partial t^2} u(t,x)=\Delta u(t,x),\\\rule{0ex}{1.5em}
    u(0,x)=u_0(x),\\
    \frac{\partial}{\partial t}u(0,x)=v_0(x),
    \end{array}
\right.
\end{align}
where $t>0, x\in \mathbb{R}^d$, $\Delta$ stands for the Laplacian operator on $\mathbb{R}^d$, $u_0$ and $v_0$ are bounded functions.

The solution to the wave equation is given by
\begin{equation} \label{wavesolution}
u(t,x)= \frac{d}{dt} (p_w*u_0)(t,x) + (p_w*v_0)(t,x)
\end{equation}
where $*$ denotes the convolution, and $p_w$ is the wave kernel defined below. In dimension $1$, this can be written as
\begin{equation*}
u(t,x) = \frac{u_0(x+t) + u_0(x-t)}{2} + \int_{x-t}^{x+t} v_0(y) \cdot dy.
\end{equation*}
 
\begin{definition}[Wave kernel]
The fundamental solution of the wave equation (or wave kernel) in dimension $d$ is defined by
\begin{align}
\label{wave kernel}
p_w(t,x)=\left\{ \begin{array}{ll}
\frac{1}{2}\mathbbm{1}_{\{|x|<t\}}     & \text{when } d=1, \\\rule{0ex}{1.5em}
\frac{1}{2\pi}\frac{1}{\sqrt{t^2-|x|^2}} \mathbbm{1}_{\{|x|<t\}}    & \text{when }d=2,\\\rule{0ex}{1.5em}
\frac{1}{4\pi t}\sigma_t& \text{when }d=3.
\end{array}\right.
\end{align}
Here $\sigma_t$ is the surface measure on $\partial \mathbf{B}(0,t)$. We notice that for all $d \leq 3$,
\begin{align}
\label{waveforalldim}
\displaystyle{\int_{\mathbb{R}^d}p_w(t,x)\,dx=t}.
\end{align}
For $d \geq 4$, $p_w$ is not a function, but a Schwartz distribution, it will not satisfy one of our assumption, hence the case will not be covered in this dissertation. The Fourier transform of the wave kernel is given by
\begin{align}
\label{e:G-Fourier}
    \hat{p}_w(t,\xi)=\int_{\bR^d} e^{-i\xi x}p_w(t,x)dx=\frac{\sin(t|\xi|)}{|\xi|}.
\end{align}
\end{definition}

\section{Stochastic heat and wave equations}
In this dissertation, we are interested in the model of a random walk in a random potential, often also called the parabolic Anderson model (PAM) or stochastic heat equation (SHE). Here the walk has a strong tendency to be confined to an extremely preferable part of the random medium. Therefore, the global properties of the system is not determined by an average behavior, but by some local extreme behavior. As a consequence, the rigorous work on the PAM owes much to theory of large deviations, which describes the exponential rates of probabilities of rare events. 

The PAM has become a popular model to study among probabilists and mathematical physicists, since there are a lot of connections to other interesting topics like branching random walks with random branching rates, the spectrum of random Schr\"odinger operators, and certain variational problems. The mathematical activity on the PAM is on a high level since the 1990s, and many specific and deeper questions and variants have been studied, especially in the last few years. For instance, models with time-dependent potentials, or connections with Anderson localisation. The Stochastic Heat Equation (SHE) that we will consider here is a continuous-time, continuous-space version of PAM. It is defined as
\begin{align}
\left\{
\begin{array}{l}
\displaystyle
\label{eqn:DO process}
    \frac{\partial}{\partial t} u(t,x)=\frac{1}{2}\Delta u(t,x)+ u(t,x)\dot{W}(t,x),\\\rule{0ex}{1.5em}
    u(0,x)=u_0,
    \end{array}
\right.
\end{align}
where $t>0, x\in \mathbb{R}^d$, $\Delta$ stands for the Laplacian operator on $\mathbb{R}^d$, $u_0$ is a bounded function, and $\dot{W}$ is a random noise that we will introduce below.

There are several motivations for us to investigate the classic stochastic partial differential equations one might wish to solve, either by extending Ito’s theory \cite{Dalang1999} and \cite{peszat1997} or by Malliavin calculus,see \cite{malliavincalc}:
 \begin{itemize}
\item For homogenization problems of PDEs driven by highly oscillating stationary random fields (see Iftimie et al.~\cite{Iftimie2008}), we often introduce the stochastic heat equation. Here we notice that limit theorems are often obtained through a Feynman-Kac representation of the solution to the heat equation.
\item There is a connection between equation \eqref{eqn:DO process} and the partition function of directed and undirected continuum polymers, exploited in Rovira and Tindel~\cite{tindel2005}. The basic properties of an equation of type \eqref{eqn:DO process} are translated into corresponding properties of the polymers.
\item The multiplicative stochastic heat equation exhibits concentration properties of its energy. This phenomenon is referred as intermittency for the process $u$ solution to (1.1), see Conus, Joseph, and Khoshnevisan~\cite{conuskhoshnevisan2013}, and as a localization property for the polymer measure (Carmona and Hu~\cite{carmonahu2006}). The intermittency property of our model is one of the main results of the current dissertation, and will be developed in Section 1.3.
\item The stochastic heat equation is related to the Kardar-Parisi-Zhang (KPZ) equation~\cite{KPZeqn} 
 \begin{align}
     \frac{\partial h}{\partial t}=\Delta h- |\nabla h|^2+\lambda \dot{W}(t,x),
 \end{align}
 which describes the growth of interfaces, such as the clustering of bacteria or the movement of galaxies, by the Hopf-Cole transformation. The Hopf-Cole transformation is a type of logarithmic transformation that connects the stochastic heat equation to the KPZ equation. This connection was rigorously proved by Martin Hairer (see \cite{hairer2013}).
 \end{itemize}

The stochastic wave equation is one of the fundamental stochastic partial differential
equations of hyperbolic type. The behavior of its solutions is significantly different from the stochastic heat equation, though they share some similarities which allow them at times to be treated with similar tools. The equation is defined as
\begin{align}
\label{eqnwave:DO process}
\left\{
\begin{array}{l}
\displaystyle
    \frac{\partial^2}{\partial t^2} u(t,x)=\Delta u(t,x)+  u(t,x)\dot{W}(t,x),\\\rule{0ex}{1.5em}
    u(0,x)=u_0,\\
    \frac{\partial}{\partial t}u(0,x)=v_0,
    \end{array}
\right.
\end{align}
where $t>0, x\in \mathbb{R}^d$, $\Delta$ stands for the Laplacian operator on $\mathbb{R}^d$, $u_0$ and $v_0$ are bounded functions.

 For the stochastic wave equation, we have several applications, see \cite{utahsummer2006}. The first example is the motion of a strand of DNA. A DNA molecule can be viewed as a long elastic string. Since a DNA molecule typically “floats” in a fluid, it is constantly moving. Thus we can describe its position by $u(t,x)$, just as a particle of pollen floating in a fluid moves according to Brownian motion, then by Newton's law of motion, we have
 \begin{align}
      \frac{\partial^2}{\partial t^2} u(t,x)=\Delta u(t,x)+  \int_0^1 k(x,y)u(t,y)\,dy+\dot{F}(t,x),
 \end{align}
where the first term on the right-hand side represents the elastic forces, the second term is a (non-local) friction term, and the third term $\dot{F}(t,y)$ is a Gaussian noise, with spatial correlation $k$, that is,
$E(\dot{F}(t,x)\dot{F}(s,y))=\delta_0(t-s)k(x,y)$. The function $k$ is the same in the friction term and in the correlation.

A second example is the internal structure of the sun, an international project named Project SOHO (Solar and Heliospheric Observatory)~\cite{utahsummer2006}, whose main objective was to obtain information about the internal structure of the sun using the motion of the sun's surface. They used a model given by
\begin{align}
    \frac{\partial^2 u}{\partial t^2} (t,x)=c^2(x)\rho_0(x)\Bigg{(}\nabla \cdot (\frac{1}{\rho_0(x)}\nabla u)+\nabla \cdot F(t,x)\Bigg{)},
\end{align}
where $x\in B(0,R)$, the ball of radius $R$ centered at the origin, $c^2(x)$ is the speed of wave propagation at position $x$, $\rho_0(x)$ is the density at position $x$ and the vector $F(t,x)$ models the shock that originates at time $t$ from position $x$.

In this model, $F$ is the 3-dimensional Gaussian noise concentrated on the sphere $\partial B(0 , r)$, where $0 < r < R$, see \cite{Dalang2004}.

 The most common situation studied in the literature for these two equations is when the driving noise is a centered Gaussian random field $W_0$ given (informally) by 
 \begin{equation} \label{eqn:WN}
E[\dot{W}_0(t,x)\dot{W}_0(s,y)] = \delta_0(t-s) f(x-y),
\end{equation}
where $\delta_0$ is the Dirac delta function and $f: \bR^d \rightarrow \R$ is a symmetric, positive-definite function known as the \emph{(space) covariance function}. For such a function, Bochner's theorem guarantees that there exists a positive measure $\mu$ such that $f = \hat \mu$. The measure $\mu$ is known as the \emph{spectral measure}.

Existence and uniqueness of a solution to both the heat and wave equation driven by $W_0$ is studied by Dalang in the seminal paper \cite{Dalang1999}. The solution is understood in integral (mild) form as a random field $\{u(t,x)\}_{t > 0, x \in \R^d}$. For the heat equation, we have
\begin{equation}
\label{whitenoisesoln}
u(t,x) = w(t,x) +\int_0^t \int_{\bR^d}  p_h(t-s,x-y) u(s,y) W_0(ds dy),
\end{equation}
where 
\begin{equation}
\label{e:h}
w(t,x)=(p_h*u_0)(t,x).
\end{equation}
For the wave equation, we have
\begin{equation}
\label{whitenoisesolnwave}
u(t,x) = w(t,x) +\int_0^t \int_{\bR^d}  p_w(t-s,x-y) u(s,y) W_0(ds dy),
\end{equation}
where
\begin{equation}
\label{e:w}
w(t,x)=\frac{d}{dt} (p_w*u_0)(t,x) + (p_w*v_0)(t,x).
\end{equation}
Notice that $w(t,x)$ are the same as the solution to the deterministic heat equation given in \eqref{heatsolution} and deterministic wave equation given in \eqref{wavesolution}. The stochastic integral in \eqref{whitenoisesoln} and \eqref{whitenoisesolnwave} is understood in the sense of martingale measure stochastic integration, see Walsh~\cite{walsh1986}.

More rigorously, consider $\mathcal{D}([0,\infty)\times \R^{d})$ to be the space of infinitely differentiable functions with compact support on $[0,\infty)\times \R^{d}$. On a complete probability space
$(\Omega,\cf,\bp)$, we consider a centered Gaussian family $\{W(\vp) ; \, \vp\in \mathcal{D}([0,\infty)\times \R^{d})\}$ such that
\begin{align}
E[W(\varphi)W(\psi)]=\int_{[0,\infty)} \int_{\R^{2d}}
\varphi(s,y)\psi(s,z) f(y-z) dy dz ds.
\end{align}

Let $\mathcal{H}$  be the completion of $\mathcal{D}([0,\infty)\times\R^d)$
endowed with the inner product
\begin{eqnarray}
\langle \varphi , \psi \rangle_{\mathcal{H}}&:=&
\int_{[0,\infty)} \int_{\R^{2d}}
\varphi(s,y)\psi(s,z) f(y-z) \, dy dz ds.
\end{eqnarray}
The mapping $\varphi \rightarrow W(\varphi)$ defined in $\mathcal{D}([0,\infty)\times\R^d)$ extends to a linear isometry between
$\mathcal{H}$ and $L^2(\Omega)$. We will denote this isometry by
\begin{equation}
\label{varphidef}
W(\varphi)=\int_0^{\infty}\int_{\R^d}\varphi(t,x)W(dt,dx)
\end{equation}
for $\phi \in \mathcal{H}$. Notice that if $\phi$ and $\psi$ are in
$\mathcal{H}$, then
$\be \lc W(\varphi)W(\psi)\rc =\langle\varphi,\psi\rangle_{\mathcal{H}}$. The theory of Walsh \cite{walsh1986}, extended by Dalang \cite{Dalang1999} shows that this extends to random integrands as well.

The main result in \cite{Dalang1999} establishes existence and uniqueness of a solution for both the heat and wave equations under the main assumption, known as \emph{Dalang's condition}.
\begin{assumption}[Dalang's condition]
\label{assumption:finite var}
The space covariance function and the spectral measure satisfy
$$\displaystyle{\int_{\mathbb{R}^d}\frac{\mu(d\xi)}{1+\norm{\xi}^2}<\infty}.$$
\end{assumption}
Moreover, it is known that for all $p \geq 2$, $$E[|u(t,x)|^p] < \infty, \text{ for all } t >0, x \in \R^d.$$


Recently, with the motivation to introduce some time correlation in the noise, several authors have considered the stochastic heat and wave equations driven by fractional noise (we call it \emph{standard fractional noise} in this dissertation). 

Informally, this noise is Gaussian and given by
\begin{equation} \label{eqn:frac_noise}
E[\dot{W}(t,x)\dot{W}(s,y)] = 
|t-s|^{2H-2} f(x-y),
\end{equation}
for $s,t > 0$ and $x,y \in \bR^d$, $f$ is the space covariance function as in \eqref{eqn:WN} and where $H \in (0,1)$ is known as the \emph{Hurst index}.

This noise is an extension of fractional Brownian motion to include a space variable. Fractional Brownian motion was introduced by Mandelbrot and Van Ness~\cite{mandelbrot1968}, as a Gaussian process with time dependence through the parameter $H$, called Hurst index, which estimates the intensity of long-range dependence.

\begin{definition}[Fractional Brownian motion]
Let $H\in (0,1)$. Fractional Brownian motion with Hurst index $H$ is a real-valued centered Gaussian process $(Z_H(t))_{t\in [0,\infty)}$, with $Z_H(0)=0$ almost surely and \\
\begin{align}
    E[Z_H(t)Z_H(s)]=\frac{1}{2}(t^{2H}+s^{2H}-|t-s|^{2H}),
\end{align}
for all $t,s\geq 0$.
\end{definition}

\begin{remark}
If we take $H=1/2$, then 
$Z_H(t)$ becomes standard Brownian motion and the fractional noise $\dot{W}$ becomes the time white noise $\dot{W}_0$.
\end{remark}

If $H >\frac{1}{2}$, the increments of the process are positively correlated and the closer $H$ is to 1, the stronger long-memory the process exhibits. If $H < \frac{1}{2}$ , the increments of fractional Brownian motion are negatively correlated. 

Fractional Brownian motion is widely used in financial applications. For example, in the modifications of the Black-Scholes SDE by Hu and \O ksendal~\cite{huoksendal2003}, namely
\begin{align}
    dS_t=S_t (\mu dt+\sigma dZ_H(t)),
\end{align}
the solution is given by
\begin{align}
S_t=S_0 \exp\{\sigma Z_H(t)+\mu t-\frac{1}{2}\sigma^2 t^{2H}\}.   
\end{align}
One of the advantage of introducing fractional Brownian motion is that there is actually a dependency of the stock prices on the past. An empirical study of daily returns from 1962 to 1987 shows that the Hurst index of S\&P 500 index is approximately 0.61 with a 95\% confidence interval (0.57,0.69), see \cite{Mackenzie2016}. 

Note that fractional Brownian motion is not a martingale, nor a semi-martingale. Hence all the theory of stochastic integration with respect to fractional Brownian motion has to be done in the Skorohod sense, using Malliavin Calculus \cite{malliavincalc}. It\^o Calculus can not directly be used because of the lack of martingale property.

In the context of stochastic partial differential equations (SPDEs), the fractional noise with covariance function $f$ is informally defined by
\begin{align}
    E[W(t,x) W(s,y)]=\frac{1}{2}\{t^{2H}+s^{2H}-|t-s|^{2H}\}f(x-y),
\end{align}
where $f$ is the space covariance function as in \eqref{eqn:WN}. Taking the derivative with respect to $t$ and $s$ yields
\begin{align}
    E[\dot W(t,x)\dot W(s,y)]= \alpha_H |t-s|^{2H-2} f(x-y),
\end{align}
where 
\begin{equation}
\label{fractionalconstant}
\alpha_H=H(2H-1).
\end{equation}
This corresponds to \eqref{eqn:frac_noise} up to a constant. 

More rigorously, the noise $W$ is built similarly as $W_0$ above, but where the inner product $\langle \cdot,\cdot\rangle_{\mathcal{H}}$ is replaced by
\begin{equation}\label{eqn:frac_innerproduct}
 \langle \varphi , \psi \rangle_{\mathcal{H}}:=
\int_{[0,\infty)^2} \int_{\R^{2d}}
\varphi(s,y)\psi(r,z) |s-r|^{2H-2} f(y-z) \, dy dz ds dr.   
\end{equation}
Existence and uniqueness for the heat equation driven by standard fractional noise is studied in Hu et al.~\cite{HuHuangNualartTindel} and the wave equation driven by standard fractional noise $W$ is studied in Balan and Tudor~\cite{BalanTudor}. In both cases, the solution is understood in (mild) integral form using Malliavin Calculus tools. More specifically it is understood in the sense of Skorohod (see e.g. Sanz-Sole~\cite{malliavincalc}). Indeed, in this case, the noise is not a martingale in time (unlike white noise) and the theory of Walsh \cite{walsh1986} (which is inspired by It\^o Calculus) does not apply. 

In this dissertation, we are mainly interested in the \emph{intermittency} property for the solution to the stochastic PDEs introduced above. This notion is related to Anderson localization and is introduced in the next section.

\section{Intermittency}
Intermittency properties are often characterized by the Lyapunov exponent in the literature, see \cite[Chapter 1]{carmona1994}, \cite[Theorem 2.2]{Foondun2008}. In the physics literature, a space–time random field is called \emph{physically intermittent} if it develops very high peaks concentrated on small spatial islands, as time becomes large \cite{BertiniCancrini}. To give a precise definition of intermittency for a random-field $u = \{u(t, x); t \geq 0, x \in \mathbb{R}^d \}$, we first introduce the upper Lyapunov exponent.

\begin{definition}[Lyapunov exponent]
For $p \in (0,\infty)$, the upper $p$-th moment Lyapunov exponent is defined as 
\begin{align}
    \gamma(p):=\limsup_{t\rightarrow \infty} \frac{1}{t} \ln \sup_{x\in \mathbb{R}^d}E(|u(t,x)|^p). 
\end{align}
\end{definition}

The mathematical definition of intermittency can now be defined in terms of the Lyapunov exponent.

\begin{definition}[Full intermittency] \label{def:fullint}
Let $p_0$ be the smallest integer $p$ for which  $\gamma(p)>0$, and $p_0=\infty$ if no such $p$ exists. When
$p_0<\infty$, we say that the random field $u(t,x)$ shows an (asymptotic) intermittency of order $p_0$. A random field $u$ is \emph{fully intermittent} if the map $p \mapsto \gamma(p)/p$ is strictly increasing for all $p \geq 2$. 
\end{definition}

Notice that, as a direct consequence of Jensen's inequality, the map $p \mapsto \gamma(p)/p$ is always increasing, but not necessarily strictly. When the map $p \mapsto \gamma(p)/p$ is strictly increasing, it means that for $p > q$, the moment of order $p$ of $u(t,x)$ is asymptotically of a larger order than its moment of order $q$. It is possible to show that in such a case, the random field must develop large, but rare values, see Carmona and Molchanov~\cite{carmona1994}. This establishes the connection between Definition \ref{def:fullint} with physical intermittency.

A weaker notion, known as weak intermittency is easier to establish, in particular for random fields defined by implicit integral equations as solutions to stochastic PDEs. In good situations, it implies full intermittency.

\begin{definition}[Weak intermittency]
A random field $u$ is called \emph{weakly intermittent} if 
\begin{align}
    \gamma(2)>0 \hspace{4mm}\text{ and } \hspace{4mm}\gamma(p)<\infty \hspace{4mm}\text{ for all }  p\geq 2.
\end{align}
\end{definition}
\begin{remark}
    If $\gamma(1)=0$ and $u(t,x)\geq 0$ almost surely, then weak intermittency implies full intermittency. This property is a consequence of the convexity of $\gamma(p)$, see Carmona and Molchanov~\cite[Theorem III.1.2]{carmona1994} for details.
\end{remark}

We will now provide three basic examples of random fields: one fully intermittent, one weakly intermittent and one that is not intermittent, so that the reader can develop a better intuition about it.

\begin{example}
Consider a standard Brownian motion $(B_t)_{t\geq0}$. We have
\begin{align}
     \no  \limsup\limits_{t\rightarrow \infty}\frac{1}{t}\ln E(B_t^p) = \limsup\limits_{t\rightarrow \infty} \frac{1}{t}\ln(t^{p/2}) = 0.
\end{align}
The moments do not exhibit exponential growth and the intermittency phenomenon does not happen.
\end{example}

\begin{example}
Consider a process $(X_t)_{t \geq 0}$ solution to the following stochastic differential equation:
\begin{equation*}
   dX_t=rX_tdt+\sigma X_tdW_t
\end{equation*}
with $X_0 =1$. The solution is given by 
\begin{equation*}
X_t=\exp \left((r-\frac{\sigma^2}{2})t+\sigma W_t\right).
\end{equation*}
Now we calculate the order of the $p$-th moment:
\begin{align}
\no    E(X_t^p)=\exp\left((r-\frac{\sigma^2}{2})pt+\frac{\sigma^2p^2}{2}t\right).
\end{align}
When $t\rightarrow \infty$, we obtain
\begin{align}
 \no  \gamma(p)=\limsup\limits_{t\rightarrow \infty}\frac{1}{t}\ln E(X_t^p) \approx p^2
\end{align}
Hence $(X_t)$ has the full intermittency property since $\gamma(p)/p$ is strictly increasing.
\end{example}

\begin{example}
In discrete time, assume we have the process $(S_n)_{n \geq 0}$ defined as follows. Consider an i.i.d sequence of random variables $(X_j)_{j \geq 0}$ which take values $0$ or $2$ with probability $1/2$. Let $S_n = \prod_{j=1}^{n} X_j$. Then the distribution of $S_n$ is given by
\begin{equation}
  S_n = \begin{cases}
\no    &0   \hspace{10mm} \text{with probability } 1-1/2^n,\\
    &2^n   \hspace{8mm} \text{with probability } 1/2^n.
    \end{cases}
\end{equation}

It is easy to see that $E(S_n^p)=2^{n(p-1)}$. Then the Lyapunov exponent becomes 
\begin{align}
     \no  \limsup\limits_{n\rightarrow \infty}\frac{1}{n}\ln E(S_n^p) = \limsup\limits_{n\rightarrow \infty} \frac{1}{n}\ln(2^{n(p-1)}) \approx p
\end{align}
In this case, we have weak intermittency since $\gamma(p)$ is non-trivial, but $\gamma(p)/p$ is constant.
\end{example}

For the stochastic heat equation \eqref{eqn:DO process} with the noise $W_0$ given in \eqref{eqn:WN}, Foondun and Khoshnevisan established intermittency and determined the Lyapunov exponents in \cite{Foondun2008}. The stochastic wave equation case is studied by Dalang and Mueller in \cite{Mueller2009}. In particular, in the case of space-time white noise (i.e. $f = \delta_0$), we have
$$E[|u(t,x)|^p] \sim \exp(p^{\gamma} t),$$
where $\gamma = 3$ for the stochastic heat equation, and $\gamma = 3/2$ for the stochastic wave equation. An exact calculation of the Lyapunov exponent for the solution to the stochastic heat equation has been provided recently by Xia Chen~\cite{xiachenprecise}, namely
$$E[|u(t,x)|^p] = \exp\left(\frac{p(p^2-1)}{24} t\right).$$

When the equations are driven by fractional noise \eqref{eqn:frac_noise}, intermittency and the Lyapunov exponent are established in Hu et al.~\cite{HuHuangNualartTindel} for the stochastic heat equation and in Balan-Conus~\cite{Balan2016} for the stochastic wave equation (See also Song et al.~\cite{songbound}). In the latter case, a lower bound is only available for $p=2$.  For a noise that is fractional in time and white in space (i.e. $f=\delta_0$), we have
\begin{equation}
\label{standardfracnoiseexponent}
E[|u(t,x)|^p] \sim \exp(p^{\gamma} t^{\rho}),
\end{equation}
where $\gamma$ is the same as above, but $\rho = 4H-1$ for the stochastic heat equation and $\rho = H+\frac{1}{2}$ for the stochastic wave equation.

\section{An alternative fractional noise: the DO noise}
\label{sec:DOnoise}

When considering a noise that is correlated in time, the standard fractional noise introduced above in \eqref{eqn:frac_noise} presents a difficulty: the solution can only be defined using Malliavin Calculus. This highly technical theory makes some of the questions of interest, such as intermittency, more difficult to obtain. Moreover, more advanced results regarding the position or size of the peaks, as introduced in \cite{conuskhoshnevisan2013}, cannot be directly extended to this case.

In this work, we will consider an alternative, different form of fractional noise, the \emph{Dobri\'c-Ojeda noise}. The Dobri\'c-Ojeda noise is an extension of the Dobri\'c-Ojeda process to include a space variable. The Dobri\'c-Ojeda process (DO process for short) is introduced in \cite{dooriginal}, to be a process $(V_s)_{s \geq 0}$ that satisfies
\begin{align}\label{eqn:defDOprocess}
    dV_s = t^{H-1/2} dB_s + (2H-1) \frac{1}{s} V_s \, ds,
\end{align}
where $(B_s)$ is a standard Brownian motion.
The DO process is shown to provide a reasonable approximation to fractional Brownian motion for most values of $H$, while being a semi-martingale, and being represented as an Ito diffusion.

The Dobri\'c-Ojeda process is used in finance applications, see for instance Wildman~\cite{Mackenzie2016}, Wildman and Conus~\cite{wildmanConus}, Carr and Itkin~\cite{carritkin}. In these financial applications, we find that models involving the DO process actually give good approximations of option prices (\cite{Mackenzie2016}) or fixed-income derivatives (\cite{carritkin}), compared to using fractional Brownian motion when the parameter $H$ is similar. 

The results obtained with the DO process motivate us to generalize the fractional noise for applications in stochastic PDEs, as was introduced in \cite{wildmanConus}. In the context of SPDEs, we assume that the noise has no drift, since a drift term is usually not the relevant factor for intermittency (compare \eqref{eqn:defDOprocess} with Defintion \ref{DO noise def} below).

In the context of Stochastic PDEs, the DO noise will allow us to use Walsh integrals to define a solution and avoid Malliavin Calculus, which the standard fractional noise does not allow.

\begin{definition}[Dobri\'{c}-Ojeda noise]
\label{DO noise def}
Let $H\in (0,1)$. The Dobri\'c-Ojeda noise is a centered Gaussian noise with covariance function given by 
\begin{align*}
        E[\dot{W}(s,x)\dot{W}(t,y)] = s^{H-1/2}t^{H-1/2}\delta_0(t-s)f(x-y),
\end{align*} 
where $t,s > 0$, $x,y \in \R^d$. More rigorously, the Dobri\'c-Ojeda noise is defined similarly as $W_0$, but where the inner product $\langle \cdot,\cdot\rangle_{\mathcal{H}}$ is replaced by
\begin{equation} \label{eqn:DO_innerproduct}
 \langle \varphi , \psi \rangle_{\mathcal{H}}:=
\int_{[0,\infty)} \int_{\R^{2d}}
\varphi(s,y)\psi(s,z) s^{2H-1} f(y-z) \, dy dz ds.   
\end{equation}
\end{definition}

In order for the definition above to make sense rigorously, we need to be able to integrate with respect to the Dobri\'c-Ojeda noise. This is done as follows. Let $F:\bR^d \times [0,\infty) \rightarrow \R$ be a function such that 
\begin{align}
\label{eqn: variance}
    \int_0^{\infty} \int_{\bR^d} F^2(y,s) s^{2H-1} dy ds < \infty.
\end{align}
We define
\begin{align}
&\int_0^{\infty} \int_{\R^d} F(y,s) W(dyds) := \int_0^{\infty} \int_{\bR^d} F(y,s) s^{H-1/2} W_0(dyds),
\end{align}
where $W_0$ stands for the time white noise with space covariance function $f$ defined in \eqref{eqn:WN}. By \eqref{eqn: variance}, the integral on the right of \eqref{Do definition eqn} is well-defined by Walsh~\cite{walsh1986} and Dalang~\cite{Dalang1999}.

In other words, this corresponds to considering the DO noise to be given by
\begin{align}
\label{Do definition eqn}
    W(dsdy)=s^{H-1/2}W_0(dsdy).
\end{align}

\begin{remark} \label{covariancedoesnotmatter}
One remarkable aspect of the DO noise is that it shares the same order of variance as the standard fractional noise, despite their different covariance structure.

We will calculate the variance for both fractional noise and DO noise here to show their similarity.

Let $t > 0$ and $\mathbb{A} \subset \bR^d$ a Borel set. We will consider $W(\varphi)$ for both the standard fractional noise and the DO noise, where $\varphi(s,y)=\mathbbm{1}_{[0,t]}(s)\mathbbm{1}_{\mathbb{A}}(y)$. It is not hard to show that $\varphi \in \mathcal{H}$ in both cases.

By \eqref{eqn:DO_innerproduct}, we have
\begin{align}
\label{varianceforDO} E[W(\varphi)^2]&=\int_0^t\int_0^t\int_{\mathbb{A}}\int_{\mathbb{A}}s^{H-1/2}r^{H-1/2}\delta_0(s-r)f(y-z)dydzdsdr\\
\no &=\int_0^t ds s^{2H-1}\int_{\mathbb{A}}\int_{\mathbb{A}}dydz f(y-z)\\
\no &=C(\mathbb{A}) \frac{t^{2H}}{2H},
\end{align}
where $C(\mathbb{A})$ is a constant independent of $t$.

For the standard fractional noise, we have by \eqref{eqn:frac_innerproduct},
\begin{align*}
E[W(\varphi)^2]& = \int_0^t\int_0^t\int_{\mathbb{A}}\int_{\mathbb{A}}|s-r|^{2H-2}f(y-z)dydzdsdr\\
& =2\int_0^t ds \int_s^t (r-s)^{2H-2}dr \int_{\mathbb{A}}\int_{\mathbb{A}}f(y-z)dydz \\
 &=C(\mathbb{A}) \frac{t^{2H}}{H(2H-1)}.
\end{align*}
We can see that they both have the same order $t^{2H}$ in time for the variance.
\end{remark}


\section{Main results and outline}

The main objective of this dissertation is to study the intermittency property of the solution to the stochastic heat and wave equations driven by the DO noise introduced in Section \ref{sec:DOnoise}.

We will establish existence and uniqueness of the solution and obtain both upper and lower bounds on the moment Lyapunov exponents for the two equations. Existence and uniqueness in the DO noise case will be based on the general results of Dalang \cite{Dalang1999}. The upper bound on the Lyapunov exponent will be established by a careful choice of norm to estimate the asymptotic order of the moments, as was proposed by Foondun and Khoshnevisan \cite{Foondun2008}. In the DO noise case, the bounds will require the use of special functions of fractional types (i.e. incomplete gamma function or Miller-Ross function), similarly as in \cite{Balan2016}. 

In \cite{dalang_mueller_tribe}, Dalang, Mueller, and Tribe introduced a Feynman-Kac type representation of the moments of the solution to certain SPDEs using Poisson processes. This representation was used to study intermittency for the stochastic wave equation drive by time white noise in \cite{Mueller2009} and by fractional noise in \cite{Balan2016}. We will take advantage of a similar representation to establish the lower bound on the Lyapunov exponent in the DO noise case.

The most remarkable fact is that these exponents exactly match the ones established for standard fractional noise in \cite{HuHuangNualartTindel} for the heat equation and in \cite{Balan2016} for the wave equation. Hence, it appears that despite their difference in correlation structure, both the solutions driven by standard fractional noise or the DO noise share the same Lyapunov exponent and, hence, the same level of intermittency. 

From there, the general conjecture that we aim to investigate is that the Lyapunov exponent of the solution to the heat or wave equation is only determined by the asymptotic variance of the noise but not the specific covariance structure (check Remark~\ref{covariancedoesnotmatter}).

As a first step towards understanding this conjecture, we will provide a framework, that we call \emph{generalized noise}, that will contain both the standard fractional noise and the DO noise as special cases. This will provide a better understanding on the reasons why both noises share similar Lyapunov exponents while starting with different covariance structures.

In the generalized fractional noise setting, we prove that the Lyapunov exponent only depends on the order of the variance of the noise, and not the specific covariance structure. Hence, the heat equation driven by an interpolation of DO noise and fractional Brownian motion will maintain the same intermittency level throughout, see Remark~\ref{covariancedoesnotmatter}. We conjecture that this will also hold for the wave equation, but this is subject of ongoing research.
 
The dissertation is organized as follows. The second chapter will focus on the existence and uniqueness of the solution to \eqref{eqn:DO process} and \eqref{eqnwave:DO process} driven by the DO noise defined in Section \ref{sec:DOnoise}. In the third chapter, we provide the derivation of the formula for the $n$-th moment of the solution $u(t,x)$. The fourth chapter will explore the Lyapunov exponent for the solution (both the upper and the lower bound). From Chapter 5 to Chapter 8, we will follow a similar program when considering the stochastic heat equation \eqref{eqn:DO process} driven by a generalized fractional noise interpolating between the DO noise and the standard fractional noise. Finally some technical lemmas are gathered in the Appendix.

\chapter{Existence and Uniqueness of solution for Dobri\'c-Ojeda noise}
In this chapter, we will establish the existence and uniqueness of a solution to \eqref{eqn:DO process} and \eqref{eqnwave:DO process} driven by the DO noise. We first start by a few technical settings.
\section{Basic settings}

In this dissertation, we will consider some specific forms of the space covariance function $f$, mostly focusing on the Riesz kernel.
 
\begin{definition}[Riesz Kernel]
We call the Riesz kernel with exponent $\alpha$ the function given by
\begin{align}
\label{riesz kernel}
f(x)=|x|^{-\alpha} \text{ for some } 0 < \alpha < d.
\end{align}
Its Fourier transform is given by
\begin{align}
    \hat{f}(\xi)= C_{\alpha,d}|\xi|^{(d-\alpha)}, 
\end{align}
where $C_{\alpha, d}$ is a constant independent of $\xi$.
 \end{definition}
 
 Namely, we will be interested in the three specific cases below for the covariance function 

\begin{equation}
\label{threecasesforf}
    f(x)=|x|^{-a}
\end{equation}

\begin{itemize}
    \item Case (1): 
    the function $f$ satisfies
    $f(x) < A$ for all $x \in \bR^d$ (see Assumption \ref{assumption:bounded}). In this case, we can show that the noise is continuous and differentiable, and we refer to it as \emph{spatially smooth noise}. Note that this condition implies that $\mu$ is a finite measure ($\mu(\bR^d) < \infty$).
    \item Case (2): the function $f$ is the Riesz kernel provided in Definition \ref{riesz kernel}. 
    \item Case (3): 
    space-time white noise in dimension 1, i.e. when $f(x) = \delta_0(x)$. Notice here that since $d=1$, we can let $\alpha \rightarrow 1$ in the Riesz kernel case. Then the solution converges to the spatial white noise case. Informally, the main idea is to replace $\delta_0(x)$ with $|x|^{-1}$ since they have the same Fourier transform. This argument can be made rigorous, as in \cite{Balan2016}. 
\end{itemize}
For each case above, we will consider equations \eqref{eqn:DO process} and \eqref{eqnwave:DO process} driven by the noise $W$ defined in \eqref{Do definition eqn}. 
We need to introduce a paramater $a$ depending on the cases above. We define
\begin{equation} \label{eqn:a_param}
a := \left\{ \begin{array}{ll} 
0 & \text{in case (1),} \\
\alpha & \text{in case (2),} \\
1 & \text{in case (3).}
\end{array} \right.
\end{equation}

In each of cases (1), (2), and (3), it is easy to check that the covariance function $f$ satisfies Dalang's condition (Assumption \ref{assumption:finite var}) provided that
\begin{equation}\label{eqn:alphaless2}
    a < \min\{2,d\}.
\end{equation}

\section{Existence and Uniqueness of the solution}

We are now ready to state the main result on existence and uniqueness.

\begin{theorem}
\label{thm:existence}

Let $W$ be the noise introduced in Definition~\ref{DO noise def}. Let $H\in (a/4,1)$, where $a$ is defined in \eqref{eqn:a_param}. The stochastic heat equation \eqref{eqn:DO process} has an almost-sure unique solution $u$ that satisfies\\
\begin{align}
    \label{eqn:solution}
\no    u(t,x)&=\int_{\bR^d} u_0(y)\cdot p_{h}(t,x-y)dy+\int_0^t\int_{\bR^d}  u(s,y) \cdot p_{h}(t-s,x-y)W(dyds)\\
    &=w(t,x)+\int_0^t\int_{\bR^d}  u(s,y) \cdot p_{h}(t-s,x-y)s^{H-1/2}W_0(dyds),
\end{align}
where $p_{h}$ is the heat kernel introduced in \eqref{heatkernel} and $w(t,x)$ is the solution to the deterministic heat equation given in \eqref{heatsolution}. Also, $u$ satisfies 
\begin{align}
    \sup_{x\in \mathbb{R}}\sup_{0\leq t\leq T} \mathbb{E}(|u(t,x)|^2)<\infty  \hspace{5mm}  \text{for all } T>0.
\end{align}
\end{theorem}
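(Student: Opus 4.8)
The plan is to reduce the DO-noise equation to a standard Walsh--Dalang equation driven by time-white noise and then run a Picard iteration. Using \eqref{Do definition eqn}, namely $W(ds\,dy)=s^{H-1/2}W_0(ds\,dy)$, the mild formulation \eqref{eqn:solution} is exactly the stochastic heat equation driven by $W_0$ whose integrand carries the deterministic weight $s^{H-1/2}$. I would set $u_0(t,x)=w(t,x)$ and iterate
\[
u_{n+1}(t,x)=w(t,x)+\int_0^t\int_{\bR^d}u_n(s,y)\,p_{h}(t-s,x-y)\,s^{H-1/2}\,W_0(dy\,ds),
\]
each step being well defined by the Walsh--Dalang theory as soon as the integrand lies in the appropriate $L^2$ space. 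Since $u_0$ is bounded, $w$ is bounded, so the inhomogeneous term contributes only a finite constant throughout.

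The core of the argument is a second-moment recursion. Writing $M_n(t):=\sup_{x\in\bR^d}\EE[|u_n(t,x)|^2]$, applying the Walsh isometry, bounding the correlation $\EE[u_n(s,y)u_n(s,z)]\le M_n(s)$ by Cauchy--Schwarz (legitimate since $f\ge0$ and $p_{h}\ge0$), and using the Fourier identity $\int_{\bR^{2d}}p_{h}(\tau,x-y)p_{h}(\tau,x-z)f(y-z)\,dy\,dz=\int_{\bR^d}e^{-\tau|\xi|^2}\mu(d\xi)=:\Phi(\tau)$, one obtains
\[
M_{n+1}(t)\le C+(\mathcal{K}M_n)(t),\qquad (\mathcal{K}g)(t):=C'\int_0^t s^{2H-1}\,\Phi(t-s)\,g(s)\,ds.
\]
The key analytic input is the behaviour of $\Phi$: for the Riesz kernel a scaling change of variables gives $\Phi(\tau)\le C\tau^{-a/2}$, while in Case (1) $\mu$ is finite so $\Phi$ is bounded (the case $a=0$), and Case (3) is recovered by letting $\alpha\to1$ as in \cite{Balan2016}. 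Feeding $\Phi(\tau)\le C\tau^{-a/2}$ into $\mathcal{K}$ and evaluating the resulting Beta integral shows that $(\mathcal{K}^n\mathbf{1})(t)$ behaves like $t^{\gamma_n}$ with $\gamma_n:=n(2H-a/2)$, up to explicit Gamma-function constants.

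Here is where the hypothesis enters, and it is the crux of the proof: the exponent $2H-a/2$ is positive precisely when $H>a/4$. When this holds, the powers of $t$ in $(\mathcal{K}^n\mathbf{1})(t)$ increase while the Beta constants decay super-geometrically, since the ratio of successive constants is $\sim\Gamma(2H+\gamma_{n-1})/\Gamma(2H+\gamma_{n-1}+1-a/2)\to0$; consequently $\sum_n(\mathcal{K}^n\mathbf{1})(t)$ is an entire, Mittag-Leffler--type function of $t$. This yields $\sup_n M_n(t)<\infty$ locally in $t$, hence the uniform bound $\sup_{0\le t\le T}\sup_x\EE[|u(t,x)|^2]<\infty$. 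By contrast, at $H=a/4$ the powers no longer grow and the series is merely geometric, which is why the threshold is sharp; note also that $H>a/4$ together with $a<\min\{2,d\}$ is consistent with Dalang's condition and reduces to it at $H=1/2$.

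To finish, I would apply the same recursion to the increments $D_n(t):=\sup_x\EE[|u_{n+1}(t,x)-u_n(t,x)|^2]$, obtaining $D_{n+1}(t)\le(\mathcal{K}D_n)(t)$; iterating and using the same summability shows $\sum_n\sup_{t\le T}D_n(t)^{1/2}<\infty$, so $(u_n)$ is Cauchy and converges in $L^2$, uniformly in $x$, on every $[0,T]$. The limit $u$ then satisfies \eqref{eqn:solution} and inherits the second-moment bound. Uniqueness follows identically: two solutions $u,v$ give $g(t):=\sup_x\EE[|u(t,x)-v(t,x)|^2]$ with $g(t)\le(\mathcal{K}g)(t)$, and a fractional Gronwall argument built on the kernel $s^{2H-1}\Phi(t-s)$ forces $g\equiv0$. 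The main obstacle throughout is the sharp control of the iterated singular time-kernel $\mathcal{K}^n$, which is exactly what the condition $H>a/4$ delivers.
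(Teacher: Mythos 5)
Your proposal is correct, but it takes a genuinely different route from the paper. The paper does not run the Picard iteration itself: it sets $\Gamma(t,s,x-y)=p_h(t-s,x-y)\,s^{H-1/2}\mathbbm{1}_{[0,t]}(s)$ and verifies the three requirements of Hypothesis~B in Dalang~\cite{Dalang1999} (finite total mass, square-integrability of $s\mapsto\mathcal{F}\Gamma(t,s)(\xi)$ against $ds\,\mu(d\xi)$, and the continuity condition), after which existence, uniqueness and the second-moment bound follow from Dalang's Theorem~13; the entire analytic content is the single integral $\int_0^t s^{2H-1}(t-s)^{-a/2}ds$ computed in Lemma~\ref{lemma:FTL2}. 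You instead unpack that black box: you track the iterated kernel $\mathcal{K}^n$ explicitly and show that $(\mathcal{K}^n\mathbf{1})(t)$ behaves like $(Ct^{2H-a/2})^n/(n!)^{1-a/2}$, giving a Mittag--Leffler-type series. This is longer but buys two things. First, your bound $\sum_n (Ct^{2H-a/2})^n/(n!)^{1-a/2}\sim\exp(Ct^{(4H-a)/(2-a)})$ already anticipates the upper Lyapunov exponent established later in Chapter~4. Second, your account of where $H>a/4$ enters is arguably more faithful than the paper's: the quantity \eqref{eqn:H value} is finite for any fixed $t>0$ as soon as $H>0$ and $a<2$ (the factor $t^{2H-a/2}$ is harmless whatever its sign), whereas the condition $2H-a/2>0$ is genuinely needed to keep the exponents $\gamma_{n-1}+2H$ positive at \emph{every} stage of the iteration, which is exactly the mechanism you isolate. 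Two cosmetic remarks: in Case~(3) you do not need the limit $\alpha\to1$ for existence, since with $\mu$ equal to Lebesgue measure in $d=1$ one has $\Phi(\tau)=C\tau^{-1/2}$ directly; and the recursion should carry the harmless factor $2$ from $(a+b)^2\le 2a^2+2b^2$, absorbed into the constants.
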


A similar result holds for the stochastic wave equation.

\begin{theorem}
\label{thm:existencewave}

Let $W$ be the noise introduced in Definition~\ref{DO noise def}. Let $H\in (\frac{a-2}{2},1)$, where $a$ is defined in \eqref{eqn:a_param}. The stochastic wave equation \eqref{eqnwave:DO process} has an almost-sure unique solution $u$ that satisfies
\begin{align}
    \label{eqn:solutionwave}
\no    u(t,x)&=\int_{\bR^d} v_0(y)\cdot p_{w}(t,x-y)dy+\frac{\partial}{\partial t} \int_{\bR^d} u_0(y) p_w(t,x-y) dy\\
\no&+\int_0^t\int_{\bR^d}  u(s,y) \cdot p_{w}(t-s,x-y)W(dyds)\\
    &=w(t,x)+\int_0^t\int_{\bR^d} u(s,y) \cdot p_{w}(t-s,x-y)s^{H-1/2}W_0(dyds),
\end{align}
where $p_{w}$ is the fundamental solution to the wave equation introduced in \eqref{wave kernel} and $w(t,x)$ is the solution to the deterministic wave equation given in \eqref{wavesolution}. Also, $u$ satisfies 
\begin{align}
    \sup_{x\in \mathbb{R}}\sup_{0\leq t\leq T} \mathbb{E}(|u(t,x)|^2)<\infty
    \hspace{5mm}  \text{for all } T>0.
\end{align}
\end{theorem}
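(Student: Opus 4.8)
The plan is to establish existence and uniqueness for the stochastic wave equation \eqref{eqnwave:DO process} by reducing it, via the representation \eqref{Do definition eqn}, to a Walsh-type stochastic integral equation driven by the time white noise $W_0$, and then invoking the general theory of Dalang \cite{Dalang1999}. The key observation is that the DO noise $W(dsdy) = s^{H-1/2} W_0(dsdy)$ simply absorbs a deterministic weight $s^{H-1/2}$ into the integrand, so the mild formulation \eqref{eqn:solutionwave} becomes a standard Walsh integral equation with kernel $p_w(t-s,x-y)s^{H-1/2}$. The strategy is a Picard iteration: define $u_0(t,x) := w(t,x)$ and set
\begin{align*}
u_{n+1}(t,x) = w(t,x) + \int_0^t\int_{\bR^d} u_n(s,y)\, p_w(t-s,x-y)\, s^{H-1/2}\, W_0(dyds),
\end{align*}
then show this sequence is Cauchy in $L^2(\Omega)$ uniformly in $(t,x)$ on $\ott$, so that it converges to a solution, and that any two solutions coincide.

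First I would verify that the stochastic integral in \eqref{eqn:solutionwave} is well-defined in the Walsh sense. By the isometry, this reduces to checking the finiteness of
\begin{align*}
\int_0^t\int_{\R^{2d}} \EE[u(s,y)u(s,z)]\, p_w(t-s,x-y)\, p_w(t-s,x-z)\, s^{2H-1}\, f(y-z)\, dy\, dz\, ds,
\end{align*}
which, after bounding the second moment of $u$ and passing to Fourier variables in the spatial integral, is controlled by $\int_0^t s^{2H-1} \int_{\R^d} |\hat p_w(t-s,\xi)|^2\, \mu(d\xi)\, ds$, where $\hat p_w(t,\xi) = \sin(t|\xi|)/|\xi|$ from \eqref{e:G-Fourier}. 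Using the bound $|\hat p_w(s,\xi)|^2 \le C s^2/(1+s^2|\xi|^2)$ together with Dalang's condition and the Riesz-kernel scaling $\mu(d\xi) \sim |\xi|^{a-d}d\xi$, one checks this integral is finite precisely when the weight $s^{2H-1}$ is integrable near the origin and the spatial integral converges; this is where the lower bound $H > (a-2)/2$ on the Hurst index enters, guaranteeing that the singularity of $s^{2H-1}$ at $s=0$ is mild enough to be absorbed by the regularizing effect of the wave kernel.

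Next I would run the Picard scheme. Setting $F_n(t) := \sup_{x} \EE[|u_{n+1}(t,x)-u_n(t,x)|^2]$, the isometry and the above estimate yield a convolution inequality of the form $F_n(t) \le C \int_0^t F_{n-1}(s)\, G(t-s)\, s^{2H-1}\, ds$, where $G$ captures the spatial Fourier integral of the wave kernel. The standard argument is then to iterate this inequality and show, via a Gronwall-type or Picard-summability estimate adapted to the weight $s^{2H-1}$, that $\sum_n \sup_{x}\sup_{t\le T}\EE[|u_{n+1}-u_n|^2]^{1/2} < \infty$; the fractional weight produces special functions (incomplete Gamma or Mittag-Leffler type), as anticipated in the main-results section, but these remain finite on any $\ott$. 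Uniqueness follows by applying the same convolution inequality to the difference of two solutions and concluding it must vanish.

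The main obstacle is the singular temporal weight $s^{2H-1}$ combined with the low spatial regularity of the wave kernel (which, unlike the heat kernel, does not decay in the spatial Fourier variable — $|\hat p_w(t,\xi)| \to $ behaves like $t$ for small $|\xi|$ and only like $1/|\xi|$ for large $|\xi|$). One must show these two sources of singularity interact favorably: the integrability near $s=0$ requires $2H-1 > -1$ augmented by the wave kernel's $s^2$ behavior, which is exactly what the condition $H > (a-2)/2$ encodes. Verifying the sharp interplay between the spectral measure $\mu$, Dalang's condition \eqref{eqn:alphaless2}, and the temporal weight — so that the convolution kernel in the Picard iteration is integrable and the iteration converges — is the technically delicate heart of the proof; the rest is routine adaptation of the Walsh--Dalang framework.
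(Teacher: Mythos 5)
Your proposal is correct and follows essentially the same route as the paper: both absorb the weight $s^{H-1/2}$ into the kernel $\Gamma(t,s,x-y)=p_w(t-s,x-y)s^{H-1/2}\mathbbm{1}_{[0,t]}(s)$ and reduce everything to the finiteness of the spectral integral $\int_0^t s^{2H-1}\int_{\R^d}|\hat p_w(t-s,\xi)|^2\,\mu(d\xi)\,ds$, which is exactly where the condition $H>\frac{a-2}{2}$ appears. The only cosmetic difference is that you re-run the Picard iteration explicitly, whereas the paper invokes Dalang's Theorem 13 and verifies its Hypothesis B (finite total mass, square-integrability of $\mathcal{F}\Gamma$ against $\mu$, and the continuity condition), which packages the same fixed-point argument.
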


\begin{remark}
There is a trade-off between the space noise and time noise as well. In Theorem \ref{thm:existence} we can see that $\alpha/4<H<1$. In dimension $d$, the case $\alpha = d$ corresponds to space white-noise. So if we want to have a solution for space-time white noise ($H=1/2$), we will need to choose $d<2$. This illustrates the known result that the heat equation driven by space-time white noise admits a solution in $d=1$ only (see \cite{Dalang1999}). However, if we want the white noise in space for $d \geq 2$, then the time noise must be chosen so that $H>1/2$, i.e. smoother than white noise. On the other hand, if we want white noise in time ($H=1/2$), we will require $\alpha < 2$. i.e. smoother space noise. Hence, we can always handle rougher noise in space or time, provided that the other one is chosen smooth enough. Note that for $d \geq 4$, in order to have white noise in space, we would have to choose $H > 1$. This is acceptable for DO noise, but the comparison with standard fractional noise is lost. 
\end{remark}
\begin{proof}[Proof of Theorem~\ref{thm:existence}]
Since the DO noise is defined directly from the space-time white noise, the proof of existence and uniqueness will rely on general results established by Dalang~\cite{Dalang1999} based on martingale techniques. According to Dalang~\cite[Theorem 13]{Dalang1999} (see also \cite{erratumDalang1999}), we have three requirements for a solution to exist.

Let $ \Gamma(t,s,x-y):=p_h(t-s,x-y) s^{H-1/2}\mathbbm{1}_{[0,t]}(s)$. The function $\Gamma $ satisfies Hypothesis B in \cite{Dalang1999} by the three lemmas below. This is sufficient to establish the existence and uniqueness of the solution.
\end{proof}
\begin{lemma}

The total mass $\Gamma(t,s,\mathbb{R}^d)$ is finite,i.e.,\\
\begin{align}
    \int_{\bR^d} \Gamma(t,s,x-y)dy <\infty \text{ for all $x$},
\end{align}
see \cite[Assumption B and Remark 20]{Dalang1999}.
\end{lemma}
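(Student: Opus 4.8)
The plan is to exploit the fact that $\Gamma$ factors into a purely temporal part and the spatial heat kernel. Since the indicator $\mathbbm{1}_{[0,t]}(s)$ and the power $s^{H-1/2}$ carry no dependence on the integration variable $y$, they pull out of the integral, leaving
\[
\int_{\bR^d} \Gamma(t,s,x-y)\,dy = s^{H-1/2}\mathbbm{1}_{[0,t]}(s)\int_{\bR^d} p_h(t-s,x-y)\,dy .
\]
The remaining integral is just the total mass of the heat kernel. A change of variables $z=x-y$ (which is legitimate by translation invariance of Lebesgue measure on $\bR^d$) turns it into $\int_{\bR^d} p_h(t-s,z)\,dz$, and by the normalization recorded just after \eqref{eqn:heatFourier}, this equals $1$ for every $t-s>0$, independently of $x$.

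Combining these two observations, I would conclude that
\[
\int_{\bR^d} \Gamma(t,s,x-y)\,dy = s^{H-1/2}\mathbbm{1}_{[0,t]}(s),
\]
which is manifestly finite for every fixed $s\in(0,t]$ and every $x\in\bR^d$. This establishes the claimed finiteness of the total mass $\Gamma(t,s,\bR^d)$, as required by Assumption B and Remark 20 of \cite{Dalang1999}.

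There is essentially no obstacle in this lemma; the only point worth flagging is the behavior of the factor $s^{H-1/2}$ near $s=0$, which blows up when $H<1/2$. However, this is irrelevant to the present statement, since here $s>0$ is fixed and we only need pointwise finiteness in $s$, not integrability. The integrability of this factor in the time variable is precisely what is controlled separately by condition \eqref{eqn: variance}, and is not part of the total-mass requirement addressed here.
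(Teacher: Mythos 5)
Your argument is correct and coincides with the paper's own proof: both pull out the factor $s^{H-1/2}\mathbbm{1}_{[0,t]}(s)$, use that the heat kernel integrates to $1$, and conclude that $\int_{\bR^d}\Gamma(t,s,x-y)\,dy = s^{H-1/2}\mathbbm{1}_{[0,t]}(s) < \infty$ for each fixed $s>0$. Your closing remark about the $s\to 0$ behavior being handled elsewhere is accurate but not needed for this lemma.
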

\begin{proof}
Since $p_h(t-s,x-y)$ is the heat kernel, the integral is equal to 1. Hence
\begin{align}
    \int_{\mathbb{R}^d}\Gamma(t,s,x-y)\,dy=s^{H-1/2}\mathbbm{1}_{[0,t]}(s).
\end{align}
This is finite for all $s>0$.\\
\end{proof}

\begin{lemma}\label{lemma:FTL2}
 The Fourier transform of $\Gamma(t,s,\cdot)$ is square integrable with respect to the spectral measure $\mu$:
\begin{align}
\label{eqn:finite FT l2}
    \int_0^\infty \,ds\int_{\mathbb{R}^d}\mu(d\xi)|\mathcal{F}\Gamma(t,s)(\xi)|^2<+\infty,
\end{align}
In our case $\mathcal{F}\Gamma(t,s)(\xi)=\hat{p}_{t-s}(\xi) s^{H-1/2}\mathbbm{1}_{[0,t]}(s) $, where $\mathcal{F}\Gamma(t,s)(\xi)$ is the Fourier transform of $\Gamma (t,s,\cdot)$. See Dalang\cite[Theorem 2]{Dalang1999}.
\end{lemma}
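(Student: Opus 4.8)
The plan is to reduce the double integral in \eqref{eqn:finite FT l2} to Dalang's condition (Assumption~\ref{assumption:finite var}) by performing the time integration first and showing that it produces a factor decaying like $(1+|\xi|^2)^{-1}$. First I would use the Fourier transform of the heat kernel \eqref{eqn:heatFourier} to write the integrand explicitly as
\[
|\mathcal{F}\Gamma(t,s)(\xi)|^2 = \hat{p}_h(t-s,\xi)^2\, s^{2H-1}\mathbbm{1}_{[0,t]}(s) = e^{-(t-s)|\xi|^2}\, s^{2H-1}\mathbbm{1}_{[0,t]}(s).
\]
Since the integrand is nonnegative, Tonelli's theorem lets me integrate in $s$ first, so that the left-hand side of \eqref{eqn:finite FT l2} equals $\int_{\mathbb{R}^d}\mu(d\xi)\,I_t(\xi)$, where $I_t(\xi):=\int_0^t e^{-(t-s)|\xi|^2}s^{2H-1}\,ds$.

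The crux is to bound $I_t(\xi)$ by $C_t(1+|\xi|^2)^{-1}$ uniformly in $\xi$, and I would do this by splitting the time integral at $s=t/2$. On $[t/2,t]$ the weight $s^{2H-1}$ is bounded (by $t^{2H-1}$ when $H\geq 1/2$, by $(t/2)^{2H-1}$ when $H<1/2$), and the remaining factor satisfies, via the substitution $u=t-s$, the estimate $\int_{t/2}^t e^{-(t-s)|\xi|^2}\,ds = (1-e^{-(t/2)|\xi|^2})/|\xi|^2 \leq \min(t/2,\,|\xi|^{-2})$, which is itself at most $C_t(1+|\xi|^2)^{-1}$. On $[0,t/2]$ one has $t-s\geq t/2$, so $e^{-(t-s)|\xi|^2}\leq e^{-(t/2)|\xi|^2}$ pulls out of the integral, leaving $\int_0^{t/2}s^{2H-1}\,ds = (t/2)^{2H}/(2H)$, which is finite precisely because $H>0$ (in particular $H>a/4$ from Theorem~\ref{thm:existence}); the Gaussian factor $e^{-(t/2)|\xi|^2}$ again decays faster than any power of $(1+|\xi|^2)^{-1}$. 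Adding the two pieces yields $I_t(\xi)\leq C_t(1+|\xi|^2)^{-1}$.

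Finally I would insert this bound to obtain $\int_{\mathbb{R}^d}\mu(d\xi)\,I_t(\xi)\leq C_t\int_{\mathbb{R}^d}\mu(d\xi)/(1+|\xi|^2)$, and the right-hand side is finite by Dalang's condition (Assumption~\ref{assumption:finite var}); for the Riesz kernel this is exactly the requirement $a<\min\{2,d\}$ recorded in \eqref{eqn:alphaless2}.

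The main obstacle is the singular time weight $s^{2H-1}$, which blows up at $s=0$ when $H<1/2$ and, since $\mu$ may be an infinite measure, cannot simply be bounded by a constant in $s$ while discarding the exponential. The splitting resolves this: near $s=0$ the heat kernel supplies full Gaussian decay in $\xi$, which absorbs the (integrable) singularity of the weight, while near $s=t$ the weight is harmless and the standard heat-kernel estimate $\int_0^{\tau}e^{-u|\xi|^2}\,du\leq |\xi|^{-2}$ produces exactly the $(1+|\xi|^2)^{-1}$ decay that matches Dalang's condition.
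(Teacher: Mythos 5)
Your proof is correct, but it takes a genuinely different route from the paper's. The paper specializes immediately to the Riesz kernel: it evaluates the spatial integral exactly via the scaling $\eta=(t-s)^{1/2}\xi$, obtaining $\int_{\R^d}e^{-(t-s)|\xi|^2}|\xi|^{-(d-\alpha)}\,d\xi = C_{\alpha,d}(t-s)^{-\alpha/2}$, and then reduces the time integral to the Beta integral $C_{\alpha,d}\,t^{2H-\alpha/2}\int_0^1(1-r)^{-\alpha/2}r^{2H-1}\,dr$, which converges when $\alpha<2$ and $H>0$. You instead integrate in time first and establish the uniform bound $\int_0^t e^{-(t-s)|\xi|^2}s^{2H-1}\,ds\le C_t(1+|\xi|^2)^{-1}$ by splitting at $s=t/2$, and then invoke Dalang's condition (Assumption~\ref{assumption:finite var}). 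Your estimates are all sound: the singularity of $s^{2H-1}$ at $s=0$ is integrable for $H>0$ and sits under a full Gaussian factor, while near $s=t$ the standard bound $(1-e^{-(t/2)|\xi|^2})/|\xi|^2\le\min(t/2,|\xi|^{-2})$ delivers the $(1+|\xi|^2)^{-1}$ decay. What each approach buys: yours is more general, handling any spectral measure satisfying Dalang's condition and hence cases (1), (2), (3) in one stroke; the paper's exact computation additionally produces the precise order $t^{2H-\alpha/2}$ of the integral, which is the quantity tracked later for the Lyapunov exponent bounds, and it is where the paper reads off the threshold $H>\alpha/4$ quoted in Theorem~\ref{thm:existence}. (As your argument makes transparent, square-integrability itself only requires $H>0$ together with Dalang's condition; the restriction $H>\alpha/4$ is not actually forced by the Beta integral in this lemma alone.) Both proofs are valid.
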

\begin{proof}
\begin{align}
\label{eqn:l2}
    \no\int_0^t\int_{\mathbb{R}^d}\hat{p}^2_{t-s}(\xi) s^{2H-1}\mu(d\xi)ds&=\int_0^t s^{2H-1}\int_{\mathbb{R}^d}e^{-(t-s)|\xi|^2}|\xi|^{-(d-\alpha)}\,d\xi ds\\
    &=\int_0^t s^{2H-1}(t-s)^{-\frac{\alpha}{2}}\Bigg(\int_{\mathbb{R}^d}e^{-|\eta|^2}|\eta|^{-(d-\alpha)}\,d\eta\Bigg)ds.
\end{align}
We make the substitution $r=\frac{s}{t}$, and notice that the space integral is a constant $C_{\alpha,d}$. Then \eqref{eqn:l2} is equal to\\
\begin{align}
\label{eqn:H value}
C_{\alpha,d} t^{2H-\frac{\alpha}{2}}\int_0^1(1-r)^{-\frac{\alpha}{2}}r^{2H-1}\,dr.
\end{align}
In order for \eqref{eqn:H value} to be finite, we need $H>\alpha/4$. 
\end{proof}
\begin{remark}
Notice that $H\geq 1$ is acceptable in Lemma \ref{lemma:FTL2}, but it does not make sense for standard fractional noise, so we only consider the case where $H \in (\alpha/4,1)$.
\end{remark}

\begin{remark}
Notice that when $H=1/2$, we recover the case $\alpha<4H=2$, which correspond the condition $\alpha=\min(2,d)$ (see \eqref{eqn:alphaless2}).
\end{remark}

\begin{lemma}
The function $\Gamma$ satisfies:
(1) $s \mapsto \mathcal{F}\Gamma (t,s)(\xi)$ is continuous, for all $\xi \in \bR^d, t>0$; \\
(2) The following equation holds true:
\begin{align}
\label{assumption:fourier transform conv}
    \lim_{h\downarrow 0} \int_0^\infty\,ds\int_{\mathbb{R}^d}\mu(d\xi)\sup_{s<r<s+h}|\mathcal{F}\Gamma(t,r)(\xi)-\mathcal{F}\Gamma(t,s)(\xi)|^2=0.
\end{align}
\end{lemma}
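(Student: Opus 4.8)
The plan is to establish the two assertions separately. Assertion (1) is elementary, and assertion (2) I would obtain from dominated convergence in $(s,\xi)$, the only genuine difficulty being the layer where $s\uparrow t$.

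For (1), I would use the explicit form $\mathcal{F}\Gamma(t,s)(\xi)=\hat p_{t-s}(\xi)\,s^{H-1/2}=e^{-(t-s)|\xi|^2/2}\,s^{H-1/2}$, valid for $s\in(0,t)$. For each fixed $t$ and $\xi$ this is the product of $s\mapsto e^{-(t-s)|\xi|^2/2}$, which is smooth, and $s\mapsto s^{H-1/2}$, which is continuous on $(0,t)$; hence $s\mapsto\mathcal{F}\Gamma(t,s)(\xi)$ is continuous there. The boundary values $s=0$ (where $s^{H-1/2}$ diverges if $H<1/2$) and $s=t$ (where $\hat p_{t-s}(\xi)\to1$) are single points of the $s$-line and play no role in the integral appearing in (2).

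For (2) I would argue by dominated convergence against $ds\,\mu(d\xi)$, noting first that the integrand vanishes for $s\ge t$. Assertion (1) already gives, for a.e.\ $(s,\xi)$, that $\sup_{s<r<s+h}|\mathcal{F}\Gamma(t,r)(\xi)-\mathcal{F}\Gamma(t,s)(\xi)|^2\to0$ as $h\downarrow0$, so it remains to produce an $h$-independent integrable majorant. Fixing $\delta\in(0,t)$ and restricting to $h\le\delta/2$, I would split the $s$-integral at $t-\delta$. On the bulk $(0,t-\delta)$ every $r\le s+\delta/2$ satisfies $t-r\ge\delta/2$, so $\hat p_{t-r}(\xi)\le e^{-\delta|\xi|^2/4}$ and the crude bound
\[
\sup_{s<r<s+h}\big|\mathcal{F}\Gamma(t,r)(\xi)-\mathcal{F}\Gamma(t,s)(\xi)\big|^2\le 4\sup_{s\le r\le s+\delta/2}\big|\mathcal{F}\Gamma(t,r)(\xi)\big|^2\le C\,(1+s^{2H-1})\,e^{-\delta|\xi|^2/2}
\]
furnishes a majorant that is $ds\,\mu(d\xi)$-integrable on $(0,t-\delta)\times\mathbb{R}^d$, since $\int_{\mathbb{R}^d}e^{-\delta|\xi|^2/2}\mu(d\xi)<\infty$ by Dalang's condition (Assumption~\ref{assumption:finite var}) and $\int_0^{t-\delta}(1+s^{2H-1})\,ds<\infty$ because $H>0$. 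Dominated convergence then drives the bulk integral to $0$ as $h\downarrow0$, for each fixed $\delta$.

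The main obstacle is the boundary layer $(t-\delta,t)$. There $t-s\to0$ and $\hat p_{t-s}(\xi)\to1$, so the heat kernel loses all spectral decay and the crude majorant above ceases to be $\mu$-integrable: one cannot simply dominate. The point to handle carefully is the increment of $s\mapsto\mathcal{F}\Gamma(t,s)(\xi)$ as the kernel approaches its initial time $\tau=t-s=0$; I would exploit that $\tau\mapsto\hat p_\tau(\xi)=e^{-\tau|\xi|^2/2}$ extends continuously to $\tau=0$ with value $1$, so that the relevant quantity is a true modulus-of-continuity term weighted against $\mu$ rather than a jump. Quantitatively I would reduce the layer contribution, after the substitution and the space integral already carried out in Lemma~\ref{lemma:FTL2}, to the tail $C_{\alpha,d}\int_{t-\delta}^{t}s^{2H-1}(t-s)^{-\alpha/2}\,ds$ of the convergent integral \eqref{eqn:H value}, which tends to $0$ as $\delta\downarrow0$ precisely because $\alpha<2$ and $H>0$---the same conditions that made Lemma~\ref{lemma:FTL2} finite. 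Letting first $h\downarrow0$ and then $\delta\downarrow0$ would give the claim; by comparison the weight singularity at $s=0$ for $H<1/2$ is harmless, being absorbed by $\int_0 s^{2H-1}\,ds<\infty$.
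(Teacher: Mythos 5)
Your treatment of the bulk $(0,t-\delta)$ is fine and is essentially a more careful version of what the paper does, and you are right that assertion (1) only holds for $s\in(0,t)$ (at $s=t$ the indicator produces a jump). The genuine problem is the boundary layer, which you correctly flag as the crux but do not actually close. The supremum in \eqref{assumption:fourier transform conv} sits \emph{inside} the $\mu(d\xi)$-integral, so for $s\in(t-h,t)$ and each fixed $\xi$ the quantity $\sup_{s<r<s+h}|\mathcal{F}\Gamma(t,r)(\xi)|^2$ is attained at $r$ close to $t$, where $e^{-(t-r)|\xi|^2}$ is close to $1$ uniformly in $\xi$; hence $\int\mu(d\xi)\sup_{s<r<s+h}|\mathcal{F}\Gamma(t,r)(\xi)-\mathcal{F}\Gamma(t,s)(\xi)|^2$ is of order $\mu(\R^d)$, which is infinite in the Riesz-kernel and white-noise cases (2) and (3). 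In particular this quantity is \emph{not} dominated by the tail $C\int_{t-\delta}^{t}s^{2H-1}(t-s)^{-\alpha/2}\,ds$ of \eqref{eqn:H value}: that tail only controls the term $|\mathcal{F}\Gamma(t,s)(\xi)|^2$, not the supremum term, and no tail of a convergent integral can absorb an infinite slab. Your reduction therefore only works in case (1), where $\mu$ is a finite measure.

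The paper's own proof never meets this layer because it reads the time argument of $\mathcal{F}\Gamma$ the other way around (Dalang's convention, where the variable is the kernel's age, so that increasing $r$ \emph{increases} the Gaussian decay): it bounds the increment by
\begin{equation*}
|\mathcal{F}\Gamma(t,r)(\xi)-\mathcal{F}\Gamma(t,s)(\xi)|^2\le e^{-2s|\xi|^2}\,|e^{-h|\xi|^2}-1|^2,
\end{equation*}
i.e.\ a decaying envelope times a modulus of continuity that tends to $0$ pointwise, and concludes by a single application of dominated convergence; the $ds$-integral of the envelope produces the factor $|\xi|^{-2}$ that makes Dalang's condition (Assumption \ref{assumption:finite var}) applicable, so no splitting is needed. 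To repair your argument you should either perform the substitution $s\mapsto t-s$ (equivalently, verify the continuity condition for the kernel as a function of its age, which is what Dalang's Hypothesis B actually requires), after which your factorization idea for the increment goes through globally, or else restrict your boundary-layer estimate to the bounded-covariance case.
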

\begin{proof}
Property one is clear from \eqref{eqn:heatFourier}. As for (2), for $s<r<s+h$,
\begin{align}
\no|\mathcal{F}\Gamma(t,r)(\xi)-\mathcal{F}\Gamma(t,s)(\xi)|^2 &\leq |e^{-(s+h)|\xi|^2}-e^{-s|\xi|^2}|^2\\
&=e^{-2s|\xi|^2}|e^{-h|\xi|^2}-1|^2.
\end{align}
Since $|e^{-h|\xi|^2}-1|^2\leq 4$ for all $\xi\in \mathbb{R}^d, h>0$, by the Dominated Convergence Theorem and Lemma \ref{lemma:FTL2},
\begin{align}
\no \lefteqn{\lim_{h\downarrow 0}  \int_0^t\,ds\int_{\mathbb{R}^d}\mu(d\xi)\sup_{t<r<t+h}|\mathcal{F}\Gamma(t,r)(\xi)-\mathcal{F}\Gamma(t,s)(\xi)|^2}\\
&\leq \int_0^t\,ds\int_{\mathbb{R}^d}\lim_{h\downarrow 0}|e^{-h|\xi|^2}-1| \cdot e^{-s|\xi|^2}  \mu(d\xi) =0.
\end{align}
\end{proof}
Since all three requirements are satisfied, we have the existence and uniqueness of a solution to stochastic heat equation. The result for the stochastic wave equation (Theorem \ref{thm:existencewave}) follows in a similar manner, replacing the definition of $\Gamma$ by 
$$\Gamma (t,s,x-y)=p_w(t-s,x-y)s^{H-1/2}\mathbbm{1}_{[0,t]}(s) $$ 
in the three requirements above. The requirements are proved using the same technique, in particular using that $\mathcal{F}\Gamma(t,s)(\xi) = \frac{\sin(s|\xi|)}{|\xi|}\mathbbm{1}_{[0,t]}(s)$. For all $\xi\in \mathbb{R}^d$, equation \eqref{eqn:finite FT l2} with $p_w$ yields the condition $H>\frac{a-2}{2}$.

\chapter{Moment formula for the DO noise case}
In this chapter, we will establish an explicit formula for the moments of the solution to equation~\eqref{eqn:DO process}. This moment formula relies on a representation technique of the moments in terms of expectations with respect to Poisson processes. It was initially introduced by Dalang, Mueller and Tribe~\cite{dalang_mueller_tribe}. This will be the key tool used to obtain the lower bound on the Lyapunov exponent in Chapter 4.
\section{Preliminaries}
We consider a stochastic process $u(t,x)$ that is solution to the following stochastic PDE, in integral form
$$u(t,x) = w(t,x) + \int_{0}^{t} \int_{\R^d} s^{H-1/2} p_h(t-s,x-y) u(s,y) W_0(ds,dy),$$
where $H > a/4$, $w(t,x)$ is the solution to the corresponding deterministic equation, the noise $W$ is white in time and correlated in space, with covariance informally given by
$$E[W(t,x)W(s,y)] = \delta_0(t-s) f(x-y),$$
(see \eqref{eqn:WN}). The parameter $a$ is defined in \eqref{eqn:a_param} as a function of the space covariance function $f$, and $p_h$ is the heat kernel defined in \eqref{heatkernel}.

Now, consider a Poisson process $(N_t)_{t \geq 0}$ with rate 1, and let $(\tau_n)_{n \geq 0}$ denote its jump times. Also, consider $B^{(i)} = \left(B^{(i)}_t\right)_{t \geq 0}$ to be i.i.d copies of a $d$-dimensional standard Brownian motion. We define a stochastic process $X = (X_t)_{t \geq 0}$ as follows: for $0 < t \leq \tau_1$, let
\begin{align}
X_t = X_0 + B^{(1)}_t.
\end{align}
For $i \geq 1$ and $\tau_i < t \leq \tau_{i+1}$, let
\begin{align}
\label{processformation}
X_t = X_{\tau_i} + B^{(i+1)}_{t - \tau_i}.
\end{align}
We denote by $P_x$ the probability measure under which $X_0 = x$ and $E_x$ the expected value under $P_x$.

In this situation, we build the process $(X_t)$ from a set of independent Brownian motions, since the operator $\Delta$ arising in the stochastic heat equation is the infinitesimal generator of Brownian motion. A direct consequence of this is that the heat kernel $p_h(t,\cdot)$ is the probability density function of $B_t$.

\section{Second moment formula.}

We will establish the following result.

\begin{theorem}
 \label{thm:2nd_mom_formula}
We have
\begin{align}
\no \lefteqn{E[u(t,x)u(t,y)]}\\
&= e^t E_x\left[w(t - \tau_{N_t}, X^{1}_{\tau_{N_t}}) w(t - \tau_{N_t}, X^{2}_{\tau_{N_t}}) \prod_{i=1}^{N_t} \left((t-\tau_i)^{2H-1} f(X^1_{\tau_i} - X^2_{\tau_i})\right)\right],
\end{align}
where $X^1$ and $X^2$ are two i.i.d copies of the process $X$ defined above.
\end{theorem}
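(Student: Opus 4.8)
The plan is to characterize $F(t,x,y):=E[u(t,x)u(t,y)]$ by a closed integral equation, prove that equation has a unique solution bounded on bounded time intervals, and then verify that the Poisson expectation on the right-hand side solves the same equation. First I would derive the integral equation from the mild formulation \eqref{eqn:solution}: write $u(t,x)=w(t,x)+I(t,x)$ with $I(t,x)=\int_0^t\int_{\bR^d}s^{H-1/2}p_h(t-s,x-z)u(s,z)W_0(ds,dz)$. Multiplying $u(t,x)u(t,y)$ and taking expectations, the two cross terms $w(t,x)E[I(t,y)]$ and $w(t,y)E[I(t,x)]$ vanish since Walsh integrals are mean zero. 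For the surviving term I apply the Walsh--Dalang isometry for noise white in time and spatially correlated by $f$; the two copies of the weight $s^{H-1/2}$ combine into $s^{2H-1}$, giving
\begin{align*}
F(t,x,y) = w(t,x)w(t,y) + \int_0^t s^{2H-1}\!\int_{\bR^{2d}} p_h(t-s,x-z_1)p_h(t-s,y-z_2)\,f(z_1-z_2)\,F(s,z_1,z_2)\, dz_1\, dz_2\, ds.
\end{align*}
Theorem~\ref{thm:existence} guarantees that $F$ is bounded on $[0,T]\times\bR^{2d}$, so every integral is finite; Dalang's condition $a<\min\{2,d\}$ together with $H>a/4$ is exactly what makes the kernel $s^{2H-1}(t-s)^{-a/2}$ integrable, as in Lemma~\ref{lemma:FTL2}.

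Next I would establish uniqueness. Two bounded solutions $F_1,F_2$ have difference $D=F_1-F_2$ solving the same linear equation with zero forcing; taking suprema in the space variables and using $\int_0^t s^{2H-1}(t-s)^{-a/2}\,ds<\infty$ yields a Gronwall/iteration estimate forcing $D\equiv 0$ on each $[0,T]$. This is the same contraction that underlies the existence result, so no new idea is needed.

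The core step is to show that $\Psi(t,x,y):=e^t E_x[\,\cdots\,]$ (with $X^1$ started at $x$, $X^2$ started at $y$, both driven by the common Poisson clock $(\tau_i)$ but by independent Brownian pieces) satisfies the very same equation. For this I condition on the first jump $\tau_1$, which is exponential with rate $1$. On $\{\tau_1>t\}$ there are no jumps, the product is empty, and the contribution is $e^{-t}w(t,x)w(t,y)$. On $\{\tau_1=s\}$, $s\in(0,t)$, the Brownian pieces carry $X^1,X^2$ to $z_1,z_2$ with densities $p_h(s,x-z_1),p_h(s,y-z_2)$, the first factor contributes $(t-s)^{2H-1}f(z_1-z_2)$, and by memorylessness of the Poisson process and the Markov restart of the Brownian motion at $s$, the remaining functional is an independent copy of the same object with horizon $t-s$ and initial points $z_1,z_2$, i.e.\ $e^{-(t-s)}\Psi(t-s,z_1,z_2)$. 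The key point is that under the shift $\tau_j\mapsto\tau_j-s$ the terminal-anchored quantities $(t-\tau_i)^{2H-1}$ and $w(t-\tau_{N_t},\cdot)$ become the identical functional on horizon $t-s$. Cancelling $e^{-t}$ gives
\begin{align*}
\Psi(t,x,y) = w(t,x)w(t,y) + \int_0^t\!\int_{\bR^{2d}} p_h(s,x-z_1)p_h(s,y-z_2)\,(t-s)^{2H-1} f(z_1-z_2)\,\Psi(t-s,z_1,z_2)\, dz_1\, dz_2\, ds,
\end{align*}
and the substitution $s\mapsto t-s$ turns this into exactly the integral equation for $F$. Uniqueness then gives $\Psi=F$.

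The main obstacle I expect is bookkeeping in the renewal step: the time-dependent weights are anchored at the terminal time $t$ (not at $0$), so the Markov restart produces the functional on horizon $t-s$ rather than a plain translate, and a time reversal $s\mapsto t-s$ is needed to line the weight $s^{2H-1}$ and the heat kernels up with the isometry-derived equation. An equally valid alternative, avoiding the uniqueness argument, is to expand both sides into the same simplex series --- $F$ via Picard iteration of the mild equation and $\Psi$ via conditioning on $\{N_t=n\}$, using that the ordered jump times have intensity $e^{-t}$ on $\{0<\tau_1<\cdots<\tau_n<t\}$ and that between jumps $X^1,X^2$ are independent Brownian motions --- and to match the two $n$-fold integrals term by term after the reversal $s_i=t-\tau_i$.
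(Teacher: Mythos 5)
Your proposal is correct, but it is organized differently from the paper's proof. The paper expands $u$ in Wiener chaos, $u=\sum_m I_m$, proves by induction on the chaos order $m$ that $E[I_m(t,x)I_m(t,y)]$ equals the Poisson expectation restricted to $\{N_t=m\}$ (the induction step being exactly your first-jump conditioning, applied to each $J_m$), and then sums over $m$ using the orthogonality of the chaoses. You instead work with the full second moment $F(t,x,y)$ at once: the Walsh--Dalang isometry gives the closed renewal equation, the first-jump conditioning shows the Poisson functional $\Psi$ satisfies the same equation, and a uniqueness lemma closes the argument. What your route buys is that it dispenses with the chaos bookkeeping and the induction; what it costs is the need for two additional ingredients the paper gets for free: (i) a uniqueness statement for bounded solutions of a Volterra equation with the weakly singular kernel $s^{2H-1}(t-s)^{-a/2}$ (a generalized Gronwall iteration, unproblematic under $H>a/4$ and $a<2$), and (ii) an \emph{a priori} bound showing $\Psi(t,x,y)<\infty$ and is bounded on $[0,T]\times\R^{2d}$, without which $\Psi$ is not in the uniqueness class. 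Point (ii) is the one place you should be explicit: for the Riesz kernel $f$ is unbounded and the finiteness of the Poisson expectation is not automatic. It can be supplied either by the moment bounds of Chapter~\ref{chap:intermittency}, or by observing that the Picard iterates of your integral equation started from $w(t,x)w(t,y)$ coincide both with the partial sums $\sum_{m\le n}E[I_m(t,x)I_m(t,y)]$ and with the partial sums of $e^tE_x[\ind_{\{N_t\le n\}}\cdots]$, and then passing to the (monotone) limit --- which is precisely your stated alternative and is, in substance, the paper's actual proof. With that one point addressed, both the renewal-equation route and the term-by-term route are complete.
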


\begin{proof}
By a standard argument for non-linear SPDE of affine type (e.g. Parabolic Anderson model, see \cite{carmona1994}), it is easy to see that the process $u(t,x)$ can be written as 
\begin{equation}
u(t,x) = \sum_{m=0}^{\infty} I_m(t,x), \label{eqn:series}
\end{equation}
where $I_0(t,x) = w(t,x)$ and, for $m \geq 1$, the process is defined inductively by 
\begin{equation}
I_m(t,x) = \int_{0}^{t} \int_{\R^d} s^{H-1/2} p_{t-s}(x-z) I_{m-1}(s,z) W_0(ds,dz). \label{eqn:Im}
\end{equation}
Note that the processes $I_m$ are orthogonal in $L^2(\Omega)$, since they are all in Wiener chaoses of different order.

We will prove by induction on $m$ that
\begin{equation} \label{eqn:2nd_mom_Im}
E[I_m(t,x)I_m(t,y)] = J_m(t,x,y),
\end{equation}
where
\begin{align} 
\label{heatexpectation}
\no \lefteqn{J_m(t,x,y)}\\
&= E_x\left[\ind_{\{N_t = m\}} w(t - \tau_m, X^{1}_{\tau_m}) w(t - \tau_m, X^{2}_{\tau_m}) \prod_{i=1}^{m} \left((t-\tau_i)^{2H-1} f\left(X^1_{\tau_i} - X^2_{\tau_i}\right)\right) \right].
\end{align}
Once this is established, the result of Theorem \ref{thm:2nd_mom_formula} follows by direct summation on $m$ using the orthogonality of the processes $I_m$.

Equation \eqref{eqn:2nd_mom_Im} in the case $m=0$ is immediate. We now assume that \eqref{eqn:2nd_mom_Im} holds up to $m-1$ and will establish the result for $m$. Let $\mathcal{F}_1 = \sigma(\tau_1, X^1_{\tau_1}, X^2_{\tau_1})$. By standard results on conditional expectations, 
\begin{align*}
\lefteqn{J_m(t,x,y)} \\
& = E_x\Bigg[\ind_{\{\tau_1 \leq t\}} e^{\tau_1} (t-\tau_1)^{2H-1} f\left(X^1_{\tau_1} - X^2_{\tau_1}\right) \phantom{\prod_{i=2}^{m}}  \\
& \phantom{E_x\Bigg[}\times  E_x\Bigr[\ind_{\{N_t-N_{\tau_1} = m-1\}} w(t - \tau_m, X^{1}_{\tau_m}) w(t - \tau_m, X^{2}_{\tau_m})\\
& \phantom{E_x\Bigg[ \ind_{\{N_t-N_{\tau_1}}}\times \prod_{i=2}^{m} \left((t-\tau_i)^{2H-1} f\left(X^1_{\tau_i} - X^2_{\tau_i}\right)\right) \mid \mathcal{F}_1 \Bigr] \Bigg].
\end{align*}
By the strong Markov property of the Poisson process $(N_t)$ at time $\tau_1$, the independence and stationarity of the increments of the processes $X^{i}$ ($i=1,2$) up to and after $\tau_1$, the conditional expectation above is equal to $J_{m-1}(t-\tau_1,X^1_{\tau_1},X^2_{\tau_2})$. Hence, we obtain
\begin{align*}
J_m(t,x,y) & = E_x\left[\ind_{\{\tau_1 \leq t\}} e^{\tau_1} (t-\tau_1)^{2H-1} f\left(X^1_{\tau_1} - X^2_{\tau_1}\right) J_{m-1}(t-\tau_1,X^1_{\tau_1},X^2_{\tau_1}) \right].
\end{align*}
Using first the fact that the random variables $X^{1}_t$ and $X^{2}_t$ are independent, both with centered normal distribution with variance $t$, and then that the law of $\tau_1$ is exponential with parameter 1, we can rewrite the expectation as an integral. Namely,
\begin{align*}
\lefteqn{J_m(t,x,y)} \\
 & = E\left[ \ind_{\{\tau_1 \leq t\}} e^{\tau_1} (t-\tau_1)^{2H-1} \phantom{\int_{\R^d}} \right. \\
 & \qquad \times \left. \int_{\R^d} dz_1 \int_{\R^d} dz_2 \, p_{\tau_1}(x-z_1) p_{\tau_1}(x-z_2) f\left(z_1 - z_2\right) J_{m-1}(t-\tau_1,z_1,z_2) \right] \\
 & = \int_{0}^{\infty} ds \, e^{-s} \ind_{\{s \leq t\}} e^{s} (t-s)^{2H-1} \\
 & \qquad \times \int_{\R^d} dz_1 \int_{\R^d} dz_2 \, p_{s}(x-z_1) p_{s}(y-z_2) f\left(z_1 - z_2\right) J_{m-1}(t-s,z_1,z_2) \\
  & = \int_{0}^{t} ds \, s^{2H-1} \int_{\bR^d} dz_1 \int_{\bR^d} dz_2 \, p_{t-s}(x-z_1) p_{t-s}(y-z_2) f\left(z_1 - z_2\right) J_{m-1}(s,z_1,z_2).
\end{align*}
Now, using the induction assumption, we can write $J_{m-1}$ as an expectation in the terms of the process $I_m$. Using standard results about moments of Walsh integrals, we obtain
\begin{align*}
\lefteqn{J_m(t,x,y)} \\
& = \int_{0}^{t} ds \, s^{2H-1} \int_{\bR^d} dz_1 \int_{\bR^d} dz_2 \, p_{t-s}(x-z_1) p_{t-s}(y-z_2) f\left(z_1 - z_2\right)  \\
&\phantom{\int_{0}^{t} ds }\times E[I_{m-1}(s,z_1)I_{m-1}(s,z_2)]\\
& = E\left[\int_{0}^{t} \int_{\bR^d} s^{H-1/2} p_{t-s}(x-z_1) I_{m-1}(s,z_1) W_0(ds,dz_1) \right. \\
& \phantom{\int_{0}^{t} ds }\times \left. \int_{0}^{t} \int_{\bR^d} s^{H-1/2} p_{t-s}(y-z_2) I_{m-1}(s,z_2) W_0(ds,dz_2) \right] \\
& = E[I_m(t,x)I_m(t,y)].
\end{align*}
The result is proved.
\end{proof}

\begin{remark}
\label{generalnoiseformula}
The reader will notice that 
\begin{align}
    \sum I_m(t,x)=\sum J_m(t,x,x)= \int_0^t ds_1\int_0^t ds_2\, e^{-s_1}e^{-s_2} G(s_1,s_2,x,x) \varphi(s_1,s_2),
\end{align}
 where 
 \begin{align*}
 \lefteqn{G(s_1,s_2,x,y)}\\
 =&\int_{\bR^d}dz_1\int_{\bR^d}dz_2\,p_h(t-s_1,x-z_1)f(z_1-z_2)p_h(t-s_2,y-z_2) u(s_1,x)u(s_2,y),
 \end{align*}
and $\varphi(s_1,s_2)=s_1^{H-1/2}s_2^{H-1/2}\delta_0(s_1-s_2)$.
\end{remark}

\section{$n$-th moment formula}

We will generalize the result of the previous subsection to moments of order $n$. First of all, let's establish some additional settings, following \cite{dalang_mueller_tribe}.

First notice that the processes $(X^1)$ and $(X^2)$ built in the previous sections were nothing else than Brownian motions. The specific construction, restarting the Brownian process at Poisson jump times, allowed to understand the role of the Poisson jump times better and will help with the proof in the wave equation case. 

In this section, instead of considering two processes, we will consider $n$ copies of the process $(X_t)$. These $n$ copies will be conditioned to restart the Brownian process by pairs at some specific Poisson jump times.

More precisely, we consider $\mathcal{P}_n$ to be the set of unordered pairs of distinct elements  of the set $\{1,\ldots,n\}$. For $\rho \in \mathcal{P}_n$, we denote $\rho=(a,b)$ with $a < b$. Notice that $|\mathcal{P}_n| = \frac{n(n-1)}{2} := \nu_n$. For each $\rho \in \mathcal{P}_n$, we consider $(N_t(\rho))_{t \geq 0}$ to be independent Poisson processes with rate 1. Let $N_t(\mathcal{P}_n) := \sum_{\rho \in \mathcal{P}_n} N_t(\rho)$ be a Poisson process with rate $\nu_n$.

Let $\sigma_1 < \sigma_2 < \ldots$ be the jump times of the process $(N_t(\mathcal{P}_n))$. For each $i$, denote by $\rho_i = (a_i,b_i)$ the pair such that $\sigma_i$ is a jump time of the process $(N_t(\rho_i))$.

For an integer $k \leq n$, we write $N_t(k) = \sum_{\rho} N_t(\rho)$, where the sum is taken over all the pairs in $\mathcal{P}_n$ that contain the integer $k$. Let $\tau^k_1 < \tau^k_2 < \ldots$ denote the jump times of the process $(N_t(k))$. With this notation, we have
$$N_t(\mathcal{P}_n) = \frac{1}{2} \sum_{k=1}^{n} N_t(k),$$
since each pair is counted twice in the sum on the right-hand side.

From there, for $k=1,\ldots, n$ and $i \geq 0$, we consider $B^{(k,i)} = \left(B^{(k,i)}_t\right)_{t \geq 0}$ to be i.i.d copies of a $d$-dimensional standard Brownian motion, independent of all the Poisson processes previously defined. For each $k = 1,\ldots, n$, we define the stochastic processes $X^k = (X^k_t)_{t \geq 0}$ as follows: for $0 < t \leq \tau^k_1$, let
$$X^k_t = X^k_0 + B^{(k,1)}_t.$$
For $i \geq 1$ and $\tau^k_i < t \leq \tau^k_{i+1}$, let
$$X^k_t = X^k_{\tau^k_i} + B^{(k,i+1)}_{t - \tau^k_i}.$$

We now consider $\mathbf{t} := (t_1,\ldots,t_n)$ and $\mathbf{x} := (x_1,\ldots,x_n)$. We denote by $P_{\mathbf{t},\mathbf{x}}$ the probability measure under which the processes $(X^k_t)$ will be defined for $t < 0$ by 
$$X^k_t := x_k + B^{(k,0)}_{t_k + t} \qquad (-t_k < t \leq 0).$$
Hence, under $P_{\mathbf{t}, \mathbf{x}}$, $X^k_{-t_k} = x_k$ for all $k=1,\ldots,n$. In addition, we will set $\tau^k_0 := -t_k$. We will denote by $E_{\mathbf{t}, \mathbf{x}}$ the expected value under $P_{\mathbf{t}, \mathbf{x}}$. Notice that each of the processes $(X^k_t)$ are Brownian motion themselves.

Throughout this section, we will establish the following result as a corollary of a series of intermediate lemmas.

\begin{theorem}[$n$-th moment formula] \label{thm:N_mom_formula}
	We have
\begin{align}
\no\lefteqn{E[u(t,x_1) \cdots u(t,x_n)]}\\
&= e^{t \frac{n(n-1)}{2}} E_{\mathbf{0},\mathbf{x}}\left[ \prod_{k=1}^{n} w\left(t - \tau^k_{N_t(k)}, X^{k}_{\tau^k_{N_t(k)}}\right) \prod_{i=1}^{N_t(\mathcal{P}_n)} \left((t-\sigma_i)^{2H-1} f(X^{a_i}_{\sigma_i} - X^{b_i}_{\sigma_i})\right)\right],
\end{align}
where $\mathbf{0} = (0, \ldots, 0)$.
\end{theorem}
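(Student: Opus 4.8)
The plan is to generalize the proof of the second moment formula (Theorem~\ref{thm:2nd_mom_formula}) from $n=2$ to general $n$. First I would expand each $u(t,x_k)$ in its chaos series $u(t,x_k) = \sum_{m} I_m(t,x_k)$ as in \eqref{eqn:series}, and compute the product moment
\[
E[u(t,x_1)\cdots u(t,x_n)] = \sum_{m_1,\ldots,m_n} E[I_{m_1}(t,x_1)\cdots I_{m_n}(t,x_n)].
\]
The crucial combinatorial fact is that a product of multiple Wiener integrals, expanded by the Wiener chaos product/Wick formula, has nonzero expectation only through complete pairings of the underlying stochastic-integral variables. Each pairing contracts two of the $n$ coordinates against the spatial covariance $f$ at a common time variable, which is exactly the mechanism encoded by the pair processes $(N_t(\rho))_{\rho\in\mathcal{P}_n}$. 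This is why the Poisson process is indexed by unordered pairs $\rho=(a,b)$ and why the rate is $\nu_n=\binom{n}{2}$: each jump time $\sigma_i$ of $N_t(\mathcal{P}_n)$ marks one pairing event between coordinates $a_i$ and $b_i$, contributing a factor $(t-\sigma_i)^{2H-1}f(X^{a_i}_{\sigma_i}-X^{b_i}_{\sigma_i})$.

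The key steps, carried out in the order below, would be: (1) establish an inductive identity for the joint moment $E[I_{m_1}(t,x_1)\cdots I_{m_n}(t,x_n)]$ in terms of a Poisson-expectation functional $J_{\mathbf m}$, the analogue of \eqref{heatexpectation}, where the indicator now records the number of jump times of each pair process; (2) condition on the \emph{first} jump time $\sigma_1$ of $N_t(\mathcal{P}_n)$ and its associated pair $\rho_1=(a_1,b_1)$, peeling off one stochastic-integration step for the two coordinates $a_1,b_1$ while leaving the other coordinates' Brownian motions to evolve freely past $\sigma_1$; (3) apply the strong Markov property of the superposed Poisson process $N_t(\mathcal{P}_n)$ at $\sigma_1$, together with independence and stationarity of the Brownian increments $B^{(k,i)}$, to identify the conditional expectation as the same functional at shifted time and space arguments; (4) rewrite the resulting conditional expectation as a spatial integral against the heat kernels $p_{\sigma_1}(x_{a_1}-z)\,p_{\sigma_1}(x_{b_1}-z')$ and $f(z-z')$, using that each $X^k$ is itself a Brownian motion and that $\sigma_1$ is exponential with parameter $\nu_n$; and (5) recognize the integral as the second-moment contraction of a single Walsh stochastic integral in the two contracted coordinates, closing the induction and then summing over all $\mathbf m$ via the chaos orthogonality.

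The main obstacle I expect is step~(3): correctly bookkeeping the \emph{pair} structure under the strong Markov property. Unlike the $n=2$ case, where there is a single Poisson process and the first jump simply restarts both Brownian motions, here a jump of $N_t(\mathcal{P}_n)$ at $\sigma_1$ restarts only the two Brownian motions $X^{a_1},X^{b_1}$ indexed by the selected pair $\rho_1$, while the coordinates not in $\rho_1$ continue unbroken. One must verify that, conditionally on $(\sigma_1,\rho_1)$, the shifted collection of pair processes $(N_{\sigma_1+\cdot}(\rho)-N_{\sigma_1}(\rho))_{\rho}$ is again a family of independent rate-$1$ Poisson processes with the \emph{same} index set $\mathcal{P}_n$, so that the post-$\sigma_1$ dynamics is a faithful copy of the original $n$-coordinate system and the induction hypothesis applies verbatim. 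The factor $e^{t\nu_n}=e^{tn(n-1)/2}$ then arises, as in the $n=2$ derivation, from compensating the Poisson density $e^{-\nu_n s}$ accumulated across the peeling steps against the counting of jump configurations. A secondary technical point is justifying the interchange of summation over the chaos indices $\mathbf m$ with the expectation, which follows from the orthogonality of distinct chaoses and the $L^2$ bounds guaranteeing convergence of \eqref{eqn:series}.
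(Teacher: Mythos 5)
Your plan is correct and follows essentially the same route as the paper: the paper expands in chaoses, proves parallel inductive formulas for $E[I_{m_1}\cdots I_{m_n}]$ (via It\^o's formula applied to the product of the martingales $I_{m_k}(\cdot;t_k,x_k)$, after extending $I_m$ to two time arguments) and for the Poisson functional $J_{\mathbf m}$ (by conditioning on the first pair jump $(\sigma_1,\rho_1)$ and using the strong Markov property exactly as you describe), identifies the two by induction, and then sums over $\mathbf m$ using chaos orthogonality. The only device you leave implicit is that the contraction step on the Walsh-integral side comes from the cross-variation terms of that It\^o expansion, which is precisely what produces the sum over pairs $(a,b)$ with $m_a m_b>0$.
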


In order to use martingale properties for the processes $I_m$ defined in Section 3.1, we extend their definition to separate the upper-bound of the integral, and the time parameter in the kernel into two different variables. Namely, let $I_0(s;t,x) := I_0(t,x)$ be constant in $s$, and define
$$I_m(s;t,x) := \int_{0}^{s} \int_{\R^d} r^{H-1/2} p_{t-r}(x-z) I_{m-1}(r,z) W_0(dr,dz)$$
(compare with \eqref{eqn:Im}). With this notation, we have that $I_m(\cdot;t,x)$ is a martingale with quadratic variation
\begin{align}
\no \lefteqn{\langle I_m(\cdot;t,x)\rangle_s}\\
\no&= \int_{0}^{s} \int_{\R^d} \int_{\R^d} r^{2H-1} p_{t-r}(x-z_1) f(z_1-z_2) p_{t-r}(x-z_2) I_{m-1}(r,z_1) I_{m-1}(r,z_2) dz_1 \, dz_2 \, dr,
\end{align}
and $I_m(t,x) = I_m(t;t,x)$.

With this notation, writing $\mathbf{m}:=(m_1,\ldots,m_n)$, and for $t < \min\{t_1,\ldots,t_n\}$, we will denote
$$I_{\mathbf{m}}(t;\mathbf{t},\mathbf{x}) := E[I_{m_1}(t,t_1,x_1)\cdots I_{m_n}(t;t_n,x_n)].$$

We are now ready to prove the following Lemma, providing an inductive integral formula for $I_{\mathbf{m}}$. This is similar to Lemma 5.2 in \cite{dalang_mueller_tribe}.
\begin{lemma} \label{lemma:I}
	If $m=0$, we have
	$$I_{\mathbf{0}}(t;\mathbf{t},\mathbf{x}) = \prod_{k=1}^{n} w(t_k,x_k).$$
	If $m > 0$, we have
	\begin{align*}
	I_{\mathbf{m}}(t;\mathbf{t},\mathbf{x}) 
	& = \sum_{\underset{m_a m_b > 0}{(a,b) \in \mathcal{P}_n}} \int_{0}^{t} ds \int_{\bR^d} dy_1 \int_{\bR^d} dy_2 \, s^{2H-1} p_{t_a-s}(x_a-y_1) f(y_1-y_2) p_{t_b-s}(x_b - y_2) \\
	& \qquad \qquad \times I_{\mathbf{m}'(a,b)}(s;\phi_{a,b}(\mathbf{t},s),\psi_{a,b}(\mathbf{x},y_1,y_2)),
	\end{align*}
	where
	\begin{itemize}
		\item $m'(a,b)_a = m_a -1$, $m'(a,b)_b = m_b - 1$, and $m'(a,b)_k = m_k$ for $k \neq a,b$;
		\item $\phi_{a,b}(\mathbf{t},s)_a = \phi_{a,b}(\mathbf{t},s)_b = s$, and $\phi_{a,b}(\mathbf{t},s)_k = t_k$ for $k \neq a,b$;
		\item $\psi_{a,b}(\mathbf{x},y_1,y_2)_a = y_1$, $\psi_{a,b}(\mathbf{x},y_1,y_2)_b = y_2$, and $\psi_{a,b}(\mathbf{x},y_1,y_2)_k = x_k$ for $k \neq a,b$.
	\end{itemize}
\end{lemma}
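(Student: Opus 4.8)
The plan is to prove both cases by exploiting that, for $s \le t < \min_k t_k$, each map $s \mapsto I_{m_k}(s;t_k,x_k)$ is a continuous $L^2$-martingale (in the variable $s$), and then to apply the integration-by-parts formula for products of continuous semimartingales. The case $\mathbf{m} = \mathbf{0}$ is immediate: each $I_0(s;t_k,x_k) = w(t_k,x_k)$ is deterministic and constant in $s$, so $I_{\mathbf{0}}(t;\mathbf{t},\mathbf{x}) = E\left[\prod_{k=1}^n w(t_k,x_k)\right] = \prod_{k=1}^n w(t_k,x_k)$.

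For $\mathbf{m} \neq \mathbf{0}$, write $M^k_s := I_{m_k}(s;t_k,x_k)$. By the multivariate It\^o product rule for continuous semimartingales,
$$\prod_{k=1}^n M^k_t = \prod_{k=1}^n M^k_0 + \sum_{k=1}^n \int_0^t \prod_{j\neq k} M^j_s \, dM^k_s + \sum_{(a,b) \in \mathcal{P}_n} \int_0^t \prod_{j\neq a,b} M^j_s \, d\langle M^a, M^b\rangle_s.$$
Each factor with $m_k \ge 1$ is a Walsh stochastic integral vanishing at $s=0$, while each factor with $m_k = 0$ is the constant $w(t_k,x_k)$ and hence has zero quadratic variation. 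Consequently the boundary term $\prod_k M^k_0$ vanishes (some $m_k \ge 1$), the stochastic-integral terms have zero expectation, and in the covariation sum only the pairs with $m_a m_b > 0$ survive. Taking expectations yields
$$I_{\mathbf{m}}(t;\mathbf{t},\mathbf{x}) = \sum_{\underset{m_a m_b > 0}{(a,b)\in \mathcal{P}_n}} E\left[\int_0^t \prod_{j\neq a,b} M^j_s \, d\langle M^a, M^b\rangle_s\right].$$

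The next step is to compute the cross-quadratic variation by polarizing the quadratic variation displayed before the lemma, using that $W_0$ is white in time and spatially correlated by $f$:
$$d\langle M^a, M^b\rangle_s = s^{2H-1} \int_{\bR^d}\int_{\bR^d} p_{t_a - s}(x_a - y_1) f(y_1 - y_2) p_{t_b - s}(x_b - y_2) I_{m_a - 1}(s, y_1) I_{m_b - 1}(s, y_2) \, dy_1 \, dy_2 \, ds.$$
Substituting this expression, applying Fubini to interchange $E$ with the $ds\,dy_1\,dy_2$ integration, and using $I_{m_a-1}(s,y_1) = I_{m_a-1}(s;s,y_1)$ and $I_{m_b-1}(s,y_2) = I_{m_b-1}(s;s,y_2)$ together with $M^j_s = I_{m_j}(s;t_j,x_j)$, the inner expectation is exactly $I_{\mathbf{m}'(a,b)}\bigl(s; \phi_{a,b}(\mathbf{t},s), \psi_{a,b}(\mathbf{x},y_1,y_2)\bigr)$ by definition, which gives the claimed formula.

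The main obstacle I expect is analytic rather than algebraic: justifying that the $n$-fold products of Walsh integrals carry enough integrability for the product It\^o formula to apply, that the stochastic-integral terms are genuine martingales with vanishing mean, and that Fubini is licensed. Each $I_m(s;t_k,x_k)$ lies in the $m$-th Wiener chaos and hence, by hypercontractivity, in every $L^p(\Omega)$; a H\"older argument then controls the products, with the second-moment bound of Theorem~\ref{thm:existence} underpinning the finiteness. This is the delicate point; once it is in place, the identification of the kernel and the matching of the parameter vectors $\phi_{a,b}$, $\psi_{a,b}$, and $\mathbf{m}'(a,b)$ is pure bookkeeping.
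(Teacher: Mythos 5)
Your proposal is correct and follows essentially the same route as the paper: the paper likewise applies the It\^o formula to the product function $f(x_1,\ldots,x_n)=x_1\cdots x_n$ of the martingales $I_{m_k}(\cdot;t_k,x_k)$, notes that the Walsh-integral terms have vanishing expectation and that only pairs with $m_a m_b>0$ contribute to the covariation sum, and then identifies $d\langle I_{m_a}(\cdot;t_a,x_a),I_{m_b}(\cdot;t_b,x_b)\rangle_s$ with the stated kernel to recover $I_{\mathbf{m}'(a,b)}$. Your polarization of the displayed quadratic variation and your unified treatment of the degenerate cases (at most one $m_k>0$) match the paper's separate case analysis in substance.
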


\begin{proof}
We let $m := \frac{1}{2}(m_1 + \cdots + m_n)$. If $m=0$, then all $m_1 = \cdots = m_n = 0$. Since $I_0(s;t,x) = w(t,x)$, the result is immediate. If $m=1$, then all but one of the $m_k$ are 0 and there is only one random integral. Since Walsh integral have zero expectation, the left-hand side vanishes. The right-hand side also vanishes, since the sum is empty: no pairs have both $m_a, m_b$ positive. If $m \geq 2$ and only one $m_k >0$, the same argument as in the case $m=1$ holds.

Finally, in order to prove the result for $m \geq 2$, when at least two $m_k > 0$, we can use It\^o formula with the function $f(x_1,\ldots,x_n) := x_1 \cdots x_n$ and the martingales $I_m(\cdot; t_k,x_k)$ ($k=1,\ldots,n$). Namely, given that
$$\frac{\partial f}{\partial x_k}(x_1,\ldots,x_n) = \prod_{\underset{j \neq k}{j=1}}^{n} x_j, \quad \frac{\partial^2 f}{\partial x_k^2}(x_1,\ldots,x_n) = 0, \text{ and } \frac{\partial^2 f}{\partial x_k x_l}(x_1,\ldots,x_n) = \prod_{\underset{j \neq k,l}{j=1}}^{n} x_j,$$
and given that Walsh integrals with square-integrable integrands have vanishing expectation, we obtain
\begin{align*}
I_{\mathbf{m}}(t;\mathbf{t},\mathbf{x}) &= E\left[I_{m_1}(t;t_1,x_1) \cdots I_{m_n}(t;t_n,x_n)\right]\\
	& = E[f(I_{m_1}(t;t_1,x_1), \ldots, I_{m_n}(t;t_n,x_n))] \\
	& = \sum_{(a,b) \in \mathcal{P}_n} E\left[ \int_{0}^{t} \left(\prod_{\underset{j \neq a,b}{j=1}}^{n} I_{m_j}(s;t_j,x_j)\right) d\langle I_{m_a}(\cdot; t_a,x_a), I_{m_b}(\cdot; t_b,x_b)\rangle_s \right] \\
	& = \sum_{\underset{m_a m_b > 0}{(a,b) \in \mathcal{P}_n}} \int_{0}^{t} ds \int_{\R^d} dy_1 \int_{\R^d} dy_2 \, s^{2H-1} p_{t_a-s}(x_a - y_1) f(y_1-y_2) p_{t_b-s}(x_b - y_2) \\
	&\qquad \times E\left[ \left(\prod_{\underset{j \neq a,b}{j=1}}^{n} I_{m_j}(r;t_j,x_j)\right) I_{m_a - 1}(s;s,y_1), I_{m_b - 1}(s;s,y_2) \right]. \\
\end{align*}
Notice that the double sum applies only to terms for which $m_a, m_b > 0$, since $I_0$ would have vanishing quadratic variation. The last expectation on the left-hand side is equal to $I_{\mathbf{m}'(a,b)}(s;\phi_{a,b}(\mathbf{t},s),\psi_{a,b}(\mathbf{x},y_1,y_2))$. This proves the result.
\end{proof}

Now, in order to establish the main result, we will look at the type of expectation present on the right-hand side of Theorem \ref{thm:N_mom_formula}. For $\mathbf{m}$, $\mathbf{t}$ and $\mathbf{x}$ as above, we define
\begin{align}
\label{eqn:defJ} 
\no J_{\mathbf{m}}(t;\mathbf{t},\mathbf{x}) 
& := e^{\nu_n t} E_{\mathbf{t},\mathbf{x}}\left[\prod_{k=1}^{n} \ind_{\{N_t(k) = m_k\}} \prod_{k=1}^{n} w(t - \tau^k_{m_k}, X^{k}_{\tau^k_{m_k}})\right.\\
&\left. \phantom{e^{\nu_n t}}\times\prod_{i=1}^{\frac{1}{2}(m_1 + \cdots + m_n)} \left((t-\sigma_i)^{2H-1} f(X^{a_i}_{\sigma_i} - X^{b_i}_{\sigma_i})\right)\right]. 
\end{align}

The terms $J_{\mathbf{m}}$ defined above also satisfy an inductive formula. This result is similar to Lemma 5.3 in \cite{dalang_mueller_tribe}.

\begin{lemma} \label{lemma:J}
	If $m=0$, we have
	$$J_{\mathbf{0}}(t;\mathbf{t},\mathbf{x}) = \prod_{k=1}^{n} w(t+t_k,x_k).$$
	If $m > 0$, we have
	\begin{align*}
	\lefteqn{J_{\mathbf{m}}(t;\mathbf{t},\mathbf{x})}\\
	& = \sum_{\underset{m_a m_b > 0}{(a,b) \in \mathcal{P}_n}} \int_{0}^{t} ds \int_{\bR^d} dy_1 \int_{\bR^d} dy_2 \, (t-s)^{2H-1} p_{t_a+s}(x_a-y_1) f(y_1-y_2) p_{t_b+s}(x_b - y_2) \\
	& \qquad \qquad \times J_{\mathbf{m}'(a,b)}(t-s;\gamma_{a,b}(\mathbf{t},s),\psi_{a,b}(\mathbf{x},y_1,y_2)),
	\end{align*}
	where $\mathbf{m}'(a,b)$ and $\psi_{a,b}$ are defined as in Lemma \ref{lemma:I} and
	\begin{itemize}
		\item $\gamma_{a,b}(\mathbf{t},s)_a = \gamma_{a,b}(\mathbf{t},s)_b = 0$, and $\gamma_{a,b}(\mathbf{t},s)_k = t_k + s$ for $k \neq a,b$;
	\end{itemize}
\end{lemma}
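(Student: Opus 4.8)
The plan is to mirror the proof of the second moment formula (Theorem~\ref{thm:2nd_mom_formula}) and of the companion Lemma~\ref{lemma:I}, but replacing the It\^o/martingale decomposition by a \emph{first-jump decomposition} of the superposed Poisson process $(N_t(\mathcal{P}_n))$ together with the strong Markov property. Write $m := \frac{1}{2}(m_1+\cdots+m_n)$ for the total number of pair-jumps. For the base case $m=0$, all $m_k=0$, so $\prod_k \ind_{\{N_t(k)=m_k\}}$ forces $N_t(\mathcal{P}_n)=0$, an event of probability $e^{-\nu_n t}$ that exactly cancels the prefactor $e^{\nu_n t}$. On this event $\tau^k_{m_k}=\tau^k_0=-t_k$ and $X^k_{-t_k}=x_k$, so each factor $w(t-\tau^k_{m_k},X^k_{\tau^k_{m_k}})$ reduces to the deterministic value $w(t+t_k,x_k)$ and the empty product over jumps equals $1$; this yields $J_{\mathbf{0}}(t;\mathbf{t},\mathbf{x})=\prod_k w(t+t_k,x_k)$.

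For $m>0$ I would condition on $\mathcal{F}:=\sigma(\sigma_1,\rho_1,X^{a_1}_{\sigma_1},X^{b_1}_{\sigma_1})$, the data of the first jump of $(N_t(\mathcal{P}_n))$. Since the superposition has rate $\nu_n$ and each of the $\nu_n$ pairs is equally likely, the first jump lands at a time $s\in[0,t]$ and is of type $(a,b)$ with ``density'' $e^{-\nu_n s}\,ds$. Before $\sigma_1=s$ no process has restarted, so $X^a$ and $X^b$ are single Brownian motions on $[-t_a,s]$ and $[-t_b,s]$; being built from independent families, their values at $s$ have joint density $p_{t_a+s}(x_a-y_1)\,p_{t_b+s}(x_b-y_2)$. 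Peeling off the $i=1$ factor $(t-\sigma_1)^{2H-1}f(X^{a_1}_{\sigma_1}-X^{b_1}_{\sigma_1})=(t-s)^{2H-1}f(y_1-y_2)$ produces exactly the integrand displayed in the lemma. Terms with $m_a m_b=0$ drop out, because the jump at $\sigma_1$ forces $N_t(a),N_t(b)\geq 1$, contradicting the indicators unless $m_a,m_b\geq 1$; this is what restricts the sum to pairs with $m_a m_b>0$.

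The core step is to evaluate the conditional expectation of the remaining factors given $\mathcal{F}$, via the strong Markov property of the Poisson processes at $\sigma_1$ and the independence and stationarity of the Brownian increments. After time $s$ the configuration is a fresh copy of the same system, run for the remaining time $t-s$, with jump counts reduced to $\mathbf{m}'(a,b)$ (one jump of each of $N(a)$ and $N(b)$ has been consumed). The bookkeeping of the starting data is the delicate point, and the only real obstacle: the pair $(a,b)$ has restarted at $\sigma_1$, so in the fresh system $X^a,X^b$ start from $y_1,y_2$ with offset $0$ (this is $\psi_{a,b}(\mathbf{x},y_1,y_2)$ together with $\gamma_{a,b}(\mathbf{t},s)_a=\gamma_{a,b}(\mathbf{t},s)_b=0$), whereas each $X^k$ with $k\neq a,b$ has \emph{not} restarted and must be re-encoded as a Brownian motion still issued from $x_k$ but now at absolute time $-t_k$ relative to the new origin $s$, i.e.\ with offset $t_k+s$ (this is $\gamma_{a,b}(\mathbf{t},s)_k=t_k+s$).

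Matching these absolute starting times carefully then identifies the conditional expectation of the remainder as $e^{-\nu_n(t-s)}J_{\mathbf{m}'(a,b)}(t-s;\gamma_{a,b}(\mathbf{t},s),\psi_{a,b}(\mathbf{x},y_1,y_2))$, where the factor $e^{-\nu_n(t-s)}$ appears because that $J$ carries its own prefactor $e^{\nu_n(t-s)}$ by definition~\eqref{eqn:defJ}. The three exponentials then combine as $e^{\nu_n t}e^{-\nu_n s}e^{-\nu_n(t-s)}=1$, and summing over admissible pairs $(a,b)$ and integrating in $s\in[0,t]$ and $y_1,y_2\in\bR^d$ produces precisely the claimed recursion. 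I expect the verification that the fresh system genuinely coincides with $J_{\mathbf{m}'(a,b)}$ under the shifted parameters $\gamma_{a,b},\psi_{a,b}$ to be the part requiring the most care, exactly as in Lemma~5.3 of \cite{dalang_mueller_tribe}.
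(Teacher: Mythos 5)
Your proposal is correct and follows essentially the same route as the paper's proof: first-jump decomposition of $(N_t(\mathcal{P}_n))$, the strong Markov property at $\sigma_1$ identifying the conditional expectation as $e^{-\nu_n(t-\sigma_1)}J_{\mathbf{m}'(a,b)}$, the density $p_{t_j+s}(x_j-\cdot)$ of $X^j_s$ under $P_{\mathbf{t},\mathbf{x}}$, and the cancellation $e^{\nu_n t}e^{-\nu_n s}e^{-\nu_n(t-s)}=1$ via the joint law of $(\sigma_1,\rho_1)$. The bookkeeping with $\gamma_{a,b}$ and $\psi_{a,b}$, and the explanation of why only pairs with $m_a m_b>0$ contribute, also match the paper's argument.
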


\begin{proof}
We let $m := \frac{1}{2}(m_1 + \cdots + m_n)$. If $m=0$, then all $m_1 = \cdots = m_n = 0$. Since $\tau^k_0 = -t_k$, and $X^k_{-t_k} = x_k$, the result follows given that $P(N_t(k) = 0 \; \forall k) = P(N_t(\mathcal{P}_n) = 0) = e^{-\nu_n t}$. 

If $m=1$, then all but one of the $m_k$ are 0. The left-hand side vanishes since it is impossible for only one process $N_t(k)$ to be non-zero. Remember that the process is first defined as $N_t(\mathcal{P}_n)$ and that the processes jump in pairs. The right-hand side also vanishes, since the sum is empty: no pairs have both $m_a, m_b$ positive. If $m \geq 2$ and only one $m_k > 0$, the same argument as in the case $m=1$ holds.

Now, if $m \geq 2$ and there is at least one pair for which $m_a m_b > 0$, we will condition on the first jump time. Since there is at least one pair jumping, we must have $0 < \sigma_1 < t$. Namely, let $\mathcal{F}_1 := \sigma(\sigma_1,\rho_1,X^{a_1}_{\sigma_1},X^{a_2}_{\sigma_2})$. Then, we have
\begin{align*}
\lefteqn{J_{\mathbf{m}}(t;\mathbf{t},\mathbf{x})} \\
& = \sum_{\underset{m_a m_b > 0}{(a,b) \in \mathcal{P}_n}} e^{\nu_n t} E_{\mathbf{t}, \mathbf{x}} \left[ \ind_{\{\sigma_1 \leq t, \rho_1=(a,b)\}} (t-\sigma_1)^{2H-1} f(X^{a_1}_{\sigma_1} - X^{b_1}_{\sigma_1})\phantom{\prod_{k=a,b}}\right.\\
&\times E_{\mathbf{t},\mathbf{x}} \left[\prod_{k=a,b} \ind_{\{N_t(k) - N_{\sigma_1}(k) = m_k-1\}}  \prod_{\underset{k\neq a,b}{k=1}}^{n} \ind_{\{N_t(k) - N_{\sigma_1}(k) = m_k\}}\right. \\
& \left. \left.\times \prod_{k=1}^{n} w(t - \tau^k_{m_k}, X^{k}_{\tau^k_{m_k}})
\prod_{i=2}^{m/2} \left((t-\sigma_i)^{2H-1} f(X^{a_i}_{\sigma_i} - X^{b_i}_{\sigma_i})\right) \bigg\vert \mathcal{F}_1 \right] \right].
\end{align*}
Now, if we consider the conditional expectation above, we notice that at time $\sigma_1$, and since $\rho_1 = (a,b)$, the processes $X^a$ and $X^b$ restart independently of the past. Furthermore, the other processes $X^k$ have not had a jump since time $-t_k$. Hence, given that there is a duration $t-\sigma_1$ left until the end of the time interval, the Markov property at time $\sigma_1$ shows that the conditional expectation is equal to $e^{-\nu_n(t-\sigma_1)} J_{\mathbf{m}'(a,b)}(t-\sigma_1;\gamma_{a,b}(\mathbf{t},\sigma_1),\psi_{a,b}(\mathbf{x},X^a_{\sigma_1},X^b_{\sigma_1}))$, since for indices $a,b$ the process restarts; and for $k \neq a,b$, the process hasn't seen a jump for $t_k+\sigma_1$ units of time. Altogether, given that under $P_{\mathbf{t},\mathbf{x}}$, the random variable $X^j_s$ has density $p_{s+t_j}(x_j - \cdot)$, we obtain
 \begin{align*}
 	\lefteqn{J_{\mathbf{m}}(t;\mathbf{t},\mathbf{x})} \\
 	& = \sum_{\underset{m_a m_b > 0}{(a,b) \in \mathcal{P}_n}} e^{\nu_n t} E_{\mathbf{t}, \mathbf{x}} \left[ \ind_{\{\sigma_1 \leq t, \rho_1=(a,b)\}} (t-\sigma_1)^{2H-1} f(X^{a_1}_{\sigma_1} - X^{b_1}_{\sigma_1}) e^{-\nu_n (t-\sigma_1)} \right. \\
 	& \qquad \qquad \times \left. J_{\mathbf{m}'(a,b)}(t-\sigma_1;\gamma_{a,b}(\mathbf{t},\sigma_1),\psi_{a,b}(\mathbf{x},X^a_{\sigma_1},X^b_{\sigma_1})) \right] \\
 	& = \sum_{\underset{m_a m_b > 0}{(a,b) \in \mathcal{P}_n}} E\left[ \ind_{\{\sigma_1 \leq t, \rho_1=(a,b)\}} (t-\sigma_1)^{2H-1} e^{\nu_n \sigma_1} \int_{\bR^d} dy_1 \, p_{t_a + \sigma_1}(x_a - y_1) \right. \\
 	& \qquad \left. \times \int_{\bR^d} dy_2 \, p_{t_b + \sigma_1}(x_b - y_2) f(y_1 - y_2) J_{\mathbf{m}'(a,b)}(t-\sigma_1;\gamma_{a,b}(\mathbf{t},\sigma_1),\psi_{a,b}(\mathbf{x},y_1,y_2)) \right].
 \end{align*}
Now, given that $\sigma_1$ is distributed exponentially with parameter $\nu_n$ and that $P(\rho_1 = (a,b)) = \frac{1}{\nu_n}$, we have that for any function $g$,
$$E[\ind_{\{\sigma_1 \leq t, \rho_1=(a,b)\}} g(\sigma_1)] = \frac{1}{\nu_n} \int_{0}^{\infty} \nu_n e^{-nu_n s} \ind_{[0,t]}(s) g(s) \, ds = \int_{0}^{t} e^{-\nu_n s} g(s) \, ds.$$
Using this fact, we obtain
  \begin{align*}
 	\lefteqn{J_{\mathbf{m}}(t;\mathbf{t},\mathbf{x})} \\
 	& = \sum_{\underset{m_a m_b > 0}{(a,b) \in \mathcal{P}_n}} \int_{0}^{t} ds \, e^{-\nu_n s}  (t-s)^{2H-1} e^{\nu_n s} \int_{\bR^d} dy_1 \, p_{t_a + s}(x_a - y_1) \int_{\bR^d} dy_2 \, p_{t_b + s}(x_b - y_2)  \\
 	& \qquad \qquad \times f(y_1 - y_2)J_{\mathbf{m}'(a,b)}(t-s;\gamma_{a,b}(\mathbf{t},s),\psi_{a,b}(\mathbf{x},y_1,y_2)) \\
 	& = \sum_{\underset{m_a m_b > 0}{(a,b) \in \mathcal{P}_n}} \int_{0}^{t} ds \,  (t-s)^{2H-1} \int_{\bR^d} dy_1 \, p_{t_a + s}(x_a - y_1) \int_{\bR^d} dy_2 \, p_{t_b + s}(x_b - y_2)  \\
 	& \qquad \qquad \times f(y_1 - y_2)J_{\mathbf{m}'(a,b)}(t-s;\gamma_{a,b}(\mathbf{t},s),\psi_{a,b}(\mathbf{x},y_1,y_2))
 \end{align*}
establishing Lemma \ref{lemma:J}.
\end{proof}

We can now establish the connection between the terms $I_{\mathbf{m}}$ and $J_{\mathbf{m}}$.

\begin{lemma} \label{lemma:IJ}
	For all $\mathbf{m}$, $\mathbf{t}$, $\mathbf{x}$, and $t \leq \min\{t_1, \ldots, t_n\}$ as above, we have
	$$I_{\mathbf{m}}(t;\mathbf{t},\mathbf{x}) = J_{\mathbf{m}}(t;\mathbf{t}-t,\mathbf{x}),$$
	where the term $\mathbf{t} - t$ stands for the vector $(t_1 - t, \ldots, t_n - t)$.
\end{lemma}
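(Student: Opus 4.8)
The plan is to prove the identity by induction on the total jump count $m := \frac{1}{2}(m_1 + \cdots + m_n)$, using the two inductive integral formulas already established in Lemma~\ref{lemma:I} and Lemma~\ref{lemma:J} together with a single change of variables in the time integral. First I would dispatch the base case $m=0$: then every $m_k = 0$, so Lemma~\ref{lemma:I} gives $I_{\mathbf{0}}(t;\mathbf{t},\mathbf{x}) = \prod_{k=1}^{n} w(t_k,x_k)$, while Lemma~\ref{lemma:J} applied at the shifted argument gives $J_{\mathbf{0}}(t;\mathbf{t}-t,\mathbf{x}) = \prod_{k=1}^{n} w(t + (t_k - t),x_k) = \prod_{k=1}^{n} w(t_k,x_k)$. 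These agree, so the base case holds.

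For the inductive step, I assume the identity for every multi-index of total count $m-1$ and start from the $I$-recursion in Lemma~\ref{lemma:I}. Each summand contains the inner term $I_{\mathbf{m}'(a,b)}(s;\phi_{a,b}(\mathbf{t},s),\psi_{a,b}(\mathbf{x},y_1,y_2))$, whose multi-index $\mathbf{m}'(a,b)$ has total count $m-1$. Before applying the hypothesis I must check the admissibility constraint: since the $a$- and $b$-components of $\phi_{a,b}(\mathbf{t},s)$ equal $s$ while the remaining components equal $t_k \geq t \geq s$, we have $s \leq \min \phi_{a,b}(\mathbf{t},s) = s$, so the inductive hypothesis applies and yields
$$I_{\mathbf{m}'(a,b)}(s;\phi_{a,b}(\mathbf{t},s),\psi_{a,b}(\mathbf{x},y_1,y_2)) = J_{\mathbf{m}'(a,b)}(s;\phi_{a,b}(\mathbf{t},s)-s,\psi_{a,b}(\mathbf{x},y_1,y_2)).$$

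Next I would perform the substitution $s \mapsto t-s$ in each time integral and check that every factor aligns with the $J$-recursion of Lemma~\ref{lemma:J} evaluated at $\mathbf{t}-t$. The prefactor $s^{2H-1}$ becomes $(t-s)^{2H-1}$; the kernels $p_{t_a-s}$ and $p_{t_b-s}$ become $p_{(t_a-t)+s}$ and $p_{(t_b-t)+s}$; the time argument $s$ of $J$ becomes $t-s$; and crucially the shifted time vector transforms according to the identity $\phi_{a,b}(\mathbf{t},s)-s = \gamma_{a,b}(\mathbf{t}-t,\,t-s)$, since both sides carry $0$ in positions $a,b$ and $t_k - s$ elsewhere. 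Because $\psi_{a,b}$ involves no time variable, it is unchanged. Thus each summand becomes exactly the corresponding summand of $J_{\mathbf{m}}(t;\mathbf{t}-t,\mathbf{x})$, and summing over the pairs $(a,b)\in\mathcal{P}_n$ with $m_a m_b > 0$ completes the induction.

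The hard part will be purely the bookkeeping of the time-shift operators $\phi_{a,b}$ and $\gamma_{a,b}$: one must verify the key identity $\phi_{a,b}(\mathbf{t},s)-s = \gamma_{a,b}(\mathbf{t}-t,\,t-s)$ component by component and confirm that the reflection $s \mapsto t-s$ simultaneously converts every heat-kernel time index and the recursion's own outer time argument correctly. Everything else reduces to matching identical integrands term by term, so no estimate or convergence argument is needed.
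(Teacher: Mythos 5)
Your proposal follows essentially the same route as the paper's proof: induction on the total jump count, the base case from the $m=0$ formulas of Lemmas~\ref{lemma:I} and~\ref{lemma:J}, the inductive hypothesis applied to the inner term $I_{\mathbf{m}'(a,b)}$, the reflection $s \mapsto t-s$ in the time integral, and the componentwise verification of the key identity $\phi_{a,b}(\mathbf{t},s)-s = \gamma_{a,b}(\mathbf{t}-t,t-s)$. The only cosmetic differences are that you explicitly check the admissibility constraint $s \leq \min\phi_{a,b}(\mathbf{t},s)$ (which the paper leaves implicit) while leaving the degenerate case with no pair satisfying $m_a m_b>0$ to the empty-sum convention (which the paper spells out); both are fine.
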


\begin{proof}
	We prove the result by induction on $m := m_1 + \cdots m_n$. For $m=0$, the result is true by the results of Lemmas \ref{lemma:I} and \ref{lemma:J}. Indeed,
	$$J_{\mathbf{0}}(t;\mathbf{t}-t,\mathbf{x}) = \prod_{k=1}^{n} w(t+(t_k-t),x_k) = \prod_{k=1}^{n} w(t+(t_k-t),x_k) = I_{\mathbf{m}}(t;\mathbf{t},\mathbf{x}).$$
	For $m=1$, the result is true since both terms vanish (see Lemmas \ref{lemma:I} and \ref{lemma:J}).
	
	Now, we assume that the result is proved up to $m-1$ and consider the case $m$. We assume that there exists a pair such that $m_a m_b > 0$, otherwise both term vanish and the result is direct. Lemma \ref{lemma:I} and the induction assumption (since $\sum m'(a,b)_k = \sum m_k - 2$) yield
	\begin{align*}
		\lefteqn{I_{\mathbf{m}}(t;\mathbf{t},\mathbf{x})} \\
		& = \sum_{\underset{m_a m_b > 0}{(a,b) \in \mathcal{P}_n}} \int_{0}^{t} ds \int_{\bR^d} dy_1 \int_{\bR^d} dy_2 \, s^{2H-1} p_{t_a-s}(x_a-y_1) f(y_1-y_2) p_{t_b-s}(x_b - y_2) \\
		& \qquad \qquad \times I_{\mathbf{m}'(a,b)}(s;\phi_{a,b}(\mathbf{t},s),\psi_{a,b}(\mathbf{x},y_1,y_2)) \\
		& = \sum_{\underset{m_a m_b > 0}{(a,b) \in \mathcal{P}_n}} \int_{0}^{t} ds \int_{\bR^d} dy_1 \int_{\bR^d} dy_2 \, s^{2H-1} p_{t_a-s}(x_a-y_1) f(y_1-y_2) p_{t_b-s}(x_b - y_2) \\
		& \qquad \qquad \times J_{\mathbf{m}'(a,b)}(s;\phi_{a,b}(\mathbf{t},s)-s,\psi_{a,b}(\mathbf{x},y_1,y_2)).
	\end{align*}
	Using the change of variables $r = t-s$, we obtain
	\begin{align*}
		\lefteqn{I_{\mathbf{m}}(t;\mathbf{t},\mathbf{x})} \\
		& = \sum_{\underset{m_a m_b > 0}{(a,b) \in \mathcal{P}_n}} \int_{0}^{t} dr \int_{\bR^d} dy_1 \int_{\bR^d} dy_2 \, (t-r)^{2H-1} p_{t_a-(t-r)}(x_a-y_1) f(y_1-y_2) p_{t_b-(t-r)}(x_b - y_2) \\
		& \qquad \qquad \times J_{\mathbf{m}'(a,b)}(t-r;\phi_{a,b}(\mathbf{t},t-r)-(t-r),\psi_{a,b}(\mathbf{x},y_1,y_2)) \\ \\
		& = \sum_{\underset{m_a m_b > 0}{(a,b) \in \mathcal{P}_n}} \int_{0}^{t} dr \int_{\bR^d} dy_1 \int_{\bR^d} dy_2 \, (t-r)^{2H-1} p_{(t_a-t)+r}(x_a-y_1) f(y_1-y_2) p_{(t_b-t)+r}(x_b - y_2) \\
		& \qquad \qquad \times J_{\mathbf{m}'(a,b)}(t-r;\gamma_{a,b}(\mathbf{t}-t,r),\psi_{a,b}(\mathbf{x},y_1,y_2)) \\ \\
		& = J_{\mathbf{m}}(t;\mathbf{t}-t,\mathbf{x}),
	\end{align*}
	by the result of Lemma \ref{lemma:J}. Here, we have used the fact that $\phi_{a,b}(\mathbf{t},t-r) = \gamma_{a,b}(\mathbf{t}-t,r)$. Indeed, for indices $a,b$, we have
	$$\phi_{a,b}(\mathbf{t},t-r)_a-(t-r) = t-r - (t-r) = 0 = \gamma_{a,b}(\mathbf{t}-t,r)_a,$$
	$$\phi_{a,b}(\mathbf{t},t-r)_b-(t-r) = t-r - (t-r) = 0 = \gamma_{a,b}(\mathbf{t}-t,r)_b.$$
	Moreover, for an index $k \neq a,b$, we have
	$$\phi_{a,b}(\mathbf{t},t-r)_k-(t-r) = t_k - (t-r) = (t_k-t) + r = \gamma_{a,b}(\mathbf{t}-t,r)_k.$$
	This proves the result.
\end{proof}	

We are now ready to prove Theorem \ref{thm:N_mom_formula}.
\vskip 12pt

\begin{proof}[Proof of Theorem \ref{thm:N_mom_formula}]
	First notice that for $n=2$, the result is directly Theorem \ref{thm:2nd_mom_formula}. First of all, considering the representation \eqref{eqn:series}, it is easy to see that
	\begin{align*}
	E[u(t,x_1)\ldots u(t,x_n)] & = \sum_{m_1=0}^{\infty} \cdots \sum_{m_n = 0}^{\infty} E[I_{m_1}(t,x_1) \ldots I_{m_n}(t,x_n)] \\
	& = \sum_{m_1=0}^{\infty} \cdots \sum_{m_n = 0}^{\infty} E[I_{m_1}(t;t,x_1) \ldots I_{m_n}(t;t,x_n)] \\
	& = \sum_{m_1=0}^{\infty} \cdots \sum_{m_n = 0}^{\infty} I_{\mathbf{m}}(t;(t,\ldots,t),\mathbf{x}).
	\end{align*}

	Using the result of Lemma \ref{lemma:IJ}, we know that $$I_{\mathbf{m}}(t;(t,\ldots,t),\mathbf{x}) = J_{\mathbf{m}}(t;(0,\ldots,0),\mathbf{x}).$$ 
	Using the definition \eqref{eqn:defJ} of $J_{\mathbf{m}}$, we obtain
	\begin{align*}
		\lefteqn{E[u(t,x_1)\ldots u(t,x_n)] }\\
		& = \sum_{m_1=0}^{\infty} \cdots \sum_{m_n = 0}^{\infty} J_{\mathbf{m}}(t;(0,\ldots,0),\mathbf{x}) \\
		& = e^{\nu_n t} \sum_{m_1=0}^{\infty} \cdots \sum_{m_n = 0}^{\infty} E_{\mathbf{0},\mathbf{x}}\left[\prod_{k=1}^{n} \ind_{\{N_t(k) = m_k\}} \prod_{k=1}^{n} w(t - \tau^k_{m_k}, X^{k}_{\tau^k_{m_k}}) \right. \\
		& \left. \qquad \times \prod_{i=1}^{\frac{1}{2}(m_1 + \cdots + m_n)} \left((t-\sigma_i)^{2H-1} f(X^{a_i}_{\sigma_i} - X^{b_i}_{\sigma_i})\right)\right] \\
		& = e^{\nu_n t} E_{\mathbf{0},\mathbf{x}}\left[\prod_{k=1}^{n} w(t - \tau^k_{N_t(k)}, X^{k}_{\tau^k_{N_t(k)}}) \prod_{i=1}^{N_t(\mathcal{P}_n)} \left((t-\sigma_i)^{2H-1} f(X^{a_i}_{\sigma_i} - X^{b_i}_{\sigma_i})\right)\right], 
	\end{align*}
	where we have identified $N_t(k) = m_k$ and $N_t(\mathcal{P}_n) = \frac{1}{2}(m_1+\cdots+m_n)$. The result is proved.
\end{proof}
\section{Wave equation formula}

In this section, we will establish a similar representation for the solution to the wave equation, rather than the heat equation. First notice that we cannot directly rely on the fact that $\Delta$ is the generator of Brownian motion, since that would not directly relate to the wave kernel. Indeed, there is no Markov process for which the wave kernel would directly be the probability density function. However, even though this held for the heat equation, it was not necessary. The fact that the wave kernel at a fixed time $t$ can be written as a probability density function is sufficient for the representation formula.

Consider a Poisson process $(N_t)_{t \geq 0}$ with rate 1, and let $(\tau_n)_{n \geq 0}$ denote its jump times. Also, consider $\beta^{(i)} = \left(\beta^{(i)}_t\right)_{t \geq 0}$ to be i.i.d copies of a stochastic process for which the densty at each time $t$ is proportional to $p_w(t,\cdot)$. (Note that $(\beta_t)$ does not have to be a Markov process.) We define a stochastic process $X = (X_t)_{t \geq 0}$ as follows: for $0 < t \leq \tau_1$, let
$$X_t = X_0 + \beta^{(1)}_t.$$
For $i \geq 1$ and $\tau_i < t \leq \tau_{i+1}$, let
$$X_t = X_{\tau_i} + \beta^{(i+1)}_{t - \tau_i}.$$
The process above is an equivalent to equation~\eqref{processformation} where this time the density of the process $(X_t)$ is given by $p_w(t,\cdot)$ (up to a renormalization constant).

For instance, in dimension $d=1$, and following \eqref{wave kernel}, we can choose the process $\beta_t = t U$, where $U$ is a uniform distribution on $[-1,1]$. In dimension $d=3$, we can choose $\beta_t = t U$, where $U$ is a uniformly distributed random variable on $\partial B(0,1)$. (See for instance \cite{dalang_mueller_tribe} for more examples of the possible choices of the process $(\beta_t)$.)

\begin{theorem}[Second moment formula]
 \label{wavethm:2nd_mom_formula}
 We have
\begin{align}
\no E[u(t,x)u(t,y)] &= e^t E_{x,y}\left[w(t - \tau_{N_t}, X^{1}_{\tau_{N_t}}) w(t - \tau_{N_t}, X^{2}_{\tau_{N_t}}) \phantom{\prod_{i=1}^{N_t}}\right.\\
&\phantom{e^t E_{x,y}} \times \left.\prod_{i=1}^{N_t} \left((t-\tau_i)^{2H-1}(\tau_i-\tau_{i-1})^2 f(X^1_{\tau_i} - X^2_{\tau_i})\right)\right],
\end{align}
where $X^1$ and $X^2$ are two i.i.d copies of the process $X$ defined above.
\end{theorem}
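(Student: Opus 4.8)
The plan is to mirror the argument for the heat equation in Theorem~\ref{thm:2nd_mom_formula}, the only structural change being the replacement of the heat kernel $p_h$ by the wave kernel $p_w$. First I would expand the solution in its chaos series $u(t,x) = \sum_{m=0}^{\infty} I_m(t,x)$, where $I_0 = w$ and, for $m \geq 1$,
\[
I_m(t,x) = \int_0^t \int_{\R^d} s^{H-1/2}\, p_w(t-s,x-z)\, I_{m-1}(s,z)\, W_0(ds,dz),
\]
exactly as in \eqref{eqn:Im} but with the wave kernel. As before, the $I_m$ lie in distinct Wiener chaoses and are therefore orthogonal in $L^2(\Omega)$, so it suffices to compute $E[I_m(t,x)I_m(t,y)]$ for each $m$ and then sum.

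Next I would record the second-moment recursion coming from the Walsh isometry,
\begin{multline*}
E[I_m(t,x)I_m(t,y)] = \int_0^t ds\, s^{2H-1} \int_{\R^d}\int_{\R^d} p_w(t-s,x-z_1)\, p_w(t-s,y-z_2)\\
\times f(z_1-z_2)\, E[I_{m-1}(s,z_1)I_{m-1}(s,z_2)]\, dz_1\, dz_2,
\end{multline*}
which is the verbatim wave analogue of the heat recursion, the DO weight $s^{2H-1}$ and the spatial correlation $f$ entering in the same way. I would then let $J_m(t,x,y)$ denote the $m$-th term of the claimed right-hand side, i.e. the Poisson expectation with $\ind_{\{N_t=m\}}$ inserted and the product running to $i=m$, and prove by induction on $m$ the per-chaos identity $E[I_m(t,x)I_m(t,y)] = e^{t} J_m(t,x,y)$, the $e^t$ arising from the exponential densities of the Poisson jump times, exactly as in the heat proof. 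The base case $m=0$ is immediate since $P(N_t=0)=e^{-t}$ and $\tau_0=0$ gives $w(t-\tau_0, X^k_{\tau_0}) = w(t,\cdot)$.

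The inductive step is where the wave kernel makes its mark. Conditioning on $\mathcal{F}_1 = \sigma(\tau_1, X^1_{\tau_1}, X^2_{\tau_1})$ and using the strong Markov property of $(N_t)$ together with the independent restart of $X^1, X^2$ at $\tau_1$, the inner conditional expectation collapses to $J_{m-1}(t-\tau_1, X^1_{\tau_1}, X^2_{\tau_1})$, precisely as for the heat equation. The decisive difference is the conversion of the two spatial integrals into an expectation over $(X^1_{\tau_1}, X^2_{\tau_1})$: whereas $p_h(s,\cdot)$ is itself a probability density, for the wave kernel \eqref{waveforalldim} gives $\int_{\R^d} p_w(s,x)\,dx = s$, so $p_w(s,\cdot) = s\, q_s(\cdot)$ where $q_s := p_w(s,\cdot)/s$ is the density of the restarted increment $\beta_s$. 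Hence rewriting $p_w(\tau_1, x-z_1)\,p_w(\tau_1, y-z_2)$ in terms of the law of $(X^1_{\tau_1}, X^2_{\tau_1})$ produces exactly the extra factor $\tau_1 \cdot \tau_1 = (\tau_1 - \tau_0)^2$ attached to the first jump. Iterating attaches the factor $(\tau_i - \tau_{i-1})^2$ to the $i$-th jump, which is precisely the new term in the product; summing over $m$ by orthogonality of the $I_m$ then yields the stated formula.

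The main obstacle I anticipate is the careful bookkeeping of this normalization: one must verify that each of the $N_t$ restart intervals contributes its own squared length, and that the time-shifts inside $J_{m-1}(t-\tau_1,\cdot)$ align the durations $\tau_i - \tau_{i-1}$ correctly as the induction unwinds (a change of variables $s \mapsto t-s$ matching $e^{-s}$ against the compensating factors, as in the heat case). A secondary point worth noting is that, unlike in the heat equation, the driving process $\beta$ is only prescribed through its one-time marginal density $q_s = p_w(s,\cdot)/s$ and need not be Markov; one checks that the representation only ever invokes these fixed-time densities, since each restart interval is treated independently, so the absence of the Markov property causes no difficulty. This is also exactly where the restriction $d \leq 3$ is used, as it guarantees $p_w \geq 0$ so that $q_s$ is a genuine probability density.
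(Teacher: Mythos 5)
Your proposal is correct and follows essentially the same route as the paper: the paper's proof is exactly a reduction to the heat-equation argument of Theorem~\ref{thm:2nd_mom_formula}, with the single new ingredient being the renormalization $p_w(s,\cdot)=s\,q_s(\cdot)$ (since $\int_{\R^d}p_w(s,x)\,dx=s$), which is what produces the extra factors $(\tau_i-\tau_{i-1})^2$ in the product. Your additional remarks on the non-Markov nature of $\beta$ and on the role of $d\leq 3$ match the paper's own comments surrounding the theorem.
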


\begin{proof}
The proof of the wave equation second moment formula is similar to the heat equation proof, using the newly defined process $(X_t)$, rather than the one built on Brownian motion. 
The extra term $(\tau_i-\tau_{i-1})$ in the formula comes from the fact that $\displaystyle \int_{\bR^d} p_w(t,x)dx\neq 1$. Hence, the wave kernel needs to be renormalized to be represented as a probability density. In order to have the Poisson type expectation representation as in \eqref{heatexpectation}, we must use
\begin{align*}
   & \int_{\bR^d}dz_1\int_{\bR^d}dz_2 \,p_w(\tau_1,x-z_1)f(z_1-z_2) p_w(\tau_1,y-z_2)\\
    =&\int_{\bR^d}dz_1\int_{\bR^d}dz_2 \,\frac{p_w(\tau_1,x-z_1)}{\tau_1}f(z_1-z_2) \frac{p_w(\tau_1,y-z_2)}{\tau_1}\tau_1^2 \\
    =&E[\tau_1^2 f(X_{\tau_1}^1-X_{\tau_1}^2)].
\end{align*}
The rest of the proof follows the steps of the proof of Theorem \ref{thm:2nd_mom_formula} with the adjustment above for renormalization.
\end{proof}

\begin{remark}
Theorem \ref{wavethm:2nd_mom_formula} holds true for dimensions $d \leq 3$, see equation \eqref{waveforalldim}. 
\end{remark}

Extending Theorem \ref{wavethm:2nd_mom_formula} to the $n$-th moment is done in a similar way as for the heat equation. The changes are:
\begin{enumerate}
\item The processes $X^k$ are the ones corresponding to the wave kernel~\eqref{wave kernel} defined earlier.
\item The terms $\tau_{i+1}-\tau_{i}$ appear in the formula similarly as in Theorem \ref{wavethm:2nd_mom_formula}.
\end{enumerate}

The formula is provided below. For conciseness, the details of the proof are omitted.

\begin{theorem}[$n$-th moment formula] \label{thmwave:N_mom_formula}
We have
\begin{align*}
\lefteqn{E[u(t,x_1) \cdots u(t,x_n)] }\\
&= e^{t \frac{n(n-1)}{2}} E_{\mathbf{0},\mathbf{x}}\left[ \prod_{k=1}^{n} w\left(t - \tau^k_{N_t(k)}, X^{k}_{\tau^k_{N_t(k)}}\right)\right.\\
&\phantom{= e^{t \frac{n(n-1)}{2}} E_{\mathbf{0},\mathbf{x}}} 
\times \left.\prod_{i=1}^{N_t(\mathcal{P}_n)} \left((t-\sigma_i)^{2H-1}(\tau_i^{a_i}-\tau_{i-1}^{a_i})(\tau_i^{b_i}-\tau_{i-1}^{b_i}) f(X^{a_i}_{\sigma_i} - X^{b_i}_{\sigma_i})\right)\right],
\end{align*}
where $\mathbf{0} = (0, \ldots, 0)$.
\end{theorem}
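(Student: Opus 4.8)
The plan is to establish the $n$-th moment formula for the wave equation (Theorem \ref{thmwave:N_mom_formula}) by following precisely the inductive program already developed for the heat equation in Lemmas \ref{lemma:I}, \ref{lemma:J}, and \ref{lemma:IJ}, and then assembling the pieces as in the proof of Theorem \ref{thm:N_mom_formula}. The only structural difference is that the wave kernel $p_w$ does not integrate to $1$, so each restart of a process at a Poisson jump time carries a renormalization factor. Specifically, since $\int_{\bR^d} p_w(t,x)\,dx = t$ by \eqref{waveforalldim}, the density of the wave process over a time increment of length $\tau_i^k - \tau_{i-1}^k$ must be renormalized by dividing by that length, which is exactly the origin of the extra factors $(\tau_i^{a_i}-\tau_{i-1}^{a_i})(\tau_i^{b_i}-\tau_{i-1}^{b_i})$ appearing in the statement, as already illustrated in the $n=2$ case of Theorem \ref{wavethm:2nd_mom_formula}.

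First I would redefine the chaos terms $I_m(s;t,x)$ using the wave kernel $p_w(t-r,x-z)$ in place of $p_h(t-r,x-z)$, and re-run the It\^o/quadratic-variation computation of Lemma \ref{lemma:I}. The It\^o formula argument with $f(x_1,\ldots,x_n) = x_1 \cdots x_n$ is insensitive to which kernel appears, since it only uses that the $I_m(\cdot;t_k,x_k)$ are martingales and that Walsh integrals have vanishing expectation; hence the analogue of Lemma \ref{lemma:I} holds verbatim with $p_w$ replacing $p_h$. Next I would re-derive the $J_{\mathbf{m}}$ recursion of Lemma \ref{lemma:J} for the wave process $(X_t)$ built from the $\beta^{(i)}$. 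Here the conditioning on the first jump time $\sigma_1$ and the strong Markov property proceed as before, but when expressing the conditional density of $X^j_s$ one must insert the renormalized density $p_w(s+t_j, x_j - \cdot)/(s+t_j)$; tracking the compensating powers of the jump-interval lengths through the recursion produces the extra product factors. Finally, the analogue of Lemma \ref{lemma:IJ} linking $I_{\mathbf{m}}$ and $J_{\mathbf{m}}$ follows by the same induction on $m$, using the identical identity $\phi_{a,b}(\mathbf{t},t-r) = \gamma_{a,b}(\mathbf{t}-t,r)$, and then summing over all $\mathbf{m}$ exactly as in the proof of Theorem \ref{thm:N_mom_formula} yields the claimed formula.

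The main obstacle I anticipate is the careful bookkeeping of the renormalization factors: in the heat case every $p_h$ integrates to $1$, so restarting a process costs nothing, whereas for the wave process each of the $n$ copies $X^k$ is reassembled from pieces whose lengths are the inter-jump times $\tau_i^k - \tau_{i-1}^k$, and each such piece contributes a factor equal to its length when converting between the true wave kernel and its probabilistic (normalized) density. The delicate point is confirming that, after the induction, these factors reorganize into exactly the pairwise product $(\tau_i^{a_i}-\tau_{i-1}^{a_i})(\tau_i^{b_i}-\tau_{i-1}^{b_i})$ indexed by the jump $\sigma_i$ of the pair $(a_i,b_i)$, and that no spurious factors attach to the $k \neq a,b$ indices which merely ``coast'' through the interval without restarting. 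Since the paper states that the details are omitted, I would present the argument at the level of indicating how the $n=2$ computation in Theorem \ref{wavethm:2nd_mom_formula} iterates, emphasizing that each pairwise restart at $\sigma_i$ generates precisely the two length factors attached to the two processes involved in that jump.
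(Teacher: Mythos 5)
Your proposal follows exactly the route the paper indicates: the paper itself only sketches this result, stating that one repeats the heat-equation machinery (Lemmas \ref{lemma:I}, \ref{lemma:J}, \ref{lemma:IJ} and the summation in Theorem \ref{thm:N_mom_formula}) with the wave-kernel process $(X_t)$ built from the $\beta^{(i)}$, inserting the inter-jump-length renormalization factors exactly as in the second moment formula of Theorem \ref{wavethm:2nd_mom_formula}, and omits the details. Your identification of where the factors $(\tau_i^{a_i}-\tau_{i-1}^{a_i})(\tau_i^{b_i}-\tau_{i-1}^{b_i})$ arise (from $\int_{\bR^d} p_w(t,x)\,dx = t$ and the renormalized density of $X^j_s$ under $P_{\mathbf{t},\mathbf{x}}$) is correct and consistent with the paper's argument.
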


\chapter{Intermittency result for the DO noise case}
\label{chap:intermittency}
 In this section, we are going to obtain bounds on the Lyapunov exponent for equation~\eqref{eqn:DO process} and \eqref{eqnwave:DO process}, similarly to the one obtained in \cite{Foondun2008} and \cite{Mueller2009} in the case of white noise in time. This extension follows the approach of Balan and Conus\cite{Balan2016} for the parabolic case. 
 
 The bounds on the Lyapunov exponent obtained in this chapter will match exactly the bounds obtained for the equation driven by standard fractional noise. We remind that the DO noise shares the same asymptotic variance as the standard fractional noise, but their covariance structure is different (see Remark \ref{covariancedoesnotmatter}). In Chapter 5 and beyond, we will present a first result towards establishing that the specific covariance structure of the noise is not relevant for intermittency, but only the asymptotic order of the variance. 

The upper bound result is proved in Section 4.2, the lower bound result is proved in Section 4.3. The notation follows the one from Chapter 3.
First, we start with a reminder about some special functions that arise in our upcoming proofs.


The results on intermittency below will be established in the case where $u_0$ and $v_0$ (for the wave equation) are constant. We are convinced that these results will hold for any initial conditions such that $c \leq u_0 \leq C$ and (for the wave equation) $|v_0| \leq C'$, where $c,C,C'$ are universal positive constants. 

\section{Preliminaries}

The fractional nature of the noise will lead us to having to deal with some results from fractional Calculus and special functions of fractional type. We present a few useful reminders here.

\begin{definition}[Fractional Integral]
\label{fractionalintegral}
The fractional integral, also known as the Riemann–Liouville integral, associates with a real function $ f:\mathbb {R} \rightarrow \mathbb{R}$  another function $I^{\alpha} f$ of the same kind for each value of the parameter $\alpha>0$
\begin{align}
    I^{\alpha}f(t)=\frac{1}{\Gamma(\alpha)} \int_0^t (t-\tau)^{\alpha-1}f(\tau)d\tau.
\end{align}
\end{definition}

The first of the special function that we will need is the incomplete Gamma function. This function arises as a special case of the Miller-Ross function, which arises as a fractional integral of the exponential (see \cite{incompletegamma}).

\begin{definition}[Incomplete Gamma function]
\label{defincomplete}
The incomplete gamma function is defined as:
\begin{align}
    \gamma^{*}(\nu,z):=e^{-z}\sum_{m=0}^\infty \frac{z^m}{\Gamma(\nu+m+1)},
\end{align}
for all $z\in \bR$ and $\nu>0$.
\end{definition}

The Beta function, also called the Euler integral of the first kind, is a special function that is closely related to the gamma function and to binomial coefficients.

\begin{definition}[Beta function]
 The Beta function is defined by the integral:
\begin{align}
\no    B(x,y)&:=\int_0^1 t^{x-1} (1-t)^{y-1}\,dt\\
    &=\frac{\Gamma (x)\Gamma(y)}{\Gamma(x+y)},
\end{align}
where $x>0$, $y>0$.
\end{definition}

Once the incomplete Gamma function arises in the results below, we will need to understand its asymptotic behavior as $t \rightarrow \infty$. This result follows from Stirling's formula.

\begin{lemma}[Asymptotic behavior of the incomplete gamma function]
\label{incompletegamma}
Let $\gamma*$ denote the incomplete Gamma function. Then, we have
$$\lim_{t\rightarrow \infty} t^{\nu}\gamma^{*}(\nu,t)=1$$
for all $\nu>0$.
\end{lemma}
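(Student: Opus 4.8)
The plan is to compare the defining series of $\gamma^{*}(\nu,t)$ term-by-term with the exponential series, so that the factor $e^{-t}$ is absorbed, and then to read off the asymptotics from Stirling's formula applied to the ratio of Gamma values. Writing the quantity of interest as
\begin{align}
t^{\nu}\gamma^{*}(\nu,t)=e^{-t}\sum_{m=0}^{\infty}\frac{t^{m+\nu}}{\Gamma(\nu+m+1)},
\end{align}
I would first record the exact identity $e^{-t}\sum_{m=0}^{\infty}t^{m}/m!=1$, so that it suffices to show the weighted series has the same limit. Dividing the $m$-th terms of the two series gives the factor $t^{\nu}\,\Gamma(m+1)/\Gamma(\nu+m+1)$, and Stirling's formula yields $\Gamma(m+1)/\Gamma(\nu+m+1)\sim m^{-\nu}$ as $m\to\infty$, so this factor behaves like $(t/m)^{\nu}$.

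The cleanest way to organize the limit is probabilistic. Let $M$ be a Poisson random variable with mean $t$, so that $p_m:=e^{-t}t^{m}/m!$ are its point masses and
\begin{align}
t^{\nu}\gamma^{*}(\nu,t)=\sum_{m=0}^{\infty}p_m\,r_m(t),\qquad r_m(t):=\frac{m!\,t^{\nu}}{\Gamma(\nu+m+1)},
\end{align}
so that the sum equals $E[r_M(t)]$. Since $\mathrm{Var}(M)=t$, Chebyshev's inequality gives $M/t\to1$ in probability, while Stirling shows $r_m(t)=(t/m)^{\nu}(1+o(1))$ uniformly for $m$ in a window $[(1-\varepsilon)t,(1+\varepsilon)t]$; on that window the summand is trapped between two quantities tending to $(1\mp\varepsilon)^{\mp\nu}$, both of which tend to $1$ as $\varepsilon\downarrow0$, and the Poisson mass of the window tends to $1$.

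It then remains to show the two tails are negligible. For the upper tail $m>(1+\varepsilon)t$ the weight $r_m(t)$ is bounded (it is asymptotic to $(t/m)^{\nu}\le 1$ for $\nu>0$), so its contribution is at most a constant times $P(M>(1+\varepsilon)t)\to0$. The delicate part, and the main obstacle, is the lower tail $m<(1-\varepsilon)t$, which includes the small-$m$ terms where $r_m(t)$ grows like $t^{\nu}$. Here I would use a Chernoff/large-deviation bound for the Poisson law, writing a term with $m=\theta t$ as $e^{-t}\,t^{m+\nu}/\Gamma(\nu+m+1)\le C\,e^{-t I(\theta)}$ with rate $I(\theta)=\theta\ln\theta-\theta+1>0$; since $I(\theta)\ge I(1-\varepsilon)>0$ on the whole lower tail, summing the at most $t$ such terms gives a bound of order $t\,e^{-t I(1-\varepsilon)}\to0$, and this exponential decay dominates the polynomial-in-$t$ growth of the weights. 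Combining the window estimate with the two tail estimates and letting $\varepsilon\downarrow0$ yields $\lim_{t\to\infty}t^{\nu}\gamma^{*}(\nu,t)=1$.

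Finally, I note that one can bypass the tail analysis entirely. Differentiating $h(t):=t^{\nu}\gamma^{*}(\nu,t)$ term-by-term and using $(m+\nu)/\Gamma(\nu+m+1)=1/\Gamma(\nu+m)$ together with a reindexing of the sum gives the clean ordinary differential equation $h'(t)=t^{\nu-1}e^{-t}/\Gamma(\nu)$ with $h(0)=0$, so that $h(t)=\Gamma(\nu)^{-1}\int_{0}^{t}s^{\nu-1}e^{-s}\,ds$ is the regularized lower incomplete Gamma function; its limit $\Gamma(\nu)/\Gamma(\nu)=1$ is then immediate from the convergence of $\int_{0}^{\infty}s^{\nu-1}e^{-s}\,ds$. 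I would present the Stirling-based argument as the main route to match the context of the statement, but this ODE shortcut is the cleaner alternative and avoids the lower-tail estimate that is the crux of the first approach.
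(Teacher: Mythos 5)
Your proposal is correct. Note that the paper does not actually prove this lemma: it defers entirely to the cited reference and only remarks that the result ``follows from Stirling's formula,'' so your Stirling-based argument is a fleshed-out version of the route the paper merely gestures at, while your closing observation is a genuinely different and cleaner proof. In the main argument the essential points are all in place: the identity $p_m r_m(t)=e^{-t}t^{m+\nu}/\Gamma(\nu+m+1)$ is exact, Stirling gives $\Gamma(m+1)/\Gamma(\nu+m+1)\sim m^{-\nu}$ uniformly once $m\geq(1-\varepsilon)t\to\infty$, the upper tail is handled by boundedness of $r_m(t)$ for $\nu>0$, and the lower tail --- which is indeed the only delicate piece, because $r_m(t)$ grows like $t^{\nu}$ near $m=0$ --- is killed by the Poisson rate function $I(\theta)=\theta\ln\theta-\theta+1$, which is bounded below by $I(1-\varepsilon)>0$ on $[0,1-\varepsilon]$ so that the exponential decay beats the polynomial factor $t^{\nu}$. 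The ODE shortcut is the better proof to record: term-by-term differentiation is legitimate because the series defines an entire function, the telescoping via $(m+\nu)/\Gamma(\nu+m+1)=1/\Gamma(\nu+m)$ is exact, and the resulting identity
\begin{equation*}
t^{\nu}\gamma^{*}(\nu,t)=\frac{1}{\Gamma(\nu)}\int_{0}^{t}s^{\nu-1}e^{-s}\,ds
\end{equation*}
reduces the lemma to the convergence of the Gamma integral, with no tail estimates at all. Either version would serve as a self-contained replacement for the external citation.
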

For a proof, see \cite[Appendix B]{incompletegamma}.

We are now ready to proceed to the main results regarding the Lyapunov exponent.

\section{Upper bound for the heat equation}
We will first consider that the spatial covariance function satisfies the following assumption. This is case (1) introduced earlier also referred to as the smooth noise case.

\begin{assumption}
\label{assumption:bounded}
 The spatial covariance function $f$ of the noise is bounded (hence uniformly continuous and attains its maximum at 0). We let $A_0 := f(0)$.
\end{assumption}
\begin{theorem}[Second moment upper bound with smooth noise]
\label{thm:2nd_mom_formula upper bound}
We assume that Assumptions \ref{assumption:finite var} and \ref{assumption:bounded} hold. Let $u$ be the solution to the stochastic heat equation~\eqref{eqn:DO process} with $u_0$ constant. There is a positive constant $C < \infty$ such that for all $t\geq 0$ and $x,y\in \mathbb{R}^d$,
\begin{align}
    E[u(t,x)u(t,y)]\leq Cu_0^2 \exp(2A_0 t^{2H}).
\end{align}
\end{theorem}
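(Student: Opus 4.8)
The plan is to run the Wiener chaos expansion $u(t,x) = \sum_{m \ge 0} I_m(t,x)$ from \eqref{eqn:series} through an iterated $L^2$ estimate, in the spirit of the norm method of Foondun and Khoshnevisan. Since the $I_m$ live in distinct Wiener chaoses they are orthogonal in $L^2(\Omega)$, so
\[
E[u(t,x)u(t,y)] = \sum_{m=0}^{\infty} E[I_m(t,x) I_m(t,y)].
\]
Because $u_0$ is constant we have $w \equiv u_0$, hence $I_0(t,x)=u_0$ and $E[I_0 I_0]=u_0^2$. The definition \eqref{eqn:Im} of $I_m$ together with the Walsh isometry produces exactly the recursion already derived inside the proof of Theorem~\ref{thm:2nd_mom_formula} (there written for the $J_m$):
\[
E[I_m(t,x)I_m(t,y)] = \int_0^t s^{2H-1} \int_{\bR^d}\!\int_{\bR^d} p_{t-s}(x-z_1) p_{t-s}(y-z_2) f(z_1-z_2) E[I_{m-1}(s,z_1)I_{m-1}(s,z_2)]\, dz_1\, dz_2\, ds.
\]

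First I would set $H_m(t) := \sup_{x,y \in \bR^d} E[I_m(t,x)I_m(t,y)]$. Using Assumption~\ref{assumption:bounded} to bound $f(z_1-z_2) \le A_0$ pointwise, and the fact that the nonnegative kernels $p_{t-s}(x-\cdot)$ and $p_{t-s}(y-\cdot)$ each integrate to $1$, the space convolution is dominated by $A_0 H_{m-1}(s)$ and the identity collapses to the scalar recursion
\[
H_m(t) \le A_0 \int_0^t s^{2H-1} H_{m-1}(s)\, ds, \qquad H_0(t) = u_0^2.
\]
An induction on $m$, evaluating $\int_0^t s^{2Hm-1}\, ds = t^{2Hm}/(2Hm)$ at each stage, then gives
\[
H_m(t) \le u_0^2 \frac{\left(A_0 t^{2H}\right)^m}{(2H)^m\, m!},
\]
and summing over $m$ yields $E[u(t,x)u(t,y)] \le \sum_m H_m(t) \le u_0^2 \exp\!\left(\tfrac{A_0}{2H}\, t^{2H}\right)$. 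This is a bound of the claimed form $C u_0^2 \exp(2A_0 t^{2H})$; indeed, since $A_0/(2H) \le 2A_0$ whenever $H \ge 1/4$, one may take $C=1$ there, and more generally the exponent is of the asserted order $t^{2H}$.

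The step I expect to require the most care is the integrability of the weight $s^{2H-1}$, which drives everything: because $2H-1 > -1$ for every $H \in (0,1)$ the time integrals converge at the endpoint, and the resulting factor $1/(2Hm)$ at each induction step is precisely what produces the factorial $m!$ that both pins the exponent at $t^{2H}$ (rather than leaving an unsummed $t^{2Hm}$) and guarantees convergence of the series. The only other points to verify are that each $H_m(t)$ is finite—which follows by induction from $H_0 = u_0^2 < \infty$ and the convergent recursion, consistent with Theorem~\ref{thm:existence}—and that the supremum passes through the convolution, which is immediate from the pointwise bound $f \le A_0$. As an equivalent route, one could instead start from the Poisson representation of Theorem~\ref{thm:2nd_mom_formula}, substitute $w \equiv u_0$ and $f \le A_0$, and evaluate $e^{t} E_x\!\left[\prod_{i=1}^{N_t} A_0 (t-\tau_i)^{2H-1}\right]$ by conditioning on $\{N_t = m\}$, whereupon the jump times are the order statistics of $m$ i.i.d.\ uniforms on $[0,t]$; this reproduces the identical series $u_0^2 \exp\!\left(A_0 t^{2H}/(2H)\right)$.
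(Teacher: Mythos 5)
Your argument is correct, and it reaches the same intermediate quantity $u_0^2\exp\bigl(A_0 t^{2H}/(2H)\bigr)$ as the paper, but by a genuinely different route. The paper starts from the Poisson representation of Theorem~\ref{thm:2nd_mom_formula}, bounds $f\le A_0$, and shows that $h(t)=E_{x,y}\bigl[\prod_{i=1}^{N_t}A_0(t-\tau_i)^{2H-1}\bigr]$ satisfies a renewal equation which, after differentiation, becomes the separable ODE $h'(t)=(A_0t^{2H-1}-1)h(t)$, solved in closed form; you instead iterate the Walsh isometry on the chaos expansion term by term, obtaining the scalar recursion $H_m(t)\le A_0\int_0^t s^{2H-1}H_{m-1}(s)\,ds$ and summing the resulting series (your order-statistics alternative at the end is closer to the paper's starting point but still replaces the ODE by a direct evaluation of the conditional expectation). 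Your main route is more elementary and self-contained --- it needs only the isometry and the orthogonality of the $I_m$, not the full Poisson representation --- and it makes the factorial that controls convergence explicit, whereas the paper's renewal/ODE computation is shorter once Theorem~\ref{thm:2nd_mom_formula} is available and serves as the template for the $n$-th moment and lower-bound arguments that follow. Two small points to make explicit: (i) replacing $f$ by $A_0$ inside the integral and passing to the supremum requires $E[I_{m-1}(s,z_1)I_{m-1}(s,z_2)]\ge 0$, which holds by induction since the recursion is an integral of nonnegative kernels starting from $u_0^2$; (ii) your observation that matching the stated constant $\exp(2A_0t^{2H})$ requires $A_0/(2H)\le 2A_0$, i.e.\ $H\ge 1/4$, is accurate --- the paper's own final step has the same restriction implicitly --- and for general $H\in(0,1)$ the honest conclusion of both arguments is the bound with exponent $A_0t^{2H}/(2H)$.
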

This is the easiest case since the covariance function $f$ is bounded.
\begin{proof}
By Theorem \ref{thm:2nd_mom_formula} we have\\
\begin{align}
\label{second moment upper bound}
\displaystyle{|E[u(t,x)u(t,y)]|\leq u_0^2 e^th(t)},
\end{align}

where\\
\begin{align}
\displaystyle{h(t)=E_{x,y}\Bigg{[}\prod_{i=1}^{N(t)}A_0  (t-\tau_i)^{2H-1}\Bigg{]}}.
\end{align}

Using the strong Markov property at the first jump time $\tau_1$ of $N(t)$, letting $\mathcal{F}_1=\sigma(\tau_1,X_{\tau_1}^1,X_{\tau_1}^2)$ and using that the product is 1 if $N_t=0$, we see that
\begin{align}
 \no  \lefteqn{ h(t)}\\
   &=E[\mathbbm{1}_{\{N(t)=0\}}]+E\Bigg{[}\mathbbm{1}_{\{N(t)>0\}}(A_0  (t-\tau_1)^{2H-1})E\Bigg[\prod_{i=2}^{N(t)}A_0  (t-\tau_i)^{2H-1}|\mathcal{F}_1\Bigg]\Bigg{]}\no\\
    &=e^{-t}+A_0 \int_0^t (t-s)^{2H-1}h(t-s) e^{-s}\,ds.
\end{align}
Next using the substitution $s=t-s$, we obtain
\begin{align}
    h(t)=e^{-t}\Bigg(1+A_0 \int_0^t s^{2H-1}h(s)e^s\,ds\Bigg).
\end{align}
Notice that this is a renewal equation \cite[Section 4.4]{durrett2010}. Taking the derivative with respect to $t$ on both sides yields
\begin{align}
    h'(t)&=-h(t)+e^{-t}(A_0  t^{2H-1}h(t)e^t)\no\\
    &=(A_0  t^{2H-1}-1)h(t).
\end{align}
This is a separable differential equation, and thus
\begin{align}
\displaystyle{h(t)=C_1 \exp\Bigg(\frac{A_0  t^{2H}}{2H}-t\Bigg)}.
\end{align}
From \eqref{second moment upper bound} we have \\
\begin{align}
  \no  |E[u(t,x)u(t,y)]|&\leq C_1 u_0^2 e^t \exp\Bigg(\frac{A_0  t^{2H}}{2H}-t\Bigg)\\
    &= Cu_0^2 e^{2A_0  t^{2H}},
\end{align}
 where $C_1, C$ are universal constants.
\end{proof}

Theorem \ref{thm:2nd_mom_formula upper bound} has established that the second order Lyapunov exponent under DO noise corresponds to the Lyapunov exponent for the equation driven by fractional Brownian motion in the case of spatially smooth noise. Now we will investigate higher moments as well as more general correlation functions $f$ (cases (2) and (3)).

\begin{theorem}[$n$-th moment upper bound]
Let $u$ be the solution to equation \eqref{eqn:DO process} driven by the noise $W$ defined in \eqref{Do definition eqn}, where $W_0$ is any of the noises of cases (1),(2),(3) in Chapter 2. Then there exists a constant $C$ such that
\label{thm:upper bound}
\begin{align}
    E[u^n(t,x)]\leq C u_0^n \cdot \exp(C  n^{\frac{4-a}{2-a}} t^{\frac{4H-a}{2-a}}).
\end{align}
\end{theorem}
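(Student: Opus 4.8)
The plan is to avoid the Poisson moment formula of Theorem~\ref{thm:N_mom_formula} (which is better suited to the lower bound) and instead work directly from the mild formulation \eqref{eqn:solution}, following the norm-based approach of Foondun and Khoshnevisan \cite{Foondun2008}. I would set $m_n(t):=\sup_{x\in\bR^d}\big(E|u(t,x)|^n\big)^{1/n}$ and $M_n(t):=m_n(t)^2$. Since $u_0$ is constant, $w(t,x)=u_0$, so applying the triangle inequality in $L^n(\Omega)$ together with the Burkholder--Davis--Gundy inequality for Walsh martingale-measure integrals to the stochastic term in \eqref{eqn:solution} produces a constant $z_n$ with $z_n^2\le Cn$ and, after H\"older's inequality $\|u(s,y)u(s,z)\|_{n/2}\le m_n(s)^2=M_n(s)$, the bound
\begin{align}
\no M_n(t)\le 2u_0^2 + 2Cn\int_0^t s^{2H-1}\Big(\int_{\bR^{2d}} p_h(t-s,x-y)p_h(t-s,x-z)f(y-z)\,dy\,dz\Big)M_n(s)\,ds,
\end{align}
where the weight $s^{2H-1}=(s^{H-1/2})^2$ comes from the DO noise in the quadratic variation, and the inner spatial integral is independent of $x$ by translation invariance.

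Next I would evaluate that spatial integral by Plancherel's theorem: it equals $\int_{\bR^d}e^{-(t-s)|\xi|^2}\mu(d\xi)$, which under Dalang's condition is, in each of cases (1)--(3), bounded by $C(t-s)^{-a/2}$ (a constant when $a=0$, and via the scaling already performed in \eqref{eqn:l2} for the Riesz kernel). This reduces the whole problem to the linear Volterra inequality
\begin{align}
\no M_n(t)\le 2u_0^2 + \lambda\int_0^t s^{2H-1}(t-s)^{-a/2}M_n(s)\,ds,\qquad \lambda:=C'n,
\end{align}
whose kernel is integrable near $s=0$ precisely because $H>a/4$ gives $2H-1>-1$ and $a<2$ gives $-a/2>-1$.

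I would then solve this by Picard iteration, obtaining $M_n(t)\le 2u_0^2\sum_{k\ge0}\lambda^k c_k\,t^{k\theta}$ with $\theta:=2H-\tfrac a2$ and $c_k=\prod_{j=0}^{k-1}B\big(2H+j\theta,\,1-\tfrac a2\big)$, each iterate being a Beta integral of the preceding monomial. The analytic heart is the growth of $c_k$: using $B(x,b)\sim\Gamma(b)x^{-b}$ together with Stirling's formula (the same toolkit as Lemma~\ref{incompletegamma}) one gets $c_k\le C_0^{\,k}(k!)^{-b}$ with $b:=1-\tfrac a2\in(0,1)$, so the series is comparable to a Mittag--Leffler function $\sum_k (C_0\lambda t^\theta)^k/(k!)^{b}$ whose large-argument growth is $\exp\big(c(\lambda t^\theta)^{1/b}\big)$. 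Since $\theta/b=(4H-a)/(2-a)$ and $1/b=2/(2-a)$, this gives $M_n(t)\le C u_0^2\exp\big(Cn^{2/(2-a)}t^{(4H-a)/(2-a)}\big)$; taking the $(n/2)$-th power and using $1+\tfrac{2}{2-a}=\tfrac{4-a}{2-a}$ yields the stated bound on $E|u(t,x)|^n$. In the borderline second-moment instances of cases (2) and (3) this Mittag--Leffler object is exactly the Miller--Ross / incomplete-Gamma function appearing in \cite{Balan2016}.

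The main obstacle is precisely this last estimate. One must control the product of Beta functions sharply enough that the exponent of $k!$ is exactly $b=1-\tfrac a2$, with no residual polynomial-in-$k$ factor to spoil the Mittag--Leffler comparison, while simultaneously keeping exact track of the $n$-dependence of the BDG constant ($z_n^2\sim n$). It is the interaction of this $\sqrt n$ with the final $(n/2)$-th power that upgrades the naive $n^{2/(2-a)}$ inside the exponential to the claimed $n^{(4-a)/(2-a)}$; mishandling either factor changes the Lyapunov exponent. A secondary technical point is that cases (2) and (3) require the constant in the spatial Fourier integral to remain uniform as $\alpha\to1$, which is where the limiting argument of \cite{Balan2016} is needed.
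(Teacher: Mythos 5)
Your proposal is correct, and for case (2) it is essentially the computation the paper performs, just organized differently: the paper runs the same Burkholder--H\"older chain to reach the same Volterra-type structure, but instead of iterating it, it introduces the weighted norm $\mathcal{N}_{\beta,\gamma,k}(u)=\sup_{s,y}e^{-\frac12\beta s^\gamma}\norm{u(s,y)}_k$ and closes a fixed-point inequality by showing $\sup_t e^{-\beta t^\gamma}\Upsilon_\beta(t)<\infty$ via fractional integrals and the incomplete Gamma function, reading off $\gamma=\frac{4H-a}{2-a}$ and $\beta>k^{2/(2-a)}$ from the requirement $c_k[\Upsilon(\beta)]^{1/2}<1$. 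Your Picard iteration with $c_k=\prod_j B(2H+j\theta,1-\tfrac a2)$ and the Mittag--Leffler comparison $\sum_k x^k/(k!)^{b}\le c_1\exp(c_2x^{1/b})$ is the series-side view of the same asymptotics (and the paper itself invokes exactly that Mittag--Leffler lemma, Lemma~\ref{lemmaA1}, in its Chapter 8 variant of this argument). Where you genuinely diverge is case (1): the paper proves it from the Poisson moment representation of Theorem~\ref{thm:N_mom_formula}, bounding $E[A_0^{N_t(\mathcal{P}_n)}Z_{n,t}]$ by the arithmetic--geometric mean inequality and order statistics of uniforms (Lemma~\ref{lemma:upper bound}), whereas you simply set $a=0$ in the uniform scheme; your route is more economical and also dispenses with the $\alpha\to1$ limiting argument for case (3), since with $f=\delta_0$ in $d=1$ the spatial integral is computed directly as $C(t-s)^{-1/2}$. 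Two reassurances on the points you flag as obstacles: a residual factor that is merely polynomial in $k$ (as opposed to a power of $k!$) is absorbed into $C_0^k$ and cannot spoil the Mittag--Leffler exponent, since Stirling gives $B(2H+j\theta,b)=\Gamma(b)(j\theta)^{-b}(1+O(1/j))$ and $\prod_j(1+O(1/j))$ grows only polynomially; and your bookkeeping $\lambda^{1/b}=(Cn)^{2/(2-a)}$ followed by the $(n/2)$-th power, giving $1+\frac{2}{2-a}=\frac{4-a}{2-a}$, matches the paper's $\beta>k^{2/(2-a)}$ step exactly.
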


\begin{remark}
Here notice that the Riesz kernel does not satisfy Assumption \ref{assumption:bounded}, so we will use another method to prove case (2). The proof follows the ideas introduced by Foondun and Khoshnevisan \cite{Foondun2008}.
\end{remark}

Now we prove Theorem \ref{thm:upper bound} in case (1). In this case, the spatial covariance function is bounded and can be controlled directly. The core of the proof comes down to controlling the temporal terms resulting from the fractional noise.

\begin{proof}[Proof of case (1)]
By Theorem \ref{thm:N_mom_formula} and Assumption \ref{assumption:bounded}, let $x_1=x_2=...=x_n=x$. Given the constant initial condition, we have $w(t,x_i)=u_0$ for all $i$. Since $u(t,x) \geq 0 $ for all $t>0, x\in \bR^d$, we have
\begin{align}
\label{upper bound 1}
   E[u^n(t,x)]\leq u_0^n e^{t n(n-1)/2} E_{(0,x),...,(0,x)}[A_0^{N_t(\mathcal{P}_n)}Z_{n,t}],
\end{align}
where $\displaystyle{Z_{n,t}=\prod_{\ell=1}^{n} \prod_{i=1}^{N_t(\ell)}(t-\tau_i^\ell)^{H-1/2}}$.\\
By Lemma \ref{lemma:upper bound}, now we have
\begin{align}
\no E[u^n(t,x)] &\leq e^{tn(n-1)/2} u_0^nE_{(0,x),...,(0,x)}[A_0^{N_t(\mathcal{P}_n)}Z_{n,t}]\\
&\leq  u_0^n \exp(A_0  t^{2H}n^2),
\end{align}
where the second inequality comes from Lemma~\ref{lemma:upper bound} below. This proves the result in case (1).
\end{proof}
\begin{lemma}
 \label{lemma:upper bound}
There is a universal constant $C<\infty $ such that for all $n\geq 2$, $t\geq 0$ and $x\in \mathbb{R}^d$,
\begin{align}
    e^{tn(n-1)/2}E_{(0,x),...,(0,x)}[A_0^{N_t(\mathcal{P}_n)}Z_{n,t}]\leq C\exp(A_0 t^{2H}n^2).
\end{align}
\end{lemma}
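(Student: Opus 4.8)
The plan is to reduce the left-hand side to a one-dimensional renewal computation driven purely by the Poisson process $N_t(\mathcal{P}_n)$, mirroring the second-moment argument in the proof of Theorem~\ref{thm:2nd_mom_formula upper bound}. The first observation is that the Brownian motions play no role here: the integrand $A_0^{N_t(\mathcal{P}_n)} Z_{n,t}$ depends only on the jump times, not on the spatial positions $X^k$, so the expectation $E_{(0,x),\ldots,(0,x)}$ collapses to an expectation over the Poisson processes alone. Moreover, each jump time $\sigma_i$ of $N_t(\mathcal{P}_n)$, associated with a pair $\rho_i=(a_i,b_i)$, is simultaneously a jump of $N_t(a_i)$ and of $N_t(b_i)$; hence in the double product $Z_{n,t}=\prod_{\ell=1}^n\prod_{i=1}^{N_t(\ell)}(t-\tau_i^\ell)^{H-1/2}$ each $\sigma_i$ contributes the factor $(t-\sigma_i)^{H-1/2}$ exactly twice. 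Therefore $Z_{n,t}=\prod_{i=1}^{N_t(\mathcal{P}_n)}(t-\sigma_i)^{2H-1}$, and writing $\nu_n=n(n-1)/2$ the quantity to estimate becomes
$$ e^{\nu_n t}\, h(t), \qquad h(t):=E\left[\prod_{i=1}^{N_t(\mathcal{P}_n)} A_0\,(t-\sigma_i)^{2H-1}\right], $$
where $(N_t(\mathcal{P}_n))$ is a Poisson process of rate $\nu_n$ with jump times $\sigma_1<\sigma_2<\cdots$.

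Next I would set up a renewal equation for $h$ exactly as in Theorem~\ref{thm:2nd_mom_formula upper bound}, now with rate $\nu_n$ in place of $1$. Conditioning on the first jump $\sigma_1$ and using the strong Markov property of the Poisson process (the jumps after $\sigma_1$ form, on $[0,t-\sigma_1]$, a fresh rate-$\nu_n$ process, under which $t-\sigma_i=(t-\sigma_1)-\sigma_j'$), the inner conditional expectation equals $h(t-\sigma_1)$. Since $\sigma_1$ has density $\nu_n e^{-\nu_n s}$ on $[0,t]$ and the empty product contributes $e^{-\nu_n t}$, this yields
$$ h(t)=e^{-\nu_n t}+\nu_n A_0\int_0^t (t-s)^{2H-1} h(t-s)\,e^{-\nu_n s}\,ds. $$
Substituting $u=t-s$ and setting $g(t):=e^{\nu_n t}h(t)$ gives the clean integral equation $g(t)=1+\nu_n A_0\int_0^t u^{2H-1} g(u)\,du$.

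Differentiating in $t$ reduces this to the separable ODE $g'(t)=\nu_n A_0\, t^{2H-1} g(t)$ with $g(0)=1$, whose solution is $g(t)=\exp\!\big(\nu_n A_0\, t^{2H}/(2H)\big)$. Since $\nu_n=n(n-1)/2\le n^2/2$, this is bounded by $\exp\!\big(\tfrac{A_0}{4H}\,n^2 t^{2H}\big)$, which is of the claimed order $\exp(A_0 n^2 t^{2H})$ after absorbing the $1/(4H)$ factor into the constant (consistent with the looseness in the exponent constant already present in the $n=2$ case of Theorem~\ref{thm:2nd_mom_formula upper bound}). The one point requiring care — and the main technical obstacle — is the singularity of $u^{2H-1}$ at the origin when $H<1/2$: one must check that $h$ is finite and locally integrable before manipulating the renewal equation and differentiating. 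This is harmless because $2H-1>-1$ for all $H>0$, so the kernel $u^{2H-1}$ is integrable near $0$ and the fixed-point/ODE argument is justified.
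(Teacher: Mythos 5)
Your proof is correct, and it takes a genuinely different route from the one the paper uses for this lemma. The paper's proof conditions on $N_t(\mathcal{P}_n)=k$, applies the arithmetic--geometric mean inequality to the $2k$ factors of $Z_{n,t}$, replaces the resulting normalized sum by its expectation via the order-statistics representation of the jump times, and then resums the Poisson series. Your argument instead exploits the identity $Z_{n,t}=\prod_{i=1}^{N_t(\mathcal{P}_n)}(t-\sigma_i)^{2H-1}$ (each $\sigma_i$ being a jump of exactly the two processes $N_t(a_i)$ and $N_t(b_i)$, hence contributing the factor $(t-\sigma_i)^{H-1/2}$ twice), which collapses the problem to the rate-$\nu_n$ analogue of the renewal computation already carried out for Theorem~\ref{thm:2nd_mom_formula upper bound}; solving the resulting separable ODE gives the exact value $e^{\nu_n t}h(t)=\exp(\nu_n A_0 t^{2H}/(2H))$. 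This is arguably cleaner: the paper's step that substitutes $E[\sum_i U_{(i)}^{H-1/2}]$ into the $2k$-th power goes the wrong way under Jensen and is only heuristic as written, whereas your route yields an equality (one can also verify it directly, since conditionally on $N_t(\mathcal{P}_n)=k$ the unordered jump times are i.i.d.\ uniform and the product factorizes). One small caveat applies to both arguments: the constant multiplying $n^2t^{2H}$ in the exponent comes out as $A_0/(4H)$ for you (and $A_0\nu_n/((H+1/2)^2 n^2)$ for the paper), and an exponent prefactor cannot literally be ``absorbed into $C$'' when it exceeds $A_0$ (i.e.\ when $H<1/4$); this is immaterial for Theorem~\ref{thm:upper bound}, which only asserts a generic constant in the exponent, but strictly speaking the lemma as stated with the explicit constant $A_0$ is only obtained for $H$ bounded away from $0$, a looseness the paper shares.
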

\begin{proof}
The proof is inspired by \cite{Mueller2009}. Let $\displaystyle{\nu_n=\frac{n(n-1)}{2}}$, the number of the ordered pairs. Using the notation defined in Chapter 3, we have
\begin{equation}
\sum_{\rho\in \mathcal{P}_n}N_t(\rho)=N_t(\mathcal{P}_n)=\frac{1}{2}\sum_{\ell=1}^nN_t(\ell).  
\end{equation}
Here $N_t(\mathcal{P}_n)$ is a Poisson random variable with parameter $t\nu_n$. For $k \in \mathbb{N}$, given that $N_t(\mathcal{P}_n) = k$, the number of
factors in the product that deﬁnes $Z_{n,t}$ is $2k$. Temporarily we focus on $E_{(0,x),...,(0,x)}[A_0^{N_t(\mathcal{P}_n)}Z_{n,t}|N_t(\mathcal{P}_n)=k]$. By the arithmetic-geometric inequality, i.e.,for positive numbers $a_1,...,a_k$,
\begin{align}
\label{arithmeticmean}
\displaystyle{\left(\prod_{\ell=1}^ka_\ell \right)^{1/k}\leq \frac{\sum_{\ell=1}^k a_\ell}{k}}.
\end{align}
Then we have
\begin{align}
\label{eqn:lemma upper bound}
 \no\lefteqn{E_{(0,x),...,(0,x)}[A_0^{N_t(\mathcal{P}_n)}Z_{n,t}|N_t(\mathcal{P}_n)=k]}\\
 &\leq \Bigg{(}\frac{1}{2k}\sum_{\ell=1}^n\sum_{i=1}^{N_t(\ell)}(A_0^{1/2}(t-\tau_{i}^\ell)^{H-1/2})\Bigg{)}^{2k}.
\end{align}
Now we focus on the sum
\begin{align}
    \sum_{\ell=1}^n\sum_{i=1}^{N_t(\ell)}(A_0^{1/2}(t-\tau_{i}^\ell)^{H-1/2})=2\cdot A_0^{ N_t(\mathcal{P}_n)} \sum_{i=1}^{N_t(P_n)}(t-\tau_i)^{H-1/2}.
\end{align}
Since $\tau_i$ can be seen as the $i$-th ordered statistics of a sample of $N_t(\mathcal{P}_n)$ uniform random variables on $[0,t]$, we have\\
\begin{align}
\no\sum_{i=1}^{N_t(P_n)}(t-\tau_i)^{H-1/2}&=t^{H-1/2}\sum_{i=1}^{N_t(P_n)}(1-U_{(i)})^{H-1/2}\\
\label{uniform dist}
&\overset{\text{law}}{=} t^{H-1/2} \sum_{i=1}^{N_t(P_n)}U_{(i)}^{H-1/2},
\end{align}
where $U_{(i)}$ is the $i$-th order statistics of a sample of $N_t(\mathcal{P}_n)$ uniform random variables on $[0,1]$. Then after we take the expectation of the uniform distribution directly, \eqref{uniform dist} yields
\begin{align}
t^{H-1/2}\int_0^1\int_0^1...\int_0^1\sum_{i=1}^k s_i^{H-1/2}ds_1...ds_{k}=t^{H-1/2}\frac{1}{H+\frac{1}{2}}\cdot k.
\end{align}
Now we get back to equation \eqref{eqn:lemma upper bound}, we know that
\begin{align}
\label{Z_{n,t}}
\no\lefteqn{E_{(0,x)...(0,x)}[A_0^{N_t(\mathcal{P}_n)}Z_{n,t}]}\\
&=\sum_{k=0}^\infty E_{(0,x),...,(0,x)}[A_0^{N_t(\mathcal{P}_n)}Z_{n,t}|N_t(\mathcal{P}_n)=k]\cdot P(N_t(\mathcal{P}_n)=k).
\end{align}
By \eqref{Z_{n,t}} and the probability density function of the Poisson process $N_t(\mathcal{P}_n)$,
\begin{align}
    \no \lefteqn{E_{(0,x),...,(0,x)}[A_0^{N_t(\mathcal{P}_n)}Z_{n,t}]}\\
\no    &\leq e^{-\nu_n t}+ \sum_{k=1}^\infty\left(\frac{1}{2k}\cdot 2\cdot A_0^{1/2} \frac{k}{H+\frac{1}{2}}t^{H-1/2}\right)^{2k}\cdot \frac{(\nu_n t)^k e^{-\nu_n t}}{k!}\\\no
    &= e^{-\nu_n t}+e^{-\nu_n t}\sum_{k=1}^\infty \frac{1}{k!}\left( \frac{A_0\cdot  t^{2H}}{(H+\frac{1}{2})^2}\nu_n\right)^k\\
    &=e^{-\nu_n t}+\exp\left(-\nu_n t+\frac{\nu_n\cdot A_0 t^{2H}}{(H+\frac{1}{2})^2}\right).
\end{align}
Finally, in \eqref{upper bound 1}, we take the temporal term back,
\begin{align}
\no E[u^n(t,x)] &\leq e^{\nu_n t}  E_{(0,x),...,(0,x)}[A_0^{N_t(\mathcal{P}_n)}Z_{n,t}]\\
\no&\leq 1+C_1 \exp(A_0 n^2 t^{2H})\\
&\leq C \exp(A_0 n^2 t^{2H}).
\end{align}
\end{proof}

Next we prove the case of Riesz kernel for $0<\alpha<2\wedge d$. Note that a similar argument can also be applied to justify the case $\alpha=0$ (case(1)). This idea is inspired by \cite{Foondun2008}. In this case, the space covariance function is unbounded, so we need to control it as well. This is done by choosing an appropriate norm and controlling its behavior by adjusting the parameters.

\begin{proof}[Proof of case (2)]
It suffices to consider the case when $k$ is an even integer, due to \cite[Theorem \rom{3}.1.2]{carmona1994} and \cite[Section 1]{Foondun2008}.
 Denote 
 $$S_{t,s}(x,y,z):=p_{h}(t-s,x-y)p_{h}(t-s,x-z)f(y-z)s^{2H-1}.$$
 By Burkholder's inequality and \cite[Equation 2.28]{Foondun2008},we have
 \begin{align}
 \label{kth norm}
\no\lefteqn{\mathbb{E}[(u(t,x))^k]^{1/k}}\\
&\leq 1+c_k  \cdot \mathbb{E}\Bigg[\Bigg(\int_0^t\int_{\mathbb{R}^d}\int_{\mathbb{R}^d}S_{t,s}(x,y,z)  u(s,y)u(s,z)dydzds\Bigg)^{k/2}\Bigg]^{1/k}.
  \end{align}
Define the norm $\displaystyle{\norm{X}_m=\mathbb{E}[X^m]^{1/m}}$. Equation~\eqref{kth norm} becomes
 \begin{align}
 \label{Normofsolution}
 \no \lefteqn{\norm{u(t,x)}_k}\\
\no &\leq 1+c_k\norm{\int_0^t\int_{\bR^d}\int_{\bR^d}S_{t,s}(x,y,z)u(s,y)u(s,z)dydzds)}_{k/2}^{1/2}\\
    \no &\leq 1+c_k \Bigg(\int_0^t \int_{\bR^d}\int_{\bR^d}S_{t,s}(x,y,z)\norm{u(s,y)u(s,z)}_{k/2}dydzds\Bigg)^{k/2\cdot(1/k)}\\
    &\leq 1+c_k\Bigg( \int_0^t\int_{\bR^d}\int_{\bR^d}S_{t,s}(x,y,z)\norm{u(s,y)}_{k}\norm{u(s,z)}_{k}dydzds\Bigg)^{k/2\cdot(1/k)}.
    \end{align}
The inequalities can be easily derived by H\"{o}lder's inequality and the triangle inequality.

 Now we introduce a factor $e^{-\beta t^\gamma}$ to calculate the asymptotic order of $\norm{u(t,x)}_k$.
 
 Let $\mathcal{N}_{\beta,\gamma,k}(u):=\sup_{s\geq 0, y\in \bR^d}(e^{-\frac{1}{2}\beta s^\gamma}\norm{u(s,y)}_k)$. This choice of norm is motivated by \cite[Theorem 5.1]{conuskhoshnevisan2013}. The parameters have to be adjusted to match the order of the fractional noise.
 
 Let $\displaystyle{\Upsilon_{\beta}(t):=\int_0^t\int_{\bR^d}\int_{\bR^d}S_{t,s}(x,y,z)e^{ \beta(s^\gamma)}dydzds}$. Now we want to prove that $\mathcal{N}_{\beta,\gamma,k}(u)$ is finite from \eqref{Normofsolution}.
\begin{align}
\label{gronwall}
 \no   \lefteqn{e^{-\frac{1}{2}\beta t^\gamma}\norm{u(t,x)}_k} \\
\no &\leq 1+c_k\Bigg( \int_0^t\int_{\bR^d}\int_{\bR^d}S_{t,s}(x,y,z)e^{-\frac{\beta}{2}s^\gamma}\norm{u(s,y)}_{k}e^{-\frac{\beta}{2}s^\gamma}\norm{u(s,z)}_{k}e^{\beta s^\gamma}dydzds\Bigg)^{k/2\cdot(1/k)}\\
\no   &\leq  e^{-\frac{1}{2}\beta t^\gamma}+ \mathcal{N}_{\beta,\gamma,k}(u)\cdot c_ke^{-\frac{1}{2}\beta  t^\gamma}\Bigg(\int_0^t\int_{\bR^d}\int_{\bR^d}S_{t,s}(x,y,z)e^{\beta  s^\gamma}dydzds\Bigg)^{1/2}\\
&=   e^{-\frac{1}{2}\beta t^\gamma} + \mathcal{N}_{\beta,\gamma,k}(u) c_k [e^{-\beta t^{\gamma}}\Upsilon_{\beta}(t)]^{1/2}.
\end{align}
We define
\begin{align}
\label{upsilonbetaheat}
    \Upsilon(\beta)=\sup_{t>0} e^{-\beta t^\gamma} \Upsilon_{\beta}(t).
\end{align}

The next step is to prove that $\sup_t e^{-\beta t^\gamma}\Upsilon_\beta(t)<\infty$ so that we can take $t\rightarrow \infty $ to control the supremum and obtain the optimal order of $\gamma.$

Since the Riesz kernel covariance function for any dimension $d\geq 1$ is 
\begin{align}
    f(x)=|x|^{-\alpha}.
\end{align}
(see Definition \ref{riesz kernel}), we have
\begin{align}
\label{noeffectofdimension}
\no\int_{\mathbb{R}^d}\hat{p}_{t-s}^2(\xi)\hat{f}(\xi)d\xi&=\int_{\mathbb{R}^d}e^{-(t-s)|\xi|^2}\mu(d\xi)\\
\no&=\int_{\mathbb{R}^d}e^{-(t-s)|\xi|^2}\xi^{-(d-\alpha)}\,d\xi\\
\no &=(t-s)^{-\frac{\alpha}{2}}(\int_{\mathbb{R}^d}e^{-|\eta|^2}\eta^{-(d-\alpha)}\,d\eta)\\
&=C\cdot (t-s)^{-\frac{\alpha}{2}},
\end{align}
where we let $\mu(d\xi)=\hat{f}(\xi)d\xi$ and $\eta=(t-s)^{1/2}\xi$.\\
It turns out that the space dimension has no impact for the order in time. By \eqref{noeffectofdimension} we have
\begin{align}
\label{upsilonbeta}
\no \Upsilon_{\beta}(t)&=\int_0^t\int_{\mathbb{R}^d}\int_{\mathbb{R}^d}p_{t-s}(x-y)p_{t-s}(x-z)f(z-y)s^{2H-1}e^{\beta (s^\gamma)}dydzds\\
\no &=\int_0^t\int_{\mathbb{R}^d}\int_{\mathbb{R}^d}p_{t-s}(\tilde y)p_{t-s}(\tilde z)f(\tilde z-\tilde y)s^{2H-1}e^{\beta (s^\gamma)}d\tilde yd\tilde zds\\
\no &=\int_0^t\int_{\mathbb{R}^d}\int_{\mathbb{R}^d}p_{t-s}(\tilde y)(f*p_{t-s})(\tilde y) s^{2H-1}e^{\beta (s^\gamma)}d\tilde yds\\
\no &=\int_0^t\int_{\mathbb{R}^d}\int_{\mathbb{R}^d}\hat p_{t-s}(\tilde y)\hat p_{t-s}(\tilde y)  s^{2H-1}e^{\beta (s^\gamma)}\mu(d\xi) ds\\
\no &=C\int_0^t(t-s)^{-\frac{\alpha}{2}}s^{2H-1}e^{\beta (s^\gamma)}ds\\
 &=C \cdot I^{-\frac{\alpha}{2}+1}(\bullet^{2H-1}e^{\beta \bullet ^\gamma})(t),
\end{align}
where we have used the substitution $\tilde{z}=x-z$, $\tilde{y}=x-y$. A similar argument is in Dalang \cite[Section 2]{Dalang1999}. The operator $I$ is the fractional integral defined in Section 4.1.
Expanding the right-hand side into a Taylor series, we obtain
\begin{align}
\label{fractionalcalc}
\no    \lefteqn{I^{-\frac{\alpha}{2}+1}(\bullet^{2H-1}e^{\beta \bullet ^\gamma})(t)} \\
\no    &=\sum_{m=0}^\infty \frac{\beta^m}{m!}\int_0^\infty (t-s)^{-\frac{\alpha}{2}}\cdot s^{\gamma m+2H-1}ds\\
    &=\sum_{m=0}^\infty \frac{\beta^m}{\Gamma(m+1)} t^{\gamma m+2H-1+-\frac{1}{2}(\alpha)+1} \int_0^1 (1-r)^{-\frac{\alpha}{2}}r^{\gamma m+2H-1}\,dr.
    \end{align}
    By the definition of Beta function, we have \\
    \begin{align}
    \label{betafunction}
       \displaystyle{\int_0^1 (1-r)^{-\frac{\alpha}{2}}r^{\gamma m+2H-1}\,dr=\frac{\Gamma(\gamma m +2H)\Gamma(-\frac{\alpha}{2}+1)}{\Gamma(-\frac{\alpha}{2}+1+\gamma m +2H)}} .
    \end{align}
Inserting \eqref{betafunction} into \eqref{fractionalcalc} implies
    \begin{align}
    \label{fracresult}
\no &I^{-\frac{1}{2}(\alpha)+1}(\bullet^{2H-1}e^{\beta \bullet ^\gamma})(t)\\
\no&=\sum_{m=0}^\infty \frac{\beta^m}{\Gamma(m+1)}t^{\gamma m+2H-1-\frac{1}{2}(\alpha)+1} \frac{\Gamma(\gamma m+2H)\Gamma(-\frac{1}{2}(\alpha)+1)}{\Gamma (\gamma m+2H-\frac{1}{2}(\alpha)+1)}\\
    &=\Gamma(-\frac{\alpha}{2}+1)  \sum_{m=0}^\infty
    (\gamma m)^{-(-\frac{\alpha}{2}+1)}\frac{\beta^m (t^\gamma)^m}{\Gamma(m+1)} t^{2H-1-\frac{1}{2}(\alpha)+1}.
\end{align}
The last equality comes from 
$$\frac{\Gamma (\gamma m+2H)
}{\Gamma (\gamma m+2H-\frac{1}{2}(\alpha)+1)}\sim
(\gamma m)^{-(-\frac{1}{2}(\alpha)+1)}=(\gamma m)^{\frac{\alpha}{2}-1},$$ by Stirling's formula. Again Stirling's formula yields, $\frac{m^{-(-\frac{1}{2}\alpha+1)}}{\Gamma(m+1)}\sim \frac{1}{\Gamma(m+1-\frac{1}{2}(\alpha)+1)}$.\\
We write \eqref{fracresult} using the incomplete Gamma function defined in Definition~\ref{defincomplete}, with $z=\beta t^\gamma$, $\nu= -\frac{1}{2}(\alpha)+1$. we have
\begin{align}
I^{-\frac{1}{2}(\alpha)+1}(\bullet^{2H-1}e^{\beta \bullet ^\gamma})(t)=\Gamma (-\frac{\alpha}{2} +1)(\gamma)^{\frac{\alpha}{2}-1} e^{\beta t^\gamma} \gamma^* \left(-\frac{1}{2}\alpha+1,\beta  t^\gamma\right) t^{2H-1-\frac{1}{2}\alpha+1}.
\end{align}
Taking the temporal term $e^{-\beta t^\gamma }$ back in \eqref{upsilonbeta}, in order to satisfy $e^{-\beta t^\gamma }\Upsilon_\beta (t)<\infty$, we only need to show that
\begin{align}
 \gamma^* \left(-\frac{1}{2}\alpha+1,\beta  t^\gamma\right) t^{2H-1-\frac{1}{2}\alpha+1}<\infty \hspace{5mm} \text{asymptotically as } t\rightarrow \infty.
\end{align} 
In order for this to hold, by Lemma~\ref{incompletegamma} we need $\alpha <2$ and $\gamma \geq \frac{4H-\alpha}{2-\alpha}$. We take the optimal order, namely $\gamma=\frac{4H-\alpha}{2-\alpha}$.

Now we take the supremum on both sides to obtain
\begin{align}
\label{Nlessthan1}
\mathcal{N}_{\beta,\gamma,k}(u)=\sup_{t,x} e^{-\beta t^\gamma} \norm{u(t,x)}_k\leq 1+c_k\mathcal{N}_{\beta,\gamma,k}(u)\Bigg(\sup_t e^{-\beta t^\gamma}\Upsilon_\beta(t)\Bigg)^{1/2}.
\end{align}

To ensure $\mathcal{N}_{\beta,\gamma,k}(u)<\infty$ from \eqref{Nlessthan1}, we need $c_k[\Upsilon (\beta)]^{1/2}<1$ ,which will allow us to determine the optimal order of $\beta$. Since $ c_k \sim 2\sqrt{k}$ as $k$ goes to infinity (see Foondun-Khoshnevisan\cite{Foondun2008}), we must have $\beta^{\frac{\alpha}{4}-\frac{1}{2}}\sqrt{k} <1$, which indicates that $\beta>k^{\frac{2}{2-\alpha}}$. Therefore, we know $\norm{u(t,x)}_k$ is of order $k^{\frac{2}{2-\alpha}} t^{\frac{4H-\alpha}{2-\alpha}}$, and thus $E[u(t,x)^k]$ is of order $ \exp(k^{\frac{4-\alpha}{2-\alpha}} t^{\frac{4H-\alpha}{2-\alpha}})$.
\end{proof}
We finish by proving case (3) of Theorem \ref{thm:upper bound}. We directly use an argument out of \cite{Balan2016}.

\begin{proof}[Proof of case (3)]
By Section 7.3 in \cite{Balan2016}, since the Fourier transform of $f$ is the same as the Fourier transform of $\delta_0$ as $\alpha \rightarrow 1$, Lemmas 7.3, 7.4 and 7.5 in Balan-Conus\cite{Balan2016} still hold, and we can take the limit as $\alpha\rightarrow 1$ in the result for case (2) to obtain
\begin{align}
    E[u(t,x)^k]\leq C e^{k^3 t^{4H-1}}.
\end{align}
\end{proof}
\begin{remark}
Notice that as $H\rightarrow 1/2$, we recover the known result from Foondun-Khoshnevisan \cite{Foondun2008}.
\end{remark}

\section{Lower bound for the heat equation}
For all three cases (1)–(3), we follow the approach of Dalang and Mueller\cite{Mueller2009} to derive the lower bound result. We first introduce an assumption on the space covariance function that will serve as a symmetric equivalent to Assumption \ref{assumption:bounded} that will allow us to control $f$ from below. 

\begin{assumption}
\label{assumption:lower bound}
The covariance function $f$ has the following property: there exist $\delta>0$ and $A_0>0$ such that for $\norm{x}<2\delta, f(x) \geq A_0$.
\end{assumption}
\begin{theorem}[Lower bound for heat equation]
\label{theoremlowerbound}
Under Assumption \ref{assumption:finite var} and \ref{assumption:lower bound}, let $u$ be the solution to the stochastic heat equation~\eqref{eqn:DO process}. There exists a universal constant $c>0$ such that for all even $n\geq 2$, $x\in \mathbb{R}^d$ and $t>0$, we have
\begin{align}
E[u^n(t,x)]\geq C\exp(A  t^{\frac{4H-a}{2-a}}n^{\frac{4-a}{2-a}}).
\end{align}
\end{theorem}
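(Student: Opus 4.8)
The plan is to start from the $n$-th moment formula of Theorem~\ref{thm:N_mom_formula}, specialized to $x_1=\cdots=x_n=x$ with constant initial datum, so that every factor $w(\cdot,\cdot)=u_0$ and
\[
E[u^n(t,x)] = u_0^n\, e^{\nu_n t}\, E_{\mathbf{0},x}\!\left[\prod_{i=1}^{N_t(\mathcal{P}_n)}\big((t-\sigma_i)^{2H-1} f(X^{a_i}_{\sigma_i}-X^{b_i}_{\sigma_i})\big)\right],
\]
where $\nu_n=\tfrac{n(n-1)}{2}$. Since $f\ge 0$, I would bound this from below by restricting to a convenient event. Following Dalang--Mueller~\cite{Mueller2009} and using Assumption~\ref{assumption:lower bound}, I take $G:=\{\sup_{0\le s\le t}|X^k_s-x|<\delta \text{ for all } k\}$: on $G$ any two paths lie within $2\delta$ at every time, so each spatial factor obeys $f(X^{a_i}_{\sigma_i}-X^{b_i}_{\sigma_i})\ge A_\delta$, with $A_\delta=A_0$ in case (1) and $A_\delta=(2\delta)^{-\alpha}$ in case (2).

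The key structural observation is that, conditionally on the whole Poisson clock $\mathcal{C}$ (all $N_t(\rho)$ together with the jump data $\sigma_i,(a_i,b_i)$), the paths $X^1,\dots,X^n$ are \emph{independent} Brownian motions from $x$, since each is a concatenation of its own private Brownian pieces $B^{(k,\cdot)}$. Hence the temporal product and $A_\delta^{N_t(\mathcal{P}_n)}$ are $\mathcal{C}$-measurable, while $P(G\mid\mathcal{C})=p_\delta(t)^n$ with $p_\delta(t):=P(\sup_{[0,t]}|B|<\delta)$ \emph{independent} of $\mathcal{C}$. This decouples the estimate into a purely spatial confinement probability and a purely temporal clock expectation.

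For the clock expectation I would use that $N_t(\mathcal{P}_n)\sim\mathrm{Poisson}(\nu_n t)$ and that, given $N_t(\mathcal{P}_n)=k$, the jump times are order statistics of $k$ uniforms on $[0,t]$; symmetry of the product gives $E[\prod_{i=1}^k(t-\sigma_i)^{2H-1}]=(t^{2H-1}/(2H))^k$, and summing the Poisson series yields $E_{\mathcal{C}}[A_\delta^{N_t(\mathcal{P}_n)}\prod_i(t-\sigma_i)^{2H-1}]=\exp(-\nu_n t+\nu_n A_\delta t^{2H}/(2H))$. For the confinement factor I would invoke the standard small-ball estimate $p_\delta(t)\ge c\,\exp(-\kappa t/\delta^2)$. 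Combining these, the two factors $e^{\nu_n t}$ cancel and
\[
\ln E[u^n(t,x)]\ge n\ln(u_0 c)-\frac{\kappa n t}{\delta^2}+\frac{\nu_n A_\delta\, t^{2H}}{2H}.
\]

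Finally I would optimize over the free radius $\delta$. In case (2) ($a=\alpha$) the competing terms are $-C_1 nt\,\delta^{-2}$ and $C_2 n^2 t^{2H}\delta^{-\alpha}$; balancing them at $\delta\sim n^{-1/(2-\alpha)}t^{(1-2H)/(2-\alpha)}$ makes both of order $n^{(4-\alpha)/(2-\alpha)}t^{(4H-\alpha)/(2-\alpha)}$, and a short computation shows the maximized value is strictly positive, giving exactly the claimed exponent. Case (3) then follows by letting $\alpha\to1$ as in \cite{Balan2016}, and case (1) is the analogue with $A_\delta=A_0$ fixed. I expect the main obstacle to be precisely this balancing: the confinement forced by Assumption~\ref{assumption:lower bound} costs a factor growing like $\exp(-\kappa nt/\delta^2)$, and one must verify that the optimal radius keeps this penalty of the same order as, and ultimately dominated by, the gain $\nu_n A_\delta t^{2H}/(2H)$. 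This is automatic in case (2), where $\delta$ is genuinely free; in case (1), where $\delta$ cannot be shrunk below the fixed radius of the assumption, care is needed to restrict to the regime in which the $n^2 t^{2H}$ gain outweighs the linear-in-$t$ confinement penalty.
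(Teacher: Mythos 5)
Your route is genuinely different from the paper's. The paper never confines the whole paths: it introduces the directional event $D(t)$ (each Brownian increment between consecutive resets must land in a cone intersected with a ball) together with the jump-spacing event $C(k,n,t)$, so that the processes are provably within $\delta$ of $x$ \emph{only at the jump times} $\sigma_i$, which is all that is needed since $f$ is evaluated only there. It then conditions on a single value $N_t(\mathcal{P}_n)=k$, bounds $P(D(t)\mid N_t(\mathcal{P}_n)=k)=q^{2k}$ and $P(C(k,n,t)\mid N_t(\mathcal{P}_n)=k)$ by the Dalang--Mueller combinatorial lemma, and optimizes over $k$. The crucial feature is that every probabilistic cost scales with the number of jumps $k$, not with $t$. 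Your version instead pays a global small-ball cost $p_\delta(t)^n \ge c^n e^{-\kappa n t/\delta^2}$, computes the clock expectation \emph{exactly} (the identity $E[A_\delta^{N}\prod_i(t-\sigma_i)^{2H-1}]=\exp(-\nu_n t+\nu_n A_\delta t^{2H}/(2H))$ via the uniform order statistics is correct and cleaner than the paper's conditioning), and optimizes over $\delta$. In case (2) this is a complete and correct proof: at the optimal radius $\delta^*\sim n^{-1/(2-\alpha)}t^{(1-2H)/(2-\alpha)}$ the net exponent is $C(1-\alpha/2)\,n^{(4-\alpha)/(2-\alpha)}t^{(4H-\alpha)/(2-\alpha)}$ with a strictly positive constant since $\alpha<2$, matching the theorem. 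This is essentially the strategy the paper itself uses in Chapter 8 (confinement of independent Brownian motions plus optimization of the radius in the Feynman--Kac moment formula), transplanted to the Poisson representation; your conditional-independence decoupling given the clock is valid because each $X^k$ is itself a Brownian motion built from its own private pieces.

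The genuine gap is case (1). There Assumption \ref{assumption:lower bound} fixes $\delta$, so the radius is not a free parameter, and your lower bound reads $\exp(-\kappa n t/\delta^2 + c A_0 n^2 t^{2H})$. For the Lyapunov-exponent regime ($n$ fixed, $t\to\infty$) the confinement penalty is linear in $t$ while the gain is $t^{2H}$, so whenever $2H\le 1$ the penalty dominates and the method yields nothing; even for $H>1/2$ you only recover the claim for $t$ large relative to $n$. You flagged this, but ``restricting to the regime where the gain outweighs the penalty'' does not prove the stated theorem for $H\le 1/2$. This is exactly what the paper's $D(t)$/$C(k,n,t)$ construction is built to avoid: by allowing the paths to wander freely between resets and requiring closeness only at the $k\sim n^2t^{2H}$ jump times, the total cost is $q^{2k}\cdot P(C(k,n,t)\mid N_t(\mathcal{P}_n)=k)$, of the same order as the gain for every $H$ in the admissible range. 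To repair case (1) you would need to replace the global small-ball event by a jump-time-only confinement of this type (or restrict the theorem in case (1) to $H>1/2$).
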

\begin{remark}
Without Assumption \ref{assumption:lower bound}, the inequality $E[u^n(t,x)]\geq u_0^n$ holds for all $t\geq 0$. Indeed, by Theorem \ref{thm:N_mom_formula},
\begin{align}
E[u^n(t,x)]\geq e^{tn(n-1)/2}E_{(0,x),...,(0,x)}[\mathbbm{1}_{\{N_t(\mathcal{P}_n)=0\}}u_0^n]=u_0^n.
\end{align}
But we cannot derive the intermittency property (exponentially growth) from it.
\end{remark} 
Before proceeding with the proof, we will introduce some notations, inspired by Dalang-Mueller\cite{Mueller2009}.
For any $x,y \in \mathbb{R}^d $ with $x = y$, we consider the solid (infinite) cone $C(x,y)$ in $\mathbb{R}^d$, with vertex $y$, axis oriented in the direction of the vector $x-y$ and an angle of $\pi/4$ between the axis and any lateral side. 

By Assumption \ref{assumption:lower bound}, with the notation in Chapter 3,  we will have $f(X_{\sigma_i}^{R_i^1}-X_{\sigma_i}^{R_i^2})\geq A$ if $|X_{\sigma_i}^{R_i^1}-X_{\sigma_i}^{R_i^2}|<2\delta$. In order to guarantee that the event $|X_{\sigma_i}^{R_i^1}-X_{\sigma_i}^{R_i^2}|<2\delta$ happens, we introduce two events $D(t)$ and $C(k,n,t)$, where $D(t)$ restricts the direction of the process and $C(k,n,t)$ restricts the length of time between each jump of the Poisson process.\\
To be more specific, let's define the event $D(t)$ to be
\begin{align}
\label{eventD}
 D(t)&=\bigcap_{k=1}^n\bigcap_{i=1}^{N_t(k)} \Bigg{(}\frac{B^{(k,i)}_{\tau_{i+1}^k-\tau_i^k}}{\sqrt{\tau_{i+1}-\tau_i}}\in C(x-X_{\tau_i}^{(k)},0)\cap B(0,1) \Bigg{)}.
\end{align}
\begin{remark}
For the event $D(t)$ in \eqref{eventD}, if $d=1$, we have
\begin{align*}
D(t)&=\bigcap_{k=1}^n\bigcap_{i=1}^{N_t(k)} \Bigg{(}\{B^{(k,i)}_{\tau_{i+1}^k-\tau_i^k}\in (-\sqrt{\tau_{i+1}^k-\tau_i^k},0)\}\cap \{X_{\tau_i^k}^{(k)}-x>0\}\Bigg{)}\\
\no&\cup \Bigg{(}\{B^{(k,i)}_{\tau_{i+1}^k-\tau_i^k}\in (0,\sqrt{\tau_{i+1}^k-\tau_i^k})\}\cap \{X_{\tau_i^k}^{(k)}-x<0\}\Bigg{)}.
\end{align*}
\end{remark}
A graphical example is presented in Figure 4.1. 

\begin{figure}[ht]
\centering
\includegraphics[width=8cm]{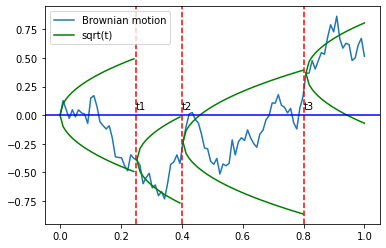}    
\caption{Illustration of the process $X_t$ starting from $x=0$.}
\end{figure}
In the graph we see that for each jump, $X_{\tau_i}$ will move in the opposite direction of $X_{\tau_{i-1}}-x$ and the end point should be in the ball $B(0,\sqrt{t})$. But between two jumps, the process can be outside the cone and the ball.

 Let $m\in \mathbb{N}$, here we assume $m=M\cdot t$, where $M$ is a positive real number indepedent of $t$ and set $\displaystyle{k=\frac{m}{\delta^2}\frac{n}{2}}$. Let $\ell=\frac{\delta^2 t}{2(m+1)}$ and, for $j=1,...,\frac{m}{\delta^2}$, let $t_j=\frac{jt\delta^2}{2(m+1)}$ and $I_j=[a_j,b_j]$, where $a_j=t_j-\ell/4$ and $b_j=t_j+\ell/4$, so that the length of $I_j$ is $\frac{\ell}{2}=\frac{\delta^2 t}{4(m+1)}$ and $I_j$ and $I_{j+1}$ are separated by an interval of length $a_{j+1}-b_j=\ell/2$.\\
Further let
\begin{align*}
C(k,n,t)=\bigcap_{j=1}^{m/\delta^2} G_j(n,k),
\end{align*}
where
\begin{align*}
    G_j(n,k)=\{N_{b_j}(\mathcal{P}_n)-N_{a_j}(\mathcal{P}_n)=\frac{n}{2}\}\cap \{N_{b_j}(q)-N_{a_j}(q)=1,q=1,...,n\}.
\end{align*}
Notice that on $C(k,n,\ell)$, $N_t(\mathcal{P}_n)=\frac{m}{\delta^2}\frac{n}{2}=k$, and for each time interval $I_j$, each process $X^{q}$ reset once. Also, for $C(k,n,t)$, we have
\begin{align*}
    \frac{\delta^2 t}{4(m+1)}\leq \tau_{i+1}^q-\tau_i^q \leq \frac{\delta^2 t}{m+1}, i=0,...,m.
\end{align*}
We can derive that $\tau_{i+1}^q-\tau_i^q\leq \delta^2 $ if $m$ large enough.
\begin{lemma}
 On the event $D(t)$, if $|X^k_{\tau_i}-x|<\delta$ and $|\tau^k_{i+1}-\tau^k_i|<\delta^2$, then we have
 \begin{align}
 |X^k_{\tau_{i+1}^k}-x|<\delta.
 \end{align}
\end{lemma}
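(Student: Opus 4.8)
The plan is to translate the geometric constraints built into the event $D(t)$ into a single inner-product inequality and then reduce the statement to an elementary estimate for a quadratic form. Write $p := X^k_{\tau_i}$ for the position just before the jump, $v := x-p$ for the vector pointing from $p$ toward the target $x$, $w := B^{(k,i)}_{\tau^k_{i+1}-\tau^k_i}$ for the Brownian increment over $[\tau^k_i,\tau^k_{i+1}]$, and $\Delta := \tau^k_{i+1}-\tau^k_i$. By the construction of $(X^k_t)$ we have $X^k_{\tau^k_{i+1}} = p+w$, so the goal is exactly $|p+w-x| = |w-v| < \delta$, and the hypotheses read $|v| < \delta$ and $\Delta < \delta^2$.

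First I would read off the two facts encoded in $D(t)$. Membership of $w/\sqrt{\Delta}$ in $B(0,1)$ gives the length bound $|w| < \sqrt{\Delta} < \delta$. Membership of $w/\sqrt{\Delta}$ in the cone $C(v,0)$, which has vertex $0$, axis in the direction of $v$ and half-angle $\pi/4$, says that $w$ makes an angle at most $\pi/4$ with $v$; since $\cos$ is decreasing on $[0,\pi/2]$ this yields $\langle w,v\rangle \ge |w|\,|v|\cos(\pi/4) = \tfrac{1}{\sqrt2}|w|\,|v|$. Expanding the square and inserting this bound gives
\begin{align*}
|w-v|^2 = |w|^2 - 2\langle w,v\rangle + |v|^2 \le |w|^2 - \sqrt2\,|w|\,|v| + |v|^2.
\end{align*}

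It then remains to show the elementary inequality $a^2 - \sqrt2\,ab + b^2 < \delta^2$ whenever $0 \le a < \sqrt{\Delta}$ and $0 \le b < \delta$, applied with $a=|w|$ and $b=|v|$. For fixed $a$ the left-hand side is convex in $b$, hence on $[0,\delta]$ it is maximized at an endpoint: at $b=0$ its value is $a^2 < \Delta < \delta^2$, and at $b=\delta$ it equals $\delta^2 + a(a-\sqrt2\,\delta) \le \delta^2$ since $0\le a < \sqrt2\,\delta$. Thus the value stays below $\delta^2$ on the whole interval once $a>0$, and the case $a=0$ reduces to $b^2<\delta^2$. This produces $|w-v|^2 < \delta^2$, proving the lemma.

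I expect this last step to be the only real subtlety. A plain triangle inequality gives merely $|w-v|<2\delta$, and even retaining the cross term but forgetting the cone would give $\sqrt2\,\delta$; the sharp constant $\delta$ is recovered precisely because the half-angle equals $\pi/4$, making the cross-term coefficient exactly $\sqrt2$, and because every defining inequality ($|w|<\sqrt\Delta$, $\sqrt\Delta<\delta$, $|v|<\delta$) is strict, which is what rules out the boundary corners $(0,\delta)$ and $(\delta,0)$ where the quadratic form attains the value $\delta^2$.
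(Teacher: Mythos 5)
Your proof is correct, but it takes a genuinely different (and more general) route than the paper's. The paper proves this lemma only in the form it takes for $d=1$: it uses the sign-based description of $D(t)$ given in the preceding remark, namely that $X^k_{\tau_i}-x\in(0,\delta)$ forces $B^{(k)}_{\tau^k_{i+1}-\tau^k_i}\in(-\delta,0)$ (and symmetrically for the other sign), so the sum of a number in $(0,\delta)$ and a number in $(-\delta,0)$ lands in $(-\delta,\delta)$. Your argument instead works directly with the general-$d$ definition of $D(t)$: the ball constraint gives $|w|<\sqrt{\Delta}<\delta$, the cone constraint gives $\langle w,v\rangle\ge \tfrac{1}{\sqrt2}|w||v|$, and the quadratic form $a^2-\sqrt2\,ab+b^2$ is then shown to stay strictly below $\delta^2$ on $[0,\delta)^2$ by convexity in $b$ and strictness of the hypotheses. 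This buys two things the paper's proof does not make explicit: it covers all dimensions $d$ in which the cone event is actually used, and it isolates exactly why the aperture $\pi/4$ appears in the definition of $C(x,y)$ — the cross-term coefficient $2\cos(\pi/4)=\sqrt2$ is the threshold at which the supremum of the quadratic form over the closed square equals $\delta^2$ and is attained only at the excluded corners $(0,\delta)$ and $(\delta,0)$. The only point worth stating with a touch more care is the degenerate case $v=0$, where the cone is ill-defined; there the conclusion follows from $|w|<\delta$ alone, which your case $b=0$ already covers.
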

\begin{proof}
If $0<X^k_{\tau_i^k}-x<\delta$, then the event $D(t)$ requires that \\
\begin{align}
\label{ballrestriction}
-\delta<B^{(k)}_{\tau^k_{i+1}-\tau_i^k}<0.
\end{align}
We duduce that 
\begin{align}
 \no   X_{\tau_{i+1}^k}^k-x&=X^k_{\tau_i^k}-x+B^{(k)}_{\tau_{i+1}^k-\tau_i^k}.
 \end{align}
The assumption and \eqref{ballrestriction}  imply
 \begin{align}
 \no-\delta&<X_{\tau_{i+1}^k}^k-x<\delta.
\end{align}
If $-\delta<X^k_{\tau_i^k}-x<0$, then the event $D(t)$ requires that \\
$$0<B^{(k)}_{\tau^k_{i+1}-\tau_i^k}<\delta.$$
Similarly as above, we use
\begin{align}
 \no   X_{\tau_{i+1}^k}^k-x&=X^k_{\tau_i^k}-x+B^{(k)}_{\tau_{i+1}^k-\tau_i^k}
 \end{align}
 to deduce that 
 \begin{align}
\no   -\delta&<X_{\tau_{i+1}^k}^k-x<\delta.
\end{align}
\end{proof}
\begin{lemma}
\label{conditionq}
 We have 
     \begin{align}
 \displaystyle{P(D(t)|N_t(\mathcal{P}_n)=k)=(\Phi(1)-1/2)^{2k}}.
    \end{align}

\end{lemma}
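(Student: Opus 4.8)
The plan is to exploit two facts: conditioning on $\{N_t(\mathcal{P}_n)=k\}$ pins down the total number of Brownian increments appearing in the event $D(t)$, and each such increment, once normalized, is a rotationally invariant Gaussian whose probability of landing in the prescribed cone is insensitive to the (random, past-dependent) direction of that cone.

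First I would count the factors. Each jump of $N_t(\mathcal{P}_n)$ at time $\sigma_i$ with pair $\rho_i=(a_i,b_i)$ restarts the two processes $X^{a_i}$ and $X^{b_i}$, so on $\{N_t(\mathcal{P}_n)=k\}$ we have $\sum_{\ell=1}^n N_t(\ell)=2k$. Equivalently, the intersection in \eqref{eventD} (whose inner product runs over $i=1,\dots,N_t(\ell)$ for each process index $\ell$) comprises exactly $2k$ increment events. Next comes the key observation: for each $(\ell,i)$, the normalized term $B^{(\ell,i)}_{\tau^\ell_{i+1}-\tau^\ell_i}/\sqrt{\tau^\ell_{i+1}-\tau^\ell_i}$ is, by the Brownian scaling property, a standard $d$-dimensional Gaussian $Z$, independent of the increment length and of all other Brownian motions. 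The corresponding event is $\{Z\in C(x-X^{(\ell)}_{\tau_i},0)\cap B(0,1)\}$, i.e. $Z$ lies in a cone of half-angle $\pi/4$ with apex at the origin, intersected with the unit ball. Since the law of $Z$ is rotationally invariant, this probability depends only on the half-angle and the radius cutoff, not on the axis $x-X^{(\ell)}_{\tau_i}$. In dimension $d=1$ (cf. the remark following \eqref{eventD}) the cone is a ray, so $C\cap B(0,1)$ is a unit interval on one side of the origin and a direct computation gives
$$p := P\!\left(Z\in C\cap B(0,1)\right)=\Phi(1)-\Phi(0)=\Phi(1)-\tfrac12 .$$

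The main obstacle is that the axis $x-X^{(\ell)}_{\tau_i}$ is not deterministic: it depends on the earlier increments (through the accumulated position of $X^\ell$), so the $2k$ events are genuinely dependent and one cannot simply multiply. To handle this I would order all $2k$ increments by their completion times and apply the tower property. Let $\mathcal{G}_i$ be the $\sigma$-field generated by the entire Poisson jump structure together with all increments that are completed strictly before the $i$-th one. Conditionally on $\mathcal{G}_i$, the cone axis of the $i$-th event is $\mathcal{G}_i$-measurable while the normalized increment $Z$ is an independent standard Gaussian; hence, by the rotational-invariance argument above, the conditional probability of the $i$-th event equals $p$ for every realization of $\mathcal{G}_i$. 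Peeling the increments off one at a time, and using that the Brownian motions are independent of all the Poisson processes — so that conditioning on $\{N_t(\mathcal{P}_n)=k\}$ does not bias the Gaussians — the successive conditional probabilities multiply to yield
$$P\!\left(D(t)\mid N_t(\mathcal{P}_n)=k\right)=p^{2k}=\left(\Phi(1)-\tfrac12\right)^{2k},$$
which is the claim. The only subtlety to check carefully is that the ordering by completion time is compatible with the filtration $\{\mathcal{G}_i\}$, i.e. that the axis of each event is measurable with respect to the information available before that increment is realized; this follows from the construction of the processes $X^\ell$ in Chapter 3, where the position $X^{(\ell)}_{\tau_i}$ is built only from increments on time intervals preceding $(\tau_i^\ell,\tau_{i+1}^\ell]$.
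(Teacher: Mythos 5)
Your proof is correct and follows essentially the same route as the paper: both reduce the event to $2k$ normalized Brownian increments, each contributing a factor $q=\Phi(1)-\tfrac{1}{2}$ by Brownian scaling (the paper averages $P(0<B_\tau<\sqrt{\tau})$ over the exponential law of $\tau$, which collapses to the same constant). Your tower-property argument ordering the increments by completion time is simply a more careful justification of the multiplication step, which the paper dispatches with a bare appeal to independence of the $B^{(k,i)}$ despite the cone axes being past-dependent.
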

\begin{proof}
Let $q=P(0<B_{\tau}<\sqrt{\tau})$. Then, by independence of the $B_t^{(k)}$, we have
     \begin{align}
     \label{qtwok}
 \displaystyle{P(D(t)|N_t(\mathcal{P}_n)=k)=\prod_{\ell=1}^n q^{N_t(\ell)}=q^{N_t(\mathcal{P}_n)}=q^{2k}}.
    \end{align}
Moreover, 
\begin{align}
\no q&=P(0<B_{\tau}<\sqrt{\tau})\\
\no&=\int_0^\infty e^{-t} P(0<B_t<\sqrt{t})\,dt\\
\no &=\int_0^\infty e^{-t} P(0<t^{-1/2}B_t<1)\,dt\\
\no&=\int_0^\infty e^{-t}P(0<Z<1)\,dt\\
\no & =P(0<Z<1)\\
\no &=\Phi(1)-1/2.
\end{align}
\end{proof}
\begin{lemma}
\label{dalangmuellerlemma}
We have
  \begin{align}
\no     P(C(k,n,t)|N_t(\mathcal{P}_n)=k)\geq k!(\sqrt{2}c)^{2k/n}e^{-k}n^k(\frac{1}{8kn})^k,
 \end{align}
where $c$ is a constant indepedent of $t$.
\end{lemma}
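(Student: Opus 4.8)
The plan is to condition on $N_t(\mathcal{P}_n)=k$ and exploit the two independent sources of randomness that remain: the locations of the $k$ jump times and the pair-labels attached to them. Given $N_t(\mathcal{P}_n)=k$, the ordered jump times $\sigma_1<\cdots<\sigma_k$ are distributed as the order statistics of $k$ i.i.d. Uniform$[0,t]$ random variables (so their joint density on the simplex is $k!/t^k$), while each $\sigma_i$ carries an independent label $\rho_i$ that is uniform over the $\nu_n=n(n-1)/2$ pairs in $\mathcal{P}_n$. This independence comes from viewing $N_t(\mathcal{P}_n)$ as the superposition $\sum_{\rho}N_t(\rho)$ of independent rate-$1$ Poisson processes. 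Since the event $C(k,n,t)$ factors into a constraint on the times (exactly $n/2$ jumps in each interval $I_j$, hence none in the gaps) and a constraint on the labels (the $n/2$ labels in each $I_j$ realize a perfect matching of $\{1,\dots,n\}$), I would write the conditional probability as an exact product $P_{\mathrm{time}}\cdot P_{\mathrm{match}}$, and only afterwards estimate it by Stirling.

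For the temporal factor there are $J:=m/\delta^2=2k/n$ intervals, each of length $\ell/2$, and we want exactly $n/2$ of the $k$ points in each one; this is consistent with placing none in the complement, since $J\cdot(n/2)=k$. Integrating $k!/t^k$ over the allowed region (equivalently, the multinomial distribution with per-interval probability $p=(\ell/2)/t=\delta^2/(4(m+1))$) gives
$$P_{\mathrm{time}}=\frac{k!}{[(n/2)!]^{J}}\left(\frac{\delta^2}{4(m+1)}\right)^{k}.$$
Using $k=\tfrac{m}{\delta^2}\tfrac{n}{2}$, i.e. $m=2\delta^2 k/n$, the per-interval probability is asymptotically $n/(8k)$, so after replacing $m+1$ by $m$ the factor $(\delta^2/4(m+1))^k$ contributes $n^k(1/(8k))^k$.

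For the labeling factor, in a single interval the $n/2$ independent uniform labels must form a perfect matching; since there are $(n-1)!!=n!/(2^{n/2}(n/2)!)$ matchings, $(n/2)!$ orderings of each, and every ordered sequence has probability $\nu_n^{-n/2}$, one gets $P_{\mathrm{match},j}=n!/(2^{n/2}\nu_n^{n/2})=n!/(n(n-1))^{n/2}$, and $P_{\mathrm{match}}=P_{\mathrm{match},j}^{J}$. Absorbing the $[(n/2)!]^{-J}$ from $P_{\mathrm{time}}$, the per-interval factor becomes $n!/((n/2)!\,(n(n-1))^{n/2})$; Stirling's formula rewrites this as a subexponential-in-$n$ prefactor (of the form $\sqrt2\,c$) times $(2/(ne))^{n/2}$, and raising to the power $J=2k/n$ yields exactly the shape $(\sqrt2 c)^{2k/n}\,2^k n^{-k}e^{-k}$.

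Combining the two pieces, the $n^k$ from $P_{\mathrm{time}}$ cancels the $n^{-k}$ from $P_{\mathrm{match}}$, the powers of $k$ assemble into $n^k(1/(8kn))^k$, and the leftover factor $2^k$ provides slack, turning the exact identity into the desired inequality $k!(\sqrt2 c)^{2k/n}e^{-k}n^k(1/(8kn))^k$. The main obstacle I anticipate is the Stirling bookkeeping for the matching probability: one must track the subexponential-in-$n$ corrections carefully, take every approximation in the lower-bound direction (Stirling lower bounds on numerator factorials and upper bounds on denominator factorials), and confirm that the resulting $c$ can be chosen independently of $t$ (it may depend on $n$, but the exponential-in-$t$ rate is untouched). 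The temporal multinomial count is routine once its structure is recognized; the matching combinatorics and the packaging of constants are where the care lies.
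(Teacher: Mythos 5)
Your proposal is correct and complete in outline: the decomposition of the conditional probability into a multinomial factor for the jump-time configuration and a perfect-matching factor for the pair labels, followed by Stirling estimates (with the spare $2^k$ absorbing the $m/(m+1)$ replacement), reproduces exactly the stated bound $k!(\sqrt{2}c)^{2k/n}e^{-k}n^k(1/(8kn))^k$. Note that the paper itself gives no argument for this lemma --- its ``proof'' is only a pointer to Equation (4.7) of Dalang--Mueller \cite{Mueller2009} --- and your derivation is essentially the one carried out there, so you have filled in the details the paper outsources rather than diverged from its approach.
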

 \begin{proof}
 The proof of this result is in Dalang-Mueller \cite[Equation (4.7)]{Mueller2009}.
 \end{proof}

We are now ready to prove Theorem \ref{theoremlowerbound} in the spatial smooth noise case.

\begin{proof}[Proof of Theorem \ref{theoremlowerbound} case (1)]
Let $\tilde{Z}_{n,t}=\prod_{\ell=1}^n\prod_{i=1}^{N_t(\ell)}(t-\tau_i^\ell)^{H-1/2}$. By formula \eqref{thm:N_mom_formula} and Assumption \ref{assumption:lower bound}, we have
\begin{align}
  \no  \lefteqn{E[u^n(t,x)]}\\
    &\geq e^{\nu_n t} E_{(0,x),...,(0,x)}[A^{N_t(\mathcal{P}_n)}\tilde{Z}_{n,t}\mathbbm{1}_{D(t)}\mathbbm{1}_{C_{k,n,t}}|N_t(\mathcal{P}_n)=k]\cdot P(N_t(\mathcal{P}_n)=k)
    \label{heatZnt}.
\end{align}
For the lower bound we can focus solely on one term in the conditioning, namely when $N_t(\mathcal{P}_n)=k$. The specific value of $k$ will be chosen later. This is valid since each term is non-negative.\\
On the event $C(k,n,t)$ we have $t-\tau_i > \frac{t}{2}$ for all $i$. Thus, using Lemma~\ref{conditionq} we obtain
\begin{align}
\lefteqn{E[A ^{N_t(\mathcal{P}_n)}\tilde{Z}_{n,t}\mathbbm{1}_{C(k,n,t)}\mathbbm{1}_{D(t)}|N_t(\mathcal{P}_n)=k]}\no\\
&\geq A^{k}q^{2k}\bigl(\frac{t}{2}\bigl)^{2k\cdot (H-1/2)}P(C(k,n,t)|N_t(\mathcal{P}_n)=k).
\end{align}

We can bound the term
 $\displaystyle{P(C(k,n,t)|N_t(\mathcal{P}_n)=k)}$ by Lemma \ref{dalangmuellerlemma}. Finally,
 \begin{align*}
E[u^n(t,x)]&\geq e^{\nu_n t} q^{2k}A^{k}\bigl(\frac{t}{2}\bigl)^{2k\cdot (H-1/2)} k!(\sqrt{2}c)^{2k/n}e^{-k}n^k\Bigl(\frac{1}{8kn}\Bigl)^k\cdot \frac{e^{-\nu_n t}(\nu_n t)^k}{k!}\\
&= \Bigg(\frac{A (\frac{t}{2})^{2H-1}q^2 (\sqrt{2}c)^{2/n}e^{-1}n \nu_n t}{8kn}\Bigg)^k.
 \end{align*}
 Now we want to make the term in the parentheses equal to $e$, so we need to set $k= A  t^{2H} q^2(2c^2)^{1/n} e^{-1}\nu_n 2^{-(2H-1)}\cdot e^{-1}$.\\
 Notice that $\nu_n\sim n^2$,
 so $\displaystyle{E[u^n(t,x)]\geq \exp(A t^{2H} (2c^2)^{1/n}q^2 e^{-1}\nu_n 2^{-(2H-1)})}$, which proves the result.
\end{proof}
Now let's prove Theorem \ref{theoremlowerbound} in case (2).
\begin{proof}[Proof of case(2)]
By Assumption~\ref{assumption:lower bound} we obtain
\begin{align}
\no\lefteqn{E[u^n(t,x)]}\\
&\geq e^{\nu_n t} E_{(0,x),...,(0,x)}[A^{N_t(\mathcal{P}_n)}\tilde{Z}_{n,t}\mathbbm{1}_{D(t)}\mathbbm{1}_{C(k,n,t)}|N_t(\mathcal{P}_n)=k]\cdot P(N_t(\mathcal{P}_n)=k).
\end{align}

We proceed as we did in the smooth noise case,
the event $D(t)$, $C(k,n,t)$ and $\tilde{Z}_{n,t}$ remaining the same as in case (1). In the Riesz kernel case, we now have $A=(2\delta)^{-\alpha}$ in Assumption~\ref{assumption:lower bound}. This gives 
  \begin{align}
\no E[u^n(t,x)]&\geq e^{\nu_n t} q^{2k}A^{k}(\frac{t}{2})^{2k\cdot (H-1/2)} k!(\sqrt{2}c)^{2k/n}e^{-k}n^k(\frac{1}{8kn})^k\cdot \frac{e^{-\nu_n t}(\nu_n t)^k}{k!}\\
\no &= \Bigg(\frac{(2\delta)^{-\alpha} (\frac{t}{2})^{2H-1}q^2 (\sqrt{2}c)^{2/n}e^{-1}n \nu_n t}{8kn}\Bigg)^k\\
&=\Bigg(\frac{(2(\frac{M\cdot t n}{2k})^{1/2})^{-\alpha} (\frac{t}{2})^{2H-1}q^2 (\sqrt{2}c)^{2/n}e^{-1}n \nu_n t}{8kn}\Bigg)^k.
 \end{align}
 Now we choose $k=\bigl(C t^{2H-\alpha/2} n^{2-\alpha/2}\bigl)^{\frac{1}{1-\alpha/2}}$, which yields
 \begin{align}
E[u^n(t,x)]\geq \exp(C  t^{\frac{4H-\alpha}{2-\alpha}}n^{\frac{4-\alpha}{2-\alpha}}).
   \end{align}
 \end{proof}
 \begin{proof}[Proof of case (3)]
 Here then by Conus-Balan \cite[Section 7.3]{Balan2016}, we can take the limit as $\alpha\rightarrow 1$ in the result. This leads to  $E[u^n(t,x)]\geq \exp(C t^{4H-1} n^{3}).$
\end{proof}

\section{Wave equation}
In this section, we establish similar upper and lower bounds as in the previous two section, but for the wave equation. We have the following theorem.

\begin{theorem}[Upper Bound for the wave equation]
Let $u$ be the solution to equation \eqref{eqnwave:DO process} driven by the noise $W$ defined in \eqref{Do definition eqn}, where $W_0$ is any of the noises of cases (1),(2),(3) in Chapter 2. Then there exists a constant $C$ such that
\label{thmwave:upper bound}

\begin{align}
    E[u^n(t,x)]\leq C u_0^n \cdot \exp(C  n^{\frac{4-a}{3-a}} t^{\frac{2H+2-a}{3-a}}).
\end{align}
\end{theorem}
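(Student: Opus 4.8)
The plan is to run the heat-equation argument of Theorem~\ref{thm:upper bound} essentially verbatim, the only genuine change being the spatial Fourier computation, where the heat kernel is replaced by the wave kernel; this single change shifts every exponent. As in the parabolic case I would split according to the three forms of $f$ and treat the Riesz kernel (case (2)) as the main one, then recover case (1) as the $a=0$ instance and case (3) by the limiting argument $\alpha \to 1$ of Balan--Conus~\cite{Balan2016}.

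For case (2) I would start from the mild formulation \eqref{eqn:solutionwave} and apply Burkholder's inequality together with Minkowski's and Hölder's inequalities exactly as in \eqref{Normofsolution}, but now with the kernel
\[
S_{t,s}^w(x,y,z) := p_w(t-s,x-y)\,p_w(t-s,x-z)\,f(y-z)\,s^{2H-1}.
\]
This produces the same recursive bound on $\norm{u(t,x)}_k$, so I would introduce the weighted norm $\mathcal{N}_{\beta,\gamma,k}$ and the quantity $\Upsilon_\beta(t) = \int_0^t\!\int_{\bR^d}\!\int_{\bR^d} S_{t,s}^w(x,y,z)\, e^{\beta s^\gamma}\,dy\,dz\,ds$ just as before. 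The essential new computation is the spatial integral, which by Plancherel equals $\int_{\bR^d} |\hat p_w(t-s,\xi)|^2 \mu(d\xi) = \int_{\bR^d} \frac{\sin^2((t-s)|\xi|)}{|\xi|^2}|\xi|^{-(d-\alpha)}\,d\xi$; the substitution $\eta = (t-s)\xi$ shows this equals $C(t-s)^{2-\alpha}$ with a finite, $d$-independent constant (this is the wave analogue of \eqref{noeffectofdimension}, and note it produces \emph{no} temporal singularity, unlike the $(t-s)^{-\alpha/2}$ of the heat case).

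From there the machinery is identical to the parabolic proof: $\Upsilon_\beta(t) = C\, I^{3-\alpha}(\bullet^{2H-1}e^{\beta\bullet^\gamma})(t)$, the fractional-integral order being $3-\alpha$ since $(t-s)^{2-\alpha}=(t-s)^{(3-\alpha)-1}$. A term-by-term Taylor expansion, the Beta-function identity, and Stirling's formula then rewrite this as a constant times $e^{\beta t^\gamma}\gamma^*(3-\alpha,\beta t^\gamma)\, t^{2H+2-\alpha}$. Applying Lemma~\ref{incompletegamma} to force $\sup_t e^{-\beta t^\gamma}\Upsilon_\beta(t) < \infty$ pins down the optimal exponent $\gamma = \frac{2H+2-\alpha}{3-\alpha}$, matching the power of $t$ in the statement. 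Finally the fixed-point condition $c_k[\Upsilon(\beta)]^{1/2} < 1$, together with $c_k \sim 2\sqrt{k}$ (from \cite{Foondun2008}) and $\Upsilon(\beta) \sim C\beta^{-(3-\alpha)}$, forces $\beta \gtrsim k^{1/(3-\alpha)}$; hence $\norm{u(t,x)}_k$ is of order $\exp(k^{1/(3-\alpha)}t^\gamma)$ and $E[u(t,x)^k] = \norm{u(t,x)}_k^k$ is of order $\exp(k^{\frac{4-\alpha}{3-\alpha}}t^{\frac{2H+2-\alpha}{3-\alpha}})$, which is the claim with $a=\alpha$.

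The step I expect to require real care is the spatial Fourier integral: one must check both its convergence and its independence of $d$. The constraint $\alpha<2$ comes from the decay $\sin^2(|\eta|)|\eta|^{\alpha-d-2}$ at infinity (integrable iff $\alpha<2$), while $\alpha>0$ controls the behaviour near the origin, where $\sin^2(|\eta|)\sim|\eta|^2$. A secondary subtlety is case (1): handling it via the Poisson representation of Theorem~\ref{thmwave:N_mom_formula} would force one to control the extra gap factors $(\tau_i^{a_i}-\tau_{i-1}^{a_i})(\tau_i^{b_i}-\tau_{i-1}^{b_i})$ arising from the non-normalized wave kernel, so it is cleaner to obtain case (1) as the bounded-covariance instance of the Burkholder argument above, where $f \le A_0$ gives the spatial bound $A_0(t-s)^2$ directly (i.e. $a=0$).
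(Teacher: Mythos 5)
Your treatment of case (2) is essentially identical to the paper's: the same Burkholder/weighted-norm machinery of Foondun--Khoshnevisan, the same Plancherel computation giving $\int_{\bR^d}\hat p_w^2(t-s,\xi)\,\mu(d\xi)=C(t-s)^{2-\alpha}$, the same identification $\Upsilon_\beta(t)=C\,I^{3-\alpha}(\bullet^{2H-1}e^{\beta\bullet^{\gamma}})(t)$, the same incomplete-Gamma asymptotics forcing $\gamma=\frac{2H+2-\alpha}{3-\alpha}$, and the same fixed-point condition $c_k[\Upsilon(\beta)]^{1/2}<1$ yielding $\beta\gtrsim k^{1/(3-\alpha)}$ and hence the exponent $k^{(4-\alpha)/(3-\alpha)}$. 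Case (3) by letting $\alpha\to1$ also matches.

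The one place you genuinely diverge is case (1). The paper proves it through the Poisson moment representation of Theorem~\ref{thmwave:N_mom_formula}, precisely confronting the gap factors $(\tau_i^{a_i}-\tau_{i-1}^{a_i})(\tau_i^{b_i}-\tau_{i-1}^{b_i})$ you wanted to avoid: the arithmetic--geometric inequality together with the telescoping bound $\sum_i(1-U_{(i)})^{H-1/2}(U_{(i)}-U_{(i-1)})\le\sum_i(U_{(i)}-U_{(i-1)})\le 1$ tames the sum at $t^{H+1/2}n$ independently of the number of jumps $k$, and the Stirling estimate $k!(2k)^{2k}\ge C(3k)!$ then produces the $t^{(2H+2)/3}n^{4/3}$ growth (Lemma~\ref{wavesmoothupperbound}). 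Your alternative --- treating case (1) as the $a=0$ instance of the Burkholder argument, with the spatial bound $A_0(t-s)^2$ --- is also valid (the paper itself remarks in the parabolic chapter that the Foondun--Khoshnevisan method covers $\alpha=0$), and it is arguably cleaner since it unifies all three cases under one scheme. What the paper's Poisson route buys in exchange is an explicit, self-contained combinatorial bound with a transparent constant $A_0^{1/3}$, and it exercises the same representation that is indispensable for the lower bound, where no Burkholder-type substitute exists. Both routes land on the stated exponents, so there is no gap; just be aware that if you discard the Poisson computation for the upper bound you will still need its machinery (events $D(t)$, $C(k,n,t)$ and the gap factors) for the companion lower-bound theorem.
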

\begin{proof}[Proof of case (1)]
The proof is similar to the heat equation case. The main difference is that
\begin{align}
Z_{n,t}=\prod_{\ell=1}^n\prod_{i=1}^{N_t(\ell)}((t-\tau_{i}^\ell)^{H-1/2}(\tau_i^\ell-\tau_{i-1}^\ell)). 
\end{align}
By Theorem \ref{thmwave:N_mom_formula} and Assumption \ref{assumption:bounded}, let $x_1=x_2=...=x_n=x$. Since the initial conditions are constant and, we have $w(t,x_i)=u_0+t v_0=w_0$. Since $u(t,x)\geq 0 $ for all $t>0,x\in \bR^d$, we have
\begin{align}
\label{wave:upper bound 1}
   E[u^n(t,x)]\leq w_0^n e^{t n(n-1)/2} E_{(0,x),...,(0,x)}[A_0^{N_t(\mathcal{P}_n)}Z_{n,t}].
\end{align}
By Lemma \ref{wavesmoothupperbound}, now we have
\begin{align}
\no E[u^n(t,x)] &\leq e^{tn(n-1)/2} u_0^nE_{(0,x),...,(0,x)}[A_0^{N_t(\mathcal{P}_n)}Z_{n,t}]\\
&\leq  u_0^n \exp(A_0  t^{2H}n^2),
\end{align}
where the second inequality comes from Lemma \ref{wavesmoothupperbound} below. This proves the result for case (1).
\end{proof}
\begin{lemma}
There is a universal constant $C<\infty $ such that for all $n\geq 2$, $t\geq 0$ and $x\in \mathbb{R}^d$,
\label{wavesmoothupperbound}
 \begin{align}
    e^{tn(n-1)/2}E_{(0,x),...,(0,x)}[A_0^{N_t(\mathcal{P}_n)}Z_{n,t}]\leq C\exp(A_0 t^{(2H+2)/3}n^{4/3}).
\end{align}
\end{lemma}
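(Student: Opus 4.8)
The plan is to mirror the proof of Lemma \ref{lemma:upper bound} (the heat-equation analog), adjusting for the extra spacing factors $(\tau_i^\ell-\tau_{i-1}^\ell)$ that now appear in $Z_{n,t}$. First I would set $\nu_n=n(n-1)/2$ and condition on $\{N_t(\mathcal{P}_n)=k\}$, under which the product defining $A_0^{N_t(\mathcal{P}_n)}Z_{n,t}$ has exactly $2k$ factors; writing $A_0^{k}=(A_0^{1/2})^{2k}$ and attaching one $A_0^{1/2}$ to each factor, the arithmetic--geometric mean inequality \eqref{arithmeticmean} gives
\begin{align*}
E_{(0,x),\ldots,(0,x)}\bigl[A_0^{N_t(\mathcal{P}_n)}Z_{n,t}\mid N_t(\mathcal{P}_n)=k\bigr]\leq E\Bigl[\Bigl(\tfrac{1}{2k}\sum_{\ell=1}^n\sum_{i=1}^{N_t(\ell)}A_0^{1/2}(t-\tau_i^\ell)^{H-1/2}(\tau_i^\ell-\tau_{i-1}^\ell)\Bigr)^{2k}\Bigm| N_t(\mathcal{P}_n)=k\Bigr].
\end{align*}

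The decisive new ingredient is the estimate of the inner sum. I would show that, for each fixed process $\ell$,
\begin{align*}
\sum_{i=1}^{N_t(\ell)}(t-\tau_i^\ell)^{H-1/2}(\tau_i^\ell-\tau_{i-1}^\ell)\leq \frac{t^{H+1/2}}{H+1/2},
\end{align*}
uniformly in the number of jumps. When $H\geq 1/2$ this holds pathwise, since the left side is a right-endpoint Riemann sum of the \emph{decreasing} function $s\mapsto(t-s)^{H-1/2}$ and is thus dominated by $\int_0^t(t-s)^{H-1/2}\,ds$. For $H<1/2$ the integrand is increasing and the pathwise comparison fails, so I would compute the expectation directly: given $N_t(\ell)=m$, the jump times are the order statistics of $m$ uniforms on $[0,t]$, and a Beta-integral computation shows that $E[(t-\tau_i)^{H-1/2}(\tau_i-\tau_{i-1})]$ depends on $i$ only through $\Gamma(m-i+H+1/2)/\Gamma(m-i+1)$; summing over $i$ and using Stirling on the prefactor $\Gamma(m+1)/\Gamma(m+H+3/2)\sim m^{-(H+1/2)}$ yields a per-process expectation of order $t^{H+1/2}$, bounded in $m$. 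Summing over the $n$ processes bounds the inner sum by $Cn\,t^{H+1/2}$.

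Crucially --- and unlike the heat case, where the analogous sum grew linearly in $k$ so the factor $1/(2k)$ cancelled --- this bound is \emph{independent of} $k$, so the $1/(2k)$ survives. Substituting it, multiplying by the Poisson weight $\frac{(\nu_n t)^k}{k!}e^{-\nu_n t}$, using $k!\geq(k/e)^k$ and $\nu_n\sim n^2/2$, the prefactors $e^{\pm\nu_n t}$ cancel and I obtain
\begin{align*}
e^{\nu_n t}E_{(0,x),\ldots,(0,x)}[A_0^{N_t(\mathcal{P}_n)}Z_{n,t}]\leq\sum_{k\geq 0}\Bigl(\frac{D}{k^3}\Bigr)^{k},\qquad D:=C\,A_0\,n^4\,t^{2H+2}.
\end{align*}
The series is dominated by its maximal term at $k^\ast\asymp D^{1/3}$, of value $\exp(O(D^{1/3}))$; since $D^{1/3}\asymp A_0^{1/3}n^{4/3}t^{(2H+2)/3}$, this produces exactly the claimed scaling $t^{(2H+2)/3}n^{4/3}$ (the $A_0$-dependence coming out as $A_0^{1/3}$, which is immaterial for intermittency and may be absorbed into the constant in the exponent).

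The hard part will be twofold. First, making the inner-sum estimate rigorous for $H<1/2$, where the singularity of $(t-s)^{H-1/2}$ near $s=t$ rules out the pathwise Riemann-sum comparison and forces the order-statistics computation above. Second, and more seriously, the passage from $E[(\mathrm{sum})^{2k}]$ to $(E[\mathrm{sum}])^{2k}$: replacing the random inner sum by its mean inside the $2k$-th power (as the heat-equation proof does) runs against Jensen's inequality, so a fully rigorous argument requires moment control on the inner sum --- or the more careful treatment of Balan--Conus \cite{Balan2016} --- to confirm that the higher moments do not destroy the $k^{-3k}$ decay driving the optimization over $k$.
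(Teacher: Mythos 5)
Your proposal follows essentially the same route as the paper's proof: condition on $N_t(\mathcal{P}_n)=k$, apply the arithmetic--geometric inequality \eqref{arithmeticmean}, bound the inner sum by $n\,t^{H+1/2}$ \emph{uniformly in $k$} so that the $1/(2k)$ survives, and then sum the resulting $(D/k^3)^k$-type series to get $\exp(C D^{1/3})$ with $D\asymp A_0 n^4 t^{2H+2}$. Two clarifications on the concerns you raise at the end: the paper's inner-sum estimate is pathwise, not in expectation --- it uses $(1-U_{(i)})^{H-1/2}\le 1$ together with the telescoping bound $\sum_i (U_{(i)}-U_{(i-1)})\le 1$ per process --- so the Jensen-type gap you worry about (passing $E[(\cdot)^{2k}]$ to $(E[\cdot])^{2k}$) does not actually occur in the wave-equation argument, unlike in the heat-equation Lemma \ref{lemma:upper bound}. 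On the other hand, that pathwise bound is valid only for $H\ge 1/2$, and your observation that the case $H<1/2$ requires a separate treatment (your order-statistics/Beta-integral computation, with the attendant moment control) identifies a genuine restriction that the paper's write-up leaves implicit.
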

\begin{proof}

Let $\displaystyle{\nu_n=\frac{n(n-1)}{2}}$,$\displaystyle{\sum_{\rho\in \mathcal{P}_n}N_t(\rho)=N_t(\mathcal{P}_n)=\frac{1}{2}\sum_{\ell=1}^nN_t(\ell)}$.

By the arithmetic-geometric inequality, same as in \eqref{arithmeticmean},
$$\displaystyle{\left(\prod_{\ell=1}^ka_\ell \right)^{1/k}\leq \frac{\sum_{\ell=1}^k a_\ell}{k}}.$$
Then we have
\begin{align}
\label{wavesmootheqn}
    E_{(0,x),...,(0,x)}[A_0^{N_t(\mathcal{P}_n)}Z_{n,t}|N_t(\mathcal{P}_n)=k]\leq \Bigg{(}\frac{1}{2k}\sum_{\ell=1}^n\sum_{i=1}^{N_t(\ell)}A_0^{1/2}(t-\tau_{i}^\ell)^{H-1/2}(\tau_i^\ell-\tau_{i-1}^\ell)\Bigg{)}^{2k}.
\end{align}

Now we focus on the sum
\begin{align}
\sum_{\ell=1}^n\sum_{i=1}^{N_t(\ell)}((t-\tau_{i}^\ell)^{H-1/2}(\tau_i^\ell-\tau_{i-1}^\ell)).
\end{align}
We know $\tau_i$ is the ith ordered statistics from a sample of $N_t(\mathcal{P}_n)$ uniform distributions on $[0,t]$. Writing $U_{(i)}=\frac{\tau_i}{t}$, we have
\begin{align}
\no\sum_{\ell=1}^n\sum_{i=1}^{N_t(\ell)}(t-\tau_i)^{H-1/2}(\tau_i^\ell-\tau_{i-1}^\ell)&=t^{H+1/2}\sum_{\ell=1}^n\sum_{i=1}^{N_t(\ell)}(1-U_{(i)})^{H-1/2}(U_{(i)}-U_{(i-1)})\\\no
&\leq t^{H+1/2}\sum_{\ell=1}^n\sum_{i=1}^{N_t(\ell)}(U_{(i)}-U_{(i-1)})\\
&\leq t^{H+1/2}n.
\end{align}
Now we get back to Equation \eqref{wavesmootheqn}, we know, by the results above,
\begin{align*}
    \lefteqn{E_{(0,x),...,(0,x)}[A_0^{N_t(\mathcal{P}_n)}Z_{n,t}]}\\    &=\sum_{k=0}^\infty E_{(0,x),...,(0,x)}[A_0^{N_t(\mathcal{P}_n)}Z_{n,t}|N_t(\mathcal{P}_n)=k]\cdot P(N_t(\mathcal{P}_n)=k)\\
    &\leq e^{-\nu_n t}+ \sum_{k=1}^\infty(\frac{1}{2k}\cdot 2\cdot A_0^{1/2} t^{H+1/2}n)^{2k}\cdot \frac{(\nu_n t)^k e^{-\nu_n t}}{k!}\\
    &\leq e^{-\nu_n t}+e^{-\nu_n t} C_1\cdot  \sum_{k=1}^\infty( A_0^{1/3}\cdot  t^{(2H+2)/3}n^{4/3})^{3k}/(3k)!\\
    &\leq C e^{-\nu_n t}\exp(A_0^{1/3}t^{(2H+2)/3}n^{4/3}).
\end{align*}
The second inequality comes from Stirling's formula, $k!(2k)^{2k}\geq C_1 (3k)!$ for $k\geq k_0$, see \cite[Lemma 3.3]{Mueller2009}. The last inequality is true since each term of the series is non-negative. We deduce
\begin{equation}
e^{\nu_n t}  E_{(0,x),...,(0,x)}[A_0^{N_t(\mathcal{P}_n)}Z_{n,t}] \leq C \exp(A_0^{1/3}t^{(2H+2)/3}n^{4/3}).
\end{equation}
\end{proof}
\begin{proof}[Proof of case (2)]
We follow the same approach as in the heat equation case, until equation~\eqref{upsilonbetaheat}. There, considering $p_w$ instead of $p_h$, we let $\eta=(t-s)\xi$ to obtain
\begin{align}
\no\int_{\mathbb{R}^d}\hat{p}_{w}^2(t-s,\xi)\mu(d\xi)&=\int_{\mathbb{R}^d}\Bigg(\frac{\sin((t-s)|\xi|)}{ |\xi|}\Bigg)^2\xi^{-(d-\alpha)}\,d\xi\\
\no&=\int_{\mathbb{R}^d}\frac{\sin^2(|\eta|)}{|\eta|^2}\eta^{-(d-\alpha)}(t-s)^{d-\alpha-d+2}\,d\eta\\
&=C (t-s)^{2-\alpha}.
\end{align}
Here the integral is finite, see Balan-Conus\cite[Lemma 4.1]{Balan2016}. Further, let $\Upsilon_\beta(t)$ be defined the same as in \eqref{upsilonbetaheat}, then
\begin{align}
\no \Upsilon_\beta(t)&=\int_0^t\int_{\mathbb{R}}p_{t-s}(x-z)p_{t-s}(x-y)f(y-z)s^{2H-1}e^{-\beta (t^\gamma -s^\gamma)}dydzds\\
\no &=C\int_0^t(t-s)^{(2-\alpha)}s^{2H-1}e^{-\beta (t^\gamma -s^\gamma)}ds\\
 &=C e^{-\beta t^\gamma}I^{(3-\alpha)}(\bullet^{2H-1}e^{\beta \bullet ^\gamma})(t).
\end{align}
Use Taylor series expansion upon $I^{(3-\alpha)}(\bullet^{2H-1}e^{\beta \bullet ^\gamma})(t)$, we obtain
\begin{align}
\label{incomplete gamma fcn}
\no    I^{3-\alpha}(\bullet^{2H-1}e^{\beta \bullet ^\gamma})(t)&=\sum_{m=0}^\infty \frac{\beta^m}{m!}\int_0^\infty (t-s)^{2-\alpha}\cdot s^{\gamma m+2H-1}ds\\
    &=\sum_{m=0}^\infty \frac{\beta^m}{\Gamma(m+1)} t^{\gamma m+2H-1+(2-\alpha)+1} \int_0^1 (1-r)^{2-\alpha}r^{\gamma m+2H-1}\,dr.
    \end{align}
    By the definition of the Beta function, we have
    \begin{align}
    \displaystyle{\int_0^1 (1-r)^{2-\alpha}r^{\gamma m+2H-1}\,dr=\frac{\Gamma(\gamma m +2H)\Gamma(3-\alpha)}{\Gamma(3-\alpha+\gamma m +2H)}}.
        \end{align}
    So the equation \eqref{incomplete gamma fcn} becomes
    \begin{align}
\no   \lefteqn{ I^{3-\alpha}(\bullet^{2H-1}e^{\beta \bullet ^\gamma})(t)}\\
\no   &=\sum_{m=0}^\infty \frac{\beta^m}{\Gamma(m+1)}t^{\gamma m+2H-1+3-\alpha} \frac{\Gamma(\gamma m+2H)\Gamma(3-\alpha)}{\Gamma (\gamma m+2H+3-\alpha)}\\
    &=\Gamma(3-\alpha) (\gamma m)^{-(3-\alpha)} \sum_{m=0}^\infty \frac{(\beta)^m(t^\gamma)^m}{\Gamma(m+1)} t^{2H-1+(3-\alpha)} .
\end{align}

The last equality comes from 
$\frac{\Gamma (\gamma m+2H)}{\Gamma (\gamma m+2H+3-\alpha)}\sim(\gamma m)^{-(3-\alpha)}=(\gamma m)^{\alpha-3}$ by Stirling's formula. Use Stirling formula again,
\begin{align}
\frac{m^{-(3-\alpha)}}{\Gamma(m+1)}\sim \frac{1}{\Gamma(m+1+(3-\alpha))},
\end{align}
and the incomplete Gamma function (Definition \ref{incompletegamma}) 
with $z=\beta t^\gamma$, $\nu= 3-\alpha$.\\ By Lemma~\ref{incompletegamma},
\begin{align}
I^{3-\alpha}(\bullet^{2H-1}e^{\beta \bullet ^\gamma})(t)=\Gamma (3-\alpha)\gamma^{-(3-\alpha)} \gamma^* (3-\alpha,\beta  t^\gamma) t^{2H-1+3-\alpha}.
\end{align}
Taking the temporal term $e^{-\beta t^\gamma }$ back in \eqref{upsilonbeta}, in order to guarantee that $e^{-\beta t^\gamma }\Upsilon_\beta (t)<\infty$, we only need to show that
\begin{align}
\gamma^* (3-\alpha,\beta  t^\gamma) t^{2H-1+3-\alpha}<\infty \hspace{5mm} \text{asymptotically as } t\rightarrow \infty.
\end{align} 
In order for this to hold, by Lemma~\ref{incompletegamma} we need $\alpha <3$ and $\gamma \geq \frac{2H+2-\alpha}{3-\alpha}$. We take the optimal order, namely $\gamma=\frac{2H+2-\alpha}{3-\alpha}$.

Now we take the supremum on both sides to obtain
\begin{align}
\label{Nlessthan1wave}
 \mathcal{N}_{\beta,\gamma,k}(u)=\sup_{t,x} e^{-\beta t^\gamma}\norm{u(t,x)}_k\leq 1+c_k\mathcal{N}_{\beta,\gamma,k}(u)\Bigg(\sup_t e^{-\beta t^\gamma}\Upsilon_\beta(t)\Bigg)^{1/2}.
\end{align}

To ensure $\mathcal{N}_{\beta,\gamma,k}(u)<\infty$ from \eqref{Nlessthan1wave}, we also need $c_k[\Upsilon (\beta)]^{1/2}<1$ ,which will allow us to determine the optimal order of $\beta$. Since $ c_k \sim 2\sqrt{k}$ as $k$ goes to infinity, (see Foondun-Khoshnevisan\cite{Foondun2008}), we must have $\beta^{-\frac{1}{2}(3-\alpha)}\sqrt{k} <1$, which indicates that $\beta>k^{\frac{1}{3-\alpha}}$. Therefore, we know $\norm{u(t,x)}_k$ is of order $k^{\frac{1}{3-\alpha}} t^{\frac{2H+2-\alpha}{3-\alpha}}$, and thus $E[u(t,x)^k]$ is of order $ \exp(k^{\frac{4-\alpha}{3-\alpha}} t^{\frac{2H+2-\alpha}{3-\alpha}})$.
 \end{proof}

\begin{theorem}[Lower Bound for the Wave Equation]
Let $u$ be the solution to equation \eqref{eqnwave:DO process} driven by the noise $W$ defined in \eqref{Do definition eqn}, where $W_0$ is any of the noises of cases (1),(2),(3) in Chapter 2. Then there exists a constant $C$ such that
 \begin{align}
 E[u^n(t,x)]\geq \exp(C n^{\frac{4-a}{3-a}} t^{\frac{2H+2-a}{3-a}}).
  \end{align}
  \end{theorem}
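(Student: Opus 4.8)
The plan is to follow exactly the strategy used for the heat equation in Theorem~\ref{theoremlowerbound}, but to start from the wave $n$-th moment formula of Theorem~\ref{thmwave:N_mom_formula} and to work with the non-Markovian process $(X_t)$ whose one-time marginal is proportional to the wave kernel (for instance $\beta_s = sU$ with $U$ uniform on $[-1,1]$ when $d=1$, and $U$ uniform on $\partial B(0,1)$ when $d=3$). As in the heat case every term of the Poisson expansion is non-negative, so I may discard all but the term $\{N_t(\mathcal{P}_n)=k\}$ for a single well-chosen $k$, and then insert the two restricting events $D(t)$ (controlling the direction of each increment, as in \eqref{eventD}) and $C(k,n,t)$ (forcing exactly $n/2$ paired jumps in each of the prescribed windows $I_j$ and pinning the inter-jump times). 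This reduces the estimate to bounding from below a product of $2k$ spatial factors $f(X^{a_i}_{\sigma_i}-X^{b_i}_{\sigma_i})$, of $2k$ fractional factors $(t-\sigma_i)^{2H-1}$, and — this is the genuinely new feature for the wave equation — of the $2k$ renormalization factors $(\tau^{a_i}_i-\tau^{a_i}_{i-1})(\tau^{b_i}_i-\tau^{b_i}_{i-1})$ produced by $\int_{\bR^d} p_w(t,\cdot)\,dx = t \neq 1$.

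The essential point, and the one that produces the correct exponents rather than the heat ones, is that the scale $\delta$ must be recalibrated to the wave process. A Brownian displacement over a lapse $\Delta\tau$ is of order $\sqrt{\Delta\tau}$, which is why the heat proof makes $\delta^2$ comparable to the inter-jump time (the relation $k=\tfrac{m}{\delta^2}\tfrac n2$). For the wave process the displacement $\beta_{\Delta\tau}$ has magnitude at most $\Delta\tau$, i.e. it scales \emph{linearly} in time, so to keep each particle within distance $\delta$ of the common starting point $x$ I will instead impose $\delta$ comparable to the inter-jump time itself, setting $\delta=\frac{Mtn}{2k}$ (linear, rather than $\delta^2=\frac{Mtn}{2k}$). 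With the windows of $C(k,n,t)$ arranged so that every inter-jump time is of a common order $\Delta:=\frac{nt}{2k}$ and bounded below by a fixed fraction of it, the inductive confinement lemma (the wave analogue of the Lemma preceding Lemma~\ref{conditionq}) becomes in fact simpler: since $|\beta_{\Delta\tau}|\le \Delta\tau\le\delta$ the magnitude is automatic, and only a direction restriction is needed, which on $\{N_t(\mathcal{P}_n)=k\}$ carries a constant per-increment probability $q$ (e.g. $q=P(U<0)=\tfrac12$ in $d=1$), exactly as in Lemma~\ref{conditionq}. On $\{|X^{a_i}_{\sigma_i}-X^{b_i}_{\sigma_i}|<2\delta\}$ Assumption~\ref{assumption:lower bound} gives $f\ge(2\delta)^{-a}$, while $t-\sigma_i>t/2$ gives $(t-\sigma_i)^{2H-1}\ge(t/2)^{2H-1}$, and each renormalization increment is $\ge c\,\Delta$.

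Collecting these bounds, using Lemma~\ref{dalangmuellerlemma} for $P(C(k,n,t)\mid N_t(\mathcal{P}_n)=k)$ and the weight $P(N_t(\mathcal{P}_n)=k)=e^{-\nu_n t}(\nu_n t)^k/k!$ (whose $k!$ and $e^{\pm\nu_n t}$ cancel against the prefactor $e^{\nu_n t}$ and the $k!$ in Lemma~\ref{dalangmuellerlemma}), the estimate collapses to a single-exponent form
\begin{equation*}
E[u^n(t,x)] \ \ge\ \Lambda^k, \qquad \Lambda \ \sim\ C\, t^{2H+2-a}\, n^{4-a}\, k^{a-3},
\end{equation*}
after substituting $\delta=\frac{Mtn}{2k}$ (so $(2\delta)^{-a}\sim(tn/k)^{-a}$) and $\Delta\sim\frac{nt}{2k}$ (so the $2k$ increment factors contribute $\Delta^{2k}$). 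Here $t^{2H+2-a}$ is assembled from $t^{-a}$ (covariance), $t^{2H-1}$ (fractional factor), $t^2$ (two increments) and $t$ (the weight $\nu_n t$), and it is precisely the linear calibration $\delta\sim\Delta$ that turns the heat-case $-a/2$ into $-a$ and replaces the heat exponents by the wave ones. Optimizing $k\mapsto\Lambda^k=\exp(k\ln\Lambda)$, i.e. taking $k\sim(t^{2H+2-a}n^{4-a})^{1/(3-a)}$, then yields
\begin{equation*}
E[u^n(t,x)] \ \ge\ \exp\!\big(C\, n^{\frac{4-a}{3-a}}\, t^{\frac{2H+2-a}{3-a}}\big),
\end{equation*}
as claimed.

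I expect the main obstacle to be the rigorous confinement lemma for the wave process: unlike the Brownian case, $(X_t)$ is neither Markov nor Gaussian, so I must check carefully that restricting each increment $\beta^{(k,i)}$ to the appropriate cone (a half-line in $d=1$, a spherical cap in $d=3$) both maintains $|X^k-x|<\delta$ inductively and has a strictly positive probability $q$ independent of $t$, $k$ and the window index. The two degenerate cases follow the heat proof: case (1) ($a=0$) uses the fixed $\delta,A_0$ of Assumption~\ref{assumption:lower bound} with the same optimization, giving $\exp(Ct^{(2H+2)/3}n^{4/3})$, and case (3) is obtained by letting $a=\alpha\to1$ in the Riesz computation following Balan--Conus~\cite{Balan2016}.
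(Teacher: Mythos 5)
Your proposal follows essentially the same route as the paper's proof: it starts from the wave moment formula of Theorem~\ref{thmwave:N_mom_formula}, restricts to the event $\{N_t(\mathcal{P}_n)=k\}$ together with $D(t)$ and $C(k,n,t)$, bounds the extra renormalization increments $(\tau^{\ell}_i-\tau^{\ell}_{i-1})$ below by $ctn/k$, invokes Lemma~\ref{dalangmuellerlemma} for $P(C(k,n,t)\mid N_t(\mathcal{P}_n)=k)$, and optimizes $k\sim(t^{2H+2-a}n^{4-a})^{1/(3-a)}$, exactly as the paper does. Your explicit linear recalibration $\delta=\frac{Mtn}{2k}$ (matching the inter-jump time because the wave process has finite propagation speed) is precisely the substitution the paper makes in its final display, so the two arguments coincide.
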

   Here the event $D(t)$ and $C(k,n,t)$ remain the same, $P(D(t)|N_t(\mathcal{P}_n)=k) =q^{2k}$ as in \eqref{qtwok} but with a different $q$ according to the wave equation, $C(k,n,t)$ is identically the same since $\delta$ will not affect the answer.
   
We will illustrate the proof of case (2). The other cases follow similarly by an adjustment of the proof of the heat equation.

 \begin{proof}
 We follow the same proof as for heat equation until \eqref{heatZnt}. Notice that $\tilde{Z}_{n,t}=\prod_{\ell=1}^n\prod_{i=1}^{N_t(\ell)}(t-\tau_i^\ell)^{H-1/2}(\tau^\ell_i-\tau^\ell_{i-1})$, and given that we are on the event $C(k,n,t)$, we have
  \begin{align}
 \no    \lefteqn{ E[A^{N_t(\mathcal{P}_n)}\tilde{Z}_{n,t}\mathbbm{1}_{C(k,n,t)}|N_t(\mathcal{P}_n)=k]}\\
    \no  &\geq A^{k}(\frac{t}{2})^{2k\cdot (H-1/2)}(\frac{\delta t}{4(m+1)})^{2k}P(C(k,n,t)|N_t(\mathcal{P}_n)=k)\\
 \no     &\geq A^{k}(\frac{t}{2})^{2k\cdot (H-1/2)}(\frac{ctn}{k})^{2k}P(C(k,n,t)|N_t(\mathcal{P}_n)=k)\\
      &\geq A^{k}\frac{t^{2kH+k}n^{2k}}{k^{2k}}P(C(k,n,t)|N_t(\mathcal{P}_n)=k).
  \end{align}
  Again, we will bound $P(C(k,n,t)|N_t(\mathcal{P}_n)=k)$ as in the heat equation proof to obtain 
\begin{align}
\no E[u^n(t,x)]&\geq e^{\nu_n t} q^{2k}A^{k}C\frac{t^{2kH+k} n^{2k}}{k^{2k}} k!(\sqrt{2}c)^{2k/n}e^{-k}n^k(\frac{1}{8kn})^k\cdot \frac{e^{-\nu_n t}(\nu_n t)^k}{k!}\\
\no&= \left(\frac{A(2\delta)^{-\alpha}C (\frac{t^{2H+1}n^{2}}{k^{2}})q^2 (\sqrt{2}c)^{2/n}e^{-1}n \nu_n t}{8kn}\right)^k\\
&=\left(\frac{A(2(\frac{M\cdot t n}{2k}))^{-\alpha} C(\frac{t^{2H+1}n^{2}}{k^{2}})q^2 (\sqrt{2}c)^{2/n}e^{-1}n \nu_n t}{8kn}\right)^k.
\end{align}
Then we set $k=(C  t^{2H+2-\alpha}n^{4-\alpha})^{\frac{1}{3-\alpha}}$ and obtain
\begin{align}
E[u^n(t,x)]\geq \exp(C n^{\frac{4-\alpha}{3-\alpha}} t^{\frac{2H+2-\alpha}{3-\alpha}}).
\end{align}
 \end{proof}
 Case (1) is treated in a similar way. Case (3) is obtained by taking the limit as $\alpha \rightarrow 1$ using the technique of Balan, Conus \cite[Section 7.3]{Balan2016}.

\chapter{Generalized fractional noise}
From now on, the notation $\dot{W}$ will stand for the partial derivative
$\frac {\partial^{d+1}W}{ \partial t \partial x_1 \cdots \partial x_d} $ ,
where $W$ is a random field  formally defined in the next section. Informally
we assume that $\dot{W}$ has a covariance of the form
\begin{align}
\label{generalfracnoise}
\be \lc\dot{W}(t,x) \dot{W}(s,y)\rc=\gamma(s,t) \, f(x-y),
\end{align}
$f$ is the space covariance function as in \eqref{threecasesforf}, and $\gamma(s,t)=(st)^{a_1}|s-t|^{a_2}$, with $a_1\geq 0, -1 < a_2\leq 0$ and
\begin{align}
\label{a1a2condition}
2a_1+a_2=2H-2.
\end{align} 

Notice here that if $a_1=0, a_2= 2H-2$, we recover the standard fractional noise of \cite{Balan2016} and \cite{HuHuangNualartTindel} defined in \eqref{eqn:frac_noise}. If $ a_1=H-1/2,a_2=-1 $, it becomes the DO noise discussed in Chapter 2-4, see Defintion~\ref{DO noise def}. 

\section{Definition of the noise}
This section gives us a formal definition
of the noise $W$. Some basic
elements of Malliavin calculus will be introduced next.

Let us start by introducing some basic notions on Fourier transforms
of functions: the space of real valued infinitely differentiable
functions with compact support is denoted by $\mathcal{D} (
\mathbb{R}^d)$ or $\mathcal{D}$. The Fourier
transform is defined as:
\[ \mathcal{F}u ( \xi)  = \int_{\mathbb{R}^d} e^{- \imath\xi x} u ( x) d x, \]
so that the inverse Fourier transform is given by $\mathcal{F}^{- 1} u ( \xi)
= ( 2 \pi)^{- d} \mathcal{F}u ( - \xi)$.

\smallskip
Here, we will follow a similar procedure as was introduced in Chapter 1 for the fractional noise, see also \cite{HuHuangNualartTindel}. On a complete probability space
$(\Omega,\cf,\bp)$, we consider a Gaussian noise $W$ encoded by a
centered Gaussian family $\{W(\vp) ; \, \vp\in
\mathcal{D}([0,\infty)\times \R^{d})\}$, whose covariance structure
is given by
\begin{equation}\label{cov1}
\be\lc W(\vp) \, W(\psi) \rc
= \int_{\R_{+}^{2}\times\R^{2d}}
\varphi(s,x)\psi(t,y)s^{a_1}t^{a_1}|s-t|^{a_2}|x-y|^{-\alpha}dxdydsdt.
\end{equation}


Let $\mathcal{H}$  be the completion of
$\mathcal{D}([0,\infty)\times\R^d)$
endowed with the inner product
\begin{eqnarray}\label{innprod1}
\langle \varphi , \psi \rangle_{\mathcal{H}}&=&
\int_{\R_{+}^{2}\times\R^{2d}}
\varphi(s,x)\psi(t,y)\gamma(s,t)f(x-y) \, dxdydsdt\\ \notag
&=&\int_{\R_{+}^{2}\times\R^{d}}  \cf \varphi(s,\xi) \overline{ \cf \psi(t,\xi)}\gamma(s,t) \mu(d\xi) \, dsdt,
\end{eqnarray}
where $\cf \varphi$ refers to the Fourier transform with respect to the space variable only. We remind that $\gamma(s,t)=s^{a_1}t^{a_1}|s-t|^{a_2}$.
 The mapping $\varphi \rightarrow W(\varphi)$ defined in $\mathcal{D}([0,\infty)\times\R^d)$  extends to a linear isometry between
$\mathcal{H}$ and the Gaussian space
spanned by $W$ in $L^2(\Omega)$. We will denote this isometry by
\begin{equation*}
W(\phi)=\int_0^{\infty}\int_{\R^d}\phi(t,x)W(dt,dx)
\end{equation*}
for $\phi \in \mathcal{H}$. Notice that if $\phi$ and $\psi$ are in
$\mathcal{H}$, then
$\be \lc W(\phi)W(\psi)\rc =\langle\phi,\psi\rangle_{\mathcal{H}}$. Furthermore, $\mathcal{H}$  contains
the class of measurable functions $\phi$ on $\R_+\times
\R^d$  such that
\begin{equation}\label{abs1}
\int_{\R^2_+  \times\R^{2d}} |\phi(s,x)\phi(t,y)| \, \gamma(s,t)f(x-y) \, dxdydsdt <
\infty\,.
\end{equation}

\smallskip

We remind that the measure $\mu$ satisfies Dalang's condition (Assumption~\ref{assumption:finite var}). 

The proof is in Chapter 2. Here a necessary and sufficient condition for $\mu$ is available for the wave equation with colored noise, see \cite{Xiachen2022}. But we will use the sufficient condition (Dalang's condition) in this dissertation, even though it is not optimal. 

Let's calculate the $E[W(\varphi)^2]$ for the general fractional noise as in \eqref{varianceforDO}.
\begin{align}
\no E[W(\varphi)^2]& = \int_0^t\int_0^t\int_{\mathbb{A}}\int_{\mathbb{A}}(sr)^{a_1}|s-r|^{a_2}f(y-z)dydzdsdr\\
\no & =C_\beta \int_0^t ds\, s^{2a_1+a_2+1}   \int_{\mathbb{A}}\int_{\mathbb{A}}f(y-z)dydz \\
 &=C(\mathbb{A})C'_\beta t^{2H}.
\end{align}
where $C_\beta, C'_\beta$ are universal constants indepedent of $t$.
The second equality comes from \eqref{generalnoisecalc}, we can see that the generalized fractional noise have the same order $t^{2H}$ in time for the variance.

\section{Elements of Malliavin calculus}
We have introduced $W$ as an isonormal Gaussian process. Specifically, on a complete probability space $(\Omega, \cF, P)$, let $W=\{W(\varphi), \varphi\in \cH\}$ be a Gaussian family with  covariance given by
\begin{equation}\label{e:cov'}
\be[W(\varphi) W(\phi)]=\langle \varphi, \phi\rangle_\cH. 
\end{equation}
Then $W(\varphi)$ for $\varphi\in\cH$ is called the Wiener integral of $\varphi$ with respect to $W$ and we also denote $\int_{\R_+}\int_{\R^d} \varphi(t,x) W(dt, dx):=W(\varphi)$, see also \eqref{varphidef}. Our goal is to define the integral with respect to a random function $\varphi$ with Malliavin Calculus.\\
Before we move on to Malliavin Calculus, we should provide a brief reminder of It\^o Calculus and its limitation. 

Let $(W_t)_{t\geq 0}$ be a Brownian motion. If $\varphi$ is a deterministic function in $L^2(\bR_+)$, then the Ito integral
\begin{align*}
    W(\varphi)=\int_0^\infty \varphi(t)\,dW_t
\end{align*}
is well defined. 

For a simple process $X_t$, if we want the Ito integral $\int_0^T X_t\,dW_t$ to be well defined, we need two assumptions:
\begin{itemize}
    \item $\displaystyle \int_0^T E[X_t^2]dt<\infty$.
    \item $X_t$ is previsible. (i.e., essentially it is $F_{t-}$ measurable.)
\end{itemize}
Then, for a simple process $X_t=\sum_{i=1}^n c_i \mathbbm{1}_{(t_i,t_{i+1}]}(t)$, with $c_i$ an $F_{t_i}-$ measurable random variable, we define 
\begin{align*}
    \int_0^T X_t dW_t := \sum_{i=1}^n c_i (W_{t_{i+1}}-W_{t_i}).
\end{align*}
In this situation, both Ito calculus and the upcoming Malliavin calculus will give you the same result. However, if we considered $(W_t)$ to be fractional Brownian motion, we would not have independent increments, which would usually make the second assumption fail. This is why we will need the more general Malliavin Calculus framework to cover this situation.\\

We first need to define the Malliavin derivative $DF$. Let $h(x_1, \dots, x_n)$ be a smooth function such that its partial derivatives have at most polynomial growth. Then for smooth and cylindrical random variables of the form $F=h(W(\varphi_1), \dots, W(\varphi_n))$, the Malliavin derivative $DF$ is the $\cH$-valued random variable 
\[ DF:=\sum_{k=1}^n\frac{\partial h}{\partial x_k}(W(\varphi_1),\dots, W (\varphi_n)) \,\varphi_k. \]
 $D:L^2(\Omega)\to L^2(\Omega; \cH)$ is a closable operator.\\ 
The next step is to define the adjoint operator of the Malliavin derivative, the divergence operator, also known as the Skorohod integral. First we define the Sobolev space $\mathbb D^{1,2}$ as the closure of the space of the smooth and cylindrical random variables under the norm
\[\|F\|_{1,2}=\sqrt{\be[F^2]+\be[\|DF\|_\cH^2]}. \]

We denote by $\text{Dom } \delta$  the domain of the divergence operator $\delta$, which is the set of  $u\in L^2(\Omega; \cH)$ such that  $|E[\langle DF, u\rangle_\cH]|\le c_F \|F\|_2$ with some constant $c_F$ depending on $F$, for all $F\in \mathbb D^{1,2}$. The divergence operator $\delta$ (also known as the Skorohod integral) is the adjoint of the Malliavin derivative operator $D$ defined by the duality  
\begin{equation}\label{e:duality}
\be[F\, \delta(u)]=E[\langle DF, u\rangle_\cH]~ \text{ for all } F\in \mathbb D^{1,2} \text{ and } u\in \text{Dom } \delta.
\end{equation}
 Thus, for $u\in \text{Dom } \delta$, we have $\delta(u)\in L^2(\Omega)$. Also we noticed from \eqref{e:duality} with $F\equiv1$ that we have $\be[\delta(u)]=0.$  We will use the following notation
 \begin{equation}\label{e:skorohod-int}
 \delta(u)=\int_{\R_+}\int_{\bR^d} u(t,x) W(dt,dx) 
 \quad\text{for all}\quad
 u\in \text{Dom } \delta.
 \end{equation}

Lastly, we will introduce the Wiener chaos expansion of a random variable $F\in L^2(\Omega)$. Let $\mathbf H_0=\R$, and for integers $n\ge1$, let $\mathbf H_n$ be the closed linear subspace of $L^2(\Omega)$ containing the set of random variables $\left\{H_n(W(\varphi)), \varphi\in\cH, \|\varphi\|_{\cH}=1\right\}$, where $H_n$ is the $n$-th Hermite polynomial (i.e., $H_n(x)=(-1)^ne^{x^2}\frac{d^n}{dx^n}(e^{-x^2})$).  Then $\mathbf H_n$ is called the $n$-th Wiener chaos of $W$.  Assuming $\mathcal F$ is the $\sigma$-field generated by $\{W(\varphi), \varphi\in \cH\}$, we have the following Wiener chaos decomposition
 \[
  L^2(\Omega, \mathcal F, P)=\bigoplus_{n=0}^\infty \mathbf H_n,
  \] 
where we have the orthogonality of each $\mathbf H_n$. For $n\ge1$, let $\cH^{\otimes n}$ be the $n$-th tensor product of $\cH$ and $\widetilde \cH^{\otimes n}$ be the symmetrization of $\cH^{\otimes n}$. Then the mapping $I_n$ defined by $I_n(h^{\otimes n}):=H_n(W(h))$ for $h\in \cH$ can be extended to a linear isometry between $\widetilde\cH^{\otimes n}$ and the $n$-th Wiener chaos $\mathbf H_n$. Thus, for any random variable $F\in L^2(\Omega, \mathcal F, P)$,  the following unique Wiener chaos expansion in $L^2(\Omega)$ holds true, 
 \[F=\be[F]+\sum_{n=1}^\infty I_n(f_n) \quad \text{for some } f_n\in \widetilde\cH^{\otimes n}.\]
Furthermore, noting that \[\be\left[\left |I_n(f_n)\right|^2\right]=n! \|f_n\|^2_{\cH^{\otimes n}},\]
 we have 
\begin{equation}\label{e:EF2}
\be[|F|^2]=\left(\be[F]\right)^2+\sum_{n=1}^\infty\be\left[\left |I_n(f_n)\right|^2\right]=\left(\be[F]\right)^2+\sum_{n=1}^\infty n! \|f_n\|^2_{\cH^{\otimes n}}.
\end{equation}

\section{Mild solution in Skorohod sense}
In this subsection, we define the mild Skorohod solution to \eqref{eqn:DO process} with noise $\dot{W}$ defined in \eqref{generalfracnoise} and derive its  Wiener chaos expansion. 

In the sequel we write $\{\cF_t\}_{t\ge 0}$ for the filtration generated by the time increments of $\dot W$. That is we set 
\[\cF_t=\sigma\{W(\mathbbm{1}_{[0,s]} \varphi); \,  0\le s \le t, \, \varphi\in \mathcal D(\R)\}\vee \mathcal N,\]
where $\mathcal N$ is the collection of null sets.  

The Skorohod integral (or divergence) of a random field $u$ can be
computed by  using the Wiener chaos expansion. More precisely,
suppose that $u=\{u(t,x) ; (t,x) \in \R_+ \times\R^d\}$ is a random
field such that for each $(t,x)$, $u(t,x)$ is an
$\mathcal{F}^W$-measurable and square integrable random  variable.
Then, for each $(t,x)$ we have a Wiener chaos expansion of the form
\begin{equation}  \label{exp1}
u(t,x) = \be \lc u(t,x) \rc + \sum_{n=1}^\infty I_n (f_n(\cdot,t,x)).
\end{equation}
Suppose also that
\[
\be \lc \int_0^\infty \int_0^\infty \int_{\R^{2d}}  |u(t,x) \, u(s,y)
| \, \gamma(s,t) f(x-y) \, dxdydsdt \rc <\infty.
\]
Then, we can interpret $u$ as a square  integrable
random function with values in $\mathcal{H}$ and the kernels $f_n$
in the expansion (\ref{exp1}) are functions in $\mathcal{H}
^{\otimes (n+1)}$ which are symmetric in the first $n$ variables. In
this situation, $u$ belongs to the domain of the divergence (that
is, $u$ is Skorohod integrable with respect to $W$) if and only if
the following series converges in $L^2(\Omega)$
\begin{equation}\label{eq:delta-u-chaos}
\delta(u)= \int_0 ^\infty \int_{\R^d}  u_{t,s} \, \delta W_{t,x} = W(\be[u]) + \sum_{n=1}^\infty I_{n+1} (\widetilde{f}_n(\cdot,t,x)),
\end{equation}
where $\widetilde{f}_n$ denotes the symmetrization of $f_n$ in all its $n+1$ variables.

\begin{definition}\label{def:mild-skorohod}
An $\{\cF_t\}_{t\ge 0}$-adapted random field $u=\{u(t,x), t\ge0, x\in \R^d\}$ is called a mild Skorohod solution to \eqref{eqn:DO process} if $\be[u^2(t,x)]<\infty$ for all $(t,x)\in \R_+\times \R^d$ and if it satisfies the following integral equation
\begin{equation}\label{e:solution}
u(t,x)=w(t,x)+\int_0^t \int_{\R^d} p_h (t-s,x-y) u(s,y) W(ds,dy). 
\end{equation}
where $w(t,x)$ is given in \eqref{e:h} for the heat equation. The stochastic integral on the right-hand side is a Skorohod integral as in \eqref{e:skorohod-int}. In particular, it is implicitly assumed that for each $t\ge 0, x\in \R$, the process $v_{t,x}(s,y)=p_h(t-s,x-y) u(s,y) \mathbbm{1}_{[0,t]}(s)$ lies in Dom $\delta$. See equation~\eqref{e:duality}.
\end{definition}

Let us say a few words about the chaos decomposition for the mild solution \eqref{e:solution}. First for $n\in \mathbb N$ we denote
\begin{align}\label{e:fn}
\lefteqn{f_n(s_1,x_1,\dots, s_n, x_n, t,x)}\\
\no&=\frac{1}{n!}\sum_{\sigma\in\Sigma_n} p_h(t-s_{\sigma(n)},x-x_{\sigma(n)})\cdots p_{h}(s_{\sigma(2)}-s_{\sigma(1)},x_{\sigma(2)}-x_{\sigma(1)})\\
\no&\phantom{\frac{1}{n!}}\times
w(s_{\sigma(1)}, x_{\sigma(1)})\mathbbm{1}_{[0<s_{\sigma(1)}<\dots<s_{\sigma(n)}<t]},
\end{align}
where $\Sigma_n$ is the set of permutations on $\{1, 2,\dots, n\}$, $p_h$ is the heat kernel defined by \eqref{heatkernel} and $w$ is the convolution \eqref{e:h}. Then the following result can be found, for instance, in \cite{Balan2016}. 

\begin{proposition}\label{prop:chaos}
There exists a unique mild Skorohod solution to \eqref{eqn:DO process} if and only if the  function $w(t,x)$ given by \eqref{e:h} is well-defined and the series   $\sum_{n=1}^\infty I_n(f_n(\cdot, t, x))$ converges in $L^2(\Omega)$ for all $t>0$, i.e.,  
\begin{equation}\label{e:chaos}
\sum_{n=1}^\infty n!\|f_n(\cdot, t,x)\|^2_{\cH^{\otimes n}}<\infty, ~ \text{ for all } t >0 \text{ and } x\in \R^d.
\end{equation}
Whenever \eqref{e:chaos} is met, we have the following Wiener chaos expansion for the solution $u$ to equation~\eqref{eqnwave:DO process}:
\begin{equation}\label{e:u-chaos}
u(t,x)=w(t,x)+\sum_{n=1}^\infty I_n(f_n(\cdot, t, x)).  
\end{equation}
\end{proposition}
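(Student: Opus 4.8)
The plan is to convert the implicit fixed-point equation \eqref{e:solution} into an explicit recursion on the Wiener chaos kernels, exploiting the uniqueness of the chaos decomposition recalled in the previous section. First I would suppose that a mild Skorohod solution $u$ exists. Since by hypothesis $\be[u^2(t,x)]<\infty$ for every $(t,x)$, the random variable $u(t,x)$ has a unique expansion $u(t,x)=\sum_{n=0}^{\infty}I_n(g_n(\cdot,t,x))$ with $g_n(\cdot,t,x)\in\widetilde{\cH}^{\otimes n}$ and $g_0(t,x)=\be[u(t,x)]$. The key step is then to substitute this expansion into the right-hand side of \eqref{e:solution} and to evaluate the Skorohod integral chaos-by-chaos. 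Writing $v_{t,x}(s,y)=p_h(t-s,x-y)u(s,y)\mathbbm{1}_{[0,t]}(s)$, formula \eqref{eq:delta-u-chaos} shows that $\delta$ maps the $n$-th chaos of $u$ into the $(n+1)$-st chaos, so that
\begin{equation*}
\int_0^t\int_{\R^d}p_h(t-s,x-y)u(s,y)W(ds,dy)=\sum_{n=0}^{\infty}I_{n+1}\big(\widetilde{h}_n(\cdot,t,x)\big),
\end{equation*}
where $h_n(s_1,x_1,\dots,s_n,x_n,s,y;t,x)=p_h(t-s,x-y)\mathbbm{1}_{[0,t]}(s)\,g_n(s_1,x_1,\dots,s_n,x_n;s,y)$ and $\widetilde{h}_n$ denotes its symmetrization in the $n+1$ space-time variables.

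Matching the two chaos expansions level by level and invoking the orthogonality of the spaces $\mathbf{H}_n$ yields $g_0(t,x)=w(t,x)$ (the Skorohod integral having mean zero by \eqref{e:duality}) together with the recursion $g_n(\cdot,t,x)=\widetilde{h}_{n-1}(\cdot,t,x)$ for $n\ge 1$. Solving this recursion by induction, starting from $g_0=w$ and symmetrizing at each stage, reproduces exactly the product-of-kernels expression \eqref{e:fn}: on the ordered simplex $0<s_{\sigma(1)}<\dots<s_{\sigma(n)}<t$ one recovers the chain $p_h(t-s_{\sigma(n)},x-x_{\sigma(n)})\cdots p_h(s_{\sigma(2)}-s_{\sigma(1)},x_{\sigma(2)}-x_{\sigma(1)})\,w(s_{\sigma(1)},x_{\sigma(1)})$, whose symmetrization is precisely $\frac{1}{n!}\sum_{\sigma\in\Sigma_n}$. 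Hence $g_n=f_n$ for all $n$, the kernels are forced to be those of \eqref{e:fn}, and the solution, if it exists, is unique.

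To obtain the $L^2$ criterion I would apply the Parseval-type identity \eqref{e:EF2} to $F=u(t,x)$, giving
\begin{equation*}
\be\big[|u(t,x)|^2\big]=w(t,x)^2+\sum_{n=1}^{\infty}n!\,\|f_n(\cdot,t,x)\|^2_{\cH^{\otimes n}}.
\end{equation*}
Thus a mild Skorohod solution in $L^2$ exists precisely when the right-hand series is finite, i.e. when \eqref{e:chaos} holds, and in that case $u$ is given by the convergent expansion \eqref{e:u-chaos}. For the converse I would, assuming \eqref{e:chaos}, \emph{define} $u(t,x)$ by \eqref{e:u-chaos} with the kernels \eqref{e:fn}, use \eqref{e:EF2} to conclude $u(t,x)\in L^2(\Omega)$, and then run the chaos computation above in reverse to verify that $u$ satisfies \eqref{e:solution} and that $v_{t,x}\in\dom\delta$.

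The main obstacle I anticipate is the rigorous justification of the term-by-term Skorohod integration, that is, interchanging the operator $\delta$ with the infinite chaos sum and confirming that $v_{t,x}$ lies in the domain of $\delta$; this hinges on controlling the partial sums of the chaos series in $L^2(\Omega;\cH)$, which is exactly where condition \eqref{e:chaos} re-enters and must be used to bound $\sum_n (n+1)!\,\|\widetilde{h}_n\|^2_{\cH^{\otimes(n+1)}}$. A secondary, more bookkeeping-heavy point is carrying out the symmetrization inside the induction carefully enough to recover the explicit permutation-sum form of \eqref{e:fn} rather than merely an abstract recursion.
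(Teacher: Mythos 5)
The paper does not prove this proposition but quotes it from Balan--Conus \cite{Balan2016}; your sketch reconstructs precisely the standard argument used there (and in \cite{HuHuangNualartTindel}): substitute the chaos expansion of $u$ into the fixed-point equation \eqref{e:solution}, use \eqref{eq:delta-u-chaos} and orthogonality of the chaoses to force the kernel recursion $g_0=w$, $g_n=\widetilde h_{n-1}$, identify $g_n$ with $f_n$ of \eqref{e:fn} by induction, and read off condition \eqref{e:chaos} from the isometry \eqref{e:EF2}. Your identification of the one delicate point --- verifying $v_{t,x}\in\dom\,\delta$, which reduces to $\sum_{n\ge 0}(n+1)!\,\|\widetilde h_n\|^2_{\cH^{\otimes(n+1)}}=\sum_{m\ge 1}m!\,\|f_m\|^2_{\cH^{\otimes m}}<\infty$, i.e.\ condition \eqref{e:chaos} again --- is also correct, so the proposal is sound and follows the same route as the cited source.
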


In order to state some necessary and sufficient conditions allowing to solve \eqref{eqnwave:DO process}, we will restrict our attention to the case where the initial condition $u_0$ is constant. Without loss of generality, we can assume that $u_0 = 1$. Thus, we will have $w(t,x) = 1$, and
\begin{align}\label{e:fn'}
\no\lefteqn{f_n(s_1, x_1, \dots, s_n, x_n, t,x)}\\
\no &=  \frac{1}{n!}\sum_{\sigma\in\Sigma_n}  p_{h}(t-s_{\sigma(n)},x-x_{\sigma(n)})\cdots p_{h}(s_{\sigma(2)}-s_{\sigma(1)},x_{\sigma(2)}-x_{\sigma(1)})\\
&\phantom{\frac{1}{n!}}\times\mathbbm{1}_{[0<s_{\sigma(1)}<\dots<s_{\sigma(n)}<t]}. 
\end{align}

A Picard iteration for $u$ would yield:
\begin{align}
\label{picardscheme}
    u_{n+1}(t,x)&=w(t,x)+\int_0^t\int_{\bR^d} p_h(t-s,x-y)u_n(s,y) W(ds,dy).
\end{align}
Here is a brief justification. Let $v_n:=u_n-u_{n-1}$, and define $v_0=u_0$, we must have
\begin{align}
v_{n+1}(t,x)&=u_{n+1}(t,x)-u_n(t,x)=\int_0^t\int_{\bR^d} p_h(t-s,x-y)v_n(s,y) W(ds,dy).
\end{align}
Since $u=\lim_{n\rightarrow \infty} u_n$ and $u=\sum_{n=0}^\infty I_n(f_n)$  we have
\begin{align}
    I_n(f_n)=v_n \hspace{5mm}\text{and}\hspace{5mm} u_n=\sum_{k=0}^n I_k(f_k).
\end{align}
Now let's derive a formula for $f_n$. We have, with $u_0\equiv 1$,
\begin{align}
    v_1(t,x)&=\int_0^t\int_{\bR^d} p_h(t-s,x-y)u_0(s,y) W(ds,dy)=I_1(f_1),
\end{align}
and 
\begin{align*}
\lefteqn{v_2(t,x)}\\
&=\int_0^t\int_{\bR^d} p_h(t-s_1,x-y_1)v_1(s_1,y_1) W(ds_1,dy_1)\\
&= \int_0^t\int_{\bR^d} \int_0^{s_1}\int_{\bR^d} p_h(t-s_1,x-y_1)p_h(s_1-s_2,y_1-y_2) W(ds_1,dy_1)W(ds_2,dy_2)\\
&=\int_0^t\int_{\bR^d} \int_0^{t}\int_{\bR^d} \frac{1}{2}\, p_h(t-s_1,x-y_1)p_h(s_1-s_2,y_1-y_2) W(ds_1,dy_1)W(ds_2,dy_2)\\
&=I_2(f_2).
\end{align*}
A similar argument by induction establishes \eqref{picardscheme}.\\
Now we are ready to prove the existence and uniqueness of a solution for \eqref{eqn:DO process} in the next chapter.

\chapter{Existence and uniqueness of the solution}
In this chapter we are going to prove the existence and uniqueness of the solution to \eqref{eqn:DO process} driven by the general noise defined in Chapter 5.

For the sake of simplicity, we will establish existence and uniqueness in the case of $u_0\equiv 1$. The result would hold true for general bounded functions as well.

\section{Main result}
\begin{theorem}\label{thmSk1}
Suppose that the spectral measure is given by $\mu(d\xi) =C_\alpha|\xi|^{\alpha-d}d\xi$
and $\gamma(s_i,r_i)=s_i^{a_1}r_i^{a_1}|s_i-r_i|^{a_2}$. Then relation (\ref{e:chaos}) holds
for each $(t,x)$. Consequently, equation \eref{eqn:DO process} admits a
unique mild solution in the sense of Definition \ref{def:mild-skorohod}. Namely:
\begin{align}
u(t,x)=w(t,x)+\int_0^t \int_{\R^d} p_h (t-s,x-y) u(s,y) W(ds,dy). 
\end{align}
\end{theorem}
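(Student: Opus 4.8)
The plan is to reduce the statement, via Proposition \ref{prop:chaos}, to verifying the chaos summability condition \eqref{e:chaos}, namely $\sum_{n=1}^\infty n!\,\|f_n(\cdot,t,x)\|_{\cH^{\otimes n}}^2 < \infty$ for each fixed $(t,x)$, where (since $u_0\equiv 1$) the kernel $f_n$ is the symmetrized product of heat kernels given in \eqref{e:fn'}. Writing the tensor norm out explicitly from \eqref{innprod1},
\begin{align*}
\|f_n(\cdot,t,x)\|_{\cH^{\otimes n}}^2 = \int_{\R_+^{2n}}\int_{\R^{2nd}} f_n(\mathbf{s},\mathbf{y};t,x)\,f_n(\mathbf{r},\mathbf{z};t,x)\prod_{i=1}^n \gamma(s_i,r_i)\,f(y_i-z_i)\,d\mathbf{y}\,d\mathbf{z}\,d\mathbf{s}\,d\mathbf{r},
\end{align*}
the crucial structural observation is that every factor in this integrand is nonnegative: the heat kernels are positive, the indicator functions and the Riesz kernel $f(x)=|x|^{-\alpha}$ are nonnegative, and hence $f_n\ge 0$.

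First I would exploit this nonnegativity to replace $\gamma$ by a pointwise upper bound. Since $a_1\ge 0$ and all time variables lie in $[0,t]$, we have $(s_i r_i)^{a_1}\le t^{2a_1}$, so that $\gamma(s_i,r_i)=(s_i r_i)^{a_1}|s_i-r_i|^{a_2}\le t^{2a_1}|s_i-r_i|^{a_2}$. Because the whole integrand is nonnegative, this pointwise domination passes under the integral and yields
\begin{align*}
n!\,\|f_n(\cdot,t,x)\|_{\cH^{\otimes n}}^2 \le t^{2a_1 n}\cdot n!\,\|f_n(\cdot,t,x)\|_{(\cH')^{\otimes n}}^2,
\end{align*}
where $\cH'$ is the noise space built from the time covariance $\gamma'(s,r)=|s-r|^{a_2}$ together with the same spatial Riesz kernel. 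Here the nonnegativity of $f_n$ and of $f$ is essential: pointwise domination of covariance kernels does not bound the induced norms in general, but it does so when the kernel being normed is itself nonnegative.

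Next I would identify $\cH'$ with the standard fractional noise. Setting $H':=1+a_2/2$, we have $\gamma'(s,r)=|s-r|^{2H'-2}$, so $\cH'$ is (up to the constant $\alpha_{H'}$) exactly the standard fractional-in-time noise space of \cite{HuHuangNualartTindel, Balan2016} with Hurst index $H'$. Two facts make this favorable: since $-1<a_2\le 0$ we automatically get $H'\in(1/2,1)$, precisely the regime where the existence theory of \cite{HuHuangNualartTindel} applies under Dalang's condition alone (guaranteed here by $\alpha<\min\{2,d\}$); and the constraint \eqref{a1a2condition} gives $2H'=2H-2a_1$, that is $H'=H-a_1$. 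Consequently I may invoke the chaos estimate of \cite{HuHuangNualartTindel} for the standard fractional heat equation with Hurst parameter $H'$, which furnishes a bound of the schematic form $n!\,\|f_n\|_{(\cH')^{\otimes n}}^2 \le C^n t^{2H' n}/\Gamma(c n)$ for suitable constants, summable over $n$ for every fixed $t$. Multiplying by the extra factor $t^{2a_1 n}$ merely replaces $t^{2H'n}$ by $t^{(2H'+2a_1)n}=t^{2Hn}$ — matching the expected variance order of the generalized noise (Remark \ref{covariancedoesnotmatter}) — while leaving the $\Gamma(cn)$ denominator intact, so the series still converges for all $t$. This establishes \eqref{e:chaos}, and Proposition \ref{prop:chaos} then delivers existence and uniqueness.

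The main obstacle, and the step deserving the most care, is the passage from $\gamma$ to the fractional kernel $\gamma'$: it is legitimate only because of the nonnegativity of $f_n$ and of the spatial kernel, and one must verify that after this reduction the resulting index $H'=H-a_1$ indeed lands in $(1/2,1)$, so that the cited estimates apply verbatim rather than requiring the more delicate $H'\le 1/2$ analysis. Tracking the accumulated time factor $t^{2a_1 n}$ and confirming that it does not destroy summability — it does not, since the $\Gamma(cn)$ growth dominates any fixed power of $t$ raised to the $n$ — is the remaining bookkeeping.
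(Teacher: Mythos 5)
Your proposal is correct, but it takes a genuinely different route from the paper. The paper handles the time covariance $\gamma(s,r)=(sr)^{a_1}|s-r|^{a_2}$ by applying Cauchy--Schwarz in the spectral variable to decouple $s$ from $r$, then using $ab\le\frac12(a^2+b^2)$ and the bound $\sup_{0<s<t}\int_0^t\gamma(s,r)\,dr<\infty$ (Lemma~\ref{gammaintegrable}) to integrate $\gamma$ out completely; this reduces the problem to a time-white computation weighted by $C(t)^n$, which is then closed by the frequency-splitting Lemma~\ref{lem1}. You instead dominate $\gamma(s,r)\le t^{2a_1}|s-r|^{a_2}$ pointwise and use the nonnegativity of $f_n$ and of the Riesz kernel to transfer this domination to the $\cH^{\otimes n}$-norms, reducing directly to the standard fractional noise of Hurst index $H'=1+a_2/2=H-a_1\in(1/2,1)$ and citing the chaos estimate of \cite{HuHuangNualartTindel}; the leftover factor $t^{2a_1 n}$ is geometric in $n$ and is absorbed exactly as in the cited estimate (by taking $N$ larger in the frequency splitting, so that $2C C_N t^{2a_1}<1$). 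Both arguments ultimately lean on the same HHNT spectral machinery, so the real difference is the treatment of $\gamma$: your comparison argument is shorter and makes transparent why the generalized noise inherits the behavior of the fractional noise of index $H'$, but it is contingent on the positivity of the heat kernel and of $f$ --- as you correctly flag, kernel domination does not imply norm domination for sign-changing integrands --- whereas the paper's Cauchy--Schwarz route needs no positivity and only the integrability of $\gamma$, so it would survive a replacement of $p_h$ by a sign-changing or distributional Green's function. Two small points worth tightening: for $a_2=0$ you get $H'=1$, formally outside the fractional range (though this case is excluded for $H<1$ by $a_1\ge 0$ and \eqref{a1a2condition}, and the constant kernel is trivial to handle directly); and the clean bound $C^n t^{2H'n}/\Gamma(cn)$ is specific to the Riesz spectral measure assumed in the theorem --- for a general measure satisfying only Dalang's condition you should quote the binomial-sum form $C^n\sum_k\binom{n}{k}\frac{t^k}{k!}D_N^k(2C_N)^{n-k}$, which still absorbs any fixed geometric factor.
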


\begin{proof}
Fix $t>0$ and $x\in \mathbb{R}^d$. Set
$f_n(s,y,t,x)=f_n(s_1,y_1,\dots,s_n,y_n,t,x)$ and $\mu(d\xi)\equiv\prod_{i=1}^n \mu(d\xi_i)$.
Using the Fourier transform and Cauchy-Schwarz inequality, we obtain
\begin{eqnarray*}   
&&n!\|f_n(\cdot,t,x)\|^2_{\mathcal{H}^{\otimes n}}   \\  
 &\leq &
n! \int_{[0,t]^{2n}} \int_{\R^{nd}}
\mathcal{F}f_n(s,\cdot,t,x)(\xi)
\overline{\mathcal{F}f_n(r,\cdot,t,x)(\xi)}  \mu(d\xi)\prod_{i=1}^n
s_i^{a_1}r_i^{a_1}|s_i-r_i|^{a_2}dsdr   \\ 
 &\leq&n! 
\int_{[0,t]^{2n}}\left (\int_{\R^{nd}}
|\mathcal{F}f_n(s,\cdot,t,x)(\xi)|^2\prod_{i=1}^n s_i^{2a_1}\mu(d\xi)\right)^{\frac{1}{2}} 
 \\
&& \times \left(\int_{\R^{nd}}|\mathcal{F}f_n(r,\cdot,t,x)(\xi)|^2\prod_{i=1}^n r_i^{2a_1}\mu(d\xi)\right)^{\frac{1}{2}}  \prod_{i=1}^n |s_i-r_i|^{a_2}ds dr   \\
&\leq&n! 
\int_{[0,t]^{2n}}\left (\int_{\R^{nd}}
|\mathcal{F}f_n(s,\cdot,t,x)(\xi)|^2\mu(d\xi)\right)^{\frac{1}{2}} 
\left(\int_{\R^{nd}}|\mathcal{F}f_n(r,\cdot,t,x)(\xi)|^2\mu(d\xi)\right)^{\frac{1}{2}}  \\
&& \times \prod_{i=1}^n s_i^{a_1}r_i^{a_1}|s_i-r_i|^{a_2}ds dr,   \\
\end{eqnarray*}
and thus, thanks to the basic
inequality $ab\leq \frac{1}{2}(a^2+b^2)$
  and the fact that $\sup_{0<s<t}\int_0^t\gamma(s,r)\,dr$ (See Appendix Lemma~\ref{gammaintegrable}) is finite, this yields:
\begin{eqnarray*}   
n!\|f_n(\cdot,t,x)\|^2_{\mathcal{H}^{\otimes n}} &\leq &   
n! 
\int_{[0,t]^{2n}}\int_{\R^{nd}}  |\mathcal{F}f_n(s,\cdot,t,x)(\xi)|^2\mu(d\xi) \prod_{i=1}^n s_i^{a_1}r_i^{a_1}|s_i-r_i|^{a_2}ds dr  \notag \\
&\le&  C(t)^nn!   \label{eq3}
\int_{[0,t]^{n}}\int_{\R^{nd}}  |\mathcal{F}f_n(s,\cdot,t,x)(\xi)|^2\mu(d\xi)  ds,
\end{eqnarray*}
where $\displaystyle C(t)=2\sup_{0<s<t} \int_0^t \gamma(s,r)dr$.\\
From here, it remains to control the behavior in space. This is done in a similar way as in Hu-Huang-Nualart-Tindel\cite{HuHuangNualartTindel} for fractional noise. We provide details for the sake of completeness. It is readily checked from expression \eqref{e:fn'} that there exists a constant $C>0$ such that the Fourier transform of $f_n$ satisfies
\[
|\mathcal{F}f_n(s,\cdot,t,x)(\xi)|^2=\frac{C^n}{(n!)^2}
\prod_{i=1}^n e^{-(s_{\si(i+1)}-s_{\si(i)})|\xi_{\si(i)}+\cdots +
\xi_{\si(1)}|^2},
\]
where we have set $s_{\si(n+1)}=t$.  As a consequence,
\begin{eqnarray}   \notag
&& \int_{\R^{nd}}|\mathcal{F}f_n(s,\cdot,t,x)(\xi)|^2\mu(d\xi)\\
\notag &\leq&  \frac{C^n}{(n!)^2}
 \prod_{i=1}^n \sup_{\eta \in
\R^d}\int_{\R^d}e^{-(s_{\sigma(i+1)}-s_{\sigma(i)})|\xi_{\sigma(i)}+\eta|^2}\mu(d\xi_{\sigma(i)})\\
 &\leq&  \frac{C^n}{(n!)^2}
 \prod_{i=1}^n \int_{\R^d}e^{-(s_{\sigma(i+1)}-s_{\sigma(i)})|\xi_{\sigma(i)}|^2}\mu(d\xi_{\sigma(i)}),
 \end{eqnarray}
The inequality holds true since the sup will take place at $\eta=0$. This yields
\begin{align}
\label{eq4}
\no n!\|f_n(\cdot,t,x)\|^2_{\mathcal{H}^{\otimes n}}&\leq  \frac{C^n}{n!} \int_{\R^{nd}}  \int_{[0,t]^n}\prod_{i=1}^n e^{-(s_{i+1}-s_i)|\xi_i|^2} \, ds \, \mu(d\xi)\,\\
&\leq  C^n \int_{\R^{nd}}  \int_{T_n(t)}\prod_{i=1}^n e^{-(s_{i+1}-s_i)|\xi_i|^2} \, ds \, \mu(d\xi)\,,
\end{align}
where we denote by $T_n(t)$ the simplex
\begin{equation} \label{simplex}
 T_n(t) =\{0<s_1<\cdots <s_n<t\}.
 \end{equation}
Let us now estimate the right hand side of \eref{eq4}:
making the change of variables $s_{i+1}-s_{i}=w_i$ for $1\leq i \leq n-1$, and $t-s_n=w_n$,  and denoting $dw=dw_1 dw_2 \cdots dw_n$, we end up with
\begin{eqnarray*}
 n!\|f_n(\cdot,t,x)\|^2_{\mathcal{H}^{\otimes n}}  \leq  C^n \int_{\R^{nd}}\int_{S_{t,n}}e^{- \sum_{i=1}^n w_i |\xi_i|^2}
 dw \prod_{i=1}^n \mu(d\xi_i)\,,
\end{eqnarray*}
where $S_{t,n}=\{(w_1, \dots, w_n)\in [0,  \infty)^n: w_1 +\cdots +w_n \le t\}$. 
We also split the contribution of $\mu$ in the following way: 
fix $N\ge 1$ and set
\begin{equation}  \label{eqD}
C_N=\int_{|\xi|\geq N}\frac{\mu(d\xi)}{|\xi|^2},
\quad\text{and}\quad
D_N=\mu\{\xi\in \R^d: |\xi|\leq N\}.
\end{equation}
By Lemma \ref{lem1} below, we can write
\begin{equation}\label{eq:eqDD}
 n!\|f_n(\cdot,t,x)\|^2_{\mathcal{H}^{\otimes n}}  \leq 
 C^n \sum_{k=0}^n \binom{n}{k} \frac{t^k}{k!}D_N^k
(2C_N)^{n-k}\,.
\end{equation}
  Next we choose a sufficiently large $N$ such that
$2CC_N < 1$, which is possible because of Assumption~\ref{assumption:finite var}. Using the
inequality $\binom{n}{k} \leq 2^n$ for any positive integers $n$
and $0\leq k \leq n$,  we have
\begin{align*}
&\sum_{n=0}^{\infty}n!\|f_n(\cdot,t,x)\|^2_{\mathcal{H}^{\otimes n}}
\leq
 \sum_{n=0}^{\infty}C^n \sum_{k=0}^n \binom{n}{k}\frac {t^k}{k!}D_N^k (2C_N)^{n-k}\\
&\leq  \sum_{k=0}^{\infty}\sum_{n=k}^{\infty}C^n 2^n \frac{t^k}{k!}D_N^k (2C_N)^{n-k}
=
 \sum_{k=0}^{\infty}\frac{t^k}{k!}D_N^k (2C_N)^{-k}\sum_{n=k}^{\infty}(2CC_N)^n\\
&\leq
 \frac{1}{1-2C C_N} \sum_{k=0}^{\infty}\frac{t^k D_N^k
(2C_N)^{-k}(2CC_N)^k}{k!}< \infty\,.
\end{align*}
This proves the theorem.
\end{proof}

Next we establish the lemma that is used in the proof of Theorem \ref{thmSk1}.
\begin{lemma}[Hu-Huang-Nualart-Tindel\cite{HuHuangNualartTindel} Lemma 3.3 ]\label{lem1}
For any $N
> 0$ we let $D_N$ and $C_N$ be given by (\ref{eqD}). Then
we have
\[
 \int_{\R^{nd}}\int_{S_{t,n}}e^{- \sum_{i=1}^n w_i |\xi_i|^2}
 dw \prod_{i=1}^n \mu(d\xi_i)
\leq\sum_{k=0}^n \binom{n}{k}
\frac{t^k}{k!}D_N^k (2C_N)^{n-k}\,.
\]

\end{lemma}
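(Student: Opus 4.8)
The plan is to decompose the frequency domain $\R^{nd}$ into $2^n$ cells according to whether each $\xi_i$ is a \emph{low} frequency ($|\xi_i|\le N$) or a \emph{high} frequency ($|\xi_i|>N$), and to treat the time variables $w_i$ differently in the two regimes. For a fixed subset $A\subseteq\{1,\dots,n\}$ of size $k$ playing the role of the low-frequency indices, I would first use the crude bound $e^{-w_i|\xi_i|^2}\le 1$ for $i\in A$, leaving only the high-frequency exponentials in the integrand. The point is then to integrate out the time variables carefully enough to retain the factorial gain $1/k!$, which is what ultimately makes the chaos series summable in Theorem~\ref{thmSk1}.

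Concretely, after bounding the low-frequency exponentials by $1$, I would integrate the $k$ variables $\{w_i\}_{i\in A}$ first: with the remaining high-frequency variables fixed, the set $\{w_A\ge 0 : \sum_{i\in A} w_i \le t-\sum_{i\notin A} w_i\}$ is a $k$-dimensional corner simplex of volume $\le t^k/k!$, uniformly in the other variables. This yields
\[
\int_{S_{t,n}} \prod_{i\notin A} e^{-w_i|\xi_i|^2}\,dw \;\le\; \frac{t^k}{k!}\int_{[0,\infty)^{n-k}} \prod_{i\notin A} e^{-w_i|\xi_i|^2}\,dw \;=\; \frac{t^k}{k!}\prod_{i\notin A}\frac{1}{|\xi_i|^2},
\]
where I have enlarged the remaining domain from $\{\sum_{i\notin A} w_i\le t\}$ to $[0,\infty)^{n-k}$ and used $\int_0^\infty e^{-w|\xi|^2}\,dw=|\xi|^{-2}$. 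Integrating in the frequencies now factorizes: each low-frequency index contributes $\int_{|\xi|\le N}\mu(d\xi)=D_N$ and each high-frequency index contributes $\int_{|\xi|>N}|\xi|^{-2}\mu(d\xi)=C_N$, so the cell indexed by $A$ is bounded by $\frac{t^k}{k!}D_N^{k}C_N^{n-k}$ (see \eqref{eqD}).

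Summing over the $\binom{n}{k}$ subsets $A$ of each size $k$ and then over $k$ gives $\sum_{k=0}^n \binom{n}{k}\frac{t^k}{k!}D_N^{k}C_N^{n-k}$, which is even slightly stronger than the claimed bound since $C_N\le 2C_N$; the factor $2$ in the statement simply provides harmless slack. The only genuinely delicate point, and the one I would be most careful about, is the order of integration over the time simplex: the volume estimate $t^k/k!$ must be applied \emph{after} discarding the low-frequency exponentials but \emph{before} relaxing the high-frequency constraint, since a naive estimate such as $\sum w_i\le t \Rightarrow w_i\le t$ would replace $t^k/k!$ by $t^k$ and destroy the summability of the series. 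Everything else — the one-dimensional Gaussian integral in $w$ and the splitting of $\mu$ — is routine.
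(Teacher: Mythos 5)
Your argument is correct and follows essentially the same route as the paper: the same low/high-frequency decomposition of $\R^{nd}$ indexed by subsets, the same crude bound $e^{-w_i|\xi_i|^2}\le 1$ on the low-frequency factors, the volume $t^k/k!$ of the corner simplex for the corresponding time variables, and the Gaussian time integral $\int e^{-w|\xi|^2}\,dw\le |\xi|^{-2}$ for the high-frequency ones (the paper phrases the simplex step via the inclusion $S_{t,n}\subset S^I_t\times S^{I^c}_t$ rather than by conditioning, which is equivalent). Your observation that the factor $2$ in $2C_N$ is harmless slack also matches the paper, where the $2^{|I^c|}$ appears only to match the definition of $C_N$ inherited from \cite{HuHuangNualartTindel}.
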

\begin{proof}
The original proof was in \cite{HuHuangNualartTindel}, we have it here for completeness.
By our Assumption~\ref{assumption:finite var}, $C_N$ is finite for all
positive $N$. Let $I$ be a subset of $\{1,2,\dots,n\}$ and  $I^{c}
=\{1,2,\dots,n\}\setminus I$. Then we have
\begin{eqnarray*}
&&\int_{\R^{nd}}\int_{S_{t,n}}\prod_{i=1}^n e^{-w_i
|\xi_i|^2} \, dw \, \mu(d\xi)\\
&=&  \int_{\R^{nd}} \int_{ S_{t,n}}\prod_{i=1}^n e^{-w_i|\xi_i|^2}({\bf 1}_{\{|\xi_i|\leq N\}}+ {\bf 1}_{\{|\xi_i|> N\}})\, dw \, \mu(d\xi)\\
&=& \sum_{I \subset \{1,2,\dots,
n\}}\int_{\R^{nd}}\int_{ S_{t,n}}\prod_{i \in I} e^{-w_i
|\xi_i|^2}{\bf 1}_{\{|\xi_i|\leq N\}}\times \prod_{j \in I^{c}}
 e^{-w_j |\xi_j|^2} {\bf 1}_{\{|\xi_j|\geq N\}} \, dw \, \mu(d\xi).
 \end{eqnarray*}
 For the indices $i$ in the set $I$ we estimate $e^{-w_j |\xi_j|^2}$ by $1$. Then, using the inclusion
 \[
 S_{t,n}\subset S^I _{t} \times S^{I^c}_{t},
 \]
 where $S^I_{t} =\{(w_i ,i\in I): w_i\ge 0, \sum_{i\in I} w_i \le t\}$ and $S^{I^c}_{t} =\{(w_i ,i\in I^c): w_i\ge 0, \sum_{i\in I^c} w_i \le t\}$ we obtain
 \begin{eqnarray*}
&&\int_{\R^{nd}}\int_{S_{t,n}}\prod_{i=1}^n e^{-w_i
|\xi_i|^2}\, dw \, \mu(d\xi) \\
&\leq& \sum_{I \subset \{1,2,\cdots, n\}}\int_{\R^{nd}}\int_{ S^I_t\times S_t^{I^c}}\prod_{i \in I} {\bf 1}_{\{|\xi_i|\leq
N\}}\times \prod_{j \in I^{c}}
e^{-w_j |\xi_j|^2} {\bf 1}_{\{|\xi_j|\geq N\}} \, dw \, \mu(d\xi) .
\end{eqnarray*}
Furthermore, one can bound the integral over $S_t^{I^c}$ in the following way
\begin{equation*}
\int_{S_t^{I^c}} \prod_{j \in I^{c}} e^{-w_j |\xi_j|^2}  \, dw
\le
\int_{[0,t]^{I^c}} \prod_{j \in I^{c}} e^{-w_j |\xi_j|^2} \, dw
=
\prod_{j \in I^{c}} \frac{1-e^{-t |\xi_{j}|^{2}}}{|\xi_{j}|^{2}}
\le
\prod_{j \in I^{c}} \frac{1}{|\xi_{j}|^{2}}.
\end{equation*}
We can thus bound $\int_{\R^{nd}}\int_{S_{t,n}}\prod_{i=1}^n e^{-w_i |\xi_i|^2}\, dw \, \mu(d\xi)$ by:
\begin{multline*}
\sum_{I \subset \{1,2,\cdots,n\}}\frac{t^{|I|}}{|I|!}\big(\mu\{\xi\in \R^d: |\xi|\leq N\}\big)^{|I|}2^{|I^{c}|}\int_{|\xi_j|>N, \forall j \in I^{c}}\prod_{j \in I^{c}}\frac{\mu(d\xi_j)}{|\xi_j|^2}\\
= \sum_{I \subset
\{1,2,\cdots,n\}}\frac{t^{|I|}}{|I|!}D_N^{|I|}(2C_N)^{|I^{c}|}=
\sum_{k=0}^n \binom{n}{k} \frac{t^k}{k!}D_N^k (2C_N)^{n-k}\,,
\end{multline*}
which is our claim.
\end{proof}
\chapter{Moment formula for the solution}

In this section, we will have a quick introduction on the Feynman-Kac Formula for the heat equation that we will generalize to the Stochastic PDE. Further, we will see that a representation of the moments of the solution can be established, even in the situation where the solution itself does not admit a Feynman-Kac formula.

\section{The Feynman-Kac functional for the solution}
Consider the following heat equation:
\begin{align}
\label{heatfeynmankac}
    \frac{\partial u}{\partial t}=\frac{1}{2}\Delta u-V(t,x) u, \hspace{8mm} u(0,x)=u_0.
\end{align}
where $V(t,x)$ is a deterministic lower-bounded continuous function and $u_0$ is a constant.\\
The corresponding $u(t,x)$ with $V(t,x)=V(x)$ was shown by Kac\cite{kac} in 1949. The solution to \eqref{heatfeynmankac} is the following:
\begin{align}
    u(t,x)=E^{(0,x)}\Bigg\{u_0 e^{-\int_0^t V(t-s,B(s))ds}\Bigg\},
\end{align}
where $E^{(0,x)}$ is the conditional expectation given that the Brownian motion starts at $x$ at time $0$. This is also known as Feynman-Kac formula. Now let's sketch a heuristic derivation of it for Brownian motion. This was motivated by Kuo~\cite[Section 11.4]{kuo} and Karatzas and Shreve~\cite[Theorem 4.2]{KaratzaShreve}.

Let $B(t)$ be an $\mathbb{R}^n$-valued Brownian motion. Let $t\geq 0$ be fixed and consider the function $g(x)=u(t,x)$. The infinitestimal generator $\mathcal{A}g$ is defined to be the limit
\begin{align}
    (\mathcal{A}g)(x)=\lim_{\epsilon \downarrow 0}\frac{E^x[g(B_\epsilon)]-g(x)}{\epsilon}.
\end{align}
We know that for Brownian motion 
\begin{align}
    (\mathcal{A}g)(x)=\frac{1}{2}\Delta g(x).
\end{align}
Then we have
\begin{align}
    E^{(0,x)}g(B_\epsilon)=E^{(0,x)}\left[E^{(0,B_{\epsilon})}\Bigl[e^{\int_0^tV(t-s,B_s)\,ds}\Bigl]\right].
\end{align}
Using Markov property of the Brownian motion, we have
\begin{align}
\label{markovppty}
E^{(0,x)}\left[E^{(0,x)}\Bigl[e^{\int_0^tV(t-s,B_{s+\epsilon})\,ds}|\mathcal F_\epsilon\Bigl]\right]=E^{(0,x)}\left[e^{-\int_\epsilon ^{t+\epsilon}V(t+\epsilon-s,B_s)\,ds} \right].
\end{align}
Next, we reorder \eqref{markovppty} to get
\begin{align}
 \no   \lefteqn{E^{(0,x)}\left[e^{-\int_\epsilon ^{t+\epsilon}V(t+\epsilon-s,B_s)\,ds} \right]}\\
    &=E^{(0,x)}\left[e^{\int_0^{t+\epsilon}V(t+\epsilon-s,B_s)ds}\right] +E^{(0,x)}\left[e^{-\int_0^{t+\epsilon}V(t+\epsilon-s,B_s)ds}(e^{\int_0^\epsilon V(t+\epsilon-s,B_s)ds}-1)\right].
\end{align}
Lastly, we find
\begin{align}
\no   \lefteqn{(\mathcal{A}g)(x)}\\
\no   &=\lim_{\epsilon\downarrow 0}\frac{E^{(0,x)} [g(B_\epsilon )]-g(x)}{\epsilon}\\
\no   &=\lim_{\epsilon\downarrow 0}\frac{u(t+\epsilon,x)-u(t,x)}{\epsilon}+\lim_{\epsilon\downarrow 0} E^{(0,x)}\left[e^{-\int_0^{t+\epsilon} V(t+\epsilon-s,B_s)ds} \left( \frac{e^{\int_0^\epsilon V(t+\epsilon-s,B_s)ds}-1}{\epsilon}\right) \right]\\
&= \frac{\partial u}{\partial t}+u(t,x)V(t,x),
\end{align}
which solves equation \eqref{heatfeynmankac}.

When we consider the stochastic heat equation drive by space-time white noise, i.e. if $H=1/2$ and $a=1$ in equation~\eqref{eqn:DO process}, the Feynman-Kac functional can be extended. The functional below is well defined, and we have 
\begin{align}
    u(t,x)=E_B\Bigg{[}u_0(B_t^x)\exp \Bigg(\int_0^t\int_{\mathbb{R}^d}\delta_0(B_{t-r}^x-y)W(drdy)\Bigg)\Bigg{]},
\end{align}
where $B^x$ is a $d$-dimensional Brownian motion independent of $W$ and starting at $x\in\mathbb{R}^d$. This result can be found in \cite{HuHuangNualartTindel}.

However, for the generalized noise, the Feynman-Kac formula will possibly fail because of the correlation of the increments of the noise. Although we don't have a formula for the solution itself anymore, we can develop a Feynman-Kac type formula for the $n$-th moment expectation by first smoothing the solution. 
\section{Feynman-Kac formula for the moments}\label{sec:FK-moments}
Our next objective is to find a formula  for the moments of the mild solution to equation \eref{eqn:DO process}.
For any $\delta > 0$, we define the function
$\varphi_{\delta}(t)=\frac{1}{\delta}{\bf 1}_{[0,\delta]}(t)$ for $t
\in \R$.  Then, $\varphi_{\delta}(t)p_{\varepsilon}(x)$  provides an
approximation of the Dirac delta function $\delta_0(t,x)$ as
$\varepsilon$ and $\delta$ tend to zero.

We set
\begin{equation}\label{regW}
\dot{W}^{\varepsilon,\delta}_{t,x}=\int_0^t
\int_{\R^d}\varphi_{\delta}(t-s)p_{\varepsilon}(x-y)W(ds,dy)\,.
\end{equation}
Now we consider the approximation of equation \eref{eqn:DO process} defined
by
\begin{equation}\label{approx}
\frac{\partial u_{t,x}^{\varepsilon,\delta}}{\partial t}=\frac{1}{2}\Delta u_{t,x}^{\varepsilon,\delta}+\int_{0}^{s}\int_{\mathbb{R}^d}\varphi _{\delta }(s-r)p_{{\varepsilon }%
}(y-z)u_{s,y}^{\varepsilon ,\delta }\delta W_{r,z}\,.
\end{equation}

Here we take the integral as a Skorohod type integral. Furthermore, the mild or evolution version of  (\ref{approx})  is
\begin{equation}
u_{t,x}^{\varepsilon ,\delta }=u_0(x)+\int_{0}^{t}\int_{\mathbb{R}%
^d}p_{t-s}(x-y)(\int_{0}^{s}\int_{\mathbb{R}^d}\varphi _{\delta }(s-r)p_{{\varepsilon }%
}(y-z)u_{s,y}^{\varepsilon ,\delta }\delta W_{r,z}\,)dsdy.  \label{eq6}
\end{equation}%
Applying Fubini's
theorem yields
\begin{equation}
u_{t,x}^{\varepsilon ,\delta }=u_0(x)+\int_{0}^{t}\int_{\mathbb{R}%
^d}\left( \int_{0}^{t}\int_{\mathbb{R}^d}p_{t-s}(x-y)\varphi _{\delta
}(s-r)p_{{\varepsilon }}(y-z)u_{s,y}^{\varepsilon ,\delta }dsdy\right)
\delta W_{r,z}.  \label{eq7}
\end{equation}
This leads to the following definition.

\begin{definition}  \label{def2}
An adapted random field $u^{\varepsilon ,\delta }=\{u_{t,x}^{{\varepsilon
,\delta }};  t\geq 0,x\in \mathbb{R}^{d}\}$ is a mild solution to equation (%
\ref{approx}) if for each $(r,z)\in  [0,t]\times \mathbb{R}^d$ the
integral
\begin{equation*}
Y_{r,z}^{t,x}= \int_{0}^{t}\int_{\mathbb{R}^d}p_{t-s}(x-y)\varphi
_{\delta }(s-r)p_{{\varepsilon }}(y-z)u_{s,y}^{\varepsilon ,\delta
}dsdy
\end{equation*}%
exists and $Y^{t,x}$ is a Skorohod integrable process such that (\ref{eq7})
holds for each $(t,x)$.
\end{definition}

Notice that the above definition is equivalent to saying that $u_{t,x}^{\varepsilon
,\delta }\in L^{2}(\Omega )$, and for any random variable $F\in \mathbb{D}%
^{1,2}$ , we have%
\begin{equation}
 \be \lc Fu_{t,x}^{\varepsilon ,\delta }\rc  =\be \lc F\rc u_0(x)+\be \lc \langle  Y^{t,x},DF\rangle _{\mathcal{H}}\rc.  \label{eq8}
\end{equation}%

In order to derive a Feynman-Kac formula for the moment of order $k\ge 2$  of the solution to equation (\ref{eqn:DO process}) we need to introduce $k$  independent $d$-dimensional Brownian motions $B^j$, $j=1,\dots, k$, which are independent of the noise $W$ driving the equation. We shall study the probabilistic behavior of some random variables with double randomness. Hence, we denote by $\bp,\be$ the probability and expectation with respect
to the annealed randomness concerning the couple $(B,W)$, where
$B=(B^1, \dots, B^k)$, while we set respectively  $\be_{B}$ and
$\be_{W}$ for the expectation with respect to one randomness only.

With this notation in mind, define
\begin{equation}\label{eq9}
u_{t,x}^{\varepsilon,\delta}=\be_B \lc \exp \lp  W (
A_{t,x}^{\varepsilon,\delta})-\frac{1}{2}\alpha^{\varepsilon,\delta}_{t,x}\rp
\rc\,,
\end{equation}
where
\begin{equation}  \label{m3}
A_{t,x}^{\varepsilon,\delta}(r,y)=\frac 1\delta
\left(\int_0^{\delta \wedge (t-r)}
p_{\varepsilon}(B_{t-r-s}^x-y)ds\right) \mathbf{1}_{[0,t]} (r),
\quad\text{and}\quad
\alpha^{\varepsilon,\delta}_{t,x}=\|A^{\varepsilon,\delta}_{t,x}\|^2_{\mathcal{H}},
\end{equation}
for a standard $d$-dimensional Brownian motion $B$ independent of
$W$. Then one can prove that $u_{t,x}^{\varepsilon,\delta}$ is a
mild solution to equation (\ref{approx}) in the sense of Definition
\ref{def2}. The proof is similar to the proof of Proposition 5.2 in
\cite{HN}, the only change we have here is that we use the inner product form $\langle\cdot,\cdot\rangle_H$ defined in \eqref{innprod1}, in place of the inner product used in \cite{HN}. We omit the details.

The next theorem asserts that the random variables
$u_{t,x}^{\varepsilon,\delta}$ have moments of all orders, uniformly
bounded in $\varepsilon$ and $\delta$, and converge to the mild
solution of equation \eref{eqn:DO process}, as $\delta$ and $\varepsilon$ tend to zero. Moreover,
it provides an expression for the moments of the mild solution of
equation \eref{eqn:DO process}.

\begin{theorem}\label{SHEkthmoment}
Suppose $\gamma(s,r)=(sr)^{a_1}|s-r|^{a_2}$ and $\mu(d\xi) =C_\alpha|\xi|^{\alpha-d}d\xi$.  Then for any integer $k \geq 1$ we have
\begin{equation}\label{eq10}
\sup_{\varepsilon,\delta}\be \lc |u_{t,x}^{\varepsilon,\delta}|^k\rc< \infty\,,
\end{equation}
the limit $\lim_{\varepsilon \downarrow 0}\lim_{\delta
\downarrow 0} u_{t,x}^{\varepsilon,\delta}$ exists in $L^p$ for all
$p \geq 1$, and it coincides with the mild solution $u$ of equation
\eref{eqn:DO process}. Furthermore, we have for any  integer $k \geq 2$
\begin{equation}\label{momSk1}
\be \lc u_{t,x}^k\rc =u_0^k\, \be_B \lc  \exp\left(\sum_{1 \leq i < j
\leq k}\int_0^t \int_0^t \gamma (s,r)f(B_s^i-B_r^j)ds
dr\right)\rc\,,
\end{equation}
where $\{B^j; \, j=1,\dots, k\}$  is a family of $d$-dimensional  independent standard Brownian motions independent of $W$.
\end{theorem}

\begin{proof}
To simplify the proof we assume without loss of generality that $u_0$ is identically one. Fix an integer $k \geq 2$. Using \eref{eq9} we have
\begin{equation*}
\be \lc \lp u_{t,x}^{\varepsilon,\delta}\rp ^k\rc=\be_W \lc\prod_{j=1}^k
\be_B\lc \exp \lp   W(A^{\varepsilon,\delta,
B^j}_{t,x})-
\frac{1}{2}\alpha_{t,x}^{\varepsilon,\delta,B^j}\rp \rc \rc\,,
\end{equation*}
where for any $j=1,\dots,k$,  $A_{t,x}^{\varepsilon,\delta,B^j}$ and $\alpha_{t,x}^{\varepsilon,\delta,B^j}$ are evaluations of  \eqref{m3} using the Brownian motion $B^j$. Therefore,  since $W(A^{\varepsilon,\delta, B^j}_{t,x})$ is a Gaussian random variable conditionally on $B$, we obtain
\begin{eqnarray}\label{eq:exp-moments-utx-ep-delta}
\be \lc \lp u_{t,x}^{\varepsilon,\delta}\rp ^k\rc &=&
\be_B \lc 
\exp \lp\frac{1}{2}\|\sum_{j=1}^k A_{t,x}^{\varepsilon,\delta,B^j}\|^2_{\mathcal{H}}
-\frac{1}{2}\sum_{j=1}^k \alpha_{t,x}^{\varepsilon,\delta,B^j}\rp\rc \notag\\
&=& \be_B \lc 
\exp \lp\frac{1}{2}\|\sum_{j=1}^k A_{t,x}^{\varepsilon,\delta,B^j}\|^2_{\mathcal{H}}
-\frac{1}{2}\sum_{j=1}^k \| A_{t,x}^{\varepsilon,\delta,B^j}\|^2_{\mathcal{H}}\rp\rc   \notag\\
&=&\be_B \lc \exp \lp\sum_{1\leq i < j \leq k}\langle
A_{t,x}^{\varepsilon,\delta,B^i},
A_{t,x}^{\varepsilon,\delta,B^j}\rangle _{\mathcal{H}}\rp\rc\,.
\end{eqnarray}
Here, we used the fact that $E[e^{W(\varphi)}]=e^{\frac{1}{2}\norm{\varphi}_H^2}$ and for multiple Gaussian random variables $W_1,...,W_k$, we have $E[\prod_{j=1}^k e^{W_j}]=e^{\sum_{i,j} \frac{1}{2} \text{Cov}(W_i,W_j)}$.

Let us now evaluate the quantities $\langle A_{t,x}^{\varepsilon,\delta,B^i}, A_{t,x}^{\varepsilon,\delta,B^j}\rangle _{\mathcal{H}}$ above. By the definition of $A_{t,x}^{\varepsilon,\delta,B^i}$, for any $i\not= j$  we have
\begin{equation} \label{eq11}
\langle A_{t,x}^{\varepsilon,\delta,B^i},A_{t,x}^{\varepsilon,\delta,B^j}\rangle_{\mathcal{H}} =
 \int_0^t \int_0^t \int_{\R^{d}} \cf A^{\ep,\delta, B^i}_{t,x} (u,\cdot)(\xi) \, \overline{\cf A}^{\ep,\delta,B^j}_{t,x}(v,\cdot) (\xi)\gamma(u,v) \mu(d\xi) dudv.
 \end{equation}
 On the other hand, for $u\in [0,t]$ we can write
 \begin{eqnarray*}
\cf A^{\ep,\delta, B^i}_{t,x}(u,\cdot)(\xi)
&=&  \frac 1\delta \int_0 ^{ \delta \wedge(t-u)}  \cf p_{\ep}(B_{t-u-s}^{i}+x-\cdot) (\xi) ds  \\
&= &\frac 1\delta \int_0 ^{ \delta \wedge(t-u)}    \exp \lp
-\frac{\ep^{2}|\xi|^{2}}{2}+\imath \lla \xi ,B^i_{t-u-s}+x\rra \rp
ds.
\end{eqnarray*}
Thus
\begin{align}
&\langle A_{t,x}^{\varepsilon,\delta,B^i},A_{t,x}^{\varepsilon,\delta,B^j}\rangle_{\mathcal{H}} 
\label{m2} \\ 
&=\int_{\R^{d}} \left(   \iot \iot
   \left( \frac 1{\delta^2}  \int_0^{\delta\wedge v } \int_0^{\delta\wedge u} e^{\imath \lla \xi ,B^i_{u-s_1}- B^j_{v-s_2}\rra}     ds_1ds_2   \right)\gamma(u,v)    dudv  \right)    
   \times   e^{-\ep^2 |\xi|^2}   \mu(d\xi), \notag
\end{align}
and we divide the proof in several steps.

\smallskip
\noindent \textit{Step 1:}  We claim that,
\begin{equation}\label{eq12}
\lim_{\varepsilon \downarrow 0} \lim_{ \delta \downarrow 0} \langle A_{t,x}^{\varepsilon,\delta,B^i}, A_{t,x}^{\varepsilon,\delta,B^j}\rangle_{\mathcal{H}}=\iot \iot\gamma(u,v)f(B_{u}^i-B_{v}^j) du dv \,,
\end{equation}
where the convergence holds in   $L^1(\Omega)$. Notice first that
the right-hand side of equation (\ref{eq12}) is finite almost surely
because
\[
\be_B \lc \iot \iot \gamma(u,v)f(B_{u}^i-B_{v}^j) dudv\rc =\iot
\iot    \int_{\R^{ d}} \gamma(u,v) e^{-\frac {1}{2} (u+v) |\xi|^2}
\mu(d\xi) dudv
\]
and we show that this is finite  making the change of variables  $x=u-v$, $y=u+v$, and using the specific $\gamma$ and $\mu$ like in the proof of Theorem \ref{thmSk1}.

 In order to show the convergence (\ref{eq12}) we first let  $\delta$ tend to zero. Then, owing to the continuity of $B$ and applying some dominated convergence arguments to \eqref{m2}, we obtain
 the following limit almost surely and in $L^1(\Omega)$
 \begin{equation}\label{eq13}
 \lim_{ \delta \downarrow 0} \langle A_{t,x}^{\varepsilon,\delta,B^i}, A_{t,x}^{\varepsilon,\delta,B^j}\rangle_{\mathcal{H}}
 = 
 \int_{\R^d}  \lp   \iot \iot    e^{\imath \lla \xi ,B^i_{u}- B^j_{v}\rra}     \gamma(u,v) dudv\rp  e^{-\ep^2 |\xi|^2} \  \mu(d\xi) .
\end{equation}
Finally, it is easily checked that the right-hand side of (\ref{eq13}) converges in $L^1(\Omega)$ to the right-hand side of  (\ref{eq12}) as $\varepsilon$ tends to zero, by means of a simple dominated convergence argument again.

\smallskip
\noindent \textit{Step 2:}
For notational convenience, we denote by $B$ and $\widetilde{B}$ two independent $d$-dimensional Brownian motions, and  $\be$ will denote here the expectation with respect to both $B$ and $\widetilde{B}$.  We  claim that for any $\lambda > 0$
\begin{equation}\label{expint}
\sup_{\varepsilon,\delta}\be  \lc  
\exp \lp \lambda\lla A_{t,x}^{\varepsilon,\delta,B}, A_{t,x}^{\varepsilon,\delta,\widetilde{B}}\rra _{\mathcal{H}} \rp \rc< \infty\,.
\end{equation}
Indeed, starting from (\ref{m2}), making the change of variables $u-s
\rightarrow u$, $v-\tilde s \rightarrow v$,  assuming $\delta \le t$, and
using Fubini's theorem, we can write
\begin{align*}
  \lla A_{t,x}^{\varepsilon,\delta,B}, A_{t,x}^{\varepsilon,\delta,\widetilde{B}}\rra _{\mathcal{H}}
  =
\frac 1{\delta^2}  \int_0^\delta \int_0^\delta   \int_0^{t-s}\int_0^{t-\tilde s}\int_{\R^d}  \exp\lp-\imath (B_{u}-\widetilde{B}_{v})\cdot \xi \rp \\
\times \exp(-\varepsilon |\xi|^2)  \gamma(u+s,v+\tilde s) \, \mu(d\xi) \, dudvdsd\tilde s\,.
\end{align*}
We now control the moments of $\langle A_{t,x}^{\varepsilon,\delta,B}, A_{t,x}^{\varepsilon,\delta,\tilde{B}}\rangle _{\mathcal{H}}$ in order to reach exponential integrability:
\begin{align} \label{m7}
\no \lla A_{t,x}^{\varepsilon,\delta,B}, A_{t,x}^{\varepsilon,\delta,\widetilde{B}}\rra _{\mathcal{H}}^n
=\frac 1{\delta^{2n}}  \int_{O_{\delta,n}} \int_{\R^{dn}}
\exp\left(-\imath\sum_{l=1}^n (B_{u_l}-\widetilde{B}_{v_l})\cdot\xi_l\right)\\ 
\times e^{-\varepsilon \sum_{l=1}^n|\xi_l|^2} \prod_{l=1}^n
\gamma(u_l+s_l,v_l+\widetilde{s}_l) \, \mu(d\xi) \, dsd\tilde{s}dudv,
\end{align}
where $\mu(d\xi)=\prod_{l=1}^n \mu(d\xi_l)$, the differentials 
$ds,d\tilde{s},du,dv$ are defined similarly, and
\[
O_{\delta,n}=\left\{ (s, \widetilde{s}, u,v);\, 0\le s_l, \widetilde{s}_l
\le \delta, \, 0\le u_l \le t-s_l, \, 0\le v_l \le
t-\widetilde{s}_l , \text{ for all } 1\leq l \leq n\right\}.
\]. We will prove that
\begin{align}
\int_0^t\int_0^t dudv \gamma(u,v)<\infty,
\end{align}

i.e., the case when $n=1$ in the Appendix~\ref{gamma_exponential_finite }.
If this hold true, we have:
\begin{eqnarray}\label{eq:charac1}
\no \be\lc  \exp\left(-\imath\sum_{l=1}^n (B_{u_l}-\widetilde{B}_{v_l})\cdot\xi_l\right)\rc
&=&
\exp\lp -\frac12 \var\lp  \sum_{l=1}^n (B_{u_l}-\widetilde{B}_{v_l})\cdot\xi_l\rp \rp 
 \\
&=&
\exp\left(-\frac{1}{2}\sum_{1\leq i,j\leq n}
(u_i\wedge u_j+v_i\wedge v_j)
\xi_i\cdot \xi_j\right). \notag
\end{eqnarray}
This yields
\begin{eqnarray*}
\lefteqn{\be \lc \left\langle A_{t,x}^{\varepsilon,\delta,B},
A_{t,x}^{\varepsilon,\delta, \widetilde {B}}\right\rangle _{\mathcal{H}}^n \rc}\\
&\leq& C(t)^n
\int_{[0,t]^{2n}}\int_{\R^{dn}}\exp\left(-\frac{1}{2}\sum_{1\leq
i,j\leq n}(s_i\wedge s_j+\widetilde{s}_i\wedge \widetilde{s}_j)\xi_i\cdot
\xi_j\right) \mu(d\xi)dsd\tilde{s}\\
&\leq&C(t)^n \int_{\R^{dn}}\int_{[0,t]^n}\exp\left(-\sum_{1\leq i,j\leq n}(s_i\wedge s_j) \, \xi_i \cdot \xi_j\right)ds \mu(d\xi)\,.
\end{eqnarray*}
 Since
\begin{equation*}
\int_{\R^{dn}}\exp\left(-\sum_{1\leq i,j\leq n}(s_i\wedge s_j)\xi_i \cdot \xi_j\right)\mu(d\xi)
\end{equation*}
is a symmetric function of $s_1,s_2,\dots,s_n$, we can restrict our
integral to $T_n(t) =\{0<s_1< s_2< \cdots < s_n< t\}$. Hence, using the convention $s_0=0$,  we have
\begin{align}\label{eq:mAn}
\lefteqn{\be  \lc \left \langle A_{t,x}^{\varepsilon,\delta,B}, A_{t,x}^{\varepsilon,\delta, \widetilde {B}}\right \rangle _{\mathcal{H}}^n\rc}\\
\no &\leq  C(t)^n n!\int_{\R^{dn}}\int_{T_n(t)}\exp\left(-\sum_{1\leq i,j\leq n}(s_i\wedge s_j)\xi_i \cdot \xi_j\right)ds \mu(d\xi)\\
&=C(t)^n n! \int_{\R^{dn}}\int_{T_n(t)}\exp\left(-\sum_{i=1}^n
(s_i-s_{i-1})|\xi_i+ \cdots +\xi_n|^2\right)ds\mu(d\xi). \notag
\end{align}
Thus, using the same argument as in the proof of  the estimate (\ref{eq4}),  we end up with
\begin{eqnarray*}
\be  \lc \left \langle A_{t,x}^{\varepsilon,\delta,B}, A_{t,x}^{\varepsilon,\delta, \widetilde {B}}\right\rangle _{\mathcal{H}}^n\rc
&\leq&C(t)^n n! \int_{T_n(t)}\prod_{i=1}^n \left(\sup_{\eta \in \R^d}
\int_{\R^d}e^{-(s_i-s_{i-1})|\xi_i+\eta|^2}\mu(d\xi)\right)ds\\
&\le &C(t)^n n!
\int_{T_n(t)}\prod_{i=1}^n\left(\int_{\R^d}e^{-(s_i-s_{i-1})|\xi_i|^2}\mu(d\xi_i)\right)ds\,.
\end{eqnarray*}
Making  the change of variable $w_i=s_i-s_{i-1}$, the above integral
is equal to
\begin{equation*}
C(t)^n n! \int_{S_{t,n}} \int_{\R^{dn}}\prod_{i=1}^n e^{-w_i
|\xi_i|^2}\mu(d\xi)dw 
\le
C(t)^n n! \sum_{k=0}^n \binom{n}{k} \frac{t^k}{k!}D_N^k
(2C_N)^{n-k} ,
\end{equation*}
where we have resorted to Lemma \ref{lem1} for the last inequality.
Therefore,
\begin{eqnarray*}
\frac{1}{n!} \be \lc \left\langle A_{t,x}^{\varepsilon,\delta,B},
A_{t,x}^{\varepsilon,\delta, \widetilde {B}}\right\rangle _{\mathcal{H}}^n  \rc
\leq C(t)^n  \sum_{k=0}^n \binom{n}{k} \frac{t^k}{k!}D_N^k
(2C_N)^{n-k}\,,
\end{eqnarray*}
which is exactly the right hand side of \eqref{eq:eqDD}. Thus, along the same lines as in the proof of Theorem \ref{thmSk1}, we get
\begin{equation*}
\be \lc  \exp \lp \lambda
\left\langle A_{t,x}^{\varepsilon,\delta,B}, A_{t,x}^{\varepsilon,\delta, \widetilde {B}}\right\rangle _{\mathcal{H}} \rp\rc
=
\sum_{n=0}^{\infty}\frac{\lambda^n}{n!}\be \lc \left\langle A_{t,x}^{\varepsilon,\delta,B}, A_{t,x}^{\varepsilon,\delta, \widetilde {B}}\right\rangle _{\mathcal{H}}^n\rc\,<\infty,
\end{equation*}
which completes the proof of \eref{expint}.

\medskip
\noindent \textit{Step 3:} Starting from \eqref{eq:exp-moments-utx-ep-delta}, (\ref{eq12}) and (\ref{expint})  we
deduce that  $\be [ ( u_{t,x}^{\varepsilon,\delta})^k]$
converges as $\delta$ and $\varepsilon$ tend to zero to the
right-hand side of   \eref{momSk1}. On the other hand, we can also
write
\[
\be \lc  u_{t,x}^{\varepsilon,\delta} u_{t,x}^{\varepsilon',\delta'}  \rc=
\be_B \lc  \exp \lp\ \langle A^{\varepsilon,\delta,
B^1}_{t,x} , A^{\varepsilon',\delta',
B^2}_{t,x}  \rangle_{\mathcal{H}}\rp\rc\,.
\]
As before we can show that this converges as $\varepsilon,\delta,
\varepsilon', \delta'$ tend to zero. So,
$u_{t,x}^{\varepsilon,\delta}$ converges in $L^2$ to some limit
$v_{t,x}$, and the limit is actually in  $L^p$ , for all $p \geq 1$.
Moreover, $\be [v^k_{t,x}]$ equals to the right hand side of
\eref{momSk1}. Finally, letting $ \delta$ and $\varepsilon$ tend to
zero in equation \eref{eq8} we get
\begin{equation*}
\be [Fv_{t,x}]= \be[ F]  +\be \lc \langle DF, v p_{t-\cdot}(x-\cdot)\rangle_{\mathcal{H}}\rc,
\end{equation*}
which implies that the process $v$ is the solution of equation
\eref{eqn:DO process}, and by the uniqueness of the solution we have $v=u$.
\end{proof}

\chapter{Moment bounds}
In this Chapter, we will establish upper and lower bounds on the moments of the solution in the case of the generalized noise of Chapter 5. This will generalize the results of Chapter 4, and show that the asymptotic behavior of the moments only depends on the value of $2a_1 + a_2$, which corresponds to the order of the variance in time, and not on the specific values of $a_1$ and $a_2$. We start by establishing the upper bound result.

Here the three cases and the parameter $a$ are defined similarly to the settings in Chapter 2.

\section{Upper Bound}
\begin{theorem}
\label{thm:2nd moment upper bound}
Under the Assumptions \ref{assumption:finite var} and \ref{assumption:bounded} and with $H$ given by equation~\eqref{a1a2condition}, let $u$ be the solution to the equation~\eqref{eqn:DO process}. There is a positive constant $C<\infty$ such that for all $t\in \mathbb{R}_+$ and $x,y\in \mathbb{R}^d$,
\begin{align}
    E[u(t,x)u(t,y)]\leq C  u_0^2 \cdot \exp(C t^{\frac{4H-a}{2-a}}).
\end{align}
\end{theorem}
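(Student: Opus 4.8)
The plan is to read off the second moment from the Feynman--Kac moment representation of Theorem~\ref{SHEkthmoment} and then exploit the boundedness of $f$ to kill the random, space-dependent part of the exponent. Taking $k=2$ in that theorem gives, at a single point, $E[u(t,x)^2]=u_0^2\,E_B[\exp(\int_0^t\int_0^t \gamma(s,r)f(B^1_s-B^2_r)\,ds\,dr)]$ with $B^1,B^2$ independent standard Brownian motions independent of $W$. My first step would be to record the (essentially cosmetic) extension to two distinct spatial points: rerunning Step~3 of the proof of Theorem~\ref{SHEkthmoment} with the approximate solutions started at $x$ and $y$ respectively, so that the shift $x-y$ is carried inside the argument of $f$, yields
\begin{align}
E[u(t,x)u(t,y)]=u_0^2\,E_B\!\left[\exp\!\left(\int_0^t\int_0^t \gamma(s,r)\,f\big(B^1_s-B^2_r+x-y\big)\,ds\,dr\right)\right].
\end{align}

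The decisive simplification, and the reason Assumption~\ref{assumption:bounded} is imposed, is that $f(z)\le f(0)=A_0$ for every $z$. This bounds the integrand by $A_0\,\gamma(s,r)$ uniformly in the Brownian paths and in $x-y$, so the expectation over $(B^1,B^2)$ trivializes and I would obtain
\begin{align}
E[u(t,x)u(t,y)]\le u_0^2\,\exp\!\left(A_0\int_0^t\int_0^t \gamma(s,r)\,ds\,dr\right),
\end{align}
reducing the whole estimate to the deterministic time integral of $\gamma(s,r)=(sr)^{a_1}|s-r|^{a_2}$.

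That integral I would evaluate exactly by the scaling argument already used for the variance in Chapter~5. Substituting $s=tu$, $r=tv$ and using $\gamma(tu,tv)=t^{2a_1+a_2}(uv)^{a_1}|u-v|^{a_2}$ together with $ds\,dr=t^2\,du\,dv$ gives
\begin{align}
\int_0^t\int_0^t \gamma(s,r)\,ds\,dr
= t^{\,2a_1+a_2+2}\int_0^1\int_0^1 (uv)^{a_1}|u-v|^{a_2}\,du\,dv
= C\,t^{2H},
\end{align}
where $C<\infty$ because $a_1\ge 0$ and $-1<a_2\le 0$ guarantee integrability near $u=v$ and near the origin (this is exactly Appendix Lemma~\ref{gammaintegrable}), and where the exponent has collapsed via the standing relation $2a_1+a_2+2=2H$ from \eqref{a1a2condition}. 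Inserting this back produces $E[u(t,x)u(t,y)]\le u_0^2\exp(A_0 C\,t^{2H})$; since the bounded case is $a=0$ in \eqref{eqn:a_param} and hence $2H=\tfrac{4H-a}{2-a}$, this is the claimed inequality (and it recovers Theorem~\ref{thm:2nd_mom_formula upper bound} when $a_1=H-1/2$, $a_2=-1$).

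The only place needing genuine care is the first step, the justification of the two-point Feynman--Kac formula; once it is in hand the remainder is immediate. I would emphasize that the argument makes the main theme of the chapter transparent: with $f$ bounded, the time order $t^{2H}$ of the second moment is dictated purely by the combination $2a_1+a_2=2H-2$, that is by the order of the variance, and is completely insensitive to the individual values of $a_1$ and $a_2$.
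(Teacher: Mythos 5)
Your proof is correct, but it takes a genuinely different route from the one in the paper. The paper works entirely through the Wiener chaos expansion: it writes $E[u(t,x)u(t,y)]=\sum_n \frac{1}{n!}\alpha_n(t)$, controls the spatial factor $\psi_n(\mathbf{t},\mathbf{s})$ by the Balan--Conus bound $u_0^2K_w^n[\beta(\mathbf{t})\beta(\mathbf{s})]^{-a/4}$, estimates the resulting temporal integrals with Beta functions and Stirling's formula to get $\alpha_n(t)\le u_0^2 cK_h^n t^{n(2H-a/2)}(n!)^{a/2}$, and then sums the series via Lemma~\ref{lemmaA1}. You instead take $k=2$ in the Feynman--Kac moment representation of Theorem~\ref{SHEkthmoment}, bound $f\le A_0$ pointwise, and reduce everything to the deterministic integral $\int_0^t\int_0^t\gamma(s,r)\,ds\,dr=Ct^{2H}$. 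Under Assumption~\ref{assumption:bounded} (so $a=0$ and $\frac{4H-a}{2-a}=2H$) your argument is shorter, cleaner, and makes the ``only the order of the variance matters'' message transparent; it also directly recovers Theorem~\ref{thm:2nd_mom_formula upper bound} in the DO case. What the paper's heavier machinery buys is uniformity over the unbounded covariances: the chaos-expansion estimate with the $[\beta(\mathbf{t})\beta(\mathbf{s})]^{-a/4}$ factor carries over verbatim to the Riesz-kernel and white-noise cases ($a>0$), where your step ``$f\le A_0$'' is unavailable and the whole spatial analysis must be redone. Two small points you should make explicit: (i) Theorem~\ref{SHEkthmoment} is stated in the paper with $\mu(d\xi)=C_\alpha|\xi|^{\alpha-d}\,d\xi$, so you need a one-line remark that the representation (and the exponential integrability in its Step~2) holds a fortiori when $f$ is bounded under Dalang's condition; and (ii) the two-point identity with $f(B^1_s-B^2_r+x-y)$ is indeed only a cosmetic rerun of Step~3 with $A^{\varepsilon,\delta,B^1}_{t,x}$ paired against $A^{\varepsilon,\delta,B^2}_{t,y}$, but it is not stated in the paper and deserves the sentence you give it. Also, the double time integral you invoke is Lemma~\ref{gamma_exponential_finite } rather than Lemma~\ref{gammaintegrable}, which only bounds $\sup_s\int_0^t\gamma(s,r)\,dr$.
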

\begin{proof}
We will use some notation from Balan-Conus\cite{Balan2016}. More specifically, 
\begin{align}
\no\lefteqn{E[u(t,x_1)u(t,x_2)]}\\
\no&=E\Bigg[\Bigl(w_0(t,x)+\sum_{n=1}^\infty I_n(f_n(t,x))\Bigl)\Bigl(w_0(t,x)+\sum_{n=1}^\infty I_n(f_n(t,x))\Bigl)\Bigg]\\
\no&=E[1+\sum_{n=1}^\infty I_n^2(f_n)]\\
\no&=\sum_{n=1}^\infty \frac{1}{n!}\alpha_H^n \int_{[0,t]^{2n}}\prod_{j=1}^n (t_j)^{a_1}(s_j)^{a_1} |t_j-s_j|^{a_2} \psi_n(\mathbf{t},\mathbf{s})\,d\mathbf{t}d\mathbf{s},
\end{align}
where $\mathbf{t}=(t_1,t_2,...,t_n)$ and $d\mathbf{t}=dt_1dt_2...dt_n$ (same hold for $\mathbf{s}$), $\alpha_H =H(2H-1)$ is a constant introduced in \eqref{fractionalconstant} and also in Balan-Conus\cite[Section 1]{Balan2016}. The second equality comes from the orthogonality of the Wiener chaos expansion. The function $\psi_n(\textbf{t},\textbf{s})$ is defined as follows. We use a similar arument in \cite[Section 3]{Balan2016}, see also \eqref{e:fn'} for the notation.
\begin{align}
    \psi_n(\textbf{t},\textbf{s})&:=\int_{\mathbb{R}^{nd}}\mathcal{F}f_n(s,\cdot,t,x)(\xi)\\
 \no&\hspace{3mm}\times \overline{\mathcal{F}f_n(s,\cdot,t,x)(\xi)}\mu(d\xi_1)\cdots \mu(d\xi_n)\\
 \no   &=\int_{\mathbb{R}^{2nd}}\prod_{j=1}^n p_h(t_{\rho(j+1)}-t_{\rho(j)},x_{\rho(j+1)}-x_{\rho(j)})\\
 \no       &\hspace{3mm}\times \prod_{j=1}^n p_h(s_{\sigma(j+1)}-s_{\sigma(j)},y_{\sigma(j+1)}-y_{\sigma(j)})\\
\no &\hspace{7mm}\prod_{j=1}^n f(x_j-y_j)dxdy,
\end{align}
where $\rho$ is the permutation such that $0<t_{\rho(1)}<...<t_{\rho(n)}<t$ and $\sigma$ is the permutation such that $0<s_{\sigma(1)}<\dots<s_{\sigma(n)}<t$ (Note that $\psi_n$ can be written as a sum over all permutations with the appropriate indicator function as in \eqref{e:fn'}).

The next lemma established a bound for the space term, i.e., $\psi_n(\textbf{t},\textbf{s})$.
\begin{lemma}[Balan-Conus 2016]
We have
\begin{align}
\no    \psi_n(\textbf{t},\textbf{s})\leq u_0^2 K_w^n [\beta(\textbf{t})\beta(\textbf{s})]^{-\frac{a}{4}},
\end{align}
where $\beta(\textbf{t})=\prod_{j=1}^n (t_{\rho(j+1)}-t_{\rho(j)})$, and $\rho \in S_n$ is chosen such that $t_{\rho(1)}<...<t_{\rho(n)}$, and $t_{\rho(n+1)}=t.$ 
\end{lemma}
\begin{proof}
The proof is in \cite[Proposition 4.2]{Balan2016}
\end{proof}
We go back to our inner product term. With Plancherel Theorem, we eliminated the space term, and take the Wiener chaos expansion to the solution $u$. For the simplicity from now on, let's define 
\begin{align}
\no   \alpha_n(t):=\alpha_H^n \int_{[0,t]^{2n}}\prod_{j=1}^n (t_j)^{a_1}(s_j)^{a_1} |t_j-s_j|^{a_2} \psi_n(\mathbf{t},\mathbf{s})\,d\mathbf{t}d\mathbf{s}.
\end{align}
Then we plug in the bound, we have:
\begin{align}
\alpha_n(t) \leq \alpha_H^n u_0^2 \int_{[0,t]^n} \int_{[0,t]^n} K_w^n\beta(\textbf{t})^{-\frac{a}{4}}\textbf{t}^{a_1}\beta(\textbf{s})^{-\frac{a}{4}}\textbf{s}^{a_1} \prod_{i=1}^n|t_i-s_i|^{a_2}d\textbf{t}d\textbf{s},
\end{align}
where $\mathbf{s}^{a_1}=s_1^{a_1}s_2^{a_1}...s_n^{a_1}$ and $K$ is a constant depending on a (e.g., a=1, $K=\pi$).\\
Consider $\beta(\textbf{t})^{-\frac{a}{4}}\textbf{t}^{a_1}$ as a new function, using Lemma B.3 in Balan-Conus \cite{Balan2016}, we have
\begin{align}
\label{n factorial}
 \no  \lefteqn{\alpha_n(s)}\\
 \no&\leq C_H^n\int_{[0,t]^n}\bigg((\beta(\textbf{s})^{-\frac{a}{4}}\textbf{s}^{a_1})^{\frac{2}{a_2+2}}d\textbf{s}\bigg)^{a_2+2}\\
\no &\leq C_H^n \bigg(\int_{[0,t]^n}[(t-s_n)(s_n-s_{n-1})\cdots (s_2-s_1)]^{\frac{2}{a_2+2}\cdot (-\frac{a}{4})}\\
\no &\hspace{4mm}\times \textbf{s}^{\frac{2a_1}{a_2+2}}d\textbf{s}\bigg)^{a_2+2}\\
\no&\leq C_H^n \bigg(n!\int_{\mathbb{T}^n}[(t-s_n)(s_n-s_{n-1})\cdots (s_2-s_1)]^{\frac{2}{a_2+2}\cdot (-\frac{a}{4})}\\ &\hspace{4mm}\times \textbf{s}^{\frac{2a_1}{a_2+2}}d\textbf{s}\bigg)^{a_2+2}.
\end{align}
Next we can use a similar trick as in the DO noise case to find the order of $t$ and the coefficient depending on $n$.\\
Following Balan-Tudor\cite[Lemma 3.5]{BalanTudor} we have
\begin{align}
\no  &  \lefteqn{\alpha_n(t)}\\
\no&\leq t^{n(2H-a/2)}(n!)^{a_2+2}\prod_{i=0}^{n-1}\beta\Bigg(\frac{-a/2}{a_2+2}+1, i\cdot (\frac{-a/2}{a_2+2}+\frac{2a_1}{a_2+2}+1)+\frac{2a_1}{a_2+2}+1\Bigg)^{a_2+2}.
\end{align}
Expanding the beta function with Gamma function, the product becomes
\begin{align}
\label{gamma product}
  \prod_{i=0}^{n-1} (\frac{\Gamma(\frac{-a/2}{a_2+2}+1)\Gamma(i(\frac{-a/2}{a_2+2}+\frac{2a_1}{a_2+2}+1)+\frac{2a_1}{a_2+2}+1)}{\Gamma(1+(i+1)\frac{2a_1+a_2+2-a/2}{a_2+2})})^{a_2+2}.
\end{align}
Here we can use Stirling's formula, for all $i$ we have
\begin{align}
\no    c_1 (\frac{(i+1)(2H-a/2)}{a_2+2})^{-\frac{-\frac{a}{2}+a_2+2}{a_2+2}}&\leq\frac{\Gamma(i(\frac{-a/2}{a_2+2}+\frac{2a_1}{a_2+2}+1)+\frac{2a_1}{a_2+2}+1)}{\Gamma(1+(i+1)\frac{2a_1+a_2+2-a/2}{a_2+2})}\\
\no&\leq c_2 (\frac{(i+1)(2H-a/2)}{a_2+2})^{-\frac{-\frac{a}{2}+a_2+2}{a_2+2}},
\end{align}
for some constants $c_1,c_2$.\\
The product \eqref{gamma product}becomes 
\begin{align}
 \no   \Gamma \Bigl(\frac{-a/2}{a_2+2}+1\Bigl)^{n(a_2+2)} \Bigl(n!\frac{2H-a/2}{a_2+2}\Bigl)^{-(-a/2+a_2+2)}.
\end{align}
So we have the following bound for $\alpha_n(t)$:
\begin{align}
\label{wienerchaosalpha}
    \alpha_n(t)\leq u_0^2\frac{c K_h^n t^{n(2H-a/2)}}{(n!)^{-a/2}}.
\end{align}
Finally, we sum up all the $\alpha_n(t)$:
\begin{align}
 \no    E|u(t,x)|^2 &=w_0(t,x)+\sum_{n=1}^\infty \frac{1}{n!}\alpha_n(t) \\
 \no&\leq \sum_{n\geq 0} \frac{c K_h^n t^{n(2H-a/2)}}{(n!)^{-a/2+1}}\\
 &\sim C  u_0^2 \cdot \exp(C t^{\frac{4H-a}{2-a}}),
\end{align}
by Lemma~\ref{lemmaA1}.
\end{proof}

Next we prove the nth moment result, following Balan-Conus~\cite{Balan2016} part 5 (Hyperbolic case: Upper bound on the moments).
\begin{theorem}[$n$-th moment upper bound]
Let $u$ be the solution to the equation~\eqref{eqn:DO process} for any of the cases (1),(2),(3) in Chapter 2. Then there exists a constant $C$ such that
\begin{align}
\label{generalnthupperbound}
    E[u^n(t,x)]\leq C u_0^n \cdot \exp(C  n^{\frac{4-a}{2-a}} t^{\frac{4H-a}{2-a}}).
\end{align}
\end{theorem}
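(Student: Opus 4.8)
The goal is to bound the $n$-th moment $E[u^n(t,x)]$ for the solution to the heat equation driven by the generalized noise. The natural strategy is to combine the Feynman–Kac moment representation from Theorem~\ref{SHEkthmoment} with the chaos-expansion estimates already developed for the second moment in Theorem~\ref{thm:2nd moment upper bound}, following the program of Balan–Conus. By Theorem~\ref{SHEkthmoment} we may write, with $u_0 \equiv 1$,
\begin{align}
\no E[u^n(t,x)] = \be_B\left[\exp\left(\sum_{1 \leq i < j \leq n} \int_0^t \int_0^t \gamma(s,r) f(B^i_s - B^j_r)\,ds\,dr\right)\right],
\end{align}
where $B^1,\dots,B^n$ are independent Brownian motions. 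First I would expand the exponential and exploit the structure already present in the second-moment proof: each pairwise interaction $\int_0^t\int_0^t \gamma(s,r) f(B^i_s - B^j_r)\,ds\,dr$ is controlled exactly by the quantity $\alpha_n(t)$ analyzed in Theorem~\ref{thm:2nd moment upper bound}, since the generalized noise only enters through $\gamma(s,t)=s^{a_1}t^{a_1}|s-t|^{a_2}$ with $2a_1+a_2=2H-2$. The key fact is that the order in $t$ depends only on this sum, not on $a_1,a_2$ separately, which is exactly what equation~\eqref{wienerchaosalpha} exhibits.

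**Key steps.** The essential mechanism is to track how the $\binom{n}{2} = \frac{n(n-1)}{2}$ pairs of Brownian motions contribute. I would first establish, by Hölder's inequality across the pairs or by a direct moment expansion as in Balan–Conus~\cite[Section 5]{Balan2016}, that the $n$-fold exponential moment is controlled by a sum over chaos orders in which each factor carries both a power of $t$ of order $(2H - a/2)$ per pair and a combinatorial factor reflecting the number of pairs. Concretely, the combinatorial bookkeeping should produce a bound of the form $\sum_k c^k n^{2k} t^{k(2H-a/2)} / (k!)^{1 - a/2}$, where the $n^{2k}$ comes from the $\nu_n \sim n^2$ available pairs and the factorial correction $(k!)^{-(1-a/2)}$ comes from the simplex integration and Stirling estimates exactly as in \eqref{wienerchaosalpha}. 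Applying the asymptotic Lemma~\ref{lemmaA1} to this series then converts it into an exponential of the form $\exp(C n^{\frac{4-a}{2-a}} t^{\frac{4H-a}{2-a}})$, after optimizing the interplay between the $n$-dependence and the factorial: matching $n^{2k} t^{k(2H-a/2)}$ against $(k!)^{1-a/2}$ yields the exponents $\frac{4-a}{2-a}$ in $n$ and $\frac{4H-a}{2-a}$ in $t$.

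**Main obstacle.** The hard part will be the combinatorial control of the cross-terms: when the exponential is expanded, one obtains products of pairwise interaction integrals over many distinct pairs $(i,j)$, and these are not independent because a single Brownian motion $B^i$ appears in multiple pairs. I expect to handle this by a Cauchy–Schwarz or Hölder decoupling argument (as in \cite{Balan2016}) that reduces each mixed moment to a product of the single-pair quantities $\alpha_k(t)$, at the cost of constants that are absorbed into $C$. Once the mixed interactions are reduced to the diagonal case already estimated in Theorem~\ref{thm:2nd moment upper bound}, the remaining work is the Stirling-type asymptotics summarized in Lemma~\ref{lemmaA1}, which is routine. The delicate point throughout is ensuring that the power of $n$ produced by the $\nu_n \sim n^2$ pairs combines correctly with the factorial gain to give precisely the exponent $\frac{4-a}{2-a}$, matching the DO-noise result of Theorem~\ref{thm:upper bound} and confirming that the specific covariance structure is irrelevant to the Lyapunov exponent.
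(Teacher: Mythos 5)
There is a genuine gap in your plan: the Feynman--Kac representation combined with a H\"older/Cauchy--Schwarz decoupling across the $\nu_n=\tfrac{n(n-1)}{2}$ pairs does not produce the stated exponent in $n$. Decoupling gives $E\bigl[\exp\bigl(\sum_{i<j}Y_{ij}\bigr)\bigr]\le\prod_{i<j}E\bigl[\exp(\nu_n Y_{ij})\bigr]^{1/\nu_n}=E\bigl[\exp(\nu_n Y_{12})\bigr]$ with $Y_{12}=\int_0^t\int_0^t\gamma(s,r)f(B^1_s-B^2_r)\,ds\,dr$, and the single-pair exponential moment satisfies $E[\exp(\lambda Y_{12})]\le\sum_k (c\lambda)^k t^{k(2H-a/2)}/(k!)^{1-a/2}\le c_1\exp\bigl(c_2(\lambda t^{2H-a/2})^{\frac{2}{2-a}}\bigr)$ by Lemma~\ref{lemmaA1}. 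Substituting $\lambda=\nu_n\sim n^2$ yields $\exp\bigl(Cn^{\frac{4}{2-a}}t^{\frac{4H-a}{2-a}}\bigr)$. The same happens with the series you display: applying Lemma~\ref{lemmaA1} to $\sum_k c^k n^{2k}t^{k(2H-a/2)}/(k!)^{1-a/2}$ gives $\exp\bigl(c(n^2t^{2H-a/2})^{\frac{2}{2-a}}\bigr)$, i.e.\ again $n^{\frac{4}{2-a}}$; your claim that ``matching'' these terms yields $n^{\frac{4-a}{2-a}}$ is an arithmetic slip. Since $\frac{4}{2-a}>\frac{4-a}{2-a}$ for $a>0$, your route proves a strictly weaker statement in cases (2) and (3) (they coincide only when $a=0$). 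Recovering the sharp exponent from the Feynman--Kac functional requires substantially more than H\"older decoupling (e.g.\ large-deviation analysis of the intersection functional).

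The paper avoids this entirely by never using the Feynman--Kac formula for the upper bound. It works with the Wiener chaos expansion and hypercontractivity: $\norm{I_k(f_k(t,x))}_p\le (p-1)^{k/2}\norm{I_k(f_k(t,x))}_2=(p-1)^{k/2}(\alpha_k(t)/k!)^{1/2}$, where $\alpha_k(t)\le cK^kt^{k(2H-a/2)}(k!)^{a/2}$ is exactly the second-moment estimate of Theorem~\ref{thm:2nd moment upper bound}. Summing over $k$ with Minkowski's inequality and Lemma~\ref{lemmaA1} gives $\norm{u(t,x)}_p\le c_1\exp\bigl(C p^{\frac{2}{2-a}}t^{\frac{4H-a}{2-a}}\bigr)$, and only then is the bound raised to the $p$-th power, which contributes the extra factor of $p$ and produces $p^{1+\frac{2}{2-a}}=p^{\frac{4-a}{2-a}}$. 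The crucial structural advantage is that the moment order enters through the hypercontractivity constant $(p-1)^{k/2}$ rather than through the $\nu_n\sim n^2$ count of interacting pairs, which is precisely what your decoupling cannot reproduce. If you want to keep your starting point, you should switch to the chaos/hypercontractivity argument after writing down the moment formula, or be prepared to invoke genuinely sharper tools for the exponential functional.
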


\begin{proof}
Using the chaos expansion and the hypercontractivity property we can derive the upper bound from the second moment as it was done in Balan-Conus\cite{Balan2016}.

We denote by $\norm{\cdot}_p$ the $L^p(\Omega)$-norm. We use the fact that for elements in a fixed Wiener chaos $\mathcal{H}_n$, the $\norm{\cdot}_p$-norms are equivalent. Namely, for any $I_n(f_n)\in \mathcal{H}_n$, \\
$\norm{I_n(f_n(t,x))}_p\leq (p-1)^{n/2} \norm{I_n(f_n(t,x))}_2=(p-1)^{n/2} \Bigg{(}\frac{1}{n!}\alpha_n(t)\Bigg{)}^{1/2}$.

In our case, we obtain the upper bound for $\norm{I_n(f_n(t,x))}_p$
\begin{align}
 \norm{I_n(f_n(t,x))}_p \leq u_0 C^n_{p,K_h}\Bigg(t^{n(2H-a/2)}\frac{1}{n!^{-a/2+1}}\Bigg)^{1/2},
\end{align}
where $C_{p,K_h}=(p-1)^{1/2} c^{1/2}K_h^{1/2}$ and $c$ depends on $H$ and $a$.\\
Recall Minkowski's inequality for integrals 
\begin{align}
\Bigg{[}\int_Y\Bigg{(}\int_X|F(x,y)|\mu(dx)\Bigg{)}^p\nu(dy)\Bigg{]}^{1/p}\leq \int_X\Bigg{(}\int_Y|F(x,y)|^p\nu(dy)\Bigg{)}^{1/p}\mu(dx).
\end{align}
We use this inequality for $(X,\mathcal{X})=(\mathcal{N},2^{\mathcal{N}},\mu)$ equipped with the counting measure, $(Y,\mathcal{Y},\nu)=(\Omega,\mathcal{F},P)$ and $F(n,\omega)=I_n(f_n(\omega,t,x)).$ We have
\begin{align}
\no    \norm{u(t,x)}_p&=\norm{\sum_{n\geq 0}I_n(f_n(t,x))}_p\leq \sum_{n\geq 0} \norm{I_n(f_n(t,x))}_p\\
\no    &\leq u_0 \sum_{n\geq 0} \frac{C_p^n K_h^n t^{n(H-a/4)}}{(n!)^{-a/4+1/2}}.
\end{align}
Using Lemma A.1(Appendix A), we conclude that for any $t>0$,
\begin{align}
\label{normupperbound}
    \norm{u(t,x)}_p\leq c_1 u_0 \exp\{ C_2 (p^{1/2} K^n_h t^{(2H-a/2})^{\frac{1}{1-a/2}}\},
\end{align}
where $C_2>0$ and $C_n>0$ are some constants depending on $a$.\\
Finally we take the pth power of \eqref{normupperbound}, which will prove our Theorem.
\end{proof}
\begin{remark}
Notice that given \eqref{a1a2condition}, \eqref{generalnthupperbound} can be written as 
\begin{align}
\no E[u^n(t,x)]\leq \exp(C n^{\frac{4-a}{2-a}} t^{\frac{4a_1+2a_2+4-a}{2-a}}).
\end{align}
Given that the variance of the noise is of asymptotic order $t^{2a_1+a_2}$, this result shows that the specific correlation structure is not relevant for the Lyapunov exponent. The same will hold for the lower bound below.
\end{remark}
\section{Lower Bound}
For the lower bound, the main idea comes from Hu-Huang-Nualart-Tindel\cite{HuHuangNualartTindel} and Balan-Conus\cite{Balan2016}.
\begin{theorem}[Lower Bound for SHE]
Let $u$ be the solution to the equation~\eqref{eqn:DO process} for any of the cases (1),(2),(3) in Chapter 2. Then there exists a constant $C$ such that
\begin{align}
\label{lowerboundheat}
    E[u(t,x)^k]\geq \exp(C k^{\frac{4-a}{2-a}} t^{\frac{4H-a}{2-a}}).
\end{align}

\end{theorem}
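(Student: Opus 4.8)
The plan is to obtain the lower bound directly from the Feynman-Kac moment representation established in Theorem~\ref{SHEkthmoment}, which gives
\begin{align}
\no \be\lc u_{t,x}^k\rc = u_0^k \, \be_B\lc \exp\left(\sum_{1\le i<j\le k} \int_0^t\int_0^t \gamma(s,r) f(B_s^i - B_r^j)\,ds\,dr\right)\rc,
\end{align}
where $\gamma(s,r)=(sr)^{a_1}|s-r|^{a_2}$ and $2a_1+a_2=2H-2$. Since the covariance function $f$ is nonnegative (it is the Riesz kernel $|x|^{-a}$, or satisfies Assumption~\ref{assumption:lower bound}), the exponent is nonnegative, so by Jensen's inequality applied to the convex exponential one could push the expectation inside; but that is too lossy. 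Instead I would follow Hu-Huang-Nualart-Tindel~\cite{HuHuangNualartTindel} and Balan-Conus~\cite{Balan2016}: restrict the Brownian expectation to a favorable event on which all the pairwise increments $|B_s^i - B_r^j|$ stay small (say bounded by some $R$) on a time window, and on which the scaling of the double time integral of $\gamma$ can be computed explicitly.

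First I would use scaling: the substitution $s = t\tilde s$, $r = t\tilde r$, together with Brownian scaling $B^i_{t\tilde s} \overset{\text{law}}{=} \sqrt{t}\,\tilde B^i_{\tilde s}$, shows that $\gamma(s,r)\,ds\,dr = t^{2a_1+a_2+2}\gamma(\tilde s,\tilde r)\,d\tilde s\,d\tilde r = t^{2H}\gamma(\tilde s,\tilde r)\,d\tilde s\,d\tilde r$, and that $f(B^i_s - B^j_r) = f(\sqrt t(\tilde B^i_{\tilde s}-\tilde B^j_{\tilde r}))$. For the Riesz kernel $f(x)=|x|^{-a}$ this produces an extra factor $t^{-a/2}$. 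Hence the typical size of a single pairwise term scales like $t^{2H-a/2}$, and summing over the $\binom{k}{2}\sim k^2/2$ pairs suggests a contribution of order $k^2 t^{2H-a/2}$ in the exponent. The optimization over the spatial constraint parameter $R$ (or equivalently over the volume on which the Brownian paths are forced to concentrate) is what converts the naive exponent $k^2 t^{2H-a/2}$ into the claimed sharp order $k^{\frac{4-a}{2-a}} t^{\frac{4H-a}{2-a}}$; this is exactly the balance performed in the Poisson-representation proof of Theorem~\ref{theoremlowerbound} case~(2), where one chooses $k=(Ct^{2H-\alpha/2}n^{2-\alpha/2})^{1/(1-\alpha/2)}$. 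I would lower-bound $\be_B[\cdots]$ by restricting to the event that, over a subdivision of $[0,t]$ into roughly $m$ subintervals, all $k$ Brownian motions remain within a ball of radius $R$ of a common point, giving a probability cost of the form $(c R^d t^{-d/2})^{\text{(number of constraints)}}$ and a guaranteed gain $f \ge R^{-a}$ on each pairwise-time factor, then optimize over $R$ and $m$.

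The main obstacle will be handling the double time integral $\int\int \gamma(s,r)\,f(\cdots)\,ds\,dr$ cleanly when $a_2 \in (-1,0]$ is strictly negative, i.e.\ when $\gamma$ has a genuine temporal singularity along the diagonal $s=r$ (the standard fractional noise case), as opposed to the DO case where $\gamma(s,r)=(sr)^{H-1/2}\delta_0(s-r)$ collapses to a single time integral. In the singular case the lower bound must retain the diagonal contribution, which is where the variance concentrates; I would isolate a neighborhood of the diagonal and use that $\int_0^t\int_0^t \gamma(s,r)\,ds\,dr \asymp t^{2H}$ (the computation already carried out in the variance calculation following \eqref{cov1}) to secure a uniform lower bound on the time factor, uniformly in the covariance structure. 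This is precisely the point of the theorem: the same $t^{2H}$-scaling of the integrated kernel appears for every admissible $(a_1,a_2)$ with $2a_1+a_2=2H-2$, so the resulting Lyapunov exponent depends only on $2a_1+a_2=2H-2$ and on $a$, matching the upper bound of the previous section and establishing \eqref{lowerboundheat}.
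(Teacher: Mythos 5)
Your proposal is correct and follows essentially the same route as the paper: the Feynman--Kac moment formula of Theorem~\ref{SHEkthmoment}, the scaling $s=t\tilde s$, $r=t\tilde r$ producing the factor $t^{2H-a/2}$, restriction to a small-ball event on which $f\ge \epsilon^{-a}$ (the paper's $A_\epsilon$, with $P(A_\epsilon)\ge\exp(-Ckd\epsilon^{-2})$ quoted from \cite{HuHuangNualartTindel} rather than re-derived via a time subdivision), and optimization over the radius to balance $ct^{2H-a/2}k^2\epsilon^{-a}$ against $Cdk\epsilon^{-2}$. Your concern about the diagonal singularity of $\gamma$ is unnecessary here: once $f$ is bounded below on the event, the time integral factors out as $\int_0^1\int_0^1\tilde s^{a_1}\tilde r^{a_1}|\tilde s-\tilde r|^{a_2}\,d\tilde s\,d\tilde r$, a finite positive Beta-function constant since $a_1\ge 0$ and $a_2>-1$.
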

\begin{proof}
We consider the set
\begin{align}
\no   A_{\epsilon}=\{\sup_{1\leq i<j\leq k}\sup_{1\leq \ell\leq d}\sup_{0\leq s, r\leq 1} |B_s^{i,\ell}-B_r^{j,\ell}|\leq \epsilon\}.
\end{align}
Then owning to the formula \eqref{SHEkthmoment}, we have
\begin{align}
\no    E[u(t,x)]^k&= E\Bigg{[}\exp \Bigg{(}c\sum_{1\leq i<j\leq k} \int_0^t\int_0^t s^{a_1}r^{a_1}|s-r|^{a_2}|B_s^i-B_r^j|^{-a}dsdr\Bigg{)}\Bigg]\\
\no   &= E\Bigg{[}\exp \Bigg{(}c\sum_{1\leq i<j\leq k} t^{2a_1+a_2+2-a/2} \int_0^1\int_0^1 \tilde s^{a_1}\tilde r^{a_1}|\tilde s-\tilde r|^{a_2}|B_{\tilde s}^i-B_{\tilde r}^j|^{-a}d\tilde sd\tilde r\Bigg{)}\Bigg]\\
\no    &\geq  E\Bigg{[}   \exp \Bigg{(}t^{2a_1+a_2+2-a/2} c\sum_{1\leq i<j\leq k} \int_0^1\int_0^1 \tilde s^{a_1}\tilde r^{a_1}|\tilde s-\tilde r|^{a_2}|\epsilon|^{-a}d\tilde sd\tilde r\Bigg{)}\mathbbm{1}_{A_{\epsilon}}\Bigg{]}\\
\no    &=  \exp\Bigg{(}t^{2H-a/2} B(a_1+1,a_2+1)k(k-1) \frac{1}{H} \epsilon^{-a}\Bigg{)}\textbf{P}(A_\epsilon).
\end{align}
Here we have $\tilde s=\frac{s}{t}$, $\tilde r=\frac{r}{t}$. The last inequality comes from the beta function and the substitution $s=\beta r$, where $0<\beta<1$, and $B(x,y)$ is the Beta function $\frac{\Gamma(a_1+1)\Gamma(a_2+1)}{\Gamma(a_1+a_2+2)}$.
Then since the event $A_\epsilon$ is the same as \cite{HuHuangNualartTindel}, there exists an $\epsilon_0>0$ such that for $\epsilon\leq \epsilon_0$, we have
\begin{align}
    P(A_\epsilon)\geq \exp(-Ckd \epsilon^{-2}),
\end{align}
where $d$ is the space dimension.

Under the condition $\epsilon\leq \epsilon_0$, this entails:\\
\begin{align}
    E[u(t,x)]^k \geq \exp(c t^{2H-a/2}k^2 \epsilon^{-a}-Cdk \epsilon^{-2}).
\end{align}
In order to optimize this expression, we try to equate the two terms inside the exponential above, we get
\begin{align}
    \epsilon=\frac{t^{\frac{2H-\frac{a}{2}}{a-2}}(ck)^{\frac{1}{a-2}}}{(2Cd)^{\frac{1}{a-2}}},
\end{align}
and notice that for $k\geq 2$ and $t$ sufficiently large, the condition $\epsilon\geq \epsilon_0$ is fulfilled. Therefore, we conclude that for $t$ and $k$ large enough,
\begin{align}
    E[u(t,x)^k]\geq \exp\Bigg{(}\frac{c^{\frac{a}{2-a}} t^{\frac{4H-a}{2-a}}k^{\frac{4-a}{2-a}}}{8(2dC)^{\frac{a}{2-a}}}\Bigg{)},
\end{align}
which finishes the proof of \eqref{lowerboundheat}.
\end{proof}
\begin{remark}
For the wave equation, we conjecture that a similar result would hold, where the Lyapunov exponent only depends on $2a_1+a_2$ and not on their specific values. This is subject of ongoing research.
\end{remark}

\renewcommand{\thechapter}{}
\renewcommand{\thesection}{A}
\renewcommand{\chaptername}{Appendix}
\setcounter{theorem}{0}
\setcounter{chapter}{0}
\setcounter{section}{0}
\setcounter{equation}{0}
\addcontentsline{toc}{chapter}{Appendix}

\chapter*{Appendix}

\makeatletter
\renewcommand \theequation {%
A.%
\ifnum \c@chapter>\z@ \@arabic\c@chapter.%
\fi\@arabic\c@equation} \@addtoreset{equation}{section}
\@addtoreset{equation}{section} 
\makeatother

\begin{lemma}[Conus-Balan Lemma A.1]
\label{lemmaA1}
For any $a>0$, we have
\begin{align}
\no    \sum_{n\geq 0} \frac{x^n}{(n!)^a}\leq c_1 \exp (c_2 x^{1/a})\hspace{7mm} \text{for all }x>0,
\end{align}
where $c_1>0$ and $c_2>0$ are some constants depending on $a$.
\end{lemma}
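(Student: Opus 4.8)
The plan is to reduce the whole estimate to an elementary one-term bound followed by a split of the series into a \emph{head} and a \emph{tail}. The starting point is the classical inequality $n! \geq (n/e)^n$, which follows from $e^n = \sum_{k\geq 0} n^k/k! \geq n^n/n!$. Raising it to the power $a$ gives $(n!)^a \geq (n/e)^{an}$, so that, writing $u := x^{1/a}$, each summand is controlled by
\[
\frac{x^n}{(n!)^a} \leq \frac{x^n e^{an}}{n^{an}} = \left(\frac{eu}{n}\right)^{an} =: t_n \qquad (n \geq 1),
\]
while the $n=0$ term equals $1$. Thus it suffices to bound $\sum_{n\geq 1} t_n$.

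First I would record the pointwise maximum of $t_n$. Treating $n$ as a continuous variable $\nu>0$ and differentiating $\log t_\nu = a\nu(1 + \log u - \log \nu)$ twice shows that $\nu \mapsto \log t_\nu$ is strictly concave with a unique maximum at $\nu = u$, where $\log t_u = au$. Hence $t_n \leq e^{au}$ for every $n \geq 1$. This is the key quantitative fact: the largest term of the series is already of the target exponential order $e^{au} = e^{a x^{1/a}}$.

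Next I split the sum at $n = 2eu$. For the tail $n > 2eu$ one has $eu/n < 1/2$, so $t_n < 2^{-an}$ and $\sum_{n>2eu} t_n \leq \sum_{n\geq 1} 2^{-an} = (2^a-1)^{-1}$, a constant depending only on $a$. For the head $1 \leq n \leq 2eu$ there are at most $2eu+1$ terms, each bounded by $e^{au}$, so that this part is at most $(2eu+1)e^{au}$. Using $u \leq e^u$ one absorbs the polynomial prefactor, $(2eu+1) \leq (2e+1)e^{u}$, whence the head contributes at most $(2e+1)e^{(a+1)u}$. Collecting the $n=0$ term, the tail and the head, the full series is bounded by $c_1 e^{(a+1)u} = c_1 \exp(c_2 x^{1/a})$ with $c_2 = a+1$ and $c_1$ depending only on $a$; since $e^{(a+1)u}\geq 1$, the constant contributions are harmlessly absorbed and the inequality holds for all $x>0$, covering the small-$x$ range as well.

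I do not expect a genuine obstacle here: the estimate is elementary and self-contained. The only point requiring mild care is the bookkeeping of the polynomial factor $2eu+1$ produced by counting the head terms, which must be absorbed into the exponential (at the cost of enlarging $c_2$ from $a$ to $a+1$) so that the final bound has the clean form $c_1\exp(c_2 x^{1/a})$ with constants depending on $a$ only. An alternative, should one wish to keep the sharp constant $c_2 = a$, would be to replace the crude ``maximum times number of terms'' estimate on the head by a comparison of $\sum_n t_n$ with a Laplace-type integral, but this refinement is unnecessary for the applications in the text.
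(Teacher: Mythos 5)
Your argument is correct and complete. The paper itself gives no proof of this lemma: it is stated as a citation of Balan--Conus (their Lemma A.1), where the standard route is to compare $(n!)^a$ with $\Gamma(an+1)$ via Stirling's formula and then invoke the known asymptotics of the Mittag--Leffler function $E_a(z)=\sum_{n\ge 0} z^n/\Gamma(an+1)\sim a^{-1}e^{z^{1/a}}$ as $z\to\infty$. Your proof replaces that machinery with the elementary bound $n!\ge (n/e)^n$, a computation of the maximal term $t_n=(eu/n)^{an}\le e^{au}$ with $u=x^{1/a}$, and a head/tail split at $n=2eu$; each step checks out (the first derivative of $\log t_\nu$ is $a(\log u-\log\nu)$, the second is $-a/\nu<0$, the tail is geometric with ratio $2^{-a}$, and the polynomial count $2eu+1$ of head terms is absorbed via $u\le e^u$ at the cost of enlarging the exponent from $au$ to $(a+1)u$). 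Since the lemma only asserts constants $c_1,c_2$ depending on $a$, the loss from $a$ to $a+1$ in $c_2$ is immaterial for every use of the lemma in the text. What your approach buys is self-containedness and transparency about where the exponent $x^{1/a}$ comes from (the largest term of the series already has that order); what the Mittag--Leffler route buys is the sharp constant $c_2=a$ and a matching lower bound, neither of which is needed here. One could tighten your head estimate to recover $c_2=a+\varepsilon$ or even $c_2=a$ by a Laplace-type comparison, as you note, but this is unnecessary.
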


\begin{lemma}
\label{gammaintegrable}
For the covariance function $\gamma(s,r)=s^{a_1}r^{a_1}|s-r|^{a_2}$, we have
\begin{align}
\sup_{0<s<t}\int_0^t\gamma(s,r)\,dr <\infty.
\end{align}
\end{lemma}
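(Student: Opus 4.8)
The plan is to bound the inner integral uniformly in $s$ by peeling off the two spatial-type powers and reducing everything to a single one-dimensional integral of $|s-r|^{a_2}$, whose finiteness is exactly what the constraint $-1<a_2\le 0$ guarantees. First I would fix $s\in(0,t)$ and write
\begin{align}
\no \int_0^t \gamma(s,r)\,dr = s^{a_1}\int_0^t r^{a_1}|s-r|^{a_2}\,dr.
\end{align}
Since $a_1\ge 0$ and both $s$ and $r$ lie in $[0,t]$, I can estimate $s^{a_1}\le t^{a_1}$ and $r^{a_1}\le t^{a_1}$, which pulls out a harmless factor $t^{2a_1}$ and leaves
\begin{align}
\no \int_0^t \gamma(s,r)\,dr \le t^{2a_1}\int_0^t |s-r|^{a_2}\,dr.
\end{align}

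The substantive step is to control $\int_0^t|s-r|^{a_2}\,dr$. I would split the interval at $r=s$, obtaining $\int_0^s (s-r)^{a_2}\,dr+\int_s^t (r-s)^{a_2}\,dr$, and change variables in each piece. Here the assumption $a_2>-1$ is crucial: it ensures $a_2+1>0$, so each antiderivative $\tfrac{u^{a_2+1}}{a_2+1}$ is finite at $u=0$ despite the singularity of the integrand at $r=s$. This yields the exact value
\begin{align}
\no \int_0^t |s-r|^{a_2}\,dr = \frac{s^{a_2+1}+(t-s)^{a_2+1}}{a_2+1}.
\end{align}
Because $0<s<t$ and $a_2+1>0$, both $s^{a_2+1}$ and $(t-s)^{a_2+1}$ are at most $t^{a_2+1}$, so this is bounded by $\tfrac{2t^{a_2+1}}{a_2+1}$, a quantity independent of $s$.

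Combining the two displays gives, for every $s\in(0,t)$,
\begin{align}
\no \int_0^t \gamma(s,r)\,dr \le \frac{2}{a_2+1}\,t^{2a_1+a_2+1}=\frac{2}{a_2+1}\,t^{2H-1},
\end{align}
using the relation $2a_1+a_2=2H-2$ from \eqref{a1a2condition}. The right-hand side does not depend on $s$, so taking the supremum over $s\in(0,t)$ keeps it finite for each fixed $t$, which is the claim. I do not anticipate any genuine obstacle; the only point requiring care is the integrability of $|s-r|^{a_2}$ across $r=s$, and this is precisely where the hypothesis $-1<a_2$ is invoked. (The factor $t^{2H-1}$ is finite since $H\in(0,1)$, though only boundedness in $s$, not in $t$, is asserted.)
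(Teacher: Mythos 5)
Your proof is correct and follows essentially the same route as the paper's: bound the powers $s^{a_1}$ and $r^{a_1}$ by constants (the paper does this after rescaling to $[0,1]$ via $\rho=r/t$, which is purely cosmetic), split the remaining integral of $|s-r|^{a_2}$ at the singularity $r=s$, and use $a_2+1>0$ to integrate each piece, arriving at the same $s$-independent bound of order $t^{2a_1+a_2+1}=t^{2H-1}$.
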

\begin{proof}
 Define $\rho=\frac{r}{t}$.\\
\begin{eqnarray*}
\no &&\int_0^t s^{a_1}r^{a_1}|s-r|^{a_2}\,dr\\
\no&=&t^{1+a_1}\int_0^1 s^{a_1}\rho^{a_1}|s-t\rho|^{a_2}\,d\rho\\
\no &=&t^{a_1+a_2+1}\int_0^1 s^{a_1}\rho^{a_1}|\frac{s}{t}-\rho|^{a_2}\,d\rho \\
\no &<& t^{2a_1+a_2+1}\int_0^1 \rho^{a_1}|\frac{s}{t}-\rho|^{a_2}\,d\rho 
\end{eqnarray*}
Let $\sigma=\frac{s}{t}, 0<\sigma<1$. Since $\rho<1$  
\begin{eqnarray*}
\no &&\int_0^1 \rho^{a_1}|\sigma-\rho|^{a_2}d\rho\\
\no &\leq&\int_0^\sigma (\sigma-\rho)^{a_2}d\rho+\int_\sigma^1 (\rho-\sigma)^{a_2}\,d\rho\\
&=&-\frac{(\sigma-\rho)^{a_2+1}}{a_2+1}\Bigg|_0^\sigma+ \frac{(\rho-\sigma)^{a_2+1}}{a_2+1}\Bigg|_\sigma^1\\
\no &=&\frac{\sigma^{a_2+1}+(1-\sigma)^{a_2+1}}{a_2+1}\leq \frac{2}{a_2+1}<\infty
\end{eqnarray*}
since $-1<a_2<0$.
\end{proof}

\begin{lemma}
\label{gamma_exponential_finite }
For the covariance function $\gamma(s,r)=s^{a_1}r^{a_1}|s-r|^{a_2}$, we have
\begin{align}
    \int_0^t\int_0^t dudv\gamma(u,v)<\infty \hspace{8mm} \text{for all } t>0.
\end{align}
\end{lemma}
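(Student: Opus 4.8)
The plan is to reduce this double-integral bound to the single-variable estimate already established in Lemma~\ref{gammaintegrable}. Since the integrand $\gamma(u,v)=u^{a_1}v^{a_1}|u-v|^{a_2}$ is nonnegative on $[0,t]^2$, Tonelli's theorem lets me write the double integral as an iterated integral without any justification of integrability in advance:
\begin{align*}
\int_0^t\int_0^t \gamma(u,v)\,du\,dv = \int_0^t\left(\int_0^t \gamma(u,v)\,dv\right)du.
\end{align*}
The next step is to bound the inner integral uniformly in the outer variable. Because $\gamma$ is symmetric in its two arguments, $\gamma(u,v)=\gamma(v,u)$, so for each fixed $u\in(0,t)$ the inner integral $\int_0^t\gamma(u,v)\,dv$ is exactly the quantity controlled in Lemma~\ref{gammaintegrable}. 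I would therefore invoke that lemma to obtain the finite constant $\displaystyle M:=\sup_{0<u<t}\int_0^t\gamma(u,v)\,dv<\infty$.

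From here the conclusion is immediate: replacing the inner integral by its supremum gives
\begin{align*}
\int_0^t\int_0^t \gamma(u,v)\,du\,dv \le \int_0^t M\,du = t\,M<\infty,
\end{align*}
which is the claim. This is the entire argument, and it holds for every $t>0$ under the standing hypotheses $a_1\ge0$ and $-1<a_2\le0$ that make Lemma~\ref{gammaintegrable} applicable.

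There is no genuine obstacle here; the only substantive content was already absorbed into Lemma~\ref{gammaintegrable}, whose proof handles the two potentially singular regions—the diagonal $u=v$, where integrability follows from $a_2>-1$, and the origin, where the factor $u^{a_1}$ with $a_1\ge0$ causes no problem. If one preferred a self-contained argument rather than citing the previous lemma, the same two observations (integrability of $|u-v|^{a_2}$ near the diagonal and boundedness of the polynomial prefactor on $[0,t]^2$) would be carried out directly, but routing through Lemma~\ref{gammaintegrable} is cleaner and avoids repeating the diagonal estimate.
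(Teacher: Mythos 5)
Your proof is correct, and it takes a genuinely different route from the one in the paper. The paper proves this lemma by a direct computation: it symmetrizes the integral to the region $v<u$, substitutes $r=v/u$, and reduces the double integral to $2\int_0^t u^{2a_1+a_2+1}\,du\cdot\int_0^1 r^{a_1}|1-r|^{a_2}\,dr$, which is finite because $a_1\ge 0$, $a_2>-1$, and $2a_1+a_2+1=2H-1>-1$. You instead apply Tonelli (legitimate, since the integrand is nonnegative), use the symmetry $\gamma(u,v)=\gamma(v,u)$ to identify the inner integral with the quantity bounded in Lemma~\ref{gammaintegrable}, and conclude with the crude bound $t\cdot M$. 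Both arguments are sound and rest on the same two observations about the singularities (the diagonal, handled by $a_2>-1$, and the origin, handled by $a_1\ge 0$). What the paper's computation buys that your shortcut does not make explicit is the precise scaling: the intermediate identity \eqref{generalnoisecalc} exhibits the double integral as a constant times $t^{2a_1+a_2+2}=t^{2H}$, and this exact order is quoted elsewhere (in the variance calculation for the generalized noise in Chapter 5). Your bound $tM$ does recover the same order if one tracks the $t$-dependence of the supremum in Lemma~\ref{gammaintegrable}, namely $M\le \frac{2}{a_2+1}t^{2a_1+a_2+1}$, but you would need to say so explicitly if the lemma were to serve that secondary purpose. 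For the statement as literally written (mere finiteness), your argument is complete and arguably cleaner.
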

\begin{proof}
From direct calculation we have
\begin{align}
\label{innerprod}
\no&\int_0^t\int_0^t \gamma(u,v)\,dudv\\
\no=&\int_0^t\int_0^t u^{a_1}v^{a_1}|u-v|^{a_2}\,dudv\\
\no=&2\int_0^t  du\int_0^u u^{a_1} v^{a_1}|u-v|^{a_2}\,dudv\\
=&2\int_0^t u^{2a_1} du\int_0^u \frac{v^{a_1}}{u^{a_1}}|u-\frac{v}{u}\cdot u |^{a_2}\,dudv.
\end{align}
Let $r=\frac{v}{u}$,$v=ru$, \eqref{innerprod} becomes
\begin{align}
\label{generalnoisecalc}
    &2\int_0^t du\, u^{2a_1+a_2+1} \int_0^1 \,dr\, r^{a_1}|1-r|^{a_2}<C(t).
\end{align}
Since $a_2>-1, a_1>0,$ $\int_0^1 \,dr \,r^{a_1}|1-r|^{a_2}<\infty$,  $2a_1+a_2+1=2H-1>-1$, we require $H>0$,which is true since $H>1/2$.
\end{proof}

\bibliographystyle{plain}
\bibliography{mybiblograph}
\addcontentsline{toc}{chapter}{Vita}
\chapter*{Vitae}
Ruxiao Qian was born in Changsha, China. He attended elementary school in Xichangjie, Kaifu District and graduated from Changjun High School in June 2011. He attended Tongji University in Shanghai and graduated in 2015. Ruxiao entered Lehigh University as a master student in the Analytical Finance program in 2016. He then transferred to the Mathematics Department in 2017 and worked as a Teaching Assistant. He began his dissertation work under the guidance of Professor Daniel Conus and was awarded the degree of Doctor of Philosophy in Mathematics in August 2022.

\end{document}